\newtheorem{theorem}{Theorem}[section]
\newtheorem{corollary}[theorem]{Corollary}
\newtheorem{proposition}[theorem]{Proposition}
\newtheorem{lemma}[theorem]{Lemma}
\theoremstyle{definition}
\newtheorem{definition}[theorem]{Definition}
\newtheorem{example}[theorem]{Example}
\theoremstyle{remark}
\newtheorem{remark}[theorem]{Remark}
\newcommand{\scHom}{\mathscr{H}\text{\kern -5pt {\calligra\large om}}\,}
\newcommand{\shtensor}{\otimes_{\mathbf{sh}}}
\newcommand{\grtensor}{\otimes_{\mathbf{gr}}}
\newcommand{\R}{\mathbb{R}}
\DeclareMathOperator{\Hom}{Hom}
\DeclareMathOperator{\op}{op}
\DeclareMathOperator{\Ran}{Ran}
\DeclareMathOperator{\res}{res}
\DeclareMathOperator{\stalk}{stalk}
\DeclareMathOperator*{\colim}{colim}
\newcommand{\uModRP}{\underline{\mathbf{Mod}}_R^{\mathbf{P}}}
\newcommand{\isom}{\cong}
\newcommand{\isomto}{\xrightarrow{\isom}}
\newcommand{\Vectk}{\mathbf{Vect}_{\mathbf{k}}}
\newcommand{\tensor}{\otimes}
\newcommand{\uHom}{\underline{\Hom}}
\newcommand{\cat}{\mathbf}
\newcommand{\Open}{\cat{Open}}
\newcommand{\xto}{\xrightarrow}
\newcommand{\Shv}{\cat{Shv}}
\newcommand{\Coshv}{\cat{Coshv}}
\newcommand{\Z}{\mathbb{Z}}
\newcommand{\Q}{\mathbb{Q}}
\newcommand{\ModR}{\mathbf{Mod}_{R}}
\newcommand{\RMod}{\prescript{}{R}{\mathbf{Mod}}}
\newcommand{\shRMod}[1]{{#1}\text{-}\mathbf{Mod}}
\newcommand{\shModR}[1]{\mathbf{Mod}\text{-}{#1}}
\renewcommand{\varprojlim}{\lim}
\title{Homological algebra for persistence modules}
\author{Peter Bubenik, Nikola Mili\'cevi\'c}
\tikzset{%
    symbol/.style={%
        draw=none,
        every to/.append style={%
            edge node={node [sloped, allow upside down, auto=false]{$#1$}}}
    }
}
\begin{document}

\begin{abstract}
We develop some aspects of the homological algebra of persistence modules, in both the one-parameter and multi-parameter settings, considered as either sheaves or graded modules. The two theories are different. We consider the graded module and sheaf tensor product and Hom bifunctors as well as their derived functors, Tor and Ext, and give explicit computations for interval modules. We give a classification of injective, projective, and flat interval modules. We state K\"unneth {T}heorems and {U}niversal {C}oefficient Theorems for the homology and cohomology of chain complexes of persistence modules in both the sheaf and graded module settings and show how these theorems can be applied to persistence modules arising from filtered cell complexes. We also give a Gabriel-Popescu {T}heorem for persistence modules. Finally, we examine categories enriched over persistence modules. We show that the graded module point of view produces a closed symmetric monoidal category that is enriched over itself.
\end{abstract}

\maketitle

\section{Introduction}
\label{section:introduction}

 In topological data analysis, one often starts with application data that has been preprocessed to obtain a digital image or a finite subset of a metric space, which is then turned into a diagram of topological spaces, such as cubical complexes or simplicial complexes. Then one applies an appropriate homology functor with coefficients in a field to obtain a diagram of vector spaces. In cases where the data is parametrized by a number of real variables, this diagram of vector spaces is indexed by $\R^n$ or a subset of $\R^n$. Such a diagram is called a (multi-parameter) persistence module.
  These persistence modules have a rich algebraic structure.
  The indexing set $\R^n$ is an abelian group under addition and has a compatible coordinate-wise partial order.
  The sub-poset generated by the origin is the positive orthant which is a commutative monoid under addition which acts on { persistence modules.}
  The category of vector spaces has kernels and cokernels with desirable properties; it is a particularly nice abelian category. 
  This algebraic structure underlies the power of topological data analysis.  We will exploit this algebraic structure and apply some of the tools of homological algebra to study persistence modules. 
                       
To facilitate a broad class of present and future applications, we generalize the above setting somewhat. We replace $\R^n$ with a {preordered} set with a compatible abelian group structure. We replace the category of 
vector spaces with any Grothendieck category. This is an abelian category satisfying additional properties useful for homological algebra. We call the resulting diagrams persistence modules. It follows that the category of persistence modules is also a Grothendieck category.
Thus, for example, persistence modules satisfy the Krull-Remak-Schmidt-Azumaya Theorem~\cite{bubenik2018wasserstein}.

The algebraic structure of persistence modules has been studied from a number of points of view, for example,  as graded modules~\cite{MR2121296,MR2506738,MR3348168}, as functors~\cite{MR3201246,MR3413628}, and as sheaves~\cite{MR3259939}.
Here we develop some aspects of the homological algebra of persistence modules, with an emphasis on the graded module and sheaf-theoretic points of view.
%
From both sheaf theory and graded module theory, we define tensor product and Hom bifunctors for persistence modules as well as their derived functors Tor and Ext (Section~\ref{section:tensors}, Section~\ref{section:homs}, Section~\ref{section:derived_functors_of_persistence_modules}). We provide explicit formulas for the interval modules arising from the persistent homology of sublevel sets of functions.
In computational settings, single-parameter persistence modules decompose into direct sums of finitely many such interval modules. So, in the computational setting, since these four functors preserve finite direct sums, the general case reduces to that of interval modules.
For example, we have the following.
\begin{proposition}
 Suppose $\mathbf{k}[a,b)$ and $\mathbf{k}[c,d)$ are interval modules. Then:
\begin{itemize}
\item $\mathbf{k}[a,b)\grtensor \mathbf{k}[c,d)=\mathbf{k}[a+c,\min\{a+d,b+c\})$
\item $\uHom(\mathbf{k}[a,b),\mathbf{k}[c,d))=\mathbf{k}[\max\{c-a,d-b\},d-a)$
\item $\mathbf{Tor}^{\mathbf{gr}}_1(\mathbf{k}[a,b),\mathbf{k}[c,d))=\mathbf{k}[\max\{a+d,b+c\},b+d)$
\item $\mathbf{Ext}_{\mathbf{gr}}^1(\mathbf{k}[a,b),\mathbf{k}[c,d))=\mathbf{k}[c-b,\min\{c-a,d-b\})$
\end{itemize}
\end{proposition}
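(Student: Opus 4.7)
The plan is to derive all four formulas from a single length-one free resolution of $\mathbf{k}[a,b)$. In the graded module category, the interval module $\mathbf{k}[s,\infty)$ is free of rank one, generated in degree $s$, and
$$0 \longrightarrow \mathbf{k}[b,\infty) \longrightarrow \mathbf{k}[a,\infty) \longrightarrow \mathbf{k}[a,b) \longrightarrow 0,$$
with the leftmost arrow the shift-by-$(b-a)$ inclusion, is a projective resolution. All four computations then reduce to evaluating the bifunctors on the free modules $\mathbf{k}[s,\infty)$, which is easy since free modules are flat and corepresent shifted evaluation.

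First I would establish the base identities
$$\mathbf{k}[s,\infty) \grtensor \mathbf{k}[c,d) \isom \mathbf{k}[s+c,\,s+d), \qquad \uHom(\mathbf{k}[s,\infty), \mathbf{k}[c,d)) \isom \mathbf{k}[c-s,\,d-s).$$
The first holds because tensoring with a rank-one free module simply shifts degrees by $s$; the second, because a degree-$t$ graded morphism $\mathbf{k}[s,\infty) \to \mathbf{k}[c,d)$ is determined by the image of the generator in degree $s+t$, which must lie in $[c,d)$, so the space of such morphisms is $\mathbf{k}$ precisely when $c-s \leq t < d-s$.

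Next I would apply $(-) \grtensor \mathbf{k}[c,d)$ to the resolution. Since $\mathbf{k}[b,\infty)$ is flat, this gives the four-term exact sequence
$$0 \to \mathbf{Tor}^{\mathbf{gr}}_1(\mathbf{k}[a,b),\mathbf{k}[c,d)) \to \mathbf{k}[b+c,b+d) \to \mathbf{k}[a+c,a+d) \to \mathbf{k}[a,b) \grtensor \mathbf{k}[c,d) \to 0,$$
in which the middle map is the identity on each common degree (inherited from the shift inclusion). Reading off kernel and cokernel yields the tensor product and $\mathbf{Tor}_1$ formulas; the $\min$ and $\max$ absorb the case analysis, including degenerate cases in which the resulting interval is empty and thus denotes the zero module. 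Dually, applying $\uHom(-,\mathbf{k}[c,d))$ to the resolution gives
$$0 \to \uHom(\mathbf{k}[a,b),\mathbf{k}[c,d)) \to \mathbf{k}[c-a,d-a) \to \mathbf{k}[c-b,d-b) \to \mathbf{Ext}_{\mathbf{gr}}^1(\mathbf{k}[a,b),\mathbf{k}[c,d)) \to 0,$$
whose middle map is again the identity on the overlap, and the $\uHom$ and $\mathbf{Ext}^1$ formulas drop out the same way.

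The main obstacle is bookkeeping rather than mathematical: one must verify that the connecting map between the two shifted interval modules really is the identity on each common degree and zero elsewhere, which amounts to tracking the generator of $\mathbf{k}[a,\infty)$ through each functor and invoking naturality. Once that is done, each of the four output intervals is determined by set subtraction of one interval from a shift of itself, and the uniform formulas with $\min/\max$ cleanly cover both the overlapping and disjoint configurations of $[a,b)$ and $[c,d)$ without a separate case split.
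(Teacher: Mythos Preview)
Your proposal is correct and, in one respect, more economical than the paper's treatment. The paper establishes the four formulas in separate examples scattered across Sections~3, 4, and~7: it computes $\grtensor$ directly via the colimit description (tracking a single generator $y^a\grtensor z^c$ and determining when $x^t$ annihilates it), computes $\uHom$ directly by running over all shifts $N(s)$ and invoking the elementary calculation of $\Hom(\mathbf{k}[a,b),\mathbf{k}[c,d))$, and only then passes to resolutions for the derived functors. Moreover, for $\mathbf{Ext}^1$ the paper uses the \emph{injective} resolution $0\to\mathbf{k}[c,d)\to\mathbf{k}(-\infty,d)\to\mathbf{k}(-\infty,c)\to 0$ of the second argument rather than the projective resolution of the first.

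Your route---applying both bifunctors to the single free resolution $0\to\mathbf{k}[b,\infty)\to\mathbf{k}[a,\infty)\to\mathbf{k}[a,b)\to 0$ and reading off all four answers as kernels and cokernels of one explicit map between shifted intervals---is cleaner and avoids the separate direct arguments. What the paper's approach buys is that the direct computations of $\grtensor$ and $\uHom$ (via colimit/limit of $M_s\otimes N_t$ and $\Hom_R(M_{-s},N_t)$) illustrate the intrinsic meaning of these functors independent of resolutions, and the use of an injective resolution for $\mathbf{Ext}$ exercises the dual machinery (Matlis duality, injective interval modules) developed elsewhere in the paper. Your approach trades that conceptual breadth for uniformity and brevity.
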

\noindent
The sheaf theoretic Tor bifunctor is trivial (Theorem~\ref{theorem:sheaf_tensor_is_exact}), but the Ext bifunctor is not (Example~\ref{example:sheaf_ext_of_interval_modules}).

A necessary step for computations in homological algebra is understanding projective, injective, and flat modules. We give a classification of these for single-parameter interval modules in Section~\ref{subsection:classification_of_interval_modules} and also extend the result somewhat to the multi-parameter setting.

\begin{theorem}
\label{theorem:classification_into_flats_and_injectives}
Let $a\in \mathbb{R}$. Then:
\begin{itemize}
\item The interval modules $\mathbf{k}(-\infty,a)$ and $\mathbf{k}(-\infty,a]$ are injective. They are not flat and thus not projective.
\item The interval modules $\mathbf{k}[a,\infty)$ are projective (free) and the interval modules $\mathbf{k}(a,\infty)$ are flat but not projective. Both are not injective.
\item The interval module $\mathbf{k}[\mathbb{R}]$ is both injective and flat, but not projective.
\item If $I\subset \mathbb{R}$ is a bounded interval, then $\mathbf{k}[I]$ is neither flat (hence not projective) nor injective.
\end{itemize}
\end{theorem}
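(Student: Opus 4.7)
The plan is to treat each class of interval modules with a shared toolkit: the identification of $\mathbf{k}[a,\infty)$ as the representable projective (in fact free, rank one) generator in degree $a$; the Krull-Remak-Schmidt-Azumaya Theorem, which the introduction records as applicable in our Grothendieck category; closure of flatness under filtered colimits; a Baer-type injectivity test using the generators $\mathbf{k}[b,\infty)$; and the explicit exhibition of non-split short exact sequences.

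For the positive results, I would first argue that $\mathbf{k}[a,\infty)$ is projective by noting that in the graded module setting it represents the exact evaluation functor $M\mapsto M_a$, so $\Hom(\mathbf{k}[a,\infty),-)$ is exact; this also identifies it as free of rank one on a generator in degree $a$. Then $\mathbf{k}(a,\infty)=\colim_{\varepsilon\downarrow 0}\mathbf{k}[a+\varepsilon,\infty)$ and $\mathbf{k}[\mathbb{R}]=\colim_{n\to\infty}\mathbf{k}[-n,\infty)$ are filtered colimits of flats, hence flat. For injectivity of $\mathbf{k}(-\infty,a)$, $\mathbf{k}(-\infty,a]$, and $\mathbf{k}[\mathbb{R}]$ I would invoke the Baer-type criterion for Grothendieck categories: an object $I$ is injective if and only if every morphism $N\to I$ from a subobject $N$ of a generator $G$ extends to $G$. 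Taking $G=\mathbf{k}[b,\infty)$, the subobjects are exactly $0$, $\mathbf{k}[c,\infty)$, and $\mathbf{k}(c,\infty)$ for $c\geq b$, and the lifting problem reduces via the Yoneda identification $\Hom(\mathbf{k}[b,\infty),I)=I_b$ to a transition map of $I$, which is visibly surjective in the relevant range for each of the three candidate injectives.

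For the negative results, non-projectivity of every listed module other than $\mathbf{k}[a,\infty)$ follows from Krull-Remak-Schmidt-Azumaya: a projective is a direct summand of $\bigoplus_i\mathbf{k}[a_i,\infty)$, and since each $\mathbf{k}[a_i,\infty)$ has local endomorphism ring $\mathbf{k}$, Azumaya's theorem forces every indecomposable projective to be isomorphic to some $\mathbf{k}[b,\infty)$; the remaining interval modules are indecomposable but not of that form. Non-injectivity of $\mathbf{k}[a,\infty)$ and $\mathbf{k}(a,\infty)$ follows from the non-split short exact sequences $0\to\mathbf{k}[a,\infty)\to\mathbf{k}[\mathbb{R}]\to\mathbf{k}(-\infty,a)\to 0$ and $0\to\mathbf{k}(a,\infty)\to\mathbf{k}[\mathbb{R}]\to\mathbf{k}(-\infty,a]\to 0$, neither of which can split since $\mathbf{k}[\mathbb{R}]$ is indecomposable. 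Non-injectivity of a bounded $\mathbf{k}[I]$ is analogous, by embedding $\mathbf{k}[I]$ into the indecomposable interval module obtained from extending $I$ to the right. For non-flatness, the Tor formula in the preceding proposition already shows $\mathbf{Tor}^{\mathbf{gr}}_1(\mathbf{k}[a,b),\mathbf{k}[c,d))\neq 0$ when $b<\infty$ and $c<d<\infty$, handling all right-bounded intervals; for $\mathbf{k}(-\infty,a)$ and $\mathbf{k}(-\infty,a]$, I would tensor a suitable short exact sequence of the form $0\to\mathbf{k}[b,\infty)\to\mathbf{k}[b',\infty)\to\mathbf{k}[b',b)\to 0$ with the candidate and show that the resulting sequence fails to be left exact, witnessing a nonzero Tor.

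The main obstacle I anticipate is the non-flatness check for $\mathbf{k}(-\infty,a)$ and $\mathbf{k}(-\infty,a]$: these modules admit no finite free resolution by shifted copies of $\mathbf{k}[b,\infty)$, so a clean Tor calculation requires either a careful colimit-of-finite-presentations argument or a direct verification that the chosen tensored sequence fails to remain exact on the left. Every other item in the theorem reduces cleanly to the toolkit assembled above.
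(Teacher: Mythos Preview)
Your proposal is correct and complete, but it takes a genuinely different route from the paper's proof. The paper's organizing principle is Matlis duality (Proposition~\ref{prop:matlis}): a persistence module is $\grtensor$-flat if and only if its Matlis dual is injective. Combined with the sheaf-theoretic criterion that injectivity is equivalent to flabbiness (Proposition~\ref{proposition:BaerCriterionSheaves}), this lets the paper halve the work. For instance, non-injectivity of $\mathbf{k}[a,\infty)$ is checked by observing the restriction map $\mathbf{k}[a,\infty)(\R)\to\mathbf{k}[a,\infty)([a,\infty))$ is not surjective; Matlis duality then immediately gives non-flatness of $\mathbf{k}(-\infty,-a]$. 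Similarly, injectivity of $\mathbf{k}(-\infty,a)$ is deduced from flatness of its dual $\mathbf{k}(-a,\infty)$ (your filtered-colimit argument), rather than via a direct Baer check. Non-projectivity of $\mathbf{k}(a,\infty)$ is handled in the paper by an explicit lifting obstruction against $\bigoplus_{b>a}\mathbf{k}[b,\infty)\twoheadrightarrow\mathbf{k}(a,\infty)$ (Proposition~\ref{prop:interval_module_that_is_flat_and_not_projective}), not via KRSA. Your approach is more self-contained---it avoids setting up Matlis duality and the flabbiness characterization---and your KRSA argument for non-projectivity is arguably cleaner and more uniform than the paper's mix of explicit obstructions and duality (indeed, the paper's written proof does not explicitly address non-projectivity of $\mathbf{k}[\R]$, which your argument handles automatically). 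The paper's approach, on the other hand, is more economical once the duality is in place, and the anticipated obstacle you flag for non-flatness of $\mathbf{k}(-\infty,a)$ simply disappears: it is immediate from non-injectivity of the dual $\mathbf{k}[-a,\infty)$.
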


In both the graded module and sheaf settings, we have K\"unneth Theorems and Universal Coefficient Theorems for homology and cohomology (Section~\ref{section:kunneth}). {There is evidence to suggest these theorems can be used to give faster algorithms for computing persistent homology \cite{gakhar2019knneth}.} We  compute a number of examples for these theorems in Section~\ref{section:kunneth}.
In addition to our main results, we discuss (Matlis) duality (Section~\ref{section:duality}),
persistence modules indexed by finite posets (Section~\ref{section:pers_modules_over_finite_posets}) and we state {the} Gabriel-Popescu {Theorem} for persistence modules (Corollary~\ref{cor:gp}). Matlis duality helps us identify injective and flat modules and is used extensively in proving Theorem~\ref{theorem:classification_into_flats_and_injectives}. The Gabriel-Popescu {T}heorem characterizes Grothendieck categories as quotients of module categories. {For persistence modules over finite posets we show a stronger result is true; persistence modules are isomorphic to modules over the ring $\text{End}(U)$ where $U$ is a generator of the Grothendieck category of persistence modules. This allows one to study persistence modules as modules over a non-graded ring.} 
In Section~\ref{section:enriched}, we consider persistence modules from the point of view of enriched category theory. We show that by viewing persistence modules as graded modules we obtain a closed symmetric monoidal category that is enriched over itself. 
Many of these results are adaptations or consequences of well-known results in graded module theory, sheaf theory, and enriched category theory. However, we hope that by carefully stating our results for persistence modules and providing numerous examples we will facilitate new computational approaches to topological data analysis. {
  For example, the magnitude of persistence modules \cite{govc2020persistent} is a new numerical invariant that respects the monoidal structure of the graded module tensor product of persistence modules.}

\subsection*{Related work}

Some of the versions of the K\"unneth {T}heorems that appear here were independently discovered by Polterovich, Shelukhin, and Stojisavljevic~\cite{polterovich2017persistence}, and Gakhar and Perea~\cite{gakhar2019knneth}.
Recent papers on persistence modules as graded modules include \cite{miller2017data,harrington2019stratifying,lesnick2015interactive} where they are considered from the perspective of commutative algebra. Recent papers from the sheaf theory point of view include \cite{kashiwara2018persistent,berkouk2019stable,
berkouk2019ephemeral}.
Results akin to Theorem~\ref{theorem:classification_into_flats_and_injectives} also appear in \cite{botnan2018decomposition,Hoppner1983}.
In the final stages of preparing this paper a preprint of Carlsson and Fillipenko appeared~\cite{carlsonfilippenko2019}, which covers some of the same material considered here, in particular graded module K\"unneth {T}heorems, but from a complementary point of view.
Grothendieck categories have been used to define algebraic Wasserstein distances for persistence modules~\cite{bubenik2018wasserstein}.
{Enriched categories over monoidal categories have been used in other recent work in applied topology \cite{leinster2017magnitude,cho2019quantales}.

\section{Persistence modules}
\label{section:Equivalent_categories}

In this section we consider persistence modules from several points of view {and provide background for the rest of the paper. In particular, we consider persistence modules as functors, sheaves and graded modules. We show that these points of view are equivalent. Thus, the reader may read the paper from their preferred viewpoint. What the different perspectives bring to the table are canonical operations from their respective well developed mathematical theories.}

\subsection{Persistence modules as functors}
\label{sec:pm-as-functors}

Given a {preordered} set $(P,\leq)$ there is a corresponding category $\cat{P}$ whose objects are the elements of $P$ and whose morphisms consist of the inequalities $x \leq y$, where $x,y\in P$.
An \emph{up-set} in a {preordered} set $(P,\leq)$ is a subset $U \subset P$ such that if $x \in U$ and $x \leq y$ then $y \in U$.
For $a \in P$ denote by $U_{a}\subset P$ the \emph{principal up-set} at ${a}$, i.e., $U_{a}:=\{x\in P\,|\, a\le x\}$.
A \emph{down-set} in a {preordered} set $(P,\leq)$ is a subset $D \subset P$ such that if $y \in D$ and $x \leq y$ then $x \in D$.
For $a \in P$ denote by $D_{a}\subset P$ the \emph{principal down-set} at ${a}$, i.e., $D_{a}:=\{x\in P\,|\, x\le a\}$.
Let $\mathbf{R}^n$ denote the category corresponding to the poset $(\mathbb{R}^n,\le)$, where $\leq$ denotes the product partial order. That is, $(x_1,\ldots,x_n) \leq (y_1,\ldots,y_n)$ if and only if $x_i \leq y_i$ for all $i$.

Let $(P,\leq)$ be a {preordered} set and let $\cat{P}$ be the corresponding category. 
Let $\cat{A}$ be a Grothendieck category -- an Abelian category with additional useful properties (see Appendix~\ref{sec:category}).

\begin{definition}
  A \emph{persistence module} is a functor
  $M: \cat{P} \to \cat{A}$.
  The category of persistence modules is the functor category
  $\cat{A}^{\cat{P}}$, 
  where the objects are persistence modules and morphisms are natural transformations. Of greatest interest to us is a special case of this, when $\mathbf{P}=\mathbf{R}^n$.
\end{definition}

The assumption that $\cat{A}$ is a Grothendieck category contains most examples of interest and ensures that the category of persistence modules has a number of useful properties {(Proposition~\ref{prop:GeneratorsCogenerators})}.
For example, the category $\cat{A}$ may be the category $\ModR$ of right $R$-modules over a unital ring $R$ and $R$-module homomorphisms.
$R$ will always denote a unital ring in what follows and we will always assume that our rings are unital.
We could also consider $\RMod$ the category of left $R$-modules over a unital ring $R$ and $R$-module homomorphisms.
  Of greatest interest to us is a special case of this, the category $\Vectk$, of $\mathbf{k}$-vector spaces for some field $\mathbf{k}$ and $\mathbf{k}$-linear maps.
  
\begin{definition}
\label{def:convect_and_connected}
Let $(P,\le)$ be a {preordered} set. Say $U\subset P$ is \emph{convex} if $a\le c\le b$ with $a,b\in U$ implies that $c\in U$. Say $U \subset P$ is \emph{connected} if for any two $a,b\in U$ there exists a sequence $a=p_0\le q_1\ge p_1\le q_2\ge \cdots \ge p_n \le q_n=b$ for some $n\in \mathbb{N}$ such that all $p_i,q_i\in U$ for $0\le i\le n$.
A connected convex subset of a {preordered} set is called an \emph{interval}. 
\end{definition}

Let $A\subset P$ be a convex subset.
The \emph{indicator persistence module} on $A$ is the persistence module 
$R[A]: \mathbf{P} \to \ModR$ given by
$R[A]_a$ equals $R$ if $a\in A$ and is $0$ otherwise and all the maps $R[A]_{a\le b}$, where $a,b \in A$, are identity maps.
If $A$ is an interval, then $R[A]$ is called {an} \emph{interval persistence module}.
If $A$ is an interval on the real line, say $A=[a,b)$, and $R=\mathbf{k}$ is a field, we will write $\mathbf{k}[a,b)$ instead of $\mathbf{k}[[a,b)]$ for brevity.

\subsection{Persistence modules as sheaves and cosheaves}
\label{sec:sheaves-cosheaves}

For more details see~\cite{MR3259939,Curry:2019}.

\begin{definition} \label{def:alexandrov}
  Let $(P,\le)$ be a {preordered} set. Define the \emph{Alexandrov topology} on $P$ to be the topology whose open sets are the up-sets in $P$.
  Let $\Open(P)$ denote the category whose objects are the open sets in $P$ and whose morphisms are given by inclusions.
\end{definition}
 
\begin{lemma}
Let $(P,\leq)$ and $(Q,\leq)$ be {preordered} sets and consider $P$ and $Q$ together with their corresponding Alexandrov topologies. Let $f: P \to Q$ be a map of sets. Then $f$ is order-preserving  if and only if $f$ is continuous.
\end{lemma}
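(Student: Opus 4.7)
The plan is to unwind the definition of the Alexandrov topology on each side of the biconditional and verify both implications directly, with each implication being a short argument using principal up-sets.

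For the forward direction, I would assume $f$ is order-preserving and let $U \subseteq Q$ be open, i.e., an up-set. To show $f^{-1}(U)$ is open in $P$, I take $x \in f^{-1}(U)$ and any $y \in P$ with $x \leq y$; then $f(x) \leq f(y)$ by order-preservation, and since $f(x) \in U$ and $U$ is an up-set, $f(y) \in U$, so $y \in f^{-1}(U)$. Hence $f^{-1}(U)$ is an up-set, i.e., open. Therefore $f$ is continuous.

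For the reverse direction, I would assume $f$ is continuous and show it preserves order. Given $x \leq y$ in $P$, consider the principal up-set $U_{f(x)} = \{z \in Q \mid f(x) \leq z\}$, which is open in the Alexandrov topology on $Q$. By continuity, $f^{-1}(U_{f(x)})$ is open in $P$, hence an up-set. Since $f(x) \leq f(x)$, we have $x \in f^{-1}(U_{f(x)})$, and since $x \leq y$, the up-set property gives $y \in f^{-1}(U_{f(x)})$, i.e., $f(x) \leq f(y)$.

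There is no real obstacle here; the lemma is a straightforward unwinding of definitions, and the only mild subtlety is to note that the argument only uses reflexivity and transitivity of the preorder (no antisymmetry needed), so it applies to preordered sets and not just posets. The key structural ingredient is the observation that principal up-sets $U_a$ are themselves open in the Alexandrov topology, which is what lets a continuity hypothesis about arbitrary open sets be leveraged into a statement about individual comparisons $x \leq y$.
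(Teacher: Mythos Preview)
Your proof is correct; the paper states this lemma without proof, so there is no argument to compare against, but your direct verification via up-sets and principal up-sets is the standard one and works as written.
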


%


\begin{example}
Consider $\mathbf{R}$ with the Alexandrov topology. Then the open sets are $\emptyset$, $\R$, and the intervals $(a,\infty)$ and $[a,\infty)$, where $a \in \R$.
\end{example}

Let $(P,\leq)$ be a {preordered} set, and let $U$ be an up-set in $P$.
Then the {preorder} on $P$ restricts to a {preorder} on $U$, and $\cat{U}$ is a full subcategory of $\cat{P}$.
Furthermore any functor $F:\cat{P} \to \cat{C}$ restricts to a functor $F|_U: \cat{U} \to \cat{C}$.

\begin{lemma}\cite[Remark 4.2.7]{MR3259939} \label{lem:sheaf}
  Let $(P,\leq)$ be a {preordered} set with the Alexandrov topology and let $\cat{P}$ be the corresponding category. Let $\cat{C}$ be a complete category. Then any functor $F:\cat{P} \to \cat{C}$ has a canonical extension $\hat{F}: \Open(P)^{\op} \to \cat{C}$ given by
\begin{equation*}
 \hat{F}(U) = \lim F|_U = \lim_{p \in U} F(p),
\end{equation*}
and $\hat{F}(U \supset V)$ is given by a canonical map.
\end{lemma}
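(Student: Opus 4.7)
The plan is to define $\hat{F}$ on objects by the stated limit formula, use the universal property of that limit to define $\hat{F}$ on morphisms, and then verify functoriality by appealing once more to universality. First, for each open set $U \subset P$, i.e.\ each up-set equipped with the induced preorder, the restriction $F|_U : \cat{U} \to \cat{C}$ is a small diagram in $\cat{C}$, so completeness of $\cat{C}$ gives an object $\hat{F}(U) \defeq \lim F|_U$ together with a universal cone $\pi^U_p : \hat{F}(U) \to F(p)$ for $p \in U$. The empty open set poses no issue: the limit of the empty diagram is a terminal object of $\cat{C}$, which exists since $\cat{C}$ is complete.

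Next, for an inclusion of up-sets $V \subset U$ (that is, a morphism $U \supset V$ in $\Open(P)^{\op}$), the family $(\pi^U_p)_{p \in V}$ is automatically a cone over $F|_V$: since $\cat{V}$ is a full subcategory of $\cat{U}$, every required commutation $F(p \le q) \circ \pi^U_p = \pi^U_q$ for $p,q \in V$ already holds inside the universal cone for $\hat{F}(U)$. By the universal property of $\hat{F}(V) = \lim F|_V$, there is then a unique morphism
\[
\hat{F}(U \supset V) : \hat{F}(U) \longrightarrow \hat{F}(V)
\]
satisfying $\pi^V_p \circ \hat{F}(U \supset V) = \pi^U_p$ for every $p \in V$. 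This uniquely determined arrow is the ``canonical map'' referred to in the statement.

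Finally, functoriality is essentially automatic. The identity $\mathrm{id}_{\hat{F}(U)}$ satisfies the defining equations for $\hat{F}(U \supset U)$, so uniqueness yields $\hat{F}(U \supset U) = \mathrm{id}_{\hat{F}(U)}$. For a chain $W \subset V \subset U$ of up-sets, both $\hat{F}(U \supset W)$ and $\hat{F}(V \supset W) \circ \hat{F}(U \supset V)$ are arrows $\hat{F}(U) \to \hat{F}(W)$ whose composition with each $\pi^W_p$ recovers $\pi^U_p$, so the universal property of $\hat{F}(W)$ forces them to agree. I do not anticipate any serious obstacle; the only subtleties worth flagging are the empty-set edge case (handled above) and the observation that, in a genuine preorder where $p \le q \le p$ may hold without $p = q$, the limit construction still makes unambiguous sense because the arrows $F(p \le q)$ and $F(q \le p)$ are simply part of the diagram being limited. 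The proof amounts to packaging these observations, with no computation required beyond invoking the universal property three times.
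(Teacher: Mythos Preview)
Your proof is correct but takes a more elementary route than the paper. The paper introduces the functor $\iota:\cat{P}\to\Open(P)^{\op}$ sending $p$ to its principal up-set $U_p$, identifies the comma category $U\downarrow\iota$ with $\cat{U}$, and then \emph{defines} $\hat{F}$ as the right Kan extension $\Ran_{\iota}F$; the limit formula then drops out of the pointwise Kan-extension formula, and the paper finishes by checking $\Ran_{\iota}F\circ\iota\cong F$ so that the word ``extension'' is justified. You instead posit the limit formula directly and verify functoriality by three appeals to the universal property. Your argument is shorter and requires no Kan-extension machinery; what the paper's approach buys is an explanation of why $\hat{F}$ is \emph{canonical} (it is the universal such extension) and a piece of notation, $\Ran_{\iota}F$, that is reused verbatim in the proof that $\hat{F}$ is a sheaf and in the equivalence $\cat{C}^{\cat{P}}\cong\Shv(P;\cat{C})$. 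One minor addition would strengthen your write-up: since the lemma calls $\hat{F}$ an \emph{extension}, you might note explicitly that $\hat{F}(U_p)=\lim F|_{U_p}\cong F(p)$, which holds because $p$ is initial in $U_p$ (in a preorder, the elements equivalent to $p$ form an initial clique, and the limit over that clique is $F(p)$).
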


\begin{proof}
  First we define $\iota: \cat{P} \to \Open(P)^{\op}$ given by $\iota(p) = U_p$ (the principal up-set at $p$) for $p \in P$, and $\iota(p \leq q): U_p \supset U_q$.

  Next for an up-set $U$ in $P$ we have the comma category $U \downarrow \iota$, whose objects are elements $p \in P$ such that $U \supset U_p$, that is, $p \in U$, and whose morphisms are given by $p \leq q \in U$.
  Notice that this category is isomorphic to the category $\cat{U}$.
  Consider the projection $\pi: U \downarrow \iota \to \cat{P}$. Then $\pi$ is just the inclusion of $\cat{U}$ in $\cat{P}$ and $F \circ \pi = F|_U$.

  Now let $\hat{F}:\Open(P)^{\op} \to \cat{C}$ be the right Kan extension, $\Ran_{\iota}F$. By definition, $\hat{F}(U) = \Ran_{\iota}F(U) = \lim (U \downarrow \iota \xto{F\pi} C) = \lim F|_U = \lim_{p \in U} F(p)$. That is, $\hat{F}(U)$ is the universal (i.e. terminal) cone over the diagram $F|_U:\cat{U} \to \cat{C}$.
  For $U \supset V \in \Open(P)^{\op}$, $\hat{F}(U)$ is a cone over $F|_V$.
  By the universal property of $\hat{F}(V)$, there is a canonical map $\res_{V,U}:\hat{F}(U) \to \hat{F}(V)$. Let $\hat{F}(U \supset V) = \res_{V,U}$.
  The universal property of the limit shows that this defines a functor.

  Finally for $p \in P$, $\Ran_{\iota}F \iota(p) = \Ran_{\iota}F(U_p) = \lim_{q \in U_p} F(q) = \lim_{p \leq q} F(q) = F(p)$. So this Kan extension is actually an extension.
\end{proof}

\begin{proposition} \label{prop:sheaf}
  The functor $\hat{F} = \Ran_{\iota}F: \Open(P)^{\op} \to \cat{C}$ is a sheaf.
  That is, for any open cover $\{U_i\}$ of an open set $U$ in $P$,
  \begin{equation} \label{eq:equalizer}
    \begin{tikzcd}[column sep = 7em]
      \hat{F}(U)
\ar[r,"\prod_i \hat{F}(U \supset U_i)"]
&
\displaystyle\prod_i \hat{F}(U_i)
      \ar[r,shift left=.75ex,"\prod_{i,j} \hat{F}(U_i \supset U_i \cap U_j)"]
      \ar[r,shift right=.75ex,swap,"\prod_{i,j} \hat{F}(U_j \supset U_i \cap U_j)"]
&
\displaystyle\prod_{i,j} \hat{F}(U_i \cap U_j)
\end{tikzcd}
\end{equation}
is an equalizer.
\end{proposition}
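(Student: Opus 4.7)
The plan is to verify the equalizer condition directly by exploiting the description $\hat{F}(U) = \lim_{p \in U} F(p)$ together with the defining property of the Alexandrov topology: every open set is an up-set. The strategy is to extract from any compatible family for the parallel pair a cone over $F|_U$, and then invoke the universal property of $\hat{F}(U)$ as a limit.

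Concretely, suppose I am given an object $c \in \cat{C}$ together with morphisms $s_i \colon c \to \hat{F}(U_i)$ whose images in $\prod_{i,j} \hat{F}(U_i \cap U_j)$ under the two parallel arrows in~\eqref{eq:equalizer} agree. For each $p \in U$, pick some $i$ with $p \in U_i$, and let $\pi_p^{U_i} \colon \hat{F}(U_i) \to F(p)$ denote the canonical projection of $\lim_{q \in U_i} F(q)$ onto its $p$-th factor. Define $t_p \colon c \to F(p)$ by $t_p = \pi_p^{U_i} \circ s_i$. Well-definedness (independence of the choice of $i$) follows from the equalizer hypothesis: if $p \in U_j$ as well, then $p \in U_i \cap U_j$, and the agreement of the two parallel composites at the $p$-th factor of $\hat{F}(U_i \cap U_j) = \lim_{q \in U_i \cap U_j} F(q)$ forces $\pi_p^{U_i} \circ s_i = \pi_p^{U_j} \circ s_j$.

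To verify that $(t_p)_{p \in U}$ is a cone over $F|_U$, the essential point is the up-set property: if $p \leq q$ with $p, q \in U$, then any $i$ with $p \in U_i$ automatically satisfies $q \in U_i$ as well. Both $t_p$ and $t_q$ then arise from the same $s_i$ via projections of the cone $\hat{F}(U_i) \to F|_{U_i}$, so $F(p \leq q) \circ t_p = t_q$ follows from the cone property of $\hat{F}(U_i)$. By the universal property of $\hat{F}(U) = \lim F|_U$, this cone factors uniquely through a morphism $t \colon c \to \hat{F}(U)$. The identity $\hat{F}(U \supset U_i) \circ t = s_i$ and the uniqueness of $t$ are then immediate from the same universal property, since $\hat{F}(U \supset U_i)$ is by construction the comparison map between the two limits.

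The only obstacle is the bookkeeping around the choices of indices when $p$ lies in several $U_i$, but the Alexandrov structure dissolves this difficulty: intersections of up-sets are again up-sets (hence open), so the overlaps $U_i \cap U_j$ land back inside $\Open(P)$ and the equalizer hypothesis has exactly the content needed to pin down the projections $\pi_p$ uniquely.
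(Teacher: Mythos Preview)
Your proof is correct and follows essentially the same approach as the paper's. Both arguments hinge on the same two observations: the up-set property ensures that $p \leq q$ and $p \in U_i$ force $q \in U_i$, which makes the maps $c \to F(p)$ assemble into a cone over $F|_U$; and the overlap condition on $U_i \cap U_j$ gives well-definedness of those maps. The only cosmetic difference is that the paper first names the equalizer $c$ and builds mutually inverse maps between $c$ and $\lim F|_U$, whereas you verify the universal property of the equalizer for $\hat F(U)$ directly---but the content is the same.
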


\begin{proof}
  Let $c$ be the limit of the diagram $\prod_i \lim F|_{U_i} \rightrightarrows \prod_{i,j} \lim F|_{U_i \cap U_j}$ where the arrows are those in \eqref{eq:equalizer}.
  By Lemma~\ref{lem:sheaf}, we want to show that $\lim F|_U \isom c$.

  By the universal property of the limit, for all $i,j$ we have the following commutative diagram of canonical 
  maps.
  \begin{equation*}
    \begin{tikzcd}[row sep = tiny]
      & \lim F|_{U_i} \ar[dr,"\hat{F}(U_i \supset U_i\cap U_j)"] & \\
      \lim F|_U \ar[ur,"\hat{F}(U \supset U_i)"] \ar[dr,"\hat{F}(U \supset U_j)"'] \ar[rr,"\hat{F}(U \supset U_i\cap U_j)"] & & \lim F|_{U_i \cap U_j}\\
      & \lim F|_{U_j} \ar[ur,"\hat{F}(U_j \supset U_i \cap U_j)"']
    \end{tikzcd}
  \end{equation*}
  Therefore there is a canonical map $\lim F|_U \to c$.

  For all $p \in U$, $p \in U_i$ for some $i$. So $U_i \supset U_p$ and hence we have a canonical map $c \to \lim F|_{U_i} \to \lim F|_{U_p} = F(U_p) = F(p)$.
  By the definition of $c$, this map does not depend on the choice of $i$.

  For $p \leq q$, if $p \in U_i$ then $q \in U_i$. So we have the following commutative diagram.
  \begin{equation*}
    \begin{tikzcd}[row sep=tiny]
      & & F(p) \ar[dd]\\
      c \ar[r] & \lim F|_{U_i} \ar[ur] \ar[dr] & \\
      & & F(q)
    \end{tikzcd}
  \end{equation*}
  Thus for all $p,q \in U$ with $p \leq q$, we have canonical maps $c \to F(p)$ and $c \to F(q)$ which commute with $F(p \leq q): F(p) \to F(q)$.
  Therefore there is a canonical map $c \to \lim F|_U$.

  By the universal property of the limit, both composites are the identity map.
\end{proof}

\begin{theorem}\cite[Theorem 4.2.10]{MR3259939} \label{thm:sheaf}
  Let $(P,\leq)$ be a {preordered} set and let $\cat{C}$ be a complete category.
  Then there is an isomorphism of categories between the functor category $\cat{C}^{\cat{P}}$ and the category $\Shv(P;\cat{C})$ of sheaves on $P$ with the Alexandrov topology.
\end{theorem}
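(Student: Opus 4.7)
The plan is to exhibit explicit mutually inverse functors $\Phi: \cat{C}^{\cat{P}} \to \Shv(P;\cat{C})$ and $\Psi: \Shv(P;\cat{C}) \to \cat{C}^{\cat{P}}$, rather than merely establishing an equivalence. In one direction, set $\Phi(F) = \hat{F} = \Ran_\iota F$, which is a functor to $\cat{C}$ on open sets by Lemma~\ref{lem:sheaf} and a sheaf by Proposition~\ref{prop:sheaf}. On a natural transformation $\eta: F \to G$, define $\Phi(\eta)_U: \hat{F}(U) \to \hat{G}(U)$ via the universal property of $\hat{G}(U) = \lim_{p \in U} G(p)$: the maps $\eta_p \circ (\text{leg of }\hat{F}(U) \text{ at }p): \hat{F}(U) \to G(p)$ form a cone. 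Functoriality and naturality in $U$ follow from uniqueness of factorizations. In the other direction, set $\Psi(S) = S \circ \iota$, so $\Psi(S)(p) = S(U_p)$ and $\Psi(S)(p \leq q) = S(U_p \supset U_q)$, and restrict on morphisms.

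The composite $\Psi \circ \Phi$ is the identity: by the last paragraph of the proof of Lemma~\ref{lem:sheaf}, $\hat{F}(U_p) = \lim_{q \in U_p} F(q) = F(p)$ since $p$ is initial in $\cat{U_p}$, and the structure maps $\hat{F}(U_p \supset U_q)$ coincide with $F(p \leq q)$ by construction. Assuming a canonical choice of limits (under which the limit over a diagram with an initial object is literally the value there), this is a strict equality.

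The main step is showing $\Phi \circ \Psi$ is the identity, which amounts to proving that for every sheaf $S$ and every open set $U$,
\begin{equation*}
S(U) \isom \lim_{p \in U} S(U_p),
\end{equation*}
naturally in $S$. The cover $\{U_p\}_{p \in U}$ of $U$ gives, via the sheaf axiom, an equalizer description of $S(U)$ as the subobject of $\prod_{p \in U} S(U_p)$ cut out by agreement on pairwise intersections $S(U_p \cap U_q)$. For comparable $p \leq q$ we have $U_p \cap U_q = U_q$, so the equalizer condition reduces exactly to the cone condition $S(U_p \supset U_q)(x_p) = x_q$ defining the limit over $\cat{U}$. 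Conversely, given a cone $(x_p)_{p \in U}$, one must check compatibility on $U_p \cap U_q$ also for incomparable $p, q$; here one applies the sheaf axiom a second time to the cover $\{U_r : r \geq p, r \geq q\}$ of the up-set $U_p \cap U_q$, and observes that both $S(U_p \supset U_p \cap U_q)(x_p)$ and $S(U_q \supset U_p \cap U_q)(x_q)$ are determined by the common further restrictions $x_r$ for $r$ above both $p$ and $q$, which match by the cone condition.

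The hard part is precisely this final identification: conceptually it says that a sheaf on the Alexandrov topology is determined by its restriction to the basis of principal up-sets, together with the fact that $\Ran_\iota$ is the correct way to extend back. Naturality of this identification in $S$, and hence functoriality of $\Phi \circ \Psi = \text{id}$, is automatic from the universal property of limits.
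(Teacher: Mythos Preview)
Your proof is correct and follows essentially the same approach as the paper: both define the inverse functors via right Kan extension and restriction along $\iota$ (the paper calls the latter $\stalk$), and verify they are mutually inverse. Your treatment of $\Phi \circ \Psi = \mathrm{id}$ is more detailed than the paper's---in particular your handling of the equalizer condition for incomparable $p,q$ via a second application of separatedness on the cover $\{U_r : r \geq p,\ r \geq q\}$ of $U_p \cap U_q$ spells out what the paper compresses into the single clause ``since $U = \bigcup_{p \in U} U_p$ and $F$ is a sheaf.''
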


\begin{proof}
  Right Kan extension gives a functor $\Ran_{\iota}: \cat{C}^{\cat{P}} \to \cat{C}^{\Open(P)^{\op}}$ (see~\cite[Prop. 6.1.5]{riehl2017category} for example).
  By Proposition~\ref{prop:sheaf}, $\Ran_{\iota}: \cat{C}^{\cat{P}} \to \Shv(P;\cat{C})$.

  Define a functor $\stalk(F): \Shv(P;\cat{C}) \to \cat{C}^{\cat{P}}$ as follows.
  For $p \in P$, let $\stalk(F)(p) = F(U_p)$, and $\stalk(F)(p\leq q) = F(U_p \supset U_q)$.

  We claim these functors are mutually inverse.
  Let $p \in P$ and $F \in \cat{C}^{\cat{P}}$. Then $(\stalk \Ran_{\iota} F)(p) = \Ran_{\iota}F(U_p) = F(p)$.
  Let $U$ be an up-set of $P$ and $F \in \Shv(P;\cat{C})$.
  Then $(\Ran_{\iota} \stalk F)(U) = \lim_{p \in U}(\stalk F)(p) = \lim_{p \in U}F(U_p)$. Since $U = \cup_{p \in U}U_p$ and $F$ is a sheaf, this equals $F(U)$.
\end{proof}

We can dualize the above construction. For a {preordered} set $(P,\leq)$, let $P^{\op}$ denote the {preordered} set with the opposite order. The Alexandrov topology on $P^{\op}$ has as open sets the down-sets $D$ of $P$.
Instead of right Kan extensions and limits, we use left Kan extensions and colimits.

\begin{lemma}\cite[Example 4.5]{Curry:2019} \label{lem:cosheaf}
  Let $(P,\leq)$ be a {preordered} set and let $\cat{P}$ be the corresponding category. Let $\cat{C}$ be a cocomplete category. Then any functor $F:\cat{P} \to \cat{C}$ has a canonical extension $\hat{F}: \Open(P^{\op}) \to \cat{C}$ given by
\begin{equation*}
  \hat{F}(D) = \colim F|_D = \colim_{p \in D} F(p),
\end{equation*}
and $\hat{F}(D \subset E)$ is given by a canonical map.
\end{lemma}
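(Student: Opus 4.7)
The plan is to dualize the argument for Lemma~\ref{lem:sheaf}, swapping principal up-sets, right Kan extensions, and limits for principal down-sets, left Kan extensions, and colimits. First I would define a covariant functor $\iota \colon \cat{P} \to \Open(P^{\op})$ by $\iota(p) = D_p$, with $\iota(p \leq q)$ the inclusion $D_p \subset D_q$ regarded as a morphism in $\Open(P^{\op})$ (this is well-defined since the open sets of $P^{\op}$ in the Alexandrov topology are exactly the down-sets of $P$). For a down-set $D \subset P$, the comma category $\iota \downarrow D$ has objects the pairs $(p, D_p \subset D)$, which amount to the data of an element $p \in D$, and morphisms $p \leq q$ in $D$. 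So $\iota \downarrow D$ is canonically isomorphic to the full subcategory $\cat{D} \subset \cat{P}$, with the projection to $\cat{P}$ being the inclusion; post-composition with $F$ yields $F|_D$.

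Next I would set $\hat{F} \defeq \Lan_{\iota} F$, which exists because $\cat{C}$ is cocomplete and is computed pointwise by the usual formula:
\[
\hat{F}(D) = \colim\bigl(\iota \downarrow D \to \cat{P} \xto{F} \cat{C}\bigr) = \colim F|_D = \colim_{p \in D} F(p).
\]
For an inclusion $D \subset E$ in $\Open(P^{\op})$, the cocone over $\hat{F}(E)$ obtained by restricting along $F|_D$ produces, via the universal property of $\hat{F}(D)$, a canonical map $\hat{F}(D \subset E)\colon \hat{F}(D) \to \hat{F}(E)$, and the same universal property guarantees functoriality. To see that $\hat{F}$ really extends $F$ along $\iota$, observe that for each $p \in P$, $p$ is terminal in $\cat{D_p}$, so $\colim F|_{D_p} = F(p)$ and hence $\hat{F}(\iota(p)) = F(p)$.

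The proof is thus formally dual to that of Lemma~\ref{lem:sheaf}, so the only real task is bookkeeping the variances. The subtle point is that since the Alexandrov topology is placed on $P^{\op}$ rather than on $P$, the embedding $\iota$ lands covariantly in $\Open(P^{\op})$ (not contravariantly in $\Open(P)^{\op}$), and the appropriate universal extension is the left Kan extension producing colimits, rather than the right Kan extension producing limits. Once those dualities are in place, every step mirrors the proof of Lemma~\ref{lem:sheaf}, and I would expect the full write-up to be essentially a translation of that proof.
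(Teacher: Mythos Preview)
Your proposal is correct and follows exactly the approach the paper indicates: the paper does not write out a separate proof of this lemma, but simply remarks that one dualizes the proof of Lemma~\ref{lem:sheaf} by replacing principal up-sets, right Kan extensions, and limits with principal down-sets, left Kan extensions, and colimits. Your write-up carries out precisely this dualization, including the correct identification of the comma category $\iota \downarrow D$ with $\cat{D}$ and the use of terminality of $p$ in $\cat{D_p}$ to verify that $\hat{F}$ genuinely extends $F$.
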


\begin{proposition}\cite[Theorem 4.8]{Curry:2019} \label{prop:cosheaf}
  The functor above $\hat{F}: \Open(P^{\op}) \to \cat{C}$ is a cosheaf.
\end{proposition}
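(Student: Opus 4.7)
The plan is to dualize the proof of Proposition~\ref{prop:sheaf} throughout: replace limits with colimits, products with coproducts, equalizers with coequalizers, and reverse every arrow. Explicitly, given an open cover $\{D_i\}$ of an open set $D$ of $P^{\op}$ (that is, a down-set $D$ of $P$ with $D = \bigcup_i D_i$, where each $D_i$ is a down-set), I want to show that
\begin{equation*}
\coprod_{i,j} \hat{F}(D_i \cap D_j) \rightrightarrows \coprod_i \hat{F}(D_i) \to \hat{F}(D)
\end{equation*}
is a coequalizer in $\cat{C}$, where the parallel maps are induced by the inclusions $D_i \cap D_j \subset D_i$ and $D_i \cap D_j \subset D_j$, and the third map is induced by the inclusions $D_i \subset D$.

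First I would let $c$ denote the coequalizer in $\cat{C}$ of the two parallel arrows $\coprod_{i,j} \colim F|_{D_i \cap D_j} \rightrightarrows \coprod_i \colim F|_{D_i}$. By Lemma~\ref{lem:cosheaf} it then suffices to produce a canonical isomorphism $c \isom \colim F|_D$, and I would do this by constructing mutually inverse canonical maps. In one direction, the inclusions $D_i \subset D$ and $D_i \cap D_j \subset D$ induce canonical maps of colimits into $\colim F|_D$ which coequalize the two parallel arrows, so the universal property of $c$ yields a canonical map $c \to \colim F|_D$.

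For the reverse map $\colim F|_D \to c$, I would build a cocone on $F|_D$ with apex $c$. For each $p \in D$, choose some $i$ with $p \in D_i$ and define $F(p) \to c$ as the composite $F(p) \to \colim F|_{D_i} \to \coprod_i \colim F|_{D_i} \to c$; the coequalizer relation applied to $D_i \cap D_j$ shows this map is independent of the chosen $i$. For a relation $p \leq q$ with both in $D$, pick any $j$ with $q \in D_j$; then since $D_j$ is a down-set and $p \leq q$, we also have $p \in D_j$, so both $F(p) \to c$ and $F(q) \to c$ factor through $\colim F|_{D_j}$ and are therefore compatible with $F(p \leq q)$. This yields a cocone on $F|_D$ and hence a canonical map $\colim F|_D \to c$. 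The universal properties of the colimit and the coequalizer force the two composites to be identities.

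There is essentially no genuine obstacle here beyond bookkeeping: the proof is a clean dualization of the sheaf argument. The one point that has to be flagged and exploited—the exact counterpart of the step in Proposition~\ref{prop:sheaf} where one uses that $U_i$ is an up-set—is that the $D_i$ are \emph{down-sets}, so that a single $D_j$ containing $q$ automatically contains every $p \leq q$. This is precisely what allows comparisons along morphisms $p \leq q$ to be carried out inside a single open set of the cover, which is what makes the gluing work in the cosheaf direction.
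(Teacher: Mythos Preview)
Your proposal is correct and is precisely the approach the paper intends: the paper does not write out a separate proof for this proposition but simply remarks that one dualizes the construction and proof of Proposition~\ref{prop:sheaf}, which is exactly what you have carried out. Your identification of the crucial dualized step---that the $D_j$ are down-sets, so $q\in D_j$ and $p\le q$ force $p\in D_j$---matches the corresponding up-set step in the sheaf proof.
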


\begin{theorem}\cite[Theorem 4.2.10]{MR3259939} \label{thm:cosheaf}
  Let $(P,\leq)$ be a {preordered} set and let $\cat{C}$ be a cocomplete category.
  Then there is an isomorphism of categories between the functor category $\cat{C}^{\cat{P}}$ and the category $\Coshv(P^{\op};\cat{C})$ of cosheaves on $P^{\op}$ with the Alexandrov topology.
\end{theorem}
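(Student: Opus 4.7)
The plan is to mirror the proof of Theorem~\ref{thm:sheaf} under the order-reversing duality $P \mapsto P^{\op}$, replacing principal up-sets by principal down-sets, limits by colimits, and right Kan extensions by left Kan extensions. Concretely, I would first introduce the functor $\iota': \cat{P} \to \Open(P^{\op})$ defined by $\iota'(p) = D_p$ and $\iota'(p \leq q) = (D_p \subset D_q)$, which is a covariant functor since down-sets grow monotonically in $p$. The canonical extension of Lemma~\ref{lem:cosheaf} is then the left Kan extension $\Lan_{\iota'}F$, and by the comma-category formula together with the observation that $\iota' \downarrow D$ is isomorphic to $\cat{D}$, one recovers $\Lan_{\iota'}F(D) = \colim_{p \in D} F(p)$. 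By Proposition~\ref{prop:cosheaf} this left Kan extension is a cosheaf, so we obtain a functor $\Lan_{\iota'}: \cat{C}^{\cat{P}} \to \Coshv(P^{\op};\cat{C})$.

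Next I would construct the candidate inverse $\stalk: \Coshv(P^{\op};\cat{C}) \to \cat{C}^{\cat{P}}$ by $\stalk(F)(p) = F(D_p)$ and $\stalk(F)(p \leq q) = F(D_p \subset D_q)$. Functoriality is automatic from the functoriality of $F$ on $\Open(P^{\op})$.

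To show $\stalk \circ \Lan_{\iota'} \isom \mathrm{id}$, I would compute $(\stalk \, \Lan_{\iota'}F)(p) = \colim_{q \in D_p}F(q) = \colim_{q \leq p}F(q) = F(p)$, where the last equality holds because $p$ is a terminal object of $\cat{D}_p$. For $\Lan_{\iota'} \circ \stalk \isom \mathrm{id}$, I would evaluate $(\Lan_{\iota'}\stalk F)(D) = \colim_{p \in D}F(D_p)$, then use the fact that $\{D_p\}_{p \in D}$ is an open cover of $D$ in $\Open(P^{\op})$ together with the cosheaf condition on $F$ to conclude this colimit equals $F(D)$. One must check that the intersections $D_p \cap D_q$ themselves decompose appropriately (they equal $D_p$ or $D_q$ whenever $p \leq q$ or $q \leq p$, and more generally are unions of principal down-sets), but this reduces to the same kind of cofinality argument already implicit in Proposition~\ref{prop:cosheaf}.

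The only real obstacle is bookkeeping around the direction of arrows: unlike the sheaf case, where $\iota$ lands in $\Open(P)^{\op}$ and the maps $U_p \supset U_q$ reverse the order, here $\iota'$ lands directly in $\Open(P^{\op})$ and $D_p \subset D_q$ preserves order, so one must verify that the left Kan extension formula produces a covariant cosheaf rather than its opposite. Beyond this verification, the argument is formally dual to that of Theorem~\ref{thm:sheaf} and requires no additional ideas.
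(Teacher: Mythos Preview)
Your proposal is correct and is exactly the approach the paper takes: the paper does not write out a separate proof for Theorem~\ref{thm:cosheaf} but instead remarks just before it that one dualizes the sheaf argument by passing to $P^{\op}$, replacing right Kan extensions and limits by left Kan extensions and colimits. Your detailed unpacking of this dualization---the covariant $\iota'$ into $\Open(P^{\op})$, the costalk inverse $F \mapsto (p \mapsto F(D_p))$, and the two identity checks using terminality of $p$ in $\cat{D}_p$ and the cosheaf gluing over $D = \bigcup_{p\in D} D_p$---matches the intended argument precisely.
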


\begin{corollary} \label{cor:sheaf-cosheaf}
    Let $(P,\leq)$ be a {preordered} set and let $\cat{A}$ be a Grothendieck category. Then $\cat{A}^{\cat{P}} \isom \Shv(P;\cat{A}) \isom \Coshv(P^{\op};\cat{A})$, where $P$ and $P^{\op}$ have the Alexandrov topology.
\end{corollary}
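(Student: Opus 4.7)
The plan is to deduce this corollary directly from the two preceding theorems, the only real point to verify being that a Grothendieck category is simultaneously complete and cocomplete so that both theorems apply with $\cat{C} = \cat{A}$.

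First I would recall that a Grothendieck category, by definition (see Appendix~\ref{sec:category}), is a cocomplete abelian category with a generator in which filtered colimits are exact (the AB5 condition). Cocompleteness is built into the definition, so Theorem~\ref{thm:cosheaf} applies immediately: the left Kan extension along $\iota: \cat{P} \to \Open(P^{\op})$ induces an isomorphism $\cat{A}^{\cat{P}} \isom \Coshv(P^{\op};\cat{A})$.

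For the other isomorphism I need completeness. The standard argument is that any Grothendieck category has enough injectives and, more to the point, has all small limits: products exist because a Grothendieck category is well-powered with a generator, and then equalizers together with products give all limits (one can also invoke Freyd's special adjoint functor theorem, since a Grothendieck category is locally presentable). Granted completeness, Theorem~\ref{thm:sheaf} applied to $\cat{C} = \cat{A}$ yields the isomorphism $\cat{A}^{\cat{P}} \isom \Shv(P;\cat{A})$ via the right Kan extension $\Ran_{\iota}$.

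Combining the two isomorphisms gives the chain $\Shv(P;\cat{A}) \isom \cat{A}^{\cat{P}} \isom \Coshv(P^{\op};\cat{A})$. The only conceptual obstacle, and it is a mild one, is pinning down completeness of $\cat{A}$; everything else is a formal invocation of the preceding theorems, with no new computation needed.
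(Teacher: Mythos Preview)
Your proposal is correct and matches the paper's approach: the paper states this as a corollary with no proof, relying on the reader to see that it follows immediately from Theorems~\ref{thm:sheaf} and~\ref{thm:cosheaf} once one knows that a Grothendieck category is both complete and cocomplete. Your observation that completeness of Grothendieck categories requires a short argument (well-poweredness plus a generator, or local presentability) is the only nontrivial point, and you handle it appropriately.
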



\begin{example}
  We may consider the persistence module $\mathbf{k}[a,b)$ as a sheaf. For an up-set $U\subseteq \R$,
  $\mathbf{k}[a,b)(U) = \varprojlim_{x \in U} [a,b)_x =
  \begin{cases}
    \mathbf{k} & \text{if } \inf U \in [a,b)\\
    0 & \text{otherwise.}
  \end{cases}
$
\end{example}

Let $X$ be a topological space. If $\mathcal{R}$ is a sheaf of rings on $X$, we can define left (or right) $\mathcal{R}$-modules (which themselves are sheaves of abelian groups). These form a category $\shRMod{\mathcal{R}}$ (or $\shModR{\mathcal{R}}$). If $M$ and $N$ are two such $\mathcal{R}$-modules we denote their set of morphisms by $\text{Hom}_{\mathcal{R}}(M,N)$. If $R$ is a ring, define $R_X$ to be the sheaf associated to the constant presheaf $U\mapsto R$ for every open $U\subset X$. If $R=\mathbf{k}$ is a field and $X=\mathbb{R}^n$ we have the constant sheaf $\mathbf{k}_{\R^n}$ (where $X=\mathbb{R}^n$ has for example the Alexandrov topology). See Appendix~\ref{sec:sheaves} for more details.

\begin{example}
\label{example:persistence_module_sheaf_ring}
We may consider the persistence module $R[P]$ as the constant sheaf of rings $R_P$ on $P$ with the Alexandrov topology (see Appendix~\ref{sec:sheaves}).
By Corollary~\ref{cor:sheaf-cosheaf}, we can view persistence modules $M \in \ModR^{\mathbf{P}}$, or $M\in \RMod^{\mathbf{P}}$ as sheaves on $P$ valued in $\ModR$ or $\RMod$ respectively, where $P$ is given the Alexandrov topology obtained from $(P,\le)$.
Furthermore, we have isomorphisms of categories:
\[\ModR^{\mathbf{P}}\cong \shModR{R_P} \quad \text{and} \quad \RMod^{\mathbf{P}}\cong \shRMod{R_P}\]  (see Appendix~ \ref{sec:sheaves}).
\end{example}

Using the sheaf viewpoint, we have the six Grothendieck operations which we can apply to persistence modules (see \cite[Chapters 2 and 3]{MR1074006}). {In particular we have a tensor product of sheaves $M\otimes_{R_P}N$ and an internal hom of sheaves $\scHom_{R_P}(M,N)$. These six Grothendieck operations are usually only left or right exact functors and in order to preserve cohomological information we need the derived perspective (Appendix~\ref{sec:homological-algebra}). Thus, it is crucial to be able to construct injective and projective resolutions of complexes of sheaves. Proposition~\ref{prop:ks} gives us a way of determining if a given sheaf is injective or not, by checking a smaller class of diagrams rather than the one usually given in the definition of an injective object.

\begin{proposition}{\cite[Exercise 2.10]{MR1074006}} \label{prop:ks}
Let $\mathcal{R}$ be a sheaf of rings on a topological space $X$ and let $M\in \text{Ob}(\shModR{\mathcal{R}})$. Then:
\begin{itemize}
\item[1)] $M$ is injective if and only if for any sub-$\mathcal{R}$-module $\mathcal{S}$ of $\mathcal{R}$ (also called an ideal of $\mathcal{R}$), the natural homomorphism:
\[ \emph{Hom}_{\mathcal{R}}(\mathcal{R},M)\to \emph{Hom}_{\mathcal{R}}(\mathcal{S},M)\]
is surjective.
\item[2)] Let $\mathbf{k}$ be a field. Then any ideal of $\mathbf{k}_{X}$ is isomorphic to a sheaf $\mathbf{k}_U$, where $U$ is open in $X$.
\item[3)] From 1) and 2) it follows that a $\mathbf{k}_{X}$-module $M$ is injective if and only if the sheaf $M$ is flabby (Appendix~\ref{sec:sheaves}). 
\end{itemize}
\label{proposition:BaerCriterionSheaves}
\end{proposition}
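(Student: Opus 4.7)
My plan is to handle the three parts in order, using parts 1) and 2) to obtain 3) as a corollary.

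For part 1), the forward implication is immediate since any ideal $\mathcal{S}$ is a sub-$\mathcal{R}$-module of $\mathcal{R}$, so injectivity of $M$ forces every $\mathcal{S} \to M$ to extend to $\mathcal{R}$. For the converse I would adapt Baer's classical argument to the sheaf setting. Given a monomorphism of $\mathcal{R}$-modules $\mathcal{N}' \hookrightarrow \mathcal{N}$ and a map $\varphi: \mathcal{N}' \to M$, consider the poset of partial extensions: pairs $(\mathcal{N}_0, \varphi_0)$ with $\mathcal{N}' \subseteq \mathcal{N}_0 \subseteq \mathcal{N}$ and $\varphi_0|_{\mathcal{N}'} = \varphi$, ordered by extension. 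Chains have upper bounds (take colimits of sub-$\mathcal{R}$-modules inside the Grothendieck category $\shModR{\mathcal{R}}$), so Zorn's lemma yields a maximal element $(\mathcal{N}_0, \varphi_0)$. If $\mathcal{N}_0 \neq \mathcal{N}$, pick an open $U$ and a section $s \in \mathcal{N}(U) \setminus \mathcal{N}_0(U)$. The map $r \mapsto rs$ defines $\mathcal{R}|_U \to \mathcal{N}|_U$, whose composition with the quotient $\mathcal{N}|_U \to \mathcal{N}|_U/\mathcal{N}_0|_U$ has kernel a local ideal $\mathcal{S}_0 \subseteq \mathcal{R}|_U$. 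The composite $\mathcal{S}_0 \to \mathcal{N}_0|_U \xrightarrow{\varphi_0} M|_U$ extends to $\mathcal{R}|_U \to M|_U$ by the hypothesis (applied to the extension-by-zero ideal on $X$, then restricted), and gluing with $\varphi_0$ produces a strictly larger extension on $\mathcal{N}_0 + \mathcal{R}|_U\cdot s$, contradicting maximality. Hence $\mathcal{N}_0 = \mathcal{N}$.

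For part 2), I would argue stalkwise. Let $\mathcal{S}$ be an ideal of $\mathbf{k}_X$. Since each stalk $\mathcal{S}_x$ is a $\mathbf{k}$-linear subspace of $\mathbf{k}$, we have $\mathcal{S}_x \in \{0, \mathbf{k}\}$. Set $U = \{x \in X : \mathcal{S}_x = \mathbf{k}\}$. The set $U$ is open: if $\mathcal{S}_x = \mathbf{k}$, choose a section $s$ of $\mathcal{S}$ near $x$ with $s_x = 1$; since sections of the constant sheaf are locally constant, $s$ equals $1$ on a whole neighborhood $V$ of $x$, so $\mathcal{S}_y = \mathbf{k}$ for all $y \in V$. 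Then $\mathcal{S}|_U = \mathbf{k}_X|_U$, because the inclusion is a stalkwise isomorphism, and the natural map $j_!(\mathcal{S}|_U) \to \mathcal{S}$ coming from the adjunction is also a stalkwise isomorphism (on $U$ both stalks are $\mathbf{k}$; off $U$ both are $0$). Thus $\mathcal{S} \cong j_!\mathbf{k} = \mathbf{k}_U$.

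Part 3) is then a direct combination of 1) and 2), using the adjunction $\Hom_{\mathbf{k}_X}(\mathbf{k}_U, M) \cong M(U)$: under this identification, the map in the criterion of 1) becomes the restriction $M(X) \to M(U)$, so $M$ is injective iff $M(X) \to M(U)$ is surjective for every open $U$, which is exactly flabbiness. The main obstacle is the extension step in 1), where one must bridge the gap between the local ideal $\mathcal{S}_0 \subseteq \mathcal{R}|_U$ produced by Zorn and the global hypothesis stated for ideals of $\mathcal{R}$ on $X$; the correct handling requires careful use of extension by zero and the $(j_!, j^{-1})$-adjunction.
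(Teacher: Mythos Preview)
The paper does not supply its own proof of this proposition; it is stated with a citation to \cite[Exercise 2.10]{MR1074006} and used as a black box thereafter. Your argument is the standard one and is essentially correct: the Zorn--Baer strategy for part~1) is exactly what is needed, and you correctly isolate the one nontrivial step (passing from the local ideal $\mathcal{S}_0 \subseteq \mathcal{R}|_U$ to a global ideal of $\mathcal{R}$ via $j_!$ and the $(j_!,j^{-1})$-adjunction); the stalkwise argument in part~2) and the identification $\Hom_{\mathbf{k}_X}(\mathbf{k}_U,M)\cong M(U)$ in part~3) are clean and turn the Baer criterion precisely into flabbiness.
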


Part 1) of Proposition~\ref{prop:ks} is analogous to Theorem~\ref{theorem:BaerCriterionGradedModules}, the Baer criterion for graded modules. It can be used to identify injective persistence modules by looking at a smaller class of diagrams. Part 3) tells us that a vector-space-valued persistence module is injective if and only if it is flabby as a sheaf. In other words, we only need to check if the restriction morphism $M(P)={\lim_{x\in P}}M_x\to M(U)={\lim_{x\in U}}M_x$ is surjective, for all up-sets $U$ in $P$.

\begin{example} \label{ex:ezra}
{Let $a,b\in \mathbb{R}^2$  be incomparable with respect to $\le$ and let $U=U_a\cup U_b$ and $D=D_a\cup D_b$ (Figure~\ref{fig:3})}. Consider the 
interval persistence module on $D$, $\mathbf{k}[D]$.
Observe that $\mathbf{k}[D](\mathbb{R}^2)={\lim_{x\in \mathbb{R}^2}}\mathbf{k}[D]_x=\mathbf{k}$. On the other hand we have that $\mathbf{k}[D](U)={\lim_{x\in U}}\mathbf{k}[D]_x=\mathbf{k}^2$. Hence the restriction morphism induced by the inclusion $U\subset \mathbb{R}^2$ cannot be surjective. 
Thus $\mathbf{k}[D]$ is not flabby as a sheaf, and therefore it is not injective, by Proposition~\ref{proposition:BaerCriterionSheaves}.
\end{example}

\begin{figure}[ht]
\centering
\begin{tikzpicture}[line cap=round,line join=round,x=1.0cm,y=1.0cm,scale=0.25]
\fill[fill=red!25] (5,0) rectangle (2,-5);
\fill[fill=red!25]  (7,-2) rectangle (5,-5);
\fill[fill=green!25] (5,0) rectangle (7,3);
\fill[fill=green!25] (7,3) rectangle (10,-2);
\draw[color=red,->,line width=0.4mm] (5,0)--(2,0);
\draw[color=red,line width=0.4mm] (5,0)--(5,-2);
\draw[color=red,line width=0.4mm] (5,-2)--(7,-2);
\draw[color=red,->,line width=0.4mm] (7,-2)--(7,-5);
\draw[color=green,->,line width=0.4mm] (5,0)--(5,3);
\draw[color=green,line width=0.4mm] (5,0)--(7,0);
\draw[color=green,line width=0.4mm] (7,0)--(7,-2);
\draw[color=green,->,line width=0.4mm] (7,-2)--(10,-2);
\draw[color=black]  (8.5,1.5) node {$U$};
\draw[color=black]  (3.5,-3.5)  node {$D$};
\fill[color=blue]  (5,0)  circle (1.5mm);
\fill[color=blue]  (7,-2)    circle (1.5mm);
\end{tikzpicture}
\caption {\rmfamily An up-set $U$ and a down-set $D$. {The interval module $\mathbf{k}[D]$ is not injective.} {See Example~\ref{ex:ezra}.}}
\label{fig:3}
\end{figure}

\subsection{Persistence modules as graded modules}
\label{sec:graded-modules}

Throughout this section we assume $R$ is a unital ring 
and $(P,\leq,0,+)$ {is} a {preordered} set together with an abelian group structure.
We assume that the addition operation in the abelian group structure is \emph{compatible}, meaning that  for $a, b, c \in P$, $a \leq b$ implies that $a+c \leq b+c$.
As an example consider $(\R^i \times \Q^j \times \Z^{\ell},\leq,0,+)$, with $i,j,\ell \geq 0$ and $n := i+j+\ell \geq 1$, and where the right hand side has the product partial order. Recall that $U_0$ is the principal up-set at $0 \in P$. For example, if $(P,\leq) = (\R^n,\leq)$, then $U_{0}\subset \mathbb{R}^n$ is the non-negative orthant of $\mathbb{R}^n$. 

\begin{example}
\label{example:GradedRing}
Let $(P,\le)=(\mathbb{R}^n,\le)$. Consider the monoid with addition, $(U_{{0}},+,{0})$, which we will also denote by $U_0$. Let $R$ be a unital ring. Let $R[U_0]$ be the monoid ring, whose definition is analogous to that of a group ring $R[G]$ for a ring $R$ and a group $G$. For example, elements of $R[U_0]$ can be $x_1^{\pi}$, $1+r_1x_1^{e}+r_2x_3^{5}$ , for $r_1,r_2\in R$, etc. This ring, $R[U_0]$, is an $\mathbb{R}^n$-graded ring and is commutative whenever $R$ is. Indeed, we can give it a grading in the following way: $R[U_0]={\bigoplus_{a\in P}}R[U_0]_{a}$, where $R[U_0]_{a}$ is the set of  homogeneous elements in $R[U_0]$ of degree $a$ if $a\ge 0$, and is $0$ otherwise. Observe that $R[U_0]_{a}\cong R$, for all $a\ge 0$.
\end{example}


For a {preordered} set $P$ with a compatible abelian structure, let $\cat{P}$ be the corresponding category.
Let $\mathbf{A}$ be the category of left $R$ modules, $\RMod$.  
Consider a persistence module $M: \cat{P} \to \cat{A}$. Then $M$ can be viewed as an $P$-graded left $R[U_0]$-module and vice versa. Indeed, 
we can write $M={\bigoplus_{a\in P}}M_{a}$ with left $R[U_0]$-action given by $x^s\cdot m:=M_{a\le a+s}(m)$ and extending linearly for a given $m\in M_a$ and $s\in U_0$ and $x^s$ the generator of $R[U_0]_s$. In the other direction, given a left action of $R[U_0]$ we can construct an $R$-module homomorphisms $M_{a}\to M_{a+s}$ by defining them to be given by the left action by the generator $x^s$ of $R[U_0]_s$. Furthermore, every natural transformation corresponds to a graded module homomorphism; see Figure~\ref{fig:2}. This is an isomorphism of categories. This has been observed by different authors, in \cite{lesnick2015interactive} in the $P=\mathbb{R}^n$-graded case, in \cite{MR2121296} in the $P=\mathbb{Z}^n$-graded case and in \cite[Lemma 3.4]{miller2017data} and \cite{miller2019modules} where $P$ is a partially ordered abelian group. What is new in this paper is the generalization to {preordered} sets.
The corresponding statements also hold for functors $M:\mathbf{P}\to \ModR$ and $P$-graded right $R[U_0]$-modules.
 
\begin{figure}[ht]
\centering
\begin{tikzcd}[row sep = scriptsize]
M_a\arrow[r, "M_{a\le b}", "x^{b-a}" below]\arrow[d,"\alpha_a"'] & M_b\arrow{d}{\alpha_b}\\
N_a\arrow[r, "N_{a\le b}", "x^{b-a}" below] & N_b
\end{tikzcd}
\caption{\rmfamily 
Consider maps $\alpha_a :M_a \to N_a$ for $a \in P$.
Viewing $M_{a\le b}$ and $N_{a\le b}$ as actions by $x^{b-a}$, the equality
$\alpha_{b}(M_{a\le b}(m))=N_{a\le b}(\alpha_{a}(m))$
corresponds to the equality $\alpha(x^{b-a}\cdot m)=x^{b-a}\cdot \alpha(m)$.
The first equality is the condition for $\alpha$ to be natural transformation.
The second equality is the condition for $\alpha$ to be a graded module homomorphism.}
\label{fig:2}
\end{figure}

{In Section~\ref{section:kunneth} we will state K\"unneth Theorems for persistence modules. The splitting of the short exact sequences in those theorems will depend on the properties of the graded ring $R[U_0]$.
}
Observe that when the ring $R$ is commutative, the ring $R[U_0]$ is an associative $R$-algebra. Furthermore, the ring $R[U_0]$ is commutative, thus it is a commutative $R$-algebra.
%
Now suppose $R=\mathbf{k}$ is a field and $(P,\le)=(\mathbb{R}^n,\le)$. 
We make the following observations on the ring $\mathbf{k}[U_0]$ and its ideals.
\begin{itemize}
\item[i)] $\mathbf{k}[U_0]$ is not a principal ideal domain. In particular, the ideal $\mathbf{k}[U_0\setminus \{0\}]$ is not generated by a single element. 
\item[ii)] $\mathbf{k}[U_0]$ is not even a unique factorization domain. Otherwise, it would satisfy the ascending chain condition for principal ideals (see \cite[Section 0.2]{MR1322960}). However, for $m = 1,2,3,\ldots$, the increasing sequence of principal graded ideals $\mathbf{k}[U_{(\frac{1}{m},\dots, \frac{1}{m})}]$ does not stabilize. 
\item[iii)] The only graded (homogeneous) ideals are the 
interval
persistence modules of up-sets that are contained in the first orthant, namely $\mathbf{k}[U]$ for up-sets $U\subset U_0$. See also \cite[Remark 8.12]{miller2017data} and \cite{ingebretson2013decompositions}.
\item[iv)] We have that $\mathbf{k}[U_0\setminus \{0\}]$ is the unique nonzero graded maximal ideal of $\mathbf{k}[U_{0}]$, consisting of homogeneous non-invertible elements of $\mathbf{k}[U_0]$. Hence $\mathbf{k}[U_0]$ is a graded-local ring. Note that $\mathbf{k}[U_0]$ is not a local ring. Indeed, if $\mathbf{k}[U_0]$ were local then $x_1$ or $1-x_1$ would be a unit. This is not the case, as these elements are not invertible.
\end{itemize}

Recall that we have assumed that
$P$ be a {preordered} set together with an abelian group structure. Let $M,N:\mathbf{P}\to \mathbf{A}$ be persistence modules, where $\mathbf{A}$ is either $\ModR$ or $\RMod$. Let $\text{Hom}_{R[U_0]}(M,N)$ denote the set of module homomorphisms from a persistence module $M$ to $N$, forgetting the grading. For a module $M$, let $M(s)$ be the translation of $M$ by $s$, i.e., $M(s)_{a}:=M_{s+a}$. 
Recall that a graded module is finitely generated if it is finitely generated as a module (Appendix~\ref{Graded Module Theory}). {The following proposition
  suggests how to construct sets of morphisms between persistence modules that are themselves persistence modules. This will eventually allow us to consider a chain complex of persistence modules with coefficients in another persistence module
  (Section~\ref{section:kunneth}).}

\begin{proposition}{\cite[Theorem 1.2.6]{hazrat2016graded}}
\label{prop:UnderlineHom}
Suppose $M$ is a finitely generated persistence module. Then the abelian group of module homomorphisms from $M$ to $N$, $\emph{Hom}_{R[U_0]}(M,N)$, has a direct sum decomposition $\emph{Hom}_{R[U_0]}(M,N)\cong \bigoplus\limits_{s\in P}\emph{Hom}(M,N(s))$, where $\emph{Hom}(M,N(s))$ is the set of natural transformations (graded module homomorphisms) from $M$ to $N(s)$.
\end{proposition}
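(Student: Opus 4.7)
The plan is to construct mutually inverse maps between $\Hom_{R[U_0]}(M,N)$ and $\bigoplus_{s \in P} \Hom(M, N(s))$ by decomposing an ungraded homomorphism into its homogeneous components. Recall that $N = \bigoplus_{b \in P} N_b$, so every element $n \in N$ has a unique finite decomposition $n = \sum_b (n)_b$ with $(n)_b \in N_b$. In the direction $\bigoplus_s \Hom(M, N(s)) \to \Hom_{R[U_0]}(M,N)$ I would define $\Phi((\phi_s)_s)(m) := \sum_s \phi_s(m)$; this sum is finite because $(\phi_s)_s$ has finite support in the direct sum, and the resulting map inherits $R[U_0]$-linearity from the individual $\phi_s$.

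For the opposite direction, given $\phi \in \Hom_{R[U_0]}(M,N)$, I would define $\phi_s \colon M \to N(s)$ on homogeneous $m \in M_a$ by $\phi_s(m) := (\phi(m))_{a+s}$ and extend linearly. To see that $\phi_s$ is $R[U_0]$-linear (and hence a natural transformation $M \to N(s)$), take homogeneous $r \in R[U_0]_t$ and $m \in M_a$; then
\[
\phi_s(rm) = (\phi(rm))_{a+t+s} = (r\phi(m))_{a+t+s} = r (\phi(m))_{a+s} = r \phi_s(m),
\]
where the third equality uses that left multiplication by a homogeneous element of degree $t$ shifts grading by $t$.

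The crux of the argument, and the place where finite generation is used, is verifying that $(\phi_s)_s$ has finite support in $P$ so that it actually lies in the direct sum. Let $m_1,\dots,m_k$ be homogeneous generators of $M$ with $\deg(m_i) = a_i$. Each $\phi(m_i)$ has only finitely many nonzero homogeneous components, so there is a finite set $T_i \subset P$ with $(\phi(m_i))_b = 0$ for $b \notin T_i$. Hence $\phi_s(m_i) = 0$ whenever $s \notin T_i - a_i$, and outside the finite set $\bigcup_i (T_i - a_i)$ the map $\phi_s$ vanishes on all generators and thus on all of $M$. The verifications $\Phi \circ \Psi = \mathrm{id}$ and $\Psi \circ \Phi = \mathrm{id}$ then reduce to $\sum_s (\phi(m))_{a+s} = \phi(m)$ for $m \in M_a$, and to the fact that $\phi_s(m) \in N_{a+s}$ contributes only to the degree-$(a+s)$ component, respectively. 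The main obstacle is precisely this finite-support step: without finite generation one only obtains an isomorphism with the direct product $\prod_{s \in P} \Hom(M, N(s))$ rather than the direct sum.
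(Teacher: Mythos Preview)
Your proof is correct. The paper does not give its own proof of this proposition; it simply cites \cite[Theorem~1.2.6]{hazrat2016graded}, and your argument is precisely the standard one found there: decompose an ungraded $R[U_0]$-homomorphism into its homogeneous components and use finite generation of $M$ to guarantee that only finitely many of these components are nonzero. Your verification of $R[U_0]$-linearity of each $\phi_s$ and the mutual-inverse checks are all in order, and your closing remark correctly identifies finite generation as the essential hypothesis (without it the decomposition only lands in the direct product).
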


Hence sets of (ungraded) module homomorphisms of persistence modules have the structure of a graded abelian group when the domain module M is a finitely generated module. 

 The following is a graded version of Nakayama's Lemma in homological algebra.

\begin{proposition}{\cite[Theorem 4.6]{li2012monoid}}
\label{prop:finitely_generated_projective_graded_module}
Let $\Gamma$ be a monoid. Let $\mathcal{S}$ be a $\Gamma$-graded ring. Suppose $\mathcal{S}$ is a graded-local ring. Then if $P$ is a  finitely generated graded projective $\mathcal{S}$-module, $P$ is a graded free $\mathcal{S}$-module.
\end{proposition}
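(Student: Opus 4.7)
The plan is to mimic the classical argument that finitely generated projective modules over a local ring are free, carefully replacing ``local'' by ``graded-local'' and working throughout in the category of $\Gamma$-graded $\mathcal{S}$-modules with degree-preserving homomorphisms. Let $\mathfrak{m}$ denote the unique graded maximal ideal of $\mathcal{S}$; by the graded-local hypothesis, $\mathfrak{m}$ consists of all non-invertible homogeneous elements of $\mathcal{S}$, so the quotient $\mathcal{S}/\mathfrak{m}$ is a \emph{graded division ring} in which every nonzero homogeneous element is a unit. Over such a ring any graded module admits a homogeneous basis, so in particular $\overline{P} := P/\mathfrak{m}P$ does. Since $P$ is finitely generated, $\overline{P}$ is too, and I would choose a finite homogeneous basis $\bar x_1,\dots,\bar x_n$ and lift it to homogeneous elements $x_1,\dots,x_n \in P$.

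Next, I would form the graded free $\mathcal{S}$-module $F := \bigoplus_{i=1}^n \mathcal{S}(-\deg x_i)$ and the degree-preserving map $\phi: F \to P$ sending the $i$th canonical generator to $x_i$. Surjectivity of $\phi$ is a standard application of the graded Nakayama lemma: the cokernel $C := \operatorname{coker}\phi$ is a finitely generated graded $\mathcal{S}$-module that satisfies $\mathfrak{m}C = C$ by construction of the $x_i$, and hence $C = 0$.

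The hypothesis that $P$ is projective now splits the short exact sequence
\begin{equation*}
0 \to K \to F \xrightarrow{\phi} P \to 0
\end{equation*}
in the category of graded $\mathcal{S}$-modules, so $F \cong P \oplus K$ and $K$, being a direct summand of a finitely generated module, is itself finitely generated. Reducing mod $\mathfrak{m}$ gives a split short exact sequence $0 \to \overline{K} \to \overline{F} \to \overline{P} \to 0$ of graded $\mathcal{S}/\mathfrak{m}$-modules. By construction the induced map $\overline{\phi}$ sends a homogeneous basis of $\overline{F}$ to a homogeneous basis of $\overline{P}$, so $\overline{\phi}$ is an isomorphism; this forces $\overline{K} = 0$, i.e.\ $\mathfrak{m}K = K$. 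A final application of graded Nakayama to the finitely generated module $K$ yields $K = 0$, and hence $P \cong F$ is graded free.

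The main point to be careful about in a monoid-graded (as opposed to group-graded) setting is establishing the two inputs used above: (i) a graded module over the graded division ring $\mathcal{S}/\mathfrak{m}$ really does admit a homogeneous basis, and (ii) graded Nakayama is valid for finitely generated graded modules over a graded-local ring. Both are standard but rely on the correct graded notions of ``non-unit'' and of ``generation by homogeneous elements''; these are precisely the statements packaged in the cited reference~\cite{li2012monoid}, after which the proof is essentially the classical local-ring argument translated into graded language.
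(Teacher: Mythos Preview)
The paper does not actually give a proof of this proposition: it is quoted verbatim from \cite[Theorem~4.6]{li2012monoid} and then immediately applied to obtain Corollary~\ref{corollary:fin_gen_projectives_are_free}. So there is no ``paper's own proof'' to compare against.

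Your argument is the standard one and is essentially correct. One small point: when $\Gamma$ is only a monoid, writing $F := \bigoplus_i \mathcal{S}(-\deg x_i)$ is problematic because $-\deg x_i$ need not exist in $\Gamma$. What you really need is the graded free module with a homogeneous generator in each degree $\deg x_i$, which exists without any shift notation; just say ``let $F$ be the graded free module on homogeneous generators $e_1,\dots,e_n$ with $\deg e_i = \deg x_i$.'' You already flag the monoid-versus-group issue at the end, so this is more a matter of cleaning up the notation than a gap in the argument.
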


Since every group is a monoid, we can apply Proposition~\ref{prop:finitely_generated_projective_graded_module} to rings and modules graded over a group. Thus,
we have the following corollary,
where $\Vectk$ is defined in Section~\ref{sec:pm-as-functors}.

\begin{corollary}
\label{corollary:fin_gen_projectives_are_free}
A finitely generated persistence module $M:\mathbf{P}\to \Vectk$ is projective if and only if $M$ is graded free.
\end{corollary}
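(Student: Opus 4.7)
The plan is to translate the statement into the language of graded modules via the isomorphism of categories established in Section~\ref{sec:graded-modules}, and then invoke Proposition~\ref{prop:finitely_generated_projective_graded_module}. Under the equivalence $\Vectk^{\mathbf{P}} \cong \mathbf{k}[U_0]\text{-}\mathbf{grMod}$, a persistence module $M$ corresponds to a $P$-graded $\mathbf{k}[U_0]$-module, finite generation is preserved, and the notions of ``projective'' and ``graded free'' are invariant under the isomorphism (projectivity because it is purely categorical, and graded freeness because shifts $\mathbf{k}[U_0](s)$ correspond to interval modules on principal up-sets). So the corollary reduces to the corresponding statement for finitely generated graded $\mathbf{k}[U_0]$-modules.

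For the reverse implication, suppose $M$ is finitely generated and graded free, so that $M \cong \bigoplus_{i=1}^{n}\mathbf{k}[U_0](s_i)$ for some $s_1,\ldots,s_n \in P$. Under the equivalence, each shift $\mathbf{k}[U_0](s_i)$ corresponds to $\mathbf{k}[U_{-s_i}]$, which is the functor $b \mapsto \mathbf{k}^{\Hom_{\mathbf{P}}(-s_i,\, b)}$, i.e.\ the free $\mathbf{k}$-linearization of a representable. By the Yoneda lemma, $\Hom_{\Vectk^{\mathbf{P}}}(\mathbf{k}[U_{-s_i}], -)$ is naturally isomorphic to the evaluation functor at $-s_i$, which is exact since limits and colimits in $\Vectk^{\mathbf{P}}$ are computed pointwise. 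Hence each $\mathbf{k}[U_{-s_i}]$ is projective, and a finite direct sum of projectives is projective, giving the implication.

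For the forward implication, suppose $M$ is finitely generated and projective in $\Vectk^{\mathbf{P}}$. Via the equivalence, $M$ corresponds to a finitely generated graded projective $\mathbf{k}[U_0]$-module. Since $P$, being an abelian group, is in particular a monoid, and since $\mathbf{k}[U_0]$ is graded-local (observation (iv) preceding Proposition~\ref{prop:UnderlineHom}, the unique graded maximal ideal being $\mathbf{k}[U_0 \setminus \{0\}]$), Proposition~\ref{prop:finitely_generated_projective_graded_module} applies directly and concludes that $M$ is graded free.

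The only subtle step is the bridge between ``projective in the functor category $\Vectk^{\mathbf{P}}$'' and ``graded projective $\mathbf{k}[U_0]$-module,'' which is immediate from the fact that Section~\ref{sec:graded-modules} provides an isomorphism (not merely equivalence) of categories; every other ingredient is a direct quotation of cited results, so I expect no further obstacles.
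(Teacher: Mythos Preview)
Your proposal is correct and follows essentially the same route as the paper: both translate to graded modules via the isomorphism of Section~\ref{sec:graded-modules} and invoke Proposition~\ref{prop:finitely_generated_projective_graded_module} (graded Nakayama) for the nontrivial direction, using that $\mathbf{k}[U_0]$ is graded-local. Your write-up is more detailed---spelling out the easy direction via Yoneda and making the category isomorphism explicit---but the substance is identical; one small caveat is that observation (iv) you cite is stated in the paper only for $(P,\le)=(\mathbb{R}^n,\le)$, so for general $\mathbf{P}$ you should note (as is routine) that the homogeneous non-units of $\mathbf{k}[U_0]$ still form the unique graded maximal ideal.
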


Let us now summarize some of the results from this section and the previous two sections.

\begin{theorem} \label{thm:isomorphism}
Let $(P,\leq,+,0)$ be a {preordered} set with a compatible abelian group structure. Let $R$ be a unital ring. Then we have the  
  isomorphisms of categories
\begin{equation*} 
\RMod^{\mathbf{P}}\cong \shRMod{R_P}\cong \text{Gr}^P\!\text{-}_{R[U_0]}\mathbf{Mod}
\quad \text{and} \quad
\ModR^{\mathbf{P}}\cong \shModR{R_P}\cong \text{Gr}^P\!\text{-}\mathbf{Mod}_{R[U_0]}.
\end{equation*}
where $Gr^P\text{-}_{R[U_0]}\mathbf{Mod}$ and $Gr^P\text{-}\mathbf{Mod}_{R[U_0]}$ are the categories of $P$-graded left and right $R[U_0]$-modules, respectively.
In particular, for each $a \in P$, $M_a \isom M(U_a)$. Also, $M(U) = \lim_{a \in U} M(U_a)$, and the graded module structure is given by $M \isom \bigoplus_{a \in P} M_a$.
\end{theorem}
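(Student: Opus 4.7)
The plan is to assemble the two isomorphisms from the preceding material in this section and then read off the three ``in particular'' statements. The first isomorphism $\RMod^{\mathbf{P}} \cong \shRMod{R_P}$ (and its right-module analogue) is obtained by applying Corollary~\ref{cor:sheaf-cosheaf} with $\cat{A} = \RMod$, which yields $\RMod^{\mathbf{P}} \cong \Shv(P;\RMod)$, and then invoking Example~\ref{example:persistence_module_sheaf_ring} to identify sheaves on $P$ valued in $\RMod$ with sheaves of left modules over the constant sheaf of rings $R_P$ on $P$ equipped with the Alexandrov topology.

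For the second isomorphism I would formalize the construction already sketched in Section~\ref{sec:graded-modules}. Define a functor $\Phi: \RMod^{\mathbf{P}} \to \text{Gr}^P\text{-}_{R[U_0]}\mathbf{Mod}$ sending $M : \mathbf{P} \to \RMod$ to $\Phi(M) := \bigoplus_{a \in P} M_a$, graded by $P$ in the obvious way, with $R[U_0]$-action determined on homogeneous generators by $x^s \cdot m := M_{a \leq a+s}(m)$ for $m \in M_a$ and $s \in U_0$; here compatibility of the abelian group structure with $\leq$ is what makes $a \leq a+s$ hold, so the formula makes sense. Functoriality of $M$ gives the module axiom $x^s \cdot (x^t \cdot m) = x^{s+t} \cdot m$, and on morphisms $\Phi$ sends a natural transformation $\alpha : M \to N$ to $\bigoplus_a \alpha_a$. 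The inverse functor $\Psi$ sends a $P$-graded left $R[U_0]$-module $N = \bigoplus_{a \in P} N_a$ to the functor with $\Psi(N)(a) = N_a$ and $\Psi(N)(a \leq b)(n) := x^{b-a}\cdot n$. One checks $\Phi\Psi = \mathrm{id}$ and $\Psi\Phi = \mathrm{id}$ on the nose by unwinding the formulas. At the morphism level, Figure~\ref{fig:2} is exactly the verification that $\alpha$ is a natural transformation of $\mathbf{P}$-diagrams if and only if $\bigoplus_a \alpha_a$ is a homomorphism of graded $R[U_0]$-modules. The right-module version is obtained by reading all actions on the opposite side.

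The three ``in particular'' statements then follow directly. The identity $M_a \cong M(U_a)$ comes from Lemma~\ref{lem:sheaf}, because $a$ is the minimum of the principal up-set $U_a$, so $\hat{M}(U_a) = \lim_{p \in U_a} M(p) = M(a)$. The formula $M(U) = \lim_{a \in U} M(U_a)$ then follows either by reading off the definition $\hat{M}(U) = \lim_{a \in U} M(a)$ from Lemma~\ref{lem:sheaf} or, equivalently, by applying the sheaf condition of Proposition~\ref{prop:sheaf} to the open cover $\{U_a\}_{a \in U}$ of $U$. Finally, the graded module identification $M \cong \bigoplus_{a \in P} M_a$ is simply the definition of $\Phi(M)$.

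The main point requiring attention is not any single deep step but the bookkeeping that ties the three descriptions together consistently: one must make sure that the compatibility hypothesis on $(P,\leq,+,0)$ is actually used (it appears exactly where one needs $a+s$ to lie in $U_a$ for $s\in U_0$, so that the action $x^s$ is defined), and that the identifications $M_a \cong M(U_a)$ arising from the sheaf picture agree with the graded decomposition coming from the module picture. Both routes give the same stalks, and since the intermediate category in each case is $\RMod^{\mathbf{P}}$, the triangle of isomorphisms is automatically compatible.
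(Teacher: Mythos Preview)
Your proposal is correct and matches the paper's approach exactly. The paper presents this theorem explicitly as a summary (``Let us now summarize some of the results from this section and the previous two sections'') and gives no separate proof; your write-up correctly assembles the sheaf isomorphism from Corollary~\ref{cor:sheaf-cosheaf} and Example~\ref{example:persistence_module_sheaf_ring}, the graded-module isomorphism from the discussion around Figure~\ref{fig:2} in Section~\ref{sec:graded-modules}, and the ``in particular'' statements from Lemma~\ref{lem:sheaf}.
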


\begin{definition}
\label{def:left_and_right_persistence_modules}
We say $M$ is a \emph{left persistence module} if $M:\mathbf{P}\to \RMod$. We say $M$ is a \emph{right persistence module} if $M:\mathbf{P}\to \ModR$. Due to the above isomorphisms, we will also use these terms when $M$ is a left $R_P$-module or right $R_P$-module, respectively, and when $P$ is a preordered set with a compatible abelian group operation, when $M$ is a $P$-graded left $R[U_0]$-module or a $P$-graded right $R[U_0]$-module, respectively.
\end{definition}

\subsection{A Grothendieck category of persistence modules}
\label{sec:grothendieck}
 
In this section we observe that the category of persistence modules is a Grothendieck category and remark that the Gabriel-Popescu Theorem can be applied. {This allows us to potentially consider persistence modules as modules over a new (non-graded) ring.}
Let $(P,\leq)$ be a preordered set.
Recall that for $a\in P$, {$U_a = \{b \in P\ | \ a \leq b \}$.}
 Let $\mathbf{A}$ be a Grothendieck category. {Recall that a \emph{family of generators in a category} is a collection of objects $\{U\}_{i}$ such that for every two distinct morphism $f,g:X\to Y$ in the category, there exists an $i$ and $h:U_i\to X$ such that $fh\neq gh$ (Appendix~\ref{sec:category}). If the family is a singleton, we simply say \emph{generator}.}

\begin{proposition}
\label{prop:GeneratorsCogenerators}
The category $\mathbf{A}^{\mathbf{P}}$ is a Grothendieck category with a generator. In particular, the category has enough projectives and injectives. 
\end{proposition}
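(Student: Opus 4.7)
The plan is to verify the three defining properties of a Grothendieck category for $\mathbf{A}^{\mathbf{P}}$ in order: abelian structure, cocompleteness with exact filtered colimits, and existence of a generator. Once these are established, enough injectives is automatic from Grothendieck's classical theorem, and enough projectives follows from a direct construction using left adjoints to evaluation.

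The first three axioms all reduce to the standard fact that limits and colimits in the functor category $\mathbf{A}^{\mathbf{P}}$ are computed pointwise, which holds because $\mathbf{A}$ is bicomplete. This gives the abelian structure (kernels and cokernels are pointwise), cocompleteness, and exactness of filtered colimits, since exactness in $\mathbf{A}^{\mathbf{P}}$ is detected pointwise and filtered colimits are exact in $\mathbf{A}$ by hypothesis.

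For the generator, for each $a \in P$ the evaluation functor $\mathrm{ev}_a : \mathbf{A}^{\mathbf{P}} \to \mathbf{A}$, $M \mapsto M_a$, admits a left adjoint $F_a$ obtained as the left Kan extension along the inclusion $\{a\} \hookrightarrow \mathbf{P}$; concretely, $F_a(X)_b = X$ when $a \leq b$ and $0$ otherwise, with structure maps that are identities or zero. If $G$ is a generator of $\mathbf{A}$, I would take
\begin{equation*}
U \defeq \bigoplus_{a \in P} F_a(G)
\end{equation*}
and verify the generator property: any nonzero morphism $f : M \to N$ has $f_a \neq 0$ for some $a \in P$, and since $G$ generates $\mathbf{A}$ there is a morphism $g : G \to M_a$ with $f_a \circ g \neq 0$; transporting $g$ through the adjunction $F_a \dashv \mathrm{ev}_a$ and composing with the coproduct inclusion produces a morphism $U \to M$ not killed by $f$.

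Having $\mathbf{A}^{\mathbf{P}}$ as a Grothendieck category, enough injectives is the general theorem. For enough projectives, I would note that $\mathrm{ev}_a$ is exact (it has both adjoints, coming from Kan extensions into the bicomplete $\mathbf{A}$), so the left adjoint $F_a$ preserves projectives; given $M \in \mathbf{A}^{\mathbf{P}}$, choosing epimorphisms $P_a \twoheadrightarrow M_a$ with $P_a$ projective in $\mathbf{A}$, the morphisms adjoint to these assemble into an epimorphism $\bigoplus_a F_a(P_a) \twoheadrightarrow M$ from a projective. The main subtle point is that not every Grothendieck category has enough projectives (for instance, sheaves on a topological space), so this last step implicitly requires $\mathbf{A}$ itself to have enough projectives, which holds in the main target categories of interest, namely $\ModR$ and $\Vectk$.
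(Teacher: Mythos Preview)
Your proof is correct and follows essentially the same approach as the paper: your $F_a(G)$ is exactly the paper's $G[U_a]$, and your adjunction-based verification that $\{F_a(G)\}_{a\in P}$ is a generating family is a repackaging of the paper's direct check (the paper simply defines the lift $h:G[U_a]\to M$ by hand via $h_b = M_{a\le b}h_a$, which is precisely the adjoint transpose you invoke). The paper outsources the Grothendieck axioms to a cited result of Grothendieck rather than sketching the pointwise argument, but the content is the same.

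Your caution about enough projectives is well placed and in fact sharper than the paper's treatment. The paper asserts that $U=\bigoplus_a G[U_a]$ is ``free and hence projective'' and then invokes the epimorphism $U^{(J)}\twoheadrightarrow M$; but ``free'' only makes literal sense once one is in a module category, and projectivity of $G[U_a]$ ultimately rests on $G$ (or some object of $\mathbf{A}$) being projective. Your explicit hypothesis that $\mathbf{A}$ have enough projectives, satisfied in the target cases $\ModR$ and $\Vectk$, is the honest assumption here.
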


\begin{proof}
Since $\mathbf{A}$ is a Grothendieck category, so is the functor category $\mathbf{A}^{\mathbf{P}}$, by  Proposition~\ref{proposition:grothendieck_functor_categories}. Let $G$ be a generator of $\mathbf{A}$. For $a\in P$, define $G[U_a]$ to be the persistence module given by $G[U_a]_b=G$ if $b\in U_a$ and $0$ otherwise and let $G[U_a]_{b\le c}=\mathbf{1}_G$ if $b,c \in U_a$ and $0$ otherwise. The collection $\{G[U_a]\}_{a\in P}$ is a family of generators. Indeed, suppose $f,g:M\to N$ are natural transformations between persistence modules $M$ and $N$ such that $f\neq g$. Then by definition, there exists an $a\in P$ such that $f_{a}\neq g_{a}$. In particular, as $G$ is a generator of $\mathbf{A}$, there exists an $h_a:G_a\to M_a$ such that $f_ah_a\neq g_ah_a$. Define $h:G[U_a] \to M$ by setting $h_b=0$ for $b\not\in U_a$  and setting $h_b=M_{a\le b}h_a$ for $b\in U_a$. Since all of the maps in $G[U_{a}]$ are the identity or are zero, the collection of maps $\{h_b\}_{b\in P}$ are the components of a natural transformation $h$. Then, by construction it is clear that $fh\neq gh$, hence $\{G[U_{a}]\}_{a\in P}$ is a family of generators. 
By Proposition~\ref{prop:Generator} we have that  $U:={\bigoplus_{a\in P}}G[U_a]$ is a generator (which is also free and hence projective). 
By Theorem~\ref{theorem:injectives} and Proposition~\ref{prop:Generator}, the category has enough injectives and projectives.
\end{proof}

We will show later in Section~\ref{subsection:classification_of_interval_modules} that the 
interval
modules $\mathbf{k}[D_{a}]$ are injective and that the 
interval
modules $\mathbf{k}[U_{a}]$ are projective, when $\mathbf{k}$ is a field.

\begin{theorem}{\cite[Theorem 14.2, Chapter 4]{popescu1973abelian}}[Gabriel-Popescu Theorem]
\label{theorem:Gabriel_Popescu}
Let $\cat{C}$ be a Grothendieck category and let $U$ be an object in $\mathcal{C}$. Consider the endomorphism ring ${S:=\emph{\text{End}}_{\cat{C}}(U)}$. Then the following are equivalent:
\begin{itemize}
\item[1)] U is a generator.
\item[2)] The functor $\emph{\text{Hom}}(U,\cdot):\cat{C}\to \mathbf{Mod}_S$ is full and faithful and its left adjoint $\cdot \otimes_S U:\mathbf{Mod}_S\to \cat{C}$ is exact.
\end{itemize}
\end{theorem}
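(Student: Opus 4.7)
The plan is to prove the two directions separately, with (2)$\Rightarrow$(1) essentially formal and (1)$\Rightarrow$(2) containing all the substance.

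For (2)$\Rightarrow$(1), given that $\Hom(U,-)$ is fully faithful, it is in particular faithful; so if $f \neq g : X \to Y$ are distinct morphisms in $\cat{C}$, then $\Hom(U,f) \neq \Hom(U,g)$, meaning some $h: U \to X$ satisfies $f \circ h \neq g \circ h$. This is exactly the definition of $U$ being a generator used in Proposition~\ref{prop:GeneratorsCogenerators}.

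For (1)$\Rightarrow$(2), I would first construct the left adjoint. Since $\cat{C}$ is cocomplete, for any right $S$-module $M$ define $M \otimes_S U$ as the coequalizer in $\cat{C}$ of the two evident morphisms $\bigsqcup_{(m,s) \in M \times S} U \rightrightarrows \bigsqcup_{m \in M} U$ built from the $S$-action on $M$ and the $S$-action on $U = \Hom_{\cat{C}}(U,U)$-module. The adjunction $\Hom_{\cat{C}}(M \otimes_S U, X) \cong \Hom_S(M, \Hom_{\cat{C}}(U,X))$ then follows formally from the universal properties of coproducts and coequalizers. Faithfulness of $\Hom(U,-)$ is immediate from $U$ being a generator, by the same argument as for (2)$\Rightarrow$(1).

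The real work lies in showing fullness of $\Hom(U,-)$ and exactness of $- \otimes_S U$. The unifying tool is the counit $\epsilon_X : \Hom(U,X) \otimes_S U \to X$. Since $U$ is a generator, the coproduct of all morphisms $U \to X$ gives an epimorphism onto $X$; this morphism factors through $\epsilon_X$, so $\epsilon_X$ is itself an epimorphism. Fullness then follows by a triangle-identity argument: given an $S$-linear map $\phi : \Hom(U,X) \to \Hom(U,Y)$, its adjoint transpose $\widetilde{\phi} : \Hom(U,X) \otimes_S U \to Y$ should kill $\ker \epsilon_X$, and so factors through $\epsilon_X$ to produce the desired morphism $X \to Y$ whose image under $\Hom(U,-)$ is $\phi$. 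Verifying that $\widetilde{\phi}$ annihilates $\ker \epsilon_X$ is the technical step; one writes elements of the kernel as finite combinations of relations of the form $u \otimes_S 1 - 1 \otimes_S u$ for $u: U \to U$ and checks these are sent to zero using naturality of $\phi$.

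The main obstacle, and the step I expect to consume most of the effort, is the exactness of $-\otimes_S U$. Right exactness is automatic since it is a left adjoint. Left exactness requires that every monomorphism $N \hookrightarrow M$ of right $S$-modules induces a monomorphism $N \otimes_S U \to M \otimes_S U$ in $\cat{C}$, and this is the step where the Grothendieck hypothesis enters essentially. The standard strategy is to use exactness of filtered colimits in $\cat{C}$ to reduce to finitely generated $M$, and then to analyze the natural transformation $N \otimes_S U \to M \otimes_S U$ by applying $\Hom(U,-)$ and using the unit of the adjunction together with a transfinite filtration of $M/N$ by subobjects whose successive quotients are images of copies of $U$. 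In practice I would follow the transfinite construction in \cite{popescu1973abelian} rather than reconstruct the argument from scratch; this is the genuinely categorical heart of the theorem and the place where the abstract Grothendieck-category axioms (small generator, AB5) are indispensable.
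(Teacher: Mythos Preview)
The paper does not prove this theorem; it is quoted verbatim from \cite[Theorem~14.2, Chapter~4]{popescu1973abelian} and used as a black box to deduce Corollary~\ref{cor:gp}. So there is no ``paper's own proof'' to compare against.

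On the substance of your sketch: the overall architecture is the standard one, and (2)$\Rightarrow$(1), the construction of the left adjoint, faithfulness, and the reduction of exactness to the cited transfinite argument are all fine. The one place where your outline is not yet a proof is the fullness step. Your description of $\ker\epsilon_X$ as ``finite combinations of relations of the form $u\otimes_S 1 - 1\otimes_S u$'' is vacuous: those elements are already zero in $\Hom(U,X)\otimes_S U$ by definition of the tensor product over $S$, so they cannot account for the kernel of $\epsilon_X$. Likewise, $\phi$ is merely an $S$-module homomorphism, not a natural transformation, so ``naturality of $\phi$'' is not the right tool. In the actual argument (as in Popescu or Stenstr\"om), fullness is obtained by showing that $\Hom(U,\epsilon_X)$ is an isomorphism, which in turn requires analysing $\Hom(U,K)$ for $K=\ker\epsilon_X$; this step is intertwined with the exactness argument and is not a matter of checking relations elementwise. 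If you intend to supply a proof rather than cite one, that is the place to flesh out.
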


From this we have the following Gabriel-Popescu theorem for persistence modules:

\begin{corollary} \label{cor:gp}
Let $U=\bigoplus_{a\in P} G[U_{a}]$ and let {$S=\emph{\text{End}}(U)$}. Then:
\begin{itemize}
\item $\emph{\text{Hom}}(U,\cdot):\mathbf{A}^{\mathbf{P}}\to \mathbf{Mod}_S$ is full and faithful; and its left adjoint
\item $\cdot \otimes_SU:\mathbf{Mod}_S\to \mathbf{A}^{\mathbf{P}}$ is exact.
\end{itemize}
\end{corollary}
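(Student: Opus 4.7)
The plan is to reduce this directly to the abstract Gabriel-Popescu theorem (Theorem~\ref{theorem:Gabriel_Popescu}) by checking its hypotheses for the specific category $\mathbf{A}^{\mathbf{P}}$ and the specific object $U = \bigoplus_{a \in P} G[U_a]$. The entire content of the corollary is that $\mathbf{A}^{\mathbf{P}}$ is a Grothendieck category and that the particular $U$ named in the statement is a generator; once these two facts are in hand, the conclusion is immediate from Theorem~\ref{theorem:Gabriel_Popescu} applied with $\mathcal{C} = \mathbf{A}^{\mathbf{P}}$ and $S = \text{End}(U)$.

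First I would invoke Proposition~\ref{prop:GeneratorsCogenerators}, which established exactly the required facts: $\mathbf{A}^{\mathbf{P}}$ is a Grothendieck category (via Proposition~\ref{proposition:grothendieck_functor_categories}) and $U = \bigoplus_{a \in P} G[U_a]$ is a generator. Since Theorem~\ref{theorem:Gabriel_Popescu} is phrased as an equivalence between ``$U$ is a generator'' and ``$\text{Hom}(U, -)$ is fully faithful with exact left adjoint $- \otimes_S U$,'' the direction ``1) implies 2)'' applied to our $U$ produces exactly the two bullet points in the statement. There is essentially no further content.

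There is no real obstacle: the entire workload was already absorbed into Proposition~\ref{prop:GeneratorsCogenerators}. The only minor point worth flagging in the proof is the implicit identification of the left adjoint of $\text{Hom}(U, -)$ with the tensor product $- \otimes_S U$ over the endomorphism ring $S = \text{End}(U)$, which is standard and is what makes the statement of Theorem~\ref{theorem:Gabriel_Popescu} meaningful in this setting. Thus the proof is a one-line citation: apply Theorem~\ref{theorem:Gabriel_Popescu} to $\mathcal{C} = \mathbf{A}^{\mathbf{P}}$ and to the generator $U$ constructed in Proposition~\ref{prop:GeneratorsCogenerators}, and read off conclusion 2) of the theorem.
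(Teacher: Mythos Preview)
Your proposal is correct and matches the paper's approach exactly: the corollary is stated immediately after Theorem~\ref{theorem:Gabriel_Popescu} with the phrase ``From this we have the following Gabriel-Popescu theorem for persistence modules,'' and no separate proof is given because it is, as you say, a one-line citation of Theorem~\ref{theorem:Gabriel_Popescu} applied to the generator $U$ furnished by Proposition~\ref{prop:GeneratorsCogenerators}.
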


We will use this result in Section~\ref{section:pers_modules_over_finite_posets} when we consider persistence modules over finite {preordered sets.}

\subsection{Chain complexes of persistence modules}
\label{sec:chain-complexes}

In Section~\ref{section:kunneth} we will investigate how changing the coefficients of a chain complex of persistence modules changes its homology.
In order to compute {examples} that come from applications, we consider a chain complex of persistence modules obtained from a filtered cellular complex, such as a filtered simplicial complex or a filtered cubical complex. 

A \emph{filtration} on a CW complex $X$ is a function
$f:X\to \mathbb{R}$ that is constant on the cells of $X$ and such that
$f(\partial \sigma) \leq f(\sigma)$ for all cells $\sigma$ of $X$.
For $a\in \mathbb{R}$, let $X_a$ be the subcomplex of $X$ defined by $X_a:=f^{-1}(-\infty,a]$. The collection of CW complexes $\{X_a\}_{a \in \mathbb{R}}$
with the inclusion maps $X_a\hookrightarrow X_b$ whenever $a\le b$
is a \emph{filtered CW complex}.
The inclusion maps induce
$\mathbf{k}$-linear maps on cellular homology with coefficients in a field $\mathbf{k}$, $H_n(X_a;\mathbf{k})\to H_n(X_b;\mathbf{k})$}.
Let $\mathcal{H}_n(X)$ denote the resulting persistence module.

Let $X$ be a CW complex with filtration $f$.
Let $X_{(m)}$ denote the set of $m$-cells of $X$.
For $m \geq 0$, define
$\mathcal{C}_m(X) = {\bigoplus}_{\sigma \in X_{(m)}} \mathbf{k}[f(\sigma),\infty)$.
For $\sigma \in X_{(m)}$, also let $\sigma$ denote the generator of $\mathbf{k}[f(\sigma),\infty)$.
For $a \geq f(\sigma)$, let $\sigma_a$ denote $\mathbf{k}[f(\sigma),\infty)_{f(\sigma) \leq a} \sigma$.
Similarly define $\alpha_a \in \mathcal{C}_m(X)$ for an $m$-chain $\alpha$ in the cellular chain complex on $X$.
Define $d_m: \mathcal{C}_m(X) \to \mathcal{C}_{m-1}(X)$ to be the natural transformation obtained by extending the definition $(d_m)_a(\sigma_a) := (\partial \sigma)_a$ linearly.
Let $H_n(\mathcal{C}(X))$ be the homology of the chain complex $(C_*(X),d_*)$.

\begin{lemma}
\label{lemma:homology_of_filtered_simplicial_complex}
Let $X$ be a CW complex with a filtration as above. For all $n\in \mathbb{N}$,
$H_n(\mathcal{C}(X)) \isom \mathcal{H}_n(X)$.
\end{lemma}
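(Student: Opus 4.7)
The plan is to show that the isomorphism can be checked \emph{pointwise} in $a \in \mathbb{R}$ and then verify functoriality. First I would unwind the definition of $\mathcal{C}_m(X)_a$: since $\mathbf{k}[f(\sigma),\infty)_a = \mathbf{k}$ when $f(\sigma) \leq a$ and $0$ otherwise, and since the evaluation functor $\mathrm{ev}_a : \mathbf{Vect}_{\mathbf{k}}^{\mathbf{R}} \to \mathbf{Vect}_{\mathbf{k}}$ preserves direct sums, one has
\begin{equation*}
\mathcal{C}_m(X)_a \;=\; \bigoplus_{\sigma \in X_{(m)},\, f(\sigma)\le a} \mathbf{k} \;=\; C_m(X_a;\mathbf{k}),
\end{equation*}
the cellular $m$-chains of the sublevel subcomplex $X_a$. (Note this uses that $X_a$ is indeed a subcomplex, which follows from the hypothesis $f(\partial\sigma)\le f(\sigma)$.) The same hypothesis ensures that the natural transformation $d_m$ actually lands in $\mathcal{C}_{m-1}(X)$, and that under the identification above $(d_m)_a$ becomes precisely the cellular boundary on $C_*(X_a;\mathbf{k})$.

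Next I would invoke the fact that $\mathrm{ev}_a$ is exact -- limits and colimits in the functor category $\mathbf{Vect}_{\mathbf{k}}^{\mathbf{R}}$ are computed objectwise, so in particular kernels, images, and cokernels are. Therefore homology commutes with evaluation:
\begin{equation*}
H_n(\mathcal{C}(X))_a \;\isom\; H_n\bigl(\mathcal{C}(X)_a\bigr) \;=\; H_n\bigl(C_*(X_a;\mathbf{k}),\,d_*\bigr) \;=\; H_n(X_a;\mathbf{k}) \;=\; \mathcal{H}_n(X)_a.
\end{equation*}

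Finally I would check naturality in $a$. For $a \le b$, the structure map $\mathcal{C}_m(X)_{a\le b}$ is, summand by summand, the identity $\mathbf{k}\to\mathbf{k}$ on each $\sigma$ with $f(\sigma)\le a$ and the inclusion $0 \hookrightarrow \mathbf{k}$ on each $\sigma$ with $a < f(\sigma) \le b$. Under the identification with $C_m(X_a;\mathbf{k})$ and $C_m(X_b;\mathbf{k})$, this is exactly the chain-level map induced by the inclusion $X_a \hookrightarrow X_b$. Passing to homology, the induced map $H_n(\mathcal{C}(X))_{a\le b}$ therefore agrees with $\mathcal{H}_n(X)_{a\le b}$, giving an isomorphism of persistence modules.

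The main (mild) obstacle is making the bookkeeping clean: one must note that $X_a$ really is a CW subcomplex so that its cellular chain complex makes sense, and one must verify that the natural transformation $d_m$ defined by $(d_m)_a(\sigma_a) = (\partial\sigma)_a$ is well-defined (again a consequence of $f(\partial\sigma)\le f(\sigma)$) before one may identify it with the cellular differential of $X_a$. Everything else is formal once pointwise evaluation is known to be exact, which follows from $\mathbf{Vect}_{\mathbf{k}}^{\mathbf{R}}$ being a Grothendieck category whose (co)limits are computed componentwise.
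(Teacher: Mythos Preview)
Your proposal is correct and follows essentially the same approach as the paper: both arguments check the isomorphism pointwise by observing that $\mathcal{C}_m(X)_a$ is the cellular $m$-chain group of $X_a$ and that $(d_m)_a$ is its cellular boundary. Your version is more thorough in that you explicitly invoke exactness of the evaluation functor and verify naturality in $a$, whereas the paper's proof leaves these points implicit.
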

\begin{proof}
{For all $a\in \mathbb{R}$, by definition, $\mathcal{H}_n(X)_a$ is the $n$-th cellular homology of $f^{-1}(-\infty,a]\subseteq X$, $H_n(f^{-1}(-\infty,a];\mathbf{k})$. By construction, $\mathcal{C}_{m}(X)_a$ has as generators the $m$ cells $\sigma$ of $X$ such that $f(\sigma)\le a$. By the definition of $(d_m)_a$ it follows that $H_n(\mathcal{C}(X))_a$ is isomorphic to $H_n(f^{-1}(-\infty,a];\mathbf{k})$.}
\end{proof}

\section{Tensor products of persistence modules}
\label{section:tensors}

In this section, we consider two functors of persistence modules. In Section~\ref{section:enriched} we show that they are both monoidal products on the category of persistence modules. These are $\grtensor$ and $\shtensor$, the tensor products from graded module theory and sheaf theory respectively. {We give formulas for calculating these functors applied to one-parameter interval modules. These formulas will be useful in computations in Section~\ref{section:kunneth}.}

\subsection{Tensor product of sheaves} \label{sec:shtensor}

Let $(P,\le)$ be a preordered set and let $R$ be a unital ring.
For more details, see \cite[Chapter 1]{Bredon:SheafTheory} and \cite[Chapter 2]{MR1074006}.

\begin{definition}
\label{def:sheaf_tensor_product}
Let $M$ be a right $R_{P}$-module and let $N$ be a left $R_{P}$-module, where $P$ is given the up-set topology. The sheaf tensor product $M\otimes_{R_{P}}N$ is the sheaf of abelian groups on $P$ which is associated to the presheaf given by the assignment $U\mapsto M(U)\otimes_R N(U)$, for an up-set $U\subset P$. The stalk of this presheaf at $a\in P$ is $M_a\otimes_R N_a$. As sheafification preserves the values on stalks, we have $(M\otimes_{R_{P}}N)_a= M_a\otimes_R N_a$. However, as discussed in the proof of Lemma~\ref{lem:sheaf}, we have $(M\otimes_{R_{P}}N)(U_a)=M(U_a)\otimes_R N(U_a)=M_a\otimes_R N_a$. By the result of Theorem~\ref{thm:sheaf}, we can also take the functor $\stalk (M\otimes_{R_{P}}N):\mathbf{P}\to \mathbf{Ab}$, defined by $\stalk(M\otimes_{R_{P}}N)_a:=M(U_a)\otimes_R N(U_a)$, as the definition of $M\otimes_{R_{P}}N$. To simplify notation, we will denote $\otimes_{R_{P}}$ by $\shtensor$ throughout this paper (the ring $R$ will always be clear from context). When $N$ is an $R_{P}$-bimodule, $M\shtensor N$ is in fact a right $R_{P}$-module. When $R$ is commutative, $M\shtensor N$ is an $R_{P}$-module.
\end{definition}

\begin{example}
\label{example:sheaf_tensor_of_interval_modules}
Assume that $(P,\le)=(\mathbb{R}^n,\le)$ and $R=\mathbf{k}$ is a field.
 Let $U,V\subset \mathbb{R}^n$ be 
intervals
and let $\mathbf{k}[U]$ and $\mathbf{k}[V]$ be the corresponding 
interval persistence modules. 
For $a \in \R^n$, $(\mathbf{k}[U] \shtensor \mathbf{k}[V])_a = \mathbf{k}[U]_a \tensor \mathbf{k}[V]_a$ which equals $\mathbf{k}$ if $a \in U \cap V$ and is otherwise zero. 
If $U\cap V$ is connected then, $\mathbf{k}[U]\shtensor \mathbf{k}[V]=\mathbf{k}[U\cap V]$.  
As a special case, if $n=1$, we have that $\mathbf{k}[a,\infty) \shtensor \mathbf{k}[b,\infty) = \mathbf{k}[{\max\{a,b\}},\infty).$
\end{example}

\subsection{Tensor product of graded modules}

Let $(P,+,0)$ be an abelian group. 
There exists a tensor product operation on $Gr^{P}$-$\mathcal{S}$, the category of $P$-graded modules over a $P$-graded ring $\mathcal{S}$; for example see \cite{hazrat2016graded}. Hence we have a tensor product of persistence modules, $M\otimes_{R[U_{0}]}N$.
For the one-parameter case, see for example \cite{polterovich2017persistence,carlsonfilippenko2019}. 
For simplicity 
and to differentiate from the sheaf tensor product
we will write $M\grtensor N$ throughout, as the ring $R$ and the abelian group $P$ will be clear from the context.

\begin{definition}
\label{def:alg_def_gr_tensor_prod}
Let $M$ be a $P$-graded right $R[U_0]$-module and let $N$ be $P$-graded left $R[U_0]$-module. 
Let $M\otimes _R N:={\bigoplus_{r\in P}}(M\otimes_R)_r$, where 
\begin{equation*}
(M\otimes_R N)_r:= R \Bigl\langle \Big\{\sum_i m_i\otimes_R n_i\,|\,m_i\in M^h,n_i\in N^h, \text{deg}(m_i)+\text{deg}(n_i)=r\Big\} \Bigr\rangle.
\end{equation*}
 Define the \emph{graded module tensor product} of $M$ and $N$, written $M \grtensor N$, to be the $P$-graded abelian group given by 
\begin{equation*}
  M \grtensor N:=  (M \tensor_R N) / J,
\end{equation*}
where $J$ is the subgroup of $M\otimes_R N$  generated by the homogeneous elements
\begin{equation*}
\{ m\cdot x\otimes_R n-m\otimes_R x\cdot n\,|\,m\in M^h,n\in N^h,x\in R[U_0]^h\}.
\end{equation*}
where $M^h,N^h$ and $R[U_0]^h$ are the sets of homogeneous elements of $M,N$ and $R[U_0]$ respectively.
\end{definition}

Now assume $(P,\le,+,0)$ is a {preorder} with a group structure compatible with the {preorder}, namely $a\le b$ implies $a+c\le b+c$. Then, there is an equivalent categorical definition of $\grtensor$, as observed in \cite{polterovich2017persistence}.
Let $X_r={\bigoplus_{s+t=r}}(M_s\otimes_R N_t)$. The abelian group $(M\grtensor N)_{r}$ is the quotient of $X_r$ given by the colimit of the diagram of abelian groups $(M_{s}\otimes_RN_{t})_{s+t\le r}$.
See Figure~\ref{fig:8} for the case $(P,\le)=(\mathbb{R},\le)$.
\begin{figure}[ht]
\centering
\begin{tikzpicture}[line cap=round,line join=round,x=1.0cm,y=1.0cm,scale=0.5]
\draw[->,color=black] (-3,0) -- (9,0);
\draw[->,color=black] (0,-0.5) -- (0,10.5);
\draw[color=black](10,0) node {M};
\draw[color=black](0,11) node {N};
\draw[color=black] (1.5,0.4) node[scale=0.7] {$M_a$};
\draw[color=black] (7.5,0.4) node[scale=0.7] {$M_b$};
\draw[color=black] (-0.5,3) node[scale=0.7]{$N_c$};
\draw[color=black] (-0.5,9) node[scale=0.7]{$N_d$};
\draw[color=black] (8.1,3.4) node[scale=0.7] {$M_{b}\otimes_R N_c$};
\draw[color=black] (2.2,9.4) node[scale=0.7] {$M_{a}\otimes_R N_{d}$};
\draw[color=black] (2.2,3.4) node[scale=0.7] {$M_a\otimes_R N_c$};
\node[text width=7cm,color=black, anchor=west, right,scale=1] at (8,9)
   { $X_r:=\bigoplus\limits_{s+t=r}(M_s\otimes_R N_t)$ };
\draw[color=black] (9.5,1) node[scale=1]{$X_r $};
\node[text width=7cm,color=black,anchor=west,right,scale=1] at (8,6)
 {$(M\grtensor N)_r:={\colim\limits_{s+t\le r}}(M_s\otimes_R N_t)$};
\draw[color=black,thick] (-1,11) -- (9,1);
\fill [color=blue] (0,9) circle (1.5mm);
\fill [color=blue] (0,3) circle (1.5mm);
\fill [color=blue] (7,3) circle (1.5mm);
\fill [color=blue] (7,0) circle (1.5mm);
\fill [color=blue] (1,9) circle (1.5mm);
\fill [color=blue] (1,0) circle (1.5mm);
\fill [color=blue] (1,3) circle (1.5mm);
\draw[->,color=red,shorten >=5pt,dashed,thick] (1,3) -- (7,3);
\draw[->,color=red,,shorten >=5pt,dashed,thick] (1,3) -- (1,9);
\draw[color=black] (4.5,2.5) node[scale=0.7] {$M_{a\le b}\otimes_R1_{N_c}$};
\node[label={[label distance=0.5cm,text depth=-1ex,rotate=90,scale=0.7]right: $1_{M_a}\otimes_R N_{c\le d}$}] at (1.1,3.7) {};
\end{tikzpicture}
\caption {\rmfamily The tensor product of one-parameter persistence modules $M$ and $N$. Each abelian group $(M\grtensor N)_r$  is assigned to be the colimit of the diagram of abelian groups $(M_s\otimes_R N_t)_{s+t\le r}$.}
\label{fig:8}
\end{figure} 

\begin{definition}
\label{def:graded_tensor_product}
Let $M$ be a $P$-graded right $R[U_0]$-module and let $N$ be a $P$-graded left $R[U_0]$-module. Define the $P$ graded abelian group  $M\grtensor N$ by setting $(M\grtensor N)_r:=\colim_{s+t\le r}(M_{s}\otimes_RN_{t})$.
\end{definition}

Observe that Definition~\ref{def:alg_def_gr_tensor_prod} and  Definition~\ref{def:graded_tensor_product} are equivalent. Indeed this follows 
from {Section~\ref{sec:graded-modules}} and the way the  {$\mathbb{Z}[U_0]$ action is defined in the quotient} in Definition~\ref{def:alg_def_gr_tensor_prod}.
If $N$ is a $P$-graded $R[U_0]$-bimodule, then $M\grtensor N$ is a $P$-graded right $R[U_0]$-module. Furthermore 
$M\grtensor N(s)=M(s)\grtensor N=(M\grtensor N)(s)$ for all $s\in \mathbb{R}^n$, and
$M\grtensor \mathbf{k}[U_{s}]=M(-s)$.

\begin{example}
\label{example:TensorOfIntervalModules} 
Let $M=\mathbf{k}[a,b)$ and $N=\mathbf{k}[c,d)$.
Assume $b+c\le a+d$ (see Figure~\ref{fig:9}). 
\begin{figure}[ht]
\centering
\begin{tikzpicture}[line cap=round,line join=round,,x=1.0cm,y=1.0cm,scale=0.3]
\draw[->,color=black,dashed] (-3,0) -- (9,0);
\draw[->,color=black,dashed] (0,-0.5) -- (0,10.5);
\draw[color=red,line width=0.8mm] (2,0) -- (5,0);
\draw[color=red,line width=0.8mm] (0,3) -- (0,9);
\fill [color=blue] (2,0) circle (1.5mm);
\fill [color=blue] (5,0) circle (1.5mm);
\fill [color=blue] (0,3) circle (1.5mm);
\fill [color=blue] (0,9) circle (1.5mm);

\fill[fill=red!25] (5,3) rectangle (2,9);
\draw[color=black,dashed] (-2,7) -- (6,-1);
\draw[color=black,dashed] (-1,9) -- (7,1);
\draw[color=black,dashed] (0,11)--(8,3);
\draw[color=red, line width=0.5mm] (2,6)--(5,3);
\draw[color=red, line width=0.5mm] (2,9) -- (5,6);
\fill [color=red] (2,3) circle (1.5mm);
\draw[color=black,line width=0.3mm] (2,4)--(3,3);
\draw[color=black,line width=0.3mm](2,8)--(5,5);
\draw[color=black,dashed] (5,5)--(7,3);
\draw[color=black] (2,-0.5) node[scale=0.8] {$a$};
\draw[color=black] (5,-0.5) node[scale=0.8] {$b$};
\draw[color=black] (0.5,3) node[scale=0.8] {$c$};
\draw[color=black] (0.5,9) node[scale=0.8] {$d$};
\draw[color=black] (-2,7.3) node[scale=0.8]{$a+c$};
\draw[color=black] (-1.5,9.5) node[scale=0.8] {$b+c$};
\draw[color=black] (0,11.5) node[scale=0.8] {$a+d$};
\end{tikzpicture}
\caption {\rmfamily Tensor product of interval modules : $\mathbf{k}[a,b)\grtensor \mathbf{k}[c,d) =\mathbf{k}[a+c,$min$\{a+d,b+c\})$ }
\label{fig:9}
\end{figure} 
Let $r\in \mathbb{R}$ and let $X_r:={\bigoplus_{s+t=r}}(M_s\otimes_{\mathbf{k}}N_t)$. 
For $a+c\le r<b+c$, every summand of $X_r$ is in the image of $M_a\otimes_{\mathbf{k}}N_c\cong \mathbf{k}$,
and hence $(M\grtensor N)_r\cong \mathbf{k}$ and for $a+c\le r\le r'<b+c$,  $(M\grtensor N)_{r\le r'}$ is the identity map on $\mathbf{k}$. 
For $b+c\le r$, each {non-zero} summand $M_s\otimes_{\mathbf{k}}N_t$ of $X_r$ {has $t>c$ and thus} lies in the image of $M_s\otimes_{\mathbf{k}}N_c\cong \mathbf{k}$. However, the map $M_s\otimes_{\mathbf{k}}N_c \to M_l\otimes_{\mathbf{k}}N_c$ where $l$ is such that $l+c=r$ has to be the zero map as $r\ge b+c$, thus $l\ge b$ and thus $M_l=0$. Hence $M\grtensor N\cong \mathbf{k}[a+c,b+c)$. 

If we had $a+d\le b+c$, then the same argument shows that $M\grtensor N\cong \mathbf{k}[a+c,a+d)$. 
Combining these two results we have the following.
\[
  \mathbf{k}[a,b)\grtensor \mathbf{k}[c,d)=\mathbf{k}[a+c,\min\{a+d,b+c\})
\]
Note that the persistence of this interval module (i.e. the length of the corresponding interval) is the minimum of the persistences of the interval modules $M$ and $N$.

Alternatively, note that $\mathbf{k}[a,b)$ and $\mathbf{k}[c,d)$ are graded modules with one generator in degrees $a$ and $c$, respectively. Label these generators as $y^a$ and $z^c$ respectively. Then note that by the action of the graded ring $\mathbf{k}[0,\infty)$, we have $x^t\cdot y^a\neq 0$ if and only if $t< b-a$. Similarly $x^t\cdot z^c\neq 0$ if and only if $t\le d-c$.
From the point of view of graded module theory, $\mathbf{k}[a,b)\grtensor \mathbf{k}[c,d)$ will be a graded module with a single generator in degree $a+c$, namely $y^a\grtensor z^c$ and $x^t\cdot (y^a\grtensor z^c)\neq 0$ if and only if $t<\min \{b-a,d-c\}$. 

Similarly one obtains the following equalities.
\begin{gather*}
  \mathbf{k}[a,\infty) \grtensor \mathbf{k}[c,d) = \mathbf{k}[a+c,a+d) \quad \quad
  \mathbf{k}[a,\infty) \grtensor \mathbf{k}[c,\infty) = \mathbf{k}[a+c,\infty)\\  
  \mathbf{k}[a,b) \grtensor \mathbf{k}(-\infty,d) = 0 \quad \quad
  \mathbf{k}[a,\infty) \grtensor \mathbf{k}(-\infty,d) = \mathbf{k}(-\infty,a+d)\\  
  \mathbf{k}[a,b) \grtensor \mathbf{k}(-\infty,\infty) = 0 \quad \quad
  \mathbf{k}[a,\infty) \grtensor \mathbf{k}(-\infty,\infty) = \mathbf{k}(-\infty,\infty)
\end{gather*}
\end{example}

Note that $\grtensor$ is different from $\shtensor$. Indeed, the tensor unit of $\grtensor$ is $R[U_{0}]$ while the tensor unit of $\shtensor$ is $R[P]$. 
We will focus more on $\grtensor$ over $\shtensor$ in this paper because $\grtensor$ is right exact in general unlike $\shtensor$ which is exact when $R=\mathbf{k}$ is a field. We thus need to spend more time carefully constructing projective resolutions and calculating the derived functor of $\grtensor$. 
However, as we will see in Proposition~\ref{prop:inverse_image_and_sh_tensor} and {Remark}~\ref{prop:inverse_image_and_gr_tensor}, unlike $\shtensor$, $\grtensor$ does not interact nicely with the other Grothendieck operations obtained from sheaf theory. 
\begin{definition}
\label{def:inverse_image_sheaf}
Let $X$ and $Y$ be topological spaces and $f:Y\to X$ a continuous map. Let $F$ be a sheaf on $X$. The inverse image of $F$ by $f$, denoted $f^{-1}F$ is the sheaf on $Y$ associated to the presheaf given by the following assignment:
\begin{equation*}
f^{-1}F(U):={\colim_{f(U)\subset V}}F(V),
\end{equation*} 
for all open $U\subset Y$, where $V$ ranges over all open subsets of $X$ containing $f(U)$.
\end{definition}

\begin{proposition}
\label{prop:inverse_image_and_sh_tensor}
Let $f:P\to P$ be a continuous map 
(with respect to the Alexandrov topology on $(P,\leq)$). 
Then for a right persistence module $M$ and a left persistence module $N$ we have a canonical isomorphism.
\begin{equation} \label{eq:inverse-image-iso}
f^{-1}(M\shtensor N)\cong f^{-1}M\shtensor f^{-1}N
\end{equation}
\end{proposition}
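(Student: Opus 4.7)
The plan is to verify the isomorphism at the level of stalks, exploiting that the Alexandrov topology on $P$ makes this computation straightforward, and then to upgrade to a sheaf isomorphism using Theorem~\ref{thm:sheaf}.

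First, I would establish the auxiliary fact that for any sheaf $F$ on $P$ (with the Alexandrov topology) and any $p \in P$, there is a canonical isomorphism $(f^{-1}F)_p \isom F_{f(p)}$. Starting from Definition~\ref{def:inverse_image_sheaf}, and recalling that on a principal up-set $U_p$ every sheaf equals its stalk at $p$, we have $(f^{-1}F)_p = (f^{-1}F)(U_p) = \colim_{V \supset f(U_p)} F(V)$. Since $f$ is order-preserving, $f(U_p) \subset U_{f(p)}$; moreover, any open $V$ containing $f(U_p)$ contains $f(p)$ in particular, and hence contains the principal up-set $U_{f(p)}$ by minimality. Thus $U_{f(p)}$ is the minimum element of the indexing diagram, so the colimit collapses to $F(U_{f(p)}) = F_{f(p)}$.

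Second, by Definition~\ref{def:sheaf_tensor_product}, sheaf tensor products are computed stalk-wise, $(A \shtensor B)_a = A_a \otimes_R B_a$. Combined with the first step, this yields
\[
  (f^{-1}(M \shtensor N))_p \isom (M \shtensor N)_{f(p)} = M_{f(p)} \otimes_R N_{f(p)},
\]
while on the other hand
\[
  (f^{-1}M \shtensor f^{-1}N)_p = (f^{-1}M)_p \otimes_R (f^{-1}N)_p \isom M_{f(p)} \otimes_R N_{f(p)}.
\]

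Third, I would construct the canonical morphism $\varphi \colon f^{-1}M \shtensor f^{-1}N \to f^{-1}(M \shtensor N)$. At the presheaf level, the indexing diagram $\{V \text{ open}, \, V \supset f(U)\}$ is filtered (closed under intersections), and filtered colimits commute with tensor products of $R$-modules, which yields a natural isomorphism
\[
  \Bigl(\colim_{V \supset f(U)} M(V)\Bigr) \otimes_R \Bigl(\colim_{V \supset f(U)} N(V)\Bigr) \isom \colim_{V \supset f(U)} \bigl(M(V) \otimes_R N(V)\bigr)
\]
that descends through sheafification to a natural morphism $\varphi$. By Theorem~\ref{thm:sheaf}, a morphism of sheaves on $P$ with the Alexandrov topology is an isomorphism if and only if it induces isomorphisms on all stalks, which is precisely what the second step provides.

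The main technical obstacle is the naturality bookkeeping: one must verify that the stalk identifications from the first step and from Definition~\ref{def:sheaf_tensor_product} are compatible with the restriction maps indexed by $p \leq q$, so that $\varphi$ is a morphism in the functor category $\mathbf{A}^{\mathbf{P}}$ rather than merely a pointwise identification. Once that compatibility is checked, Theorem~\ref{thm:sheaf} delivers the canonical isomorphism of equation~\eqref{eq:inverse-image-iso}.
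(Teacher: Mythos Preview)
Your proof is correct but proceeds by a genuinely different route from the paper's. The paper invokes the general sheaf-theoretic fact (cited from \cite[Proposition 2.3.5]{MR1074006}) that for any continuous $f:X\to Y$ and sheaf of rings $\mathcal{R}$ on $Y$ one has $f^{-1}(M\otimes_{\mathcal{R}}N)\cong f^{-1}M\otimes_{f^{-1}\mathcal{R}}f^{-1}N$, and then reduces the proposition to the single verification that $f^{-1}R_P\cong R_P$ when $f:P\to P$ is continuous for the Alexandrov topology. Your argument is instead self-contained: you compute stalks directly using the minimality of principal up-sets, obtain $(f^{-1}F)_p\cong F_{f(p)}$ by hand, and then assemble the stalkwise isomorphism via Theorem~\ref{thm:sheaf}. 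The paper's approach is shorter and situates the result as an instance of standard sheaf machinery; yours is more elementary and makes explicit exactly how the Alexandrov structure trivialises the stalk computation, which has some expository value in this setting.
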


\begin{proof}
  Let $f:X\to Y$ be a map of topological spaces, and let $\mathcal{R}$ be a sheaf of rings on $Y$. Let $M$ be a right $\mathcal{R}$ module and let $N$ be a left $\mathcal{R}$ module. Then there is a canonical isomorphism $f^{-1}(M\otimes_{\mathcal{R}}N)\cong f^{-1}M\otimes_{f^{-1}\mathcal{R}}f^{-1}N$, see for example \cite[Proposition 2.3.5]{MR1074006}. Now let $X=Y=P$ (with the Alexandrov topology) and let $\mathcal{R}=R_P$. Suppose $f:P\to P$ is continuous, with respect to the Alexandrov topology on $P$. Then, $f^{-1}R_P=R_P$. Indeed let $U\subset P$ be an up-set. Then by Definition~\ref{def:inverse_image_sheaf}, $f^{-1}R_P$ is the sheaf associated to the presheaf $f^{-1}R_P(U):= \colim_{f(U)\subset V}R_P(V)=R$, which means $f^{-1}R_P$ is the constant sheaf on $P$.
Thus we have $f^{-1}(M\shtensor N)\cong f^{-1}M\shtensor f^{-1}N$.
\end{proof}

\begin{remark}
\label{prop:inverse_image_and_gr_tensor}
Let $f:P\to P$ be continuous (with respect to the Alexandrov topology on $(P,\leq)$). It is not necessarily true that  $f^{-1}(M\grtensor N)$ is isomorphic to $f^{-1}M\grtensor f^{-1}N$. Indeed consider the following counter example. Let $(P,\le)=(\mathbb{R},\le)$ and let $R=\mathbf{k}$. Let $f:\mathbb{R}\to \mathbb{R}$ be given by $f(x)=x+5$. Observe that $f$ is continuous and that for an interval module $\mathbf{k}[a,b)$ we have:
\[f^{-1}(\mathbf{k}[a,b)\grtensor \mathbf{k}[a,b))=f^{-1}\mathbf{k}[2a,a+b)=\mathbf{k}[2a-5,a+b-5)\, .\]
On the other hand:
\[f^{-1}\mathbf{k}[a,b)\grtensor f^{-1}\mathbf{k}[a,b)=\mathbf{k}[a-5,b-5)\grtensor \mathbf{k}[a-5,b-5)=\mathbf{k}[2a-10,a+b-10)\, .
\]
\end{remark}

For the remainder of this section we assume that $(P,\le)=(\mathbb{R}^n,\le)$ and that $R=\mathbf{k}$ is a field.

\begin{example}
\label{example:multiparameter_tensor_of_rect_modules}
Consider persistence modules $M=\mathbf{k}[[a_1,b_1)\times \dots \times [a_n,b_n)]$ and $N=\mathbf{k}[[c_1,d_1)\times \dots \times [c_n,d_n)]$. 
Then $M\grtensor N=\mathbf{k}[[a_1+c_1,\min\{b_1+c_1,a_1+d_1\})\times\dots\times [a_n+c_n,\min\{b_n+c_n,a_n+d_n)]$. To see this, observe that $M$ and $N$ are graded modules with a single generator, in degrees $(a_1,\dots ,a_n)$ and $(c_1,\dots ,c_n)$ respectively. Hence $M\grtensor N$ will be a persistence module with a single generator in degree $(a_1+c_1,\dots ,a_n+c_n)$, say $y^{a+c}$, and all that is left is to determine for which $t\in \mathbb{R}^n$ is $x^t\cdot y^{a+c}$ zero. We examine this coordinatewise as in Example~\ref{example:TensorOfIntervalModules} to obtain the answer above. 
\end{example}

\section{Homomorphisms of persistence modules}
\label{section:homs}

In this section we consider two bifunctors of persistence modules: the two internal homs, $\uHom$ and $\scHom$, coming from graded module theory and sheaf theory, respectively. {These functors are well known in their respective domains but examples in the persistence module literature seem to be lacking.} {In order to do computations with interval modules we first need to understand the sets of natural transformations between them. The following examples serve that purpose.}

\begin{example}[{\cite[Appendix A.2]{bubenik2018topological}}]
\label{example:natural_transformations_form_a_vector_space}
Suppose $\mathbf{k}[a,b)$ and $\mathbf{k}[c,d)$ are interval modules. Then, due to the constraints of commutative squares for natural transformations, we have:
\[\text{Hom}(\mathbf{k}[a,b),\mathbf{k}[c,d)) \cong
  \begin{cases}
                                   \mathbf{k} &\text{if } c \le a < d \le b\\
                                    0 &\text{otherwise}
  \end{cases}
\]
\end{example}

\begin{example}{\cite[Proposition 3.10]{miller2019modules}}
\label{example:hom_examples}
  Let $U$ be an up-set and $D$ a down-set in {a} poset $({P,\le})$. Then $\text{Hom}(\mathbf{k}[U],\mathbf{k}[D])\cong\mathbf{k}^{\pi_0(U\cap D)}$ where $\pi_0 A$ is the set of equivalence classes of connected components of a set $A$, with respect to  the poset structure, as in Definition~\ref{def:convect_and_connected}. For upsets $U$ and $U'$, $\Hom(\mathbf{k}[U'],\mathbf{k}[U])=\mathbf{k}^{\{S\in \pi_0U'\,|\,S\subseteq U\}}$.
\end{example}

\subsection{Sheaf internal hom}

Let $(P,\le)$ be a {preorder} with the Alexandrov topology and let $R$ be a unital ring. Given two left/right persistence modules $M$ and $N$, thought of as sheaves, there is a sheaf of abelian groups given by
$\scHom_{R_P}(M,N)(U):=\text{Hom}_{{R_P}|_{U}}(M|_U,N|_U)$, for any up-set $U$ (see Definition~\ref{def:sheaf_hom}). We will write $\scHom(M,N)$ instead of $\scHom_{R_P}(M,N)$ as the ring $R$ and {preorder} $P$ will always be clear from context. Furthermore, when the ring $R$ is commutative, $\scHom(M,N)$ also has the structure of an $R_P$-module (i.e. a persistence module).
For any persistence module $X$, the functor $- \shtensor X$ is left adjoint to the functor $\scHom(X,-)$ (see Proposition~\ref{prop:sheaf-tensor-hom-adjunction}).

\begin{example}
\label{example:sheaf_hom_of_interval_modules}
For interval modules $\mathbf{k}[a,b)$ and $\mathbf{k}[c,d)$ we have the following. 
\[\scHom(\mathbf{k}[a,b),\mathbf{k}[c,d))=\begin{cases}
0 & \text{if } a < b\le c < d\\
0 & \text{if } a < c\le b < d\\
\mathbf{k}[c,d) & \text{if } a < c < d\le b\\
0 & \text{if } c\le a < b < d\\
\mathbf{k}(-\infty,d) & \text{if } c\le a < d\le b\\
0 & \text{if } c < d\le a < b
\end{cases}\]
To see this, note that by definition we have the following.
\begin{gather*}
\scHom(\mathbf{k}[a,b),\mathbf{k}[c,d))_x=\scHom(\mathbf{k}[a,b),\mathbf{k}[c,d))([x,\infty))=\\
=\text{Hom}_{\mathbf{k}[\mathbb{R}]|_{[x,\infty)}}(\mathbf{k}[a,b)|_{[x,\infty)},\mathbf{k}[c,d)|_{[x,\infty)})
\end{gather*}
Thus, we need to compute the set of natural transformations between the functors \[\mathbf{k}[a,b)|_{[x,\infty)},\mathbf{k}[c,d)|_{[x,\infty)}:[x,\infty)\to \mathbf{Vect}_{\mathbf{k}},
\]
where $[x,\infty)$ is given the total linear order induced from $\mathbb{R}$. 
Note that $\mathbf{k}[a,b)|_{[x,\infty)}$ is nonzero if and only if $x \in (-\infty,b)$.
Consider the case $c\le a < d\le b$. 
As in Example~\ref{example:natural_transformations_form_a_vector_space}, we see that $\text{Hom}_{\mathbf{k}[\mathbb{R}]|_{[x,\infty)}}(\mathbf{k}[a,b)_{[x,\infty)},\mathbf{k}[c,d)_ {[x,\infty)})\cong \mathbf{k}$ if $x\in (-\infty,d)$ and is zero otherwise. The other cases may be computed similarly.
The same argument also shows that
\begin{equation*}
  \scHom(\mathbf{k}[a,\infty),\mathbf{k}[c,d)) =
  \begin{cases}
    \mathbf{k}[c,d) &\text{if } a < c\\
    \mathbf{k}(-\infty,d) &\text{if } c \leq a < d \\
    0 &\text{if } d \leq a
  \end{cases}
\end{equation*}
and that
\[
  \scHom(\mathbf{k}[a,b), \mathbf{k}[\R]) = 0 \quad \text{and} \quad
  \scHom(\mathbf{k}[a,\infty), \mathbf{k}[\R]) = \mathbf{k}[\R].
\]
\end{example}

\subsection{Graded module internal hom}
Now assume that $(P,\le,+,0)$ is a {preordered} set with a compatible abelian group structure. Then we can consider the graded module internal hom, the right adjoint of $\grtensor$.

\begin{definition}
\label{def:translation_functor}
Let $M$ be a persistence module (either left or right). For $s\in P$, let $\mathcal{T}_{s}:\mathbf{P}\to \mathbf{P}$ be the translation functor by $s$, i.e., $\mathcal{T}_{s}(x)=x+s$. Define $M(s):=M\circ \mathcal{T}_{s}$.
\end{definition}

Observe that for every $s\in U_{0}$, there is a natural transformation $\eta_{s}:1_{\mathbf{P}}\to \mathcal{T}_{s}$ whose components $(\eta_{s})_{a}:1_{\mathbf{P}}(a)\to \mathcal{T}_{s}(a)$ are given by $a\le a+s$. Then $\eta_{s}$ is a natural transformation since $a\le b$ implies $a+s\le b+s$ for all $a,b\in P$. Furthermore, for any $s\in U_0$, given a persistence module $M$,  we have a natural transformation $1_M* \eta_{s}:M\to M(s)$, where $*$ denotes horizontal composition.

\begin{definition}
Let $M$ and $N$ be two persistence modules (both left or both right). Define $\uHom(M,N):=\bigoplus_{s\in P}\Hom(M,N(s))$. Then $\uHom(M,N)$ is a $P$-graded abelian group. This follows from Proposition~\ref{prop:UnderlineHom}. When the ring $R$ is commutative, $\uHom(M,N)$ is a persistence module. Given $s\in {P}$, we have the $R$-module $\uHom(M,N)_s:=\Hom(M,N(s))$ and for each $s\le t$ we have {an} $R$-module homomorphisms $\uHom(M,N)_{s\le t}$ defined by $(\{\alpha_x:M_x\to N_{x+s}\}_{x\in P})\mapsto (\{N_{x+s\le x+t}\alpha_x:M_x\to N_{x+t}\}_{x\in P})$ or equivalently, by the naturality of $\alpha$, $(\{\alpha_x:M_x\to N_{x+s}\}_{x\in P})\mapsto (\{\alpha_{x+t}M_{x\le x+t}:M_x\to N_{x+s+t}\}_{x\in P})$.
\end{definition}

There is a canonical isomorphism $\text{Hom}(M,N(s))\cong \text{Hom}(M(-s),N)$ for all $s\in P$. Hence shifting the first argument in $\text{Hom}$ or the second one to construct $\uHom$ gives us the same definition. 

\begin{proposition}
\label{prop:limit_characterization_of_underline_hom}
(Limit characterization of $\uHom$)
Let $M,N$ be persistence modules (both left or both right). Then $\uHom(M,N)_{r}$ is the limit of the diagram $\{\emph{Hom}_R(M_{-s},N_{t})\}_{s+t\ge r}$.
\end{proposition}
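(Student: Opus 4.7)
The plan is to exhibit mutually inverse $R$-linear maps between $\uHom(M,N)_r = \Hom(M,N(r))$ and the limit of the diagram $D$ of $R$-modules $\Hom_R(M_{-s},N_t)$ indexed by the poset $I = \{(s,t)\in P\times P \mid s+t\ge r\}$ (with $(s,t)\le (s',t')$ iff $s\le s'$ and $t\le t'$), where the morphism $D((s,t)\le(s',t'))$ sends $\varphi$ to $N_{t\le t'}\circ \varphi\circ M_{-s'\le -s}$.

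First I would construct the canonical cone. For each $(s,t)\in I$, the inequality $s+t\ge r$ is equivalent to $-s+r\le t$, so one can define
$$\pi_{s,t} : \Hom(M,N(r)) \to \Hom_R(M_{-s},N_t), \qquad \alpha \mapsto N_{-s+r\le t}\circ \alpha_{-s}.$$
Compatibility of the $\pi_{s,t}$ with the morphisms of $D$ follows at once from naturality of $\alpha$ applied at $-s'\le -s$ in $\mathbf{P}$ together with functoriality of $N$, yielding by the universal property a canonical $R$-linear map $\Phi:\uHom(M,N)_r\to \lim D$.

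Next I would define a candidate inverse $\Psi$ by setting $\Psi(\varphi)_x := \varphi_{-x,\,x+r}$ (the index lies in $I$ since $(-x)+(x+r)=r$), and show that $\Psi(\varphi)$ is a natural transformation by applying compatibility of $\varphi$ along the two chains $(-a,a+r)\to(-a,b+r)$ (fixing $s=-a$, increasing $t$) and $(-b,b+r)\to(-a,b+r)$ (legitimate because $a\le b$ forces $-b\le -a$, while $t$ stays fixed); equating the two expressions for $\varphi_{-a,b+r}$ produces exactly the naturality square $\alpha_b\circ M_{a\le b} = N_{a+r\le b+r}\circ \alpha_a$. Finally, $\Psi\circ\Phi = \mathrm{id}$ because $N_{x+r\le x+r}$ is the identity, and $\Phi\circ\Psi=\mathrm{id}$ because compatibility of $\varphi$ along $(s,-s+r)\to(s,t)$ gives $N_{-s+r\le t}\circ \varphi_{s,-s+r} = \varphi_{s,t}$; $R$-linearity is immediate from the formulas. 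The only subtlety to watch is the sign convention $-s = x$ relating the two parametrizations of the diagram, and the fact that the diagonal $\{s+t=r\}$ on its own is discrete, so one genuinely needs the full off-diagonal portion of $I$ to encode the naturality constraints; beyond that the proof is bookkeeping with naturality squares.
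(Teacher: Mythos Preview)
Your proof is correct and follows essentially the same approach as the paper: both identify the limit with $\Hom(M,N(r))$ by matching the naturality condition for a transformation $\alpha:M\to N(r)$ with the compatibility condition for a family $(\varphi_{s,t})$ in the limit. The paper's argument is a brief sketch that points to the key parallelogram (Figure~\ref{fig:17}) showing these two conditions coincide, whereas you carry out the full bookkeeping by writing down mutually inverse maps $\Phi$ and $\Psi$ and verifying both composites; your version is more explicit but not a different method.
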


\begin{proof}
Define $X_{r}=\prod_{s+t=r}\Hom_R(M_{-s},N_{t})$.
\begin{figure}[ht]
\centering
\begin{tikzpicture}[line cap=round,line join=round,x=1.0cm,y=1.0cm,scale=0.5]
\draw[->,color=black] (-3,0) -- (9,0);
\draw[->,color=black] (0,-0.5) -- (0,10.5);
\draw[color=black](10,0) node {M};
\draw[color=black](0,11) node {N};
\draw[color=black] (7.5,0.4) node[scale=0.9] {$b$};
\draw[color=black] (1.5,0.4) node[scale=0.9] {$a$};
\draw[color=black] (-0.5,3) node[scale=0.9]{$c$};
\draw[color=black] (-0.5,9) node[scale=0.9]{$d$};
\draw[color=black] (8,9.5) node[scale=0.7] {$\Hom_R(M_{-b}, N_{d})$};
\draw[color=black] (9.5,3) node[scale=0.7] {$\Hom_R(M_{-b}, N_{c})$};
\draw[color=black] (2.7,9.6) node[scale=0.7] {$\Hom_R(M_{-a}, N_{d})$};
\draw[color=black,thick] (-1,11) -- (9,1);
\fill [color=blue] (0,9) circle (1.5mm);
\fill [color=blue] (0,3) circle (1.5mm);
\fill [color=blue] (7,0) circle (1.5mm);
\fill [color=blue] (7,9) circle (1.5mm);
\fill [color=blue] (1,0) circle (1.5mm);
\fill [color=blue] (7,3) circle (1.5mm);
\fill [color=blue] (1,9) circle (1.5mm);

\draw[->,color=red,dashed,thick,shorten >=5pt] (7,3) -- (7,9);
\draw[->,color=red,dashed,thick,shorten >=5pt] (1,9) -- (7,9);
\draw[color=black] (4.5,8.5) node[scale=0.7] {$\rule{0.3cm}{0.15mm}\circ M_{-b\le -a}$};
\node[label={[label distance=0.5cm,text depth=-1ex,rotate=90,scale=0.7]right: $ N_{c\le d}\circ \rule{0.3cm}{0.15mm}$}] at (7,4) {};
\node[text width=7cm, color=black,anchor=west, right,scale=1] at (8.5,7)
  { $X_{r}=\displaystyle\prod\limits_{s+t=r}\Hom_R(M_{-s}, N_{t})$ };
\draw[color=black] (9.5,1) node[scale=1]{$X_{r} $};

\node[text width=7cm,color=black, anchor=west,right,scale=1] at (8.5,5)
 {$\uHom(M, N)_r=\varprojlim\limits_{s+t\ge r}\Hom_R(M_{-s},N_{t})$};

\end{tikzpicture}
\caption{Limit characterization of $\uHom$.}
\label{fig:16}
\end{figure} 
We claim that $\uHom(M,N)_{r}$ is the abelian subgroup of $X_r$ that is the limit of the diagram of abelian groups given by $\Hom_R(M_{-s},N_{t})$ with $s+t\ge r$ and maps as in Figure~\ref{fig:16}. Note that Figure~\ref{fig:16} illustrates the case in which $(P,\le)=(\mathbb{R},\le)$, but the algebra holds for the general case. To see this observe the following: Let $f\in \Hom_R(M_{-b}, N_{c})$ and $g\in \Hom_R(M_{-a},N_{d})$, where $a+d=b+c=r$. The canonical maps $\Hom_R(M_{-b},N_{c})\to \Hom_R(M_{-b},N_{d})$ and $\Hom_R(M_{-a},N_{d})\to \Hom_R(M_{-b},N_{d})$ that are induced by $M_{-b\le -a}$ and $N_{c\le d}$ are just postcomposition and precomposition by $N_{c\le d}$ and $M_{-b\le -a}$ respectively. If $f$ and $g$ are components of a natural transformation in $\Hom(M,N(r))$ then the parallelogram in Figure~\ref{fig:17} commutes. Equivalently, $f$ and $g$ are mapped to the same morphism under the above maps (see Figure~\ref{fig:17}). 
\begin{figure}[ht]
\centering

\begin{tikzcd}
M_{-b}\arrow[dr,"f"]\arrow[rr,"M_{-b\le -a}"]&&M_{-a}\arrow[dr,"g"]\\
&N_c\arrow[rr,"N_{c\le d}"]&&N_d
\end{tikzcd}
\caption{Commutativity of natural transformations is equivalent to a limit characterization of the appropriate hom sets. }
\label{fig:17}
\end{figure}
\end{proof}

In the remainder of this section we assume that $(P,\le)=(\mathbb{R}^n,\le)$ and $R=\mathbf{k}$ is a field.

\begin{example}
\label{example:underline_hom_of_interval_modules}
Consider two interval modules, say $\mathbf{k}[a,b)$ and $\mathbf{k}[c,d)$. Note that in the definition of $\uHom(M,N)$, we compute the direct sum of {abelian groups of} natural transformations between the persistence module $M$ and all translations of the persistence module $N$ on the real line. Thus, by using the same arguments as in Example~\ref{example:natural_transformations_form_a_vector_space}, $\uHom(\mathbf{k}[a,b),\mathbf{k}[c,d))$ is the interval module $\mathbf{k}[I]$ such that for all $t\in I$, $\uHom(\mathbf{k}[a,b),\mathbf{k}[c,d))_t=\mathbf{k}$ and $0$ otherwise. Depending on the lengths of the intervals $[a,b)$ and $[c,d)$ there are two cases to consider, namely $b-a\le d-c$ and $d-c\le b-a$. We can  calculate, accounting for both cases, that 
\[\uHom(\mathbf{k}[a,b),\mathbf{k}[c,d))= \mathbf{k}[\max\{c-a,d-b\},d-a)\,,\]

Alternatively, using Proposition~\ref{prop:limit_characterization_of_underline_hom} and reasoning similar to that used in Example~\ref{example:TensorOfIntervalModules} we can do the same calculation in terms of limits of diagrams of vector spaces, see Figure~\ref{fig:18}.
\begin{figure}[ht]
\centering
\begin{tikzpicture}[line cap=round,line join=round,x=1.0cm,y=1.0cm,scale=0.25]
\draw[color=red,line width=0.8mm] (-5,0)--(-2,0);
\draw[color=red,line width=0.8mm] (0,3) -- (0,9);
\draw[->,color=black,dashed] (-6,0) -- (2,0);
\draw[->,color=black,dashed] (0,-0.5) -- (0,10.5);
\fill[fill=red!25] (-2,3) rectangle (-5,9);
\draw[color=black,dashed] (-5,9) -- (-2,6);
\draw[color=black,thick] (-5,8)--(-2,5);
\draw[color=black,thick](-4,9)--(-2,7);
\draw[color=black,dashed](-5,8)--(-6,9);
\draw[color=black,dashed] (-5,6) -- (-2,3);
\draw[color=black] (-2.2,-0.5) node[scale=0.7] {$-a$};
\draw[color=black] (-5.2,-0.5) node[scale=0.7] {$-b$};
\draw[color=black] (-0.5,3) node[scale=0.7] {$c$};
\draw[color=black] (-0.5,9) node[scale=0.7] {$d$};
\fill [color=blue] (0,3) circle (1.5mm);
\fill [color=blue] (0,9) circle (1.5mm);
\fill [color=blue] (-5,0) circle (1.5mm);
\fill [color=blue] (-2,0) circle (1.5mm);
\end{tikzpicture}
\caption {\rmfamily $\uHom$ of interval modules: $\uHom(\mathbf{k}[a,b),\mathbf{k}[c,d)=\mathbf{k}[\text{max}\{c-a,d-b\},d-a)$.}
\label{fig:18}
\end{figure} 
Other formulas such as $\uHom(\mathbf{k}[a,b),\mathbf{k}[c,\infty)=0$, $\uHom(\mathbf{k}[a,\infty),\mathbf{k}[b,c))=\mathbf{k}[b-a,c-a)$ can be computed using the same arguments.
\end{example}

\begin{example}
\label{example:multiparameter_hom_underline_of_rect_modules}
Suppose that $M=\mathbf{k}[[a_1,b_1)\times\dots\times [a_n,b_n)]$ and $N=\mathbf{k}[[c_1,d_1)\times\dots\times [c_n,d_n)]$ are two rectangle modules. Then $\uHom(M,N)=\mathbf{k}[[\max\{c_1-a_1,d_1-b_1\},d_1-a_1)\times\dots\times [\max\{c_n-a_n,d_n-b_n\},d_n-a_n)]$.
\end{example}

\section{Duality}
\label{section:duality}

Let $(P,\le)$ be a {preorder}. Let $R$ be a commutative unital ring. 
For a persistence module we have duals from sheaf theory and from graded module theory. For the former, see \cite[Corollary 2.2.10.]{MR1074006} and for the latter see
\cite{miller2017data,lesnick2015interactive}.
{The graded module dual will be useful in determining which interval modules are flat and injective, which will be used in homological algebra computations to come.}

\begin{definition}
\label{def:sheaf_dual}
The \emph{sheaf dual} of a persistence module $M$ is the persistence module given by 
\[
M^*_{\mathbf{sh}}:=\scHom(M,R_P).
\]
 \end{definition}
 
 \begin{example}
 \label{example:sheaf_dual}
Let $(P,\le)=(\mathbb{R},\le)$ and let $R=\mathbf{k}$ be a field. Let $a\le b$ and consider the interval module $\mathbf{k}[a,b)$. Then, by Example~\ref{example:sheaf_hom_of_interval_modules}, we find $\mathbf{k}[a,b)^*_{\mathbf{sh}}:=\scHom(\mathbf{k}[a,b),\mathbf{k}[\mathbb{R}])=0$. Similarly, if $a\in \mathbb{R}$, we have $\mathbf{k}[a,\infty)^*_{\mathbf{sh}}:=\scHom(\mathbf{k}[a,\infty),\mathbf{k}[\mathbb{R}])=\mathbf{k}[\mathbb{R}]$.
 \end{example}

Now suppose that $(P,\le,+,0)$ is a {preorder} with a compatible abelian group action. The ring $R$ is still assumed commutative and unital.
\begin{definition} \label{def:Matlis-dual}
  The \emph{Matlis dual} of a persistence module $M$ is the persistence module given by
  \[
    M^*_{\mathbf{gr}}:=\uHom(M,R[D_{0}]]).
  \]
\end{definition}

\begin{lemma}
For $a \in P$, $\left(M^*_{\mathbf{gr}}\right)_a \isom \Hom_R(M_{-a},R)$.
\end{lemma}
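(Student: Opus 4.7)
The plan is to unwind the definitions in three stages and reduce the statement to a straightforward observation about natural transformations into an interval module supported on a principal down-set.

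First, I would unpack the outermost layer. By Definition~\ref{def:Matlis-dual} and the definition of $\uHom$,
\[
\left(M^*_{\mathbf{gr}}\right)_a = \uHom(M,R[D_0])_a = \Hom(M, R[D_0](a)),
\]
where $R[D_0](a)$ is the translation of the indicator persistence module $R[D_0]$ by $a$ (Definition~\ref{def:translation_functor}).

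Second, I would identify the translated module. Unwinding $R[D_0](a)_x = R[D_0]_{x+a}$, the value is $R$ precisely when $x+a \le 0$, i.e.\ when $x \le -a$, and $0$ otherwise; all non-zero structure maps are identities. Thus $R[D_0](a) = R[D_{-a}]$ as persistence modules.

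Third, I would establish the isomorphism
\[
\Hom(M, R[D_{-a}]) \isom \Hom_R(M_{-a}, R)
\]
by evaluation at $-a$. A natural transformation $\alpha: M \to R[D_{-a}]$ has components $\alpha_x: M_x \to R$ for $x \le -a$ and $\alpha_x = 0$ otherwise; naturality on pairs $x \le y \not\le -a$ is automatic (both sides are zero), while on pairs $x \le y \le -a$ it reads $\alpha_y \circ M_{x \le y} = \alpha_x$, because the transition maps inside $R[D_{-a}]$ are identities. Since $-a$ is the maximum of $D_{-a}$, each such system is uniquely reconstructed from $\alpha_{-a}$ via $\alpha_x := \alpha_{-a} \circ M_{x \le -a}$, and functoriality of $M$ guarantees this is consistent. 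Hence evaluation $\alpha \mapsto \alpha_{-a}$ is a bijection, and inspection shows it is $R$-linear.

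There is essentially no obstacle here: the only mild subtlety is checking naturality across the boundary of $D_{-a}$, which collapses because the codomain is zero on one side. Combining the three identifications yields the claimed isomorphism $(M^*_{\mathbf{gr}})_a \isom \Hom_R(M_{-a}, R)$.
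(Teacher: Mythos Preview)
Your proof is correct and follows essentially the same approach as the paper: both identify $(M^*_{\mathbf{gr}})_a$ with $\Hom(M,R[D_{-a}])$ and then exhibit the isomorphism to $\Hom_R(M_{-a},R)$ via evaluation at $-a$, with the inverse given by $f \mapsto (\varphi_x = f \circ M_{x\le -a}$ for $x \le -a$, zero otherwise$)$. Your write-up is slightly more detailed in checking naturality at the boundary of $D_{-a}$, while the paper additionally records how the structure maps $(M^*_{\mathbf{gr}})_{a\le b}$ look under this identification.
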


\begin{proof}
Note that $\uHom(M,R[D_0])_a = \Hom(M,R[D_0](a)) = \Hom(M,R[D_{-a}])$. It remains to show that
\[\Hom(M,R[D_{-a}]) \isom \Hom_R(M_{-a},R).\]
For $\varphi: M \to R[D_{-a}]$, we have the component $\varphi_{-a}:M_{-a} \to R$.
For $f: M_{-a} \to R$, define $\varphi: M \to R[D_{-a}]$ by
$\varphi_{-a} = f$, for $x \leq -a$, $\varphi_x = fM_{x \leq -a}$, and let $\varphi_x$ be the zero map otherwise.
These two mappings provide the desired isomorphism.
Under this isomorphism, $\left(M^*_{\mathbf{gr}}\right)_{a \leq b}$ is given by the mapping $f \mapsto f \circ M_{-b\leq -a}$.
\end{proof}

Using the $\grtensor-\uHom$ adjunction
(see Theorem~\ref{theorem:graded_module_adjunction})
we have the following canonical isomorphism:
\[(M\grtensor N)^{*}_{\mathbf{gr}}=\uHom(M\grtensor N,R[D_{0}]])\cong\uHom(M,\uHom(N,R[D_{0}]))=\uHom(M,N^*_{\mathbf{gr}})\,.\]
Similarly, using the $\shtensor-\scHom$ adjunction, \cite[Proposition 2.2.9]{MR1074006}, we have the following canonical isomorphism:
\[(M\shtensor N)^{*}_{\mathbf{sh}}=\scHom(M\shtensor N,R_P)\cong \scHom(M,\scHom(N,R_P))=\scHom(M,N^*_{\mathbf{sh}})\,.\]

In the remainder of this section $(P,\le)=(\mathbb{R}^n,\le)$ and $R=\mathbf{k}$ is a field.

If a persistence module $M$ is pointwise finite dimensional we have $(M^*_{\mathbf{gr}})^*_{\mathbf{gr}}\cong M$. This is true since for finite dimensional vector spaces the same formula holds for vector space duals. In particular for a pointwise finite dimensional persistence module $M$, the module $M^*_{\mathbf{gr}}$ is in some sense the dilation of $M$ about the origin of scale factor $-1$.
 
\begin{example}
Consider an interval
module $\mathbf{k}[A]$. Then $\mathbf{k}[A]^*_{\mathbf{gr}}=\mathbf{k}[-A]$.
\end{example}

\begin{definition}
A graded module $M$ is \emph{$\grtensor$-flat} if $-\grtensor M$ is an exact functor.
\end{definition}

\begin{proposition}{\cite[Remark 4.20]{miller2017data}}
A persistence module $M$ is $\grtensor$-flat if and only if its Matlis dual $M^*$ is injective, and vice versa. In particular, $\mathbf{k}[A]$ is injective if and only if $\mathbf{k}[-A]$ is $\grtensor$-flat.
\label{prop:matlis}
\end{proposition}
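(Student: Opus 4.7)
The plan is to combine two standard facts. First, Matlis duality $(-)^*_{\mathbf{gr}} = \uHom(-, \mathbf{k}[D_0])$ is an exact, conservative, contravariant endofunctor on $\mathbf{k}$-vector-space valued persistence modules; this follows from the pointwise identification $(M^*_{\mathbf{gr}})_a \isom \Hom_{\mathbf{k}}(M_{-a},\mathbf{k})$ proved just before Definition~\ref{def:Matlis-dual}, since $\mathbf{k}$-linear duality of vector spaces is exact and $\Hom_{\mathbf{k}}(V,\mathbf{k})=0$ forces $V=0$. Second, the $\grtensor$--$\uHom$ adjunction supplies the natural isomorphism $(A\grtensor M)^*_{\mathbf{gr}} \isom \uHom(A, M^*_{\mathbf{gr}})$ displayed in the paper just after Definition~\ref{def:Matlis-dual}.

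For the main equivalence ``$M$ is $\grtensor$-flat iff $M^*_{\mathbf{gr}}$ is injective,'' I would take an arbitrary short exact sequence
\[0 \to A \to B \to C \to 0.\]
Because $-\grtensor M$ is always right exact, flatness of $M$ is equivalent to the exactness of
\[0 \to A \grtensor M \to B \grtensor M \to C \grtensor M \to 0\]
for every such input. Applying the exact and conservative functor $(-)^*_{\mathbf{gr}}$ preserves and reflects exactness, and then rewriting each term via the adjunction isomorphism turns the dualized sequence into
\[0 \to \uHom(C, M^*_{\mathbf{gr}}) \to \uHom(B, M^*_{\mathbf{gr}}) \to \uHom(A, M^*_{\mathbf{gr}}) \to 0.\]
Exactness of the latter for every short exact input is, after decomposing $\uHom$ degree by degree and noting that every translate of an injective is injective, precisely the injectivity of $M^*_{\mathbf{gr}}$. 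The ``vice versa'' direction follows by applying the equivalence just established to $M^*_{\mathbf{gr}}$ in place of $M$ and invoking the double duality $M^{**}_{\mathbf{gr}} \isom M$ available for pointwise finite dimensional modules.

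For the displayed special case, I would take $M = \mathbf{k}[-A]$ in the main equivalence: since $\mathbf{k}[-A]$ is pointwise finite dimensional and $(\mathbf{k}[-A])^*_{\mathbf{gr}} = \mathbf{k}[A]$ (noted just before the proposition), flatness of $\mathbf{k}[-A]$ is equivalent to injectivity of $\mathbf{k}[A]$. The main obstacle is bookkeeping: one must keep the direction of the adjunction and the order-reversal $a \mapsto -a$ intrinsic to Matlis duality consistent throughout, as well as verify that $\uHom(-, X)$ is exact iff $X$ is injective (which uses stability of injectives under translation). Once those are handled, the statement is a formal consequence of adjointness and the exactness of pointwise $\mathbf{k}$-duality.
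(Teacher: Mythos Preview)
The paper does not supply its own proof of this proposition; it simply cites \cite[Remark 4.20]{miller2017data}. So there is no ``paper's own proof'' to compare against, and your argument stands on its own merits.

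Your argument for the direction ``$M$ is $\grtensor$-flat iff $M^*_{\mathbf{gr}}$ is injective'' is correct. The key ingredients---that Matlis duality is exact and reflects zero objects because pointwise it is vector-space duality over $\mathbf{k}$, and the adjunction isomorphism $(A\grtensor M)^*_{\mathbf{gr}}\cong\uHom(A,M^*_{\mathbf{gr}})$---are exactly the right ones, and you correctly observe that $\uHom(-,X)$ is exact iff every translate $X(s)$ is injective, which is equivalent to $X$ being injective since translation is an autoequivalence.

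There is one point worth flagging. For the ``vice versa'' (that $M$ is injective iff $M^*_{\mathbf{gr}}$ is $\grtensor$-flat) you invoke double duality $M^{**}_{\mathbf{gr}}\cong M$, which you correctly note requires $M$ to be pointwise finite dimensional. The proposition as stated does not impose that hypothesis, so strictly speaking your proof of the ``vice versa'' is only for pointwise finite dimensional $M$. In classical module theory the analogue (``$M$ injective iff its character module is flat'') generally needs a Noetherian or finiteness hypothesis, and $\mathbf{k}[U_0]$ is not Noetherian, so this is a genuine restriction rather than a removable technicality. That said, every use of the proposition in the paper is for interval modules $\mathbf{k}[A]$, which are pointwise one-dimensional, so your argument fully covers the special case ``$\mathbf{k}[A]$ injective iff $\mathbf{k}[-A]$ $\grtensor$-flat'' and all of the paper's applications.
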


\begin{remark}
\label{remark:adjoint_to_injective_is_flat}
Observe that we can use Matlis duality and the fact that injectivity of persistence modules is equivalent to their flabbiness (Appendix~\ref{sec:sheaves} and Proposition~\ref{proposition:BaerCriterionSheaves}) to classify {interval} modules into injectives and flats, see Figure~\ref{fig:3}, or use the Baer criterion (Proposition~\ref{prop:BaerCriterionForCategories}, Theorem~\ref{theorem:BaerCriterionGradedModules}, and Proposition~\ref{proposition:BaerCriterionSheaves}) if one prefers it over the flabbiness condition.
\end{remark}

\section{Classification of projective, injective and flat interval modules}
\label{subsection:classification_of_interval_modules}

In this section we assume that $(P,\le)=(\mathbb{R}^n,\le)$ and that $R=\mathbf{k}$ is a field. We classify interval modules (in the one-parameter case) into injectives and projectives and extend the results somewhat to the multi-parameter setting. {This is a necessary step for the homological algebra computations that are to come involving interval modules.}

\begin{proposition}
\label{prop:interval_module_that_is_flat_and_not_projective}
Let $a\in \mathbb{R}$. The interval module $\mathbf{k}(a,\infty)$ is not graded projective.
\end{proposition}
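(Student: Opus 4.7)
The plan is to exhibit an explicit surjection onto $\mathbf{k}(a,\infty)$ from a direct sum of projective (free) modules $\mathbf{k}[b,\infty)$ with $b \searrow a$, and to argue that projectivity of $\mathbf{k}(a,\infty)$ would force a section whose coefficients cannot consistently be chosen. Concretely, fix the sequence $a_n := a + 1/n$ for $n\geq 1$ and consider the natural transformation
\[
\pi : F := \bigoplus_{n \geq 1} \mathbf{k}[a_n,\infty) \longrightarrow \mathbf{k}(a,\infty)
\]
whose restriction to the $n$-th summand is the canonical pointwise inclusion of interval modules. For each $x > a$ some $a_n \leq x$, so $\pi_x$ is surjective and hence $\pi$ is an epimorphism of persistence modules. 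If $\mathbf{k}(a,\infty)$ were projective then $\pi$ would admit a section $s$ with $\pi \circ s = \mathbf{1}$, and the remaining task is to rule out the existence of such an $s$.

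I would next analyze a hypothetical $s$ pointwise. For each $x > a$, write $s_x(1) = (c_n(x))_n$ inside $F_x = \bigoplus_{n : a_n \leq x} \mathbf{k}$, a tuple with only finitely many nonzero entries. The transition maps of $F$ act as the identity on every summand $\mathbf{k}[a_n,\infty)$ as soon as $a_n \leq x$, and the transition maps of $\mathbf{k}(a,\infty)$ above $a$ are identities, so naturality of $s$ forces $c_n(x) = c_n(y)$ whenever $a_n \leq x \leq y$. Each $c_n$ therefore descends to a constant $c_n \in \mathbf{k}$, well defined on $[a_n,\infty)$, and the identity $\pi_x \circ s_x = \mathbf{1}$ becomes
\[
\sum_{n : a_n \leq x} c_n = 1 \qquad \text{for every } x > a.
\]

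The contradiction then arises from combining finite support with this sum identity. Taking $y = a+1$ one has $\{n : a_n \leq y\} = \mathbb{N}$, so the support of $s_y(1)$ is exactly $\{n : c_n \neq 0\}$, and hence this set is finite. Let $N$ be any integer strictly greater than every such index (take $N = 1$ if all $c_n$ vanish), and choose $x$ with $a < x < a + 1/N$; then $a_n = a + 1/n > x$ for every $n \leq N$, so every index appearing in the displayed sum satisfies $n > N$, giving $c_n = 0$ and a total of $0 \neq 1$. I expect the one delicate step to be verifying that naturality really does force constancy of $c_n$: one must observe that the transition maps of $F$ can only propagate an existing coordinate forward or newly introduce a coordinate that was previously $0$, never alter or delete an existing one, which is exactly what pins down $c_n$ globally from its value at any single $y \geq a_n$.
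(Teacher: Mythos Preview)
Your proof is correct and uses the same underlying construction as the paper: the canonical epimorphism from a direct sum of free interval modules $\mathbf{k}[b,\infty)$ with $b\searrow a$ onto $\mathbf{k}(a,\infty)$, followed by the observation that no section can exist. The paper's execution of the last step is shorter: it invokes the computation $\Hom(\mathbf{k}(0,\infty),\mathbf{k}[b,\infty))=0$ for $b>0$ (since $(0,\infty)\not\subseteq[b,\infty)$), which immediately forces any lift $\beta$ to be zero. Your argument instead analyzes a hypothetical section coordinatewise, deduces constancy of the coefficients $c_n$ from naturality, and reaches a contradiction by playing finite support against the requirement $\sum_{a_n\le x} c_n=1$ for all $x>a$. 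Both are valid; the paper's route is cleaner once the Hom computation is available, while yours is self-contained and makes explicit the obstruction coming from the direct sum having finite support at each point.
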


\begin{proof}
For simplicity we will prove the claim for $\mathbf{k}(0,\infty)$. Consider the following diagram
\[
  \begin{tikzcd}[row sep=scriptsize]
&&\mathbf{k}(0,\infty)\arrow[d,"\text{Id}"]\arrow[dll, "\beta" above, dashed]\\
\bigoplus\limits_{a>0}\mathbf{k}[a,\infty)\arrow[rr, "p" above]&&\mathbf{k}(0,\infty)\arrow[r] &0
\end{tikzcd}
\]
where $p$ is induced by the inclusions $\mathbf{k}[a,\infty) \hookrightarrow \mathbf{k}[0,\infty)$, $a > 0$. However, by Example~\ref{example:hom_examples}, $\mathbf{k}(0,\infty)$ has no nonzero maps to $\mathbf{k}[a,\infty)$, when $a>0$, because $(0,\infty)$ is not a subset of $[a,\infty)$. Thus $\beta=0$ but then the diagram cannot commute, and thus $\mathbf{k}(0,\infty)$ is not projective.
\end{proof}

In particular, submodules of free modules are not necessarily free, which is expected as the graded ring we are working with is not a principal ideal domain.

\vskip 0.1in
The following is an observation due to Parker Edwards.

\begin{lemma}
    If $a<c \in \R \cup \{\infty\}$ then
    $\colim_{a<b<c} \mathbf{k}[b,c) = \mathbf{k}(a,c)$.
    Dually, if $c<a \in \R \cup \{-\infty\}$, then
    ${\lim_{c<b<a}} \mathbf{k}(c,b] = \mathbf{k}(c,a)$.
\end{lemma}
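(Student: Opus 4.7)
The plan is to compute both (co)limits pointwise, using the standard fact that (co)limits in the functor category $\mathbf{Vect}_{\mathbf{k}}^{\mathbf{R}}$ are computed objectwise in $\mathbf{Vect}_{\mathbf{k}}$.

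For the first statement, I would make the diagram explicit: the indexing category is $(a,c)$ with the reverse order, so that for $b' \leq b$ in $(a,c)$ the inclusion $[b,c) \subseteq [b',c)$ gives the morphism $\mathbf{k}[b,c) \to \mathbf{k}[b',c)$, whose $x$-component is the identity where both modules are $\mathbf{k}$ and zero otherwise. This is a filtered system. Fix $x \in \mathbb{R}$ and evaluate. If $x \notin (a,c)$, then $\mathbf{k}[b,c)_x = 0$ for every admissible $b$, so the colimit at $x$ is $0$. If $x \in (a,c)$, then the subsystem indexed by $b \in (a,x]$ is cofinal and consists of identity maps $\mathbf{k} \to \mathbf{k}$, so the colimit at $x$ is $\mathbf{k}$. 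For $a < x \leq x' < c$, the internal transition $(\colim)_x \to (\colim)_{x'}$ is induced by the identities $\mathbf{k}[b,c)_{x \leq x'}$ for $b \in (a,x]$ and is therefore the identity, which matches the interval module $\mathbf{k}(a,c)$.

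For the dual limit, the diagram is $\{\mathbf{k}(c,b]\}_{c<b<a}$ with transitions $\mathbf{k}(c,b'] \twoheadrightarrow \mathbf{k}(c,b]$ for $b \leq b'$ given by the identity on $(c,b]$ and zero on $(b,b']$; these are precisely the Matlis duals of the inclusions used above, and they constitute an inverse system over $(c,a)$. Evaluating at $x \in \mathbb{R}$: if $x \leq c$ or $x \geq a$ then $\mathbf{k}(c,b]_x = 0$ for every $b \in (c,a)$ (using $x \geq a > b$ in the second case), so the limit vanishes. If $c < x < a$, then the subsystem with $b \geq x$ is constant $\mathbf{k}$ with identity transitions, while the terms with $b < x$ are $0$ and receive only the zero map from the other components, so the compatibility conditions there are vacuous; hence the limit is $\mathbf{k}$. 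As in the colimit case, the induced transitions of the resulting module on $(c,a)$ are identities, producing $\mathbf{k}(c,a)$.

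The main subtlety, rather than a genuine obstacle, is correctly identifying the direction and formula of the transition maps in the second diagram: between $\mathbf{k}(c,b]$ and $\mathbf{k}(c,b']$ there are natural maps in both directions (an inclusion and its Matlis dual projection), and only the projections yield the intended limit. Once both diagrams are set up correctly, the rest is a routine pointwise computation, and the argument goes through uniformly whether the endpoints are finite or infinite.
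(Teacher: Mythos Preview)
Your argument is correct, and since the paper states this lemma without proof (attributing it as an observation due to Parker Edwards), there is nothing to compare against; your pointwise computation of the (co)limit in $\mathbf{Vect}_{\mathbf{k}}^{\mathbf{R}}$ is exactly the expected verification.

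One small inaccuracy in your closing commentary: there is in fact \emph{no} nonzero natural transformation $\mathbf{k}(c,b] \to \mathbf{k}(c,b']$ when $b < b'$. Naturality at a pair $x \le y$ with $x \in (c,b]$ and $y \in (b,b']$ forces the component at $x$ to vanish, so the ``inclusion'' you mention does not exist. This does not affect your proof, since you correctly use the projections $\mathbf{k}(c,b'] \to \mathbf{k}(c,b]$ as the structure maps of the inverse system, but the aside should be removed or rephrased.
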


\begin{lemma}
\label{lemma:colimits_of_gr_proj_are_flat}
Colimits of graded projective modules are $\grtensor$-flat.
\end{lemma}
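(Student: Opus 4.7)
The plan is to reduce to two standard facts: every graded projective persistence module is $\grtensor$-flat, and filtered colimits of flat modules are flat in a Grothendieck category. I read ``colimits'' in the statement as filtered colimits, which is the form needed for the intended applications, e.g., $\mathbf{k}(a,\infty) = \colim_{b > a}\mathbf{k}[b,\infty)$.

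First, I would verify that graded projective modules are $\grtensor$-flat. The identity $M \grtensor \mathbf{k}[U_s] = M(-s)$ recorded in Section~\ref{section:tensors} exhibits tensoring with the free module on a single generator in degree $s$ as the translation $M \mapsto M(-s)$, which is an equivalence of categories (induced by $\mathcal{T}_{-s}$) and hence exact. Distributivity of $\grtensor$ over direct sums then shows that tensoring with an arbitrary graded free module $\bigoplus_j \mathbf{k}[U_{s_j}]$ is exact, so graded free modules are flat; since a direct summand of an exact functor is exact, graded projectives (as summands of graded frees) are also flat.

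Next, suppose $M = \colim_{i \in I} P_i$ is a filtered colimit of graded projectives. For any short exact sequence $0 \to A \to B \to C \to 0$, tensoring with each $P_i$ produces a short exact sequence by the previous step. The $\grtensor$--$\uHom$ adjunction (Theorem~\ref{theorem:graded_module_adjunction}) makes $-\grtensor M$ a left adjoint, and hence colimit-preserving, so applying $-\grtensor M$ to the original sequence yields the filtered colimit of the sequences obtained from the $P_i$. By Proposition~\ref{prop:GeneratorsCogenerators} the category of persistence modules is Grothendieck, so filtered colimits are exact (axiom AB5) and the colimit sequence remains short exact. This shows $M$ is $\grtensor$-flat.

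The principal obstacle is conceptual rather than technical: one must recognize that the filtered hypothesis is genuinely needed, since arbitrary colimits of projectives are not flat in general (every persistence module is a cokernel of a map between free modules). Once this is granted, the heart of the proof is the invocation of AB5, with everything else following formally from the $\grtensor$--$\uHom$ adjunction and the fact that translation functors are equivalences.
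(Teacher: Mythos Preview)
The paper states this lemma without proof, so there is nothing to compare against; your argument is the standard one and is correct. Your observation that ``colimits'' must be read as \emph{filtered} colimits is both correct and necessary: as you note, every persistence module is a cokernel of a map between graded frees, so the literal statement would force every module to be flat, contradicting Theorem~\ref{theorem:classification_into_flats_and_injectives}. The filtered reading is exactly what the paper needs for Corollary~\ref{corollary:gr_flat_interval_modules}, since the colimit $\mathbf{k}(a,\infty)=\colim_{a<b<\infty}\mathbf{k}[b,\infty)$ is directed.
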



\begin{corollary}
\label{corollary:gr_flat_interval_modules}
Let $a\in \mathbb{R}$. The interval module $\mathbf{k}(a,\infty)$ is $\grtensor$-flat.
\end{corollary}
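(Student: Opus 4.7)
The plan is to assemble the corollary directly from the two preceding lemmas together with the identification of $\mathbf{k}[b,\infty)$ as a free (hence projective) graded module. First I would invoke the preceding lemma with $c = \infty$ to write
\[
\mathbf{k}(a,\infty) \;=\; \colim_{a<b<\infty} \mathbf{k}[b,\infty),
\]
expressing the interval module on the open ray as a filtered colimit of interval modules on closed rays indexed by the totally ordered set $\{b \in \R \mid a < b\}$, with transition maps the canonical inclusions $\mathbf{k}[b',\infty) \hookrightarrow \mathbf{k}[b,\infty)$ for $a < b \le b'$.

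Next I would observe that each $\mathbf{k}[b,\infty)$ is a graded projective module. Indeed, $\mathbf{k}[b,\infty) = \mathbf{k}[U_b]$ is a shift of the graded ring $\mathbf{k}[U_0] = \mathbf{k}[0,\infty)$, so it is graded free on a single generator in degree $b$. In particular it is graded projective. (This is also the content of the first bullet on projectives in Theorem~\ref{theorem:classification_into_flats_and_injectives}.)

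Finally I would apply Lemma~\ref{lemma:colimits_of_gr_proj_are_flat} to conclude that the colimit $\mathbf{k}(a,\infty)$ is $\grtensor$-flat. There is no substantive obstacle here; the entire argument is a two-line composition of the two immediately preceding lemmas together with the standard fact that $\mathbf{k}[b,\infty)$ is the free rank-one graded module shifted to degree $b$. The only thing worth checking carefully is that the colimit diagram really is a filtered diagram of projective modules (which it is, since $(a,\infty)$ is directed under $\le$), so that Lemma~\ref{lemma:colimits_of_gr_proj_are_flat} applies as stated.
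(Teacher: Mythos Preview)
Your proposal is correct and matches the paper's intended argument exactly: the corollary is stated in the paper without proof because it follows immediately from the two preceding lemmas, and you have spelled out precisely that two-line composition. The extra care you take in noting that the diagram is filtered is a reasonable sanity check, though the paper's Lemma~\ref{lemma:colimits_of_gr_proj_are_flat} is phrased for arbitrary colimits.
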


We now prove the classification of projective, injective and $\grtensor$-flat interval modules stated in Theorem~\ref{theorem:classification_into_flats_and_injectives}.

\begin{theorem}[Theorem 1.2]
Let $a\in \mathbb{R}$. Then:
\begin{itemize}
\item The interval modules $\mathbf{k}(-\infty,a)$ and $\mathbf{k}(-\infty,a]$ are injective. They are not flat and thus not projective.
\item The interval modules $\mathbf{k}[a,\infty)$ are projective (free) and the interval modules $\mathbf{k}(a,\infty)$ are flat but not projective. Both are not injective.
\item The interval module $\mathbf{k}[\mathbb{R}]$ is both injective and flat, but not projective.
\item If $I\subset \mathbb{R}$ is a bounded interval, then $\mathbf{k}[I]$ is neither flat (hence not projective) nor injective.
\end{itemize}
\end{theorem}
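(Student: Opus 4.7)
The plan is to bundle everything through two observations. First, injectivity will be decided by flabbiness (Proposition~\ref{proposition:BaerCriterionSheaves}, part 3). Second, flatness will be reduced to injectivity of the Matlis dual via Proposition~\ref{prop:matlis}, using $\mathbf{k}[A]^{*}_{\mathbf{gr}} = \mathbf{k}[-A]$. Projectivity is handled directly, since $\mathbf{k}[a,\infty)$ is just the free rank-one graded $\mathbf{k}[U_{0}]$-module shifted to degree $a$ (Theorem~\ref{thm:isomorphism}), while the failures of projectivity follow either from Proposition~\ref{prop:interval_module_that_is_flat_and_not_projective} or from an adaptation of it, or just from failure of flatness.

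For Step 1 (injectivity), I compute $\mathbf{k}[I](U) = \lim_{x \in U} \mathbf{k}[I]_{x}$ on up-sets $U \subseteq \mathbb{R}$, which have the simple form $\emptyset$, $[b,\infty)$, $(b,\infty)$, or $\mathbb{R}$. The key observation: if $I$ has a point in its closure that is below some $y \notin I$ with $y$ below the rest of $I$, i.e.\ $\inf I \notin I$ is finite \emph{or} $\inf I \in I$ and there are zero stalks at $x < \inf I$, then the constraint $\mathbf{k}[I]_{x \leq z}(0) = v_{z}$ forces $v_{z} = 0$, hence $\mathbf{k}[I](\mathbb{R}) = 0$ whenever $I$ is bounded below; at the same time, shrinking to $U = [\inf I, \infty)$ (if $\inf I \in I$) or $U = (\inf I, \infty)$ (otherwise) kills the contaminating zero stalks and gives $\mathbf{k}[I](U) = \mathbf{k}$. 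This breaks flabbiness for $[a,\infty)$, $(a,\infty)$, and all bounded intervals. For $\mathbf{k}(-\infty,a)$, $\mathbf{k}(-\infty,a]$ and $\mathbf{k}[\mathbb{R}]$, a short case analysis on the four shapes of $U$ shows $\mathbf{k}[I](\mathbb{R}) \to \mathbf{k}[I](U)$ is always surjective (either $\mathbf{k} \to \mathbf{k}$ by the canonical isomorphism, or onto $0$), so these are flabby and hence injective.

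Step 2 (flatness) is then immediate. Because the interval modules in question are pointwise finite dimensional, $(\mathbf{k}[A]^{*}_{\mathbf{gr}})^{*}_{\mathbf{gr}} \cong \mathbf{k}[A]$, so Proposition~\ref{prop:matlis} asserts that $\mathbf{k}[A]$ is flat iff $\mathbf{k}[-A]$ is injective. Reflecting every interval of Step 1 through the origin: $\mathbf{k}[a,\infty)$, $\mathbf{k}(a,\infty)$, and $\mathbf{k}[\mathbb{R}]$ have duals $\mathbf{k}(-\infty,-a]$, $\mathbf{k}(-\infty,-a)$, and $\mathbf{k}[\mathbb{R}]$, all injective by Step 1, hence the originals are flat. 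Dually, the duals of $\mathbf{k}(-\infty,a)$, $\mathbf{k}(-\infty,a]$, and bounded $\mathbf{k}[I]$ are $\mathbf{k}(-a,\infty)$, $\mathbf{k}[-a,\infty)$, and a bounded interval, none injective, so these three fail to be flat.

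For Step 3 (projectivity), $\mathbf{k}[a,\infty)$ is the free graded $\mathbf{k}[U_{0}]$-module of rank one with generator in degree $a$ (Theorem~\ref{thm:isomorphism}), so it is projective. Non-projectivity of $\mathbf{k}(a,\infty)$ is exactly Proposition~\ref{prop:interval_module_that_is_flat_and_not_projective}. For $\mathbf{k}[\mathbb{R}]$, I repeat that argument verbatim: any surjection from a free module $\bigoplus_{b \in \mathbb{R}} \mathbf{k}[b,\infty) \twoheadrightarrow \mathbf{k}[\mathbb{R}]$ admits no splitting because $\operatorname{Hom}(\mathbf{k}[\mathbb{R}], \mathbf{k}[b,\infty)) = 0$ by Example~\ref{example:hom_examples} (as $\mathbb{R} \not\subseteq [b,\infty)$). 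The remaining intervals in the theorem are not even flat, hence not projective. The main obstacle is purely bookkeeping in Step 1: one has to be careful about when zero stalks below $\inf I$ propagate upward through the limit versus when restricting to a sub-up-set severs that propagation; once this flabbiness dichotomy is established cleanly, Matlis duality makes the rest essentially automatic.
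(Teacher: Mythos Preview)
Your proof is correct and uses the same two pillars as the paper---the flabbiness criterion (Proposition~\ref{proposition:BaerCriterionSheaves}) and Matlis duality (Proposition~\ref{prop:matlis})---but you organize them more uniformly. The paper establishes injectivity of $\mathbf{k}(-\infty,a)$ indirectly: it first proves $\mathbf{k}(-a,\infty)$ is flat as a colimit of the projectives $\mathbf{k}[b,\infty)$ (Corollary~\ref{corollary:gr_flat_interval_modules}), and then invokes Matlis duality to deduce injectivity. You instead verify flabbiness directly for every interval shape and only afterwards push everything through Matlis duality to settle flatness; this bypasses the colimit-of-projectives lemma entirely. Your route is slightly more self-contained, while the paper's route makes the flatness of $\mathbf{k}(a,\infty)$ visible as a structural fact (directed colimit of frees) rather than as a consequence of duality. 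One further point: the paper's written proof does not actually argue that $\mathbf{k}[\mathbb{R}]$ is non-projective, whereas your adaptation of Proposition~\ref{prop:interval_module_that_is_flat_and_not_projective}---using $\Hom(\mathbf{k}[\mathbb{R}],\mathbf{k}[b,\infty))=0$ from Example~\ref{example:hom_examples} to rule out any section of a free cover---fills that gap cleanly.
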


\begin{proof}
  First, let us show that $\mathbf{k}(-\infty,a)$ is injective.
  By Corollary~\ref{corollary:gr_flat_interval_modules}, we know the interval module $\mathbf{k}(-a,\infty)$ is $\grtensor$-flat. By Proposition~\ref{prop:matlis} it follows that the interval module $\mathbf{k}(-\infty,a)$ is injective. 
  To see that $\mathbf{k}[a,\infty)$ is projective, note that it is a graded free module and is thus graded projective (hence $\grtensor$-flat). The statement that $\mathbf{k}(a,\infty)$ is $\grtensor$-flat and not projective is Proposition~\ref{prop:interval_module_that_is_flat_and_not_projective} and Corollary~\ref{corollary:gr_flat_interval_modules}.
Note that $\mathbf{k}[a,\infty)([a,\infty) \subset \R): \mathbf{k}[a,\infty)(\R) = 0 \to \mathbf{k}[a,\infty)[a,\infty) = \mathbf{k}$ is not surjective. Thus the sheaf $\mathbf{k}[a,\infty)$ is not flabby and hence by Proposition~\ref{proposition:BaerCriterionSheaves} is not injective.
  The same argument shows $\mathbf{k}(a,\infty)$ is not injective. By Proposition~\ref{prop:matlis}, $\mathbf{k}(-\infty,a)$ and $\mathbf{k}(-\infty,a]$ are not $\grtensor$-flat thus not projective.

The same argument used for $\mathbf{k}(-\infty,a)$ shows that $\mathbf{k}[\mathbb{R}]$ is injective. By Proposition~\ref{prop:matlis} $\mathbf{k}[-\mathbb{R}]=\mathbf{k}[\mathbb{R}]$ is $\grtensor$-flat.

  For a bounded interval $I\subset \mathbb{R}$ note that
 for $a \in I$,  $\mathbf{k}[I]([a,\infty) \subset \R)$ is not surjective.
  Thus the sheaf $\mathbf{k}[I]$ is not flabby thus not injective by Proposition~\ref{proposition:BaerCriterionSheaves}. Its Matlis dual $\mathbf{k}[-I]$ is thus not $\grtensor$-flat. By the same arguments $\mathbf{k}[-I]$, as $-I$ is a bounded interval, $\mathbf{k}[-I]$ is not injective thus by Proposition~\ref{prop:matlis} $\mathbf{k}[I]$ is not $\grtensor$-flat thus not projective. 
\end{proof}

For the multi-parameter case note that for $a\in \mathbb{R}^n$, the persistence module $\mathbf{k}[U_{a}]$ is graded free, thus graded projective, and hence $\grtensor$-flat. By Proposition~\ref{prop:matlis}, the persistence module $\mathbf{k}[D_{a}]$ is injective.

\begin{definition}
\label{def:join_and_meet_in_a_poset}
Let $(Q,\le)$ be a poset. Let $p,q\in Q$. The \emph{join of} $p$ and $q$ denoted $p\vee q$ is the smallest $r\in Q$ such that $p\le r$ and $q\le r$, if it exists. The \emph{meet of} $p$ and $q$ denoted $p\wedge q$ is the largest $t\in Q$ such that $t\le p$ and $t\le q$, if it exists. A poset where every join exists is called a \emph{join semilattice}. A poset where every meet exists is called a \emph{meet semilattice}. A poset where every join and every meet exists is called a \emph{lattice}. {Note that every up-set $U$ (and every down-set $D$) in a lattice is an interval.} 
\end{definition}

\begin{example}
\label{example:lattice}
The poset $(\mathbb{R}^n,\leq)$ is a lattice.
\end{example}

\begin{proposition} \label{prop:updown}
  Let $(P,\le)$ be a lattice. Consider a down-set $D\subset P$
  such that for all $a,b \in D$, the join $a\vee b$ is in $D$. Then the {interval} module $\mathbf{k}[D]$ is injective.
  Dually, for an up-set $U \subset \R^n$  with $a \wedge b \in U$ for all $a,b \in U$,  the {interval} module $\mathbf{k}[U]$ is $\grtensor$-flat.
\end{proposition}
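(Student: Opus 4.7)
My plan is to reduce the injectivity of $\mathbf{k}[D]$ to the flabbiness criterion of Proposition~\ref{proposition:BaerCriterionSheaves}(3). That is, I will show that for every up-set $V \subseteq P$, the restriction map $\mathbf{k}[D](P) \to \mathbf{k}[D](V)$ is surjective. Both ends will be computed via the limit formula $\mathbf{k}[D](V) = \lim_{x \in V} \mathbf{k}[D]_x$ supplied by Lemma~\ref{lem:sheaf} and Theorem~\ref{thm:isomorphism}.

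Unpacking that limit, an element is a tuple $(m_x)_{x \in V}$ with $m_x = 0$ for $x \notin D$ and, since every nonzero transition map of $\mathbf{k}[D]$ is the identity on $\mathbf{k}$, with $m_x = m_y$ whenever $x \le y$ are both in $V \cap D$. Hence I can identify $\mathbf{k}[D](V)$ with the space of $\mathbf{k}$-valued functions on $V \cap D$ that are constant along the zigzag equivalence of Definition~\ref{def:convect_and_connected}. The key step is then to prove that $V \cap D$ is connected whenever it is nonempty. Here the join-closure hypothesis does the work: for any $a, b \in V \cap D$ the join $a \vee b$ exists (since $P$ is a lattice), lies in $D$ by assumption, and lies in $V$ because $V$ is an up-set and $a \le a \vee b$. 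Thus $a \le a \vee b \ge b$ is a zigzag inside $V \cap D$, witnessing connectedness. It follows that $\mathbf{k}[D](V) \cong \mathbf{k}$ when $V \cap D \neq \emptyset$ and is $0$ otherwise. Under this identification the restriction $\mathbf{k}[D](P) \to \mathbf{k}[D](V)$ is either the identity on $\mathbf{k}$ or the zero map to $0$, hence surjective. So $\mathbf{k}[D]$ is flabby, and therefore injective.

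For the dual assertion in $\R^n$ I will pass through Matlis duality. Set $V := -U = \{-x \mid x \in U\}$; the assumption that $U$ is an up-set closed under meets translates, via the identity $-(a \wedge b) = (-a) \vee (-b)$ in $\R^n$, to the statement that $V$ is a down-set closed under joins. The first part then gives that $\mathbf{k}[V] = \mathbf{k}[-U]$ is injective. Since $\mathbf{k}[U]^*_{\mathbf{gr}} = \mathbf{k}[-U]$ (as recorded just before Proposition~\ref{prop:matlis}), Proposition~\ref{prop:matlis} yields that $\mathbf{k}[U]$ is $\grtensor$-flat.

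The main obstacle I anticipate is the limit computation: one must carefully observe that the points of $V$ outside $D$ impose no new constraints on the values over $V \cap D$, and then recognize that join-closure is precisely the hypothesis needed to prevent the kind of disconnected intersection that produced a two-dimensional limit in Example~\ref{ex:ezra}. Once that connectedness is in hand, the rest of the argument is essentially bookkeeping and an application of the duality machinery already established.
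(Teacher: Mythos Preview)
Your proposal is correct and follows essentially the same approach as the paper: both reduce injectivity to flabbiness via Proposition~\ref{proposition:BaerCriterionSheaves}(3), compute the section module $\mathbf{k}[D](V)$ as a limit, use join-closure to show $V\cap D$ is connected (hence the limit is $\mathbf{k}$ or $0$), and then invoke Proposition~\ref{prop:matlis} for the dual flatness statement. Your write-up is more explicit than the paper's about why $a\vee b\in V\cap D$ and about unwinding the Matlis-duality step via $-U$, but the argument is the same.
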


\begin{proof}
  Let $D$ be as in the statement of the proposition. Observe that, viewing $\mathbf{k}[D]$ as a sheaf, we have $\mathbf{k}[D](P)=\lim_{x\in P}\mathbf{k}[D]_{x}\cong \mathbf{k}$. Let $U$ be an up-set of $P$ with $D \cap U \neq \emptyset$. Then, since the join for all $a,b\in D\cap U$ exists in $D\cap U$, it follows that $\mathbf{k}[D](U)=\lim_{x\in U}\mathbf{k}[D]_{x}\cong \mathbf{k}$. Since the nonzero maps in the module $\mathbf{k}[D]$ are identities, the induced map between the limits is an isomorphism. Hence $\mathbf{k}[D]$ is a flabby sheaf, hence an injective persistence module by Proposition~\ref{proposition:BaerCriterionSheaves}.
  The remainder of the statement follows from Proposition~\ref{prop:matlis}.
\end{proof}

\section{Derived functors for persistence modules}
\label{section:derived_functors_of_persistence_modules}

In this section we consider the derived functors of the following functors of persistence modules: $\grtensor, \uHom,\shtensor$ and $\scHom$. 
Throughout this section we assume $(P,\le)=(\mathbb{R}^n,\le)$ and that $R=\mathbf{k}$ is a field.
For one-parameter interval decomposable persistence modules we will use the classification of projective, injective, and flat interval modules (Theorem~\ref{theorem:classification_into_flats_and_injectives})
{in order to construct projective and injective resolutions and calculate these derived functors. The resulting formulas will be used in Section~\ref{section:kunneth}.}

\subsection{Graded module Tor and Ext}
Here we consider the derived functors $\mathbf{Tor}^{\mathbf{gr}}$ and $\mathbf{Ext}_{\mathbf{gr}}$ of the graded module tensor product $\otimes_{\mathbf{gr}}$ and its adjoint, $\uHom$.

\begin{example}[\cite{polterovich2017persistence}]
\label{example:tor_interval_modules}
Consider the interval modules $\mathbf{k}[a,b)$ and $\mathbf{k}[c,d)$.
We have the following augmented projective resolution of $\mathbf{k}[a,b)$.
\[0\to \mathbf{k}[b,\infty)\to \mathbf{k}[a,\infty)\to \mathbf{k}[a,b)\to 0
\]
Apply the functor $-\grtensor \mathbf{k}[c,d)$ to the projective resolution to get the following (no longer exact) sequence.
\[0\to \mathbf{k}[b+c,b+d)\to \mathbf{k}[a+c,a+d)\to  0
\]
Calculating homology (i.e. taking the kernel of the middle map) we get the following.
\[\mathbf{Tor}^{\mathbf{gr}}_1(\mathbf{k}[a,b),\mathbf{k}[c,d))=\mathbf{k}[\max\{a+d,b+c\},b+d)\,.\]
Similarly
$\mathbf{Tor}_1^{\mathbf{gr}}(\mathbf{k}[a,b),(-\infty,d)) = \mathbf{k}[a+d,b+d)$
and 
$\mathbf{Tor}_1^{\mathbf{gr}}(\mathbf{k}[a,\infty),\mathbf{k}[c,d)) = 0$. 
\end{example}

\begin{example}
\label{example:ext_interval_modules}
Consider the interval modules $\mathbf{k}[a,b)$ and $\mathbf{k}[c,d)$.
We have the following augmented injective resolution of $\mathbf{k}[c,d)$.
\[0\to \mathbf{k}[c,d)\to \mathbf{k}(-\infty,d)\to \mathbf{k}(-\infty,c)\to 0\]
Apply the functor $\uHom(\mathbf{k}[a,b),-)$ to the injective resolution. 
\[0\to \mathbf{k}[d-b,d-a)\to\mathbf{k}[c-b,c-a)\to 0\]
Calculating homology (i.e. taking the cokernel of the middle map) we get the following.
\[\mathbf{Ext}^1_{\mathbf{gr}}(\mathbf{k}[a,b),\mathbf{k}[c,d))=\mathbf{k}[c-b,\min\{c-a,d-b\})\]
\end{example}

In Example~\ref{example:tor_interval_modules} and Example~\ref{example:ext_interval_modules}, where $U_0\subseteq \mathbb{R}$, the given one-parameter persistence modules had projective, respectively injective resolutions, of length one. It is an open question whether all one-parameter persistence modules have projective resolutions of length one. More generally, for $U_0\subseteq \mathbb{R}^n$, it is unknown if the ring $\mathbf{k}[U_0]$ has a finite global dimension.

\subsection{Sheaf Tor and Ext}

Here we consider the derived functors $\mathbf{Tor}^{\mathbf{sh}}$ and $\mathbf{Ext}_{\mathbf{sh}}$ of the sheaf tensor product $\otimes_{\mathbf{sh}}$ and its adjoint $\scHom$.

\begin{theorem}
\label{theorem:sheaf_tensor_is_exact}
Let $M$ be a persistence module. Then $-\shtensor M$ and $M\shtensor -$ are exact functors. In particular, $\mathbf{Tor}_i^{\mathbf{sh}}(M,N)=0$ for any persistence modules $M$ and $N$ and any $i\ge 1$.
\end{theorem}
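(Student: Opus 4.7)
The plan is to reduce the exactness of $-\shtensor M$ to the exactness of the ordinary tensor product of vector spaces over a field, by checking exactness stalkwise. Recall from Definition~\ref{def:sheaf_tensor_product} that for each $a\in P$, $(M\shtensor N)_a = M_a \otimes_{\mathbf{k}} N_a$, because sheafification preserves stalks. Since we are working in a Grothendieck category with enough stalks, a sequence of sheaves $0\to F'\to F\to F''\to 0$ is exact if and only if the induced sequence on stalks $0\to F'_a\to F_a\to F''_a\to 0$ is exact for every $a\in P$.

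First I would fix a short exact sequence of right persistence modules $0\to A\to B\to C\to 0$ and apply $-\shtensor M$. For each $a\in P$, the induced stalk sequence is
\begin{equation*}
0\to A_a\otimes_{\mathbf{k}} M_a \to B_a\otimes_{\mathbf{k}} M_a \to C_a\otimes_{\mathbf{k}} M_a\to 0.
\end{equation*}
Because $\mathbf{k}$ is a field, every $\mathbf{k}$-vector space is flat, so $-\otimes_{\mathbf{k}} M_a$ is an exact functor on $\mathbf{Vect}_{\mathbf{k}}$. Hence the stalk sequence is exact for every $a$, and therefore the sheaf sequence $0\to A\shtensor M\to B\shtensor M\to C\shtensor M\to 0$ is exact. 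This shows $-\shtensor M$ is exact; the argument for $M\shtensor -$ is symmetric.

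Finally, for the claim about $\mathbf{Tor}_i^{\mathbf{sh}}$, I would invoke the standard fact from homological algebra (Appendix~\ref{sec:homological-algebra}) that the left derived functors of an exact functor vanish in positive degrees. Concretely, choose a flat (or projective) resolution $P_\bullet \to N$ of $N$; then $\mathbf{Tor}_i^{\mathbf{sh}}(M,N) = H_i(M\shtensor P_\bullet)$. Since $M\shtensor -$ is exact, applying it to the resolution $P_\bullet\to N$ yields a resolution of $M\shtensor N$, so the homology vanishes in positive degrees, giving $\mathbf{Tor}_i^{\mathbf{sh}}(M,N)=0$ for $i\ge 1$.

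There is no real obstacle here: the whole argument rests on the two facts that (i) stalks of a sheafified tensor product agree with the tensor product of stalks, and (ii) tensor products over a field are exact. The only thing to be mindful of is that the formula $(M\shtensor N)_a = M_a\otimes_{\mathbf{k}} N_a$ already encodes the sheafification step, so no separate argument about sheafification being exact is needed once we work stalkwise.
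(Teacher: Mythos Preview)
Your proof is correct and follows essentially the same approach as the paper: check exactness stalkwise using $(M\shtensor N)_a = M_a\otimes_{\mathbf{k}} N_a$, then invoke flatness of vector spaces over a field. Your additional sentence spelling out why $\mathbf{Tor}_i^{\mathbf{sh}}$ vanishes via a projective resolution is a fine elaboration of what the paper leaves implicit.
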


\begin{proof}
We will show that $-\shtensor M$ is exact. The other case is symmetric.

 Suppose $0\to A\to B\to C\to 0$ is a short exact sequence of persistence modules. A classical result in sheaf theory is that a sequence of morphisms $A\to B\to C$ of sheaves is short exact if and only if the induced maps on all the stalks are short exact. Thus for all $x\in \mathbb{R}^n$,  $0\to A_{x}\to B_{x}\to C_{x}\to 0$ is a short exact sequence of vector spaces. Now observe that applying the functor $-\shtensor M$, we get a sequence $A\shtensor M\to B\shtensor M\to C\shtensor M$ which gives us a sequence on stalks  $(A\shtensor M)_{x} \to (B\shtensor M)_{x}\to (C\shtensor M)_{x}$ which is equal to  $A_{x}\otimes_{\mathbf{k}}M_{x}\to B_{x}\otimes_{\mathbf{k}}M_{x}\to C_{x}\otimes_{\mathbf{k}}M_{x}$. Since every $\mathbf{k}$-vector space is a flat $\mathbf{k}$-module, and the sequence $A_{x}\to B_{x}\to C_{x}$ is short exact, the sequence $(A\shtensor M)_{x} \to (B\shtensor M)_{x}\to (C\shtensor M)_{x}$ is also exact, for all $x\in \mathbb{R}^n$. Thus  the sequence $A\shtensor M\to B\shtensor M\to C\shtensor M$ is also exact. Thus $-\shtensor M$ is an exact functor.
\end{proof}

It is not true in general that for any persistence module $M$ the functors $\scHom(-,M)$ and $\scHom(M,-)$ are exact. Thus we do have non-trivial $\mathbf{Ext}_{\mathbf{sh}}^i(M,N)$ groups for certain persistence modules $M$ and $N$, see Example~\ref{example:sheaf_ext_of_interval_modules}.

\begin{example}
\label{example:sheaf_ext_of_interval_modules}
Consider two interval modules $\mathbf{k}[a,b)$ and $\mathbf{k}[c,d)$.
We have the following augmented projective resolution.
\[0\to \mathbf{k}[b,\infty)\to \mathbf{k}[a,\infty)\to \mathbf{k}[a,b)\to 0\]
Apply the functor $\scHom(-,\mathbf{k}[c,d))$ to the projective resolution to get the (no longer exact) sequence:
\[0\to \scHom(\mathbf{k}[a,\infty),\mathbf{k}[c,d))\to \scHom(\mathbf{k}[b,\infty),\mathbf{k}[c,d))\to 0\]
Using Example~\ref{example:sheaf_hom_of_interval_modules} this sequence falls in one of the following cases:
\[\begin{cases}
0\to \mathbf{k}[c,d)\to \mathbf{k}[c,d)\to 0 & \text{if } a < b < c < d\\
0\to \mathbf{k}[c,d)\to \mathbf{k}(-\infty,d) \to 0& \text{if } a < c\le b < d\\
0\to \mathbf{k}[c,d)\to 0\to 0 & \text{if } a < c < d\le b\\
0\to \mathbf{k}(-\infty,d)\to \mathbf{k}(-\infty,d) \to 0 & \text{if } c\le a < b < d\\
0\to \mathbf{k}(-\infty,d)\to 0\to 0& \text{if } c\le a < d\le b\\
0\to 0\to 0\to 0 & \text{if } c < d\le a < b
\end{cases}
\]
By definition, $\mathbf{Ext}_{\mathbf{sh}}^1(\mathbf{k}[a,b),\mathbf{k}[c,d))$ is the cokernel of the middle morphisms. Thus we have the following.
\[\mathbf{Ext}_{\mathbf{sh}}^1(\mathbf{k}[a,b),\mathbf{k}[c,d))=\begin{cases}
\mathbf{k}(-\infty,c) & \text{if } a < c\le b < d\\
0 & \text{otherwise}
\end{cases}\] 
\end{example}
  
\section{K\"unneth theorems and universal coefficient theorems}
\label{section:kunneth}

In this section we state K\"unneth and {U}niversal {C}oefficient {T}heorems for chain complexes of persistence modules. We apply these theorems to products of filtered CW complexes. 
We will see that the K\"unneth formula for the additive product-filtration comes from graded module theory and that the K\"unneth formula for the maximum product-filtration comes from sheaf theory.

 Theorem~\ref{theorem:Kunneth_gr_1} and Theorem~\ref{theorem:Kunneth_gr_2} below imply the existence of certain natural short exact sequences, however additional assumptions are needed for these sequences to split. One of these is the assumption that the ring $R[U_{0}]$ is \emph{hereditary} (submodules of projective modules are projective). For example, the $G=\mathbb{R}^n$-graded ring $\mathbf{k}[U_0]$ is not hereditary. 
Indeed, for $a\in \mathbb{R}$ the interval module $\mathbf{k}[a,\infty)$ is projective, however its submodule $\mathbf{k}(a,\infty)$ is not
 (Theorem~\ref{theorem:classification_into_flats_and_injectives}).
However, the $\mathbb{Z}$-graded ring $\mathbf{k}[U_0]$ where $U_0$ is the principal up-set at $0$ of the poset $\mathbb{Z}$, is hereditary, since it is a principal ideal domain.
We can obtain splittings for more general persistence modules if they are left Kan extension of persistence modules indexed over $\Z$.
That is, if $M:\mathbf{Z}\to \mathbf{Vect}_{\mathbf{k}}$ is a persistence module and $i:\mathbf{Z}\to G$ is an inclusion of posets, then the left Kan extension of $M$ along $i$, is the persistence module given by $\overline{M}_{a}={\lim_{i(k)\le a}}M_{k}$.
In particular, when $n=1$ and $M$ is a real parameter persistence module isomorphic to a direct sum of interval modules, $M={\bigoplus_{i=1}^N}\mathbf{k}[a_i,b_i)$ then $M$ is obtained by such a Kan extension.
In these cases, by the functoriality of the left Kan extension,
the splitting of persistence modules indexed by $\Z$ provides a splitting of persistence modules indexed by $G$.

Below, let $(P,\le,+,0)$ be a {preorder} with a compatible abelian group structure. Let $*$ denote either $\mathbf{sh}$ or $\mathbf{gr}$.
Recall that given a chain complex $(K,d^K)$, valued in some abelian category, the subcomplex of boundaries is the chain complex $(L,d^L)$ where $L_n= d^K_n(K_n)$ and $d^L_n$ is the restriction of $d^K_n$, for all $n\in \mathbb{Z}$.

\begin{theorem}[K\"unneth Homology Theorem for Persistence Modules]
\label{theorem:Kunneth_gr_1} 
Let  $(K,d^K)$ be a chain complex of  $\otimes_*$-flat right persistence modules whose subcomplex of boundaries $B$ also has all terms $\otimes_*$-flat. Let $(L,d^L)$ be a chain complex of left persistence modules. Then:
\begin{itemize}
\item[1)] For every $n\in \mathbb{Z}$ there is a natural short exact sequence 
\begin{gather*}
0\to\bigoplus\limits_{p+q=n}(H_p(K)\otimes_* H_q(L))\to H_n(K\otimes_* L)\to\bigoplus\limits_{p+q=n-1}(
\emph{\textbf{Tor}}^{*}_1(H_p(K),H_q(L)))\to 0\,.
\end{gather*}
\item[2)] Suppose now that $R[U_{0}]$ is right hereditary and all terms in $(K,d^K)$ are projective, then the above sequence splits (the splitting need not be natural).
\end{itemize}
\end{theorem}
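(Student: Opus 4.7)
The plan is to adapt the classical proof of the K\"unneth theorem for chain complexes of modules, exploiting that $\mathbf{A}^{\mathbf{P}}$ is a Grothendieck category (Proposition~\ref{prop:GeneratorsCogenerators}) so that the standard homological algebra machinery applies uniformly in both the sheaf and graded-module settings. Let $Z_n = \ker d^K_n$ and $B_n = \operatorname{im} d^K_{n+1}$ be the cycles and boundaries as sub-persistence modules of $K_n$. For each $n$ one has a short exact sequence of persistence modules
\[
0 \to Z_n \to K_n \to B_{n-1} \to 0,
\]
and since $K_n$ and $B_{n-1}$ are $\otimes_*$-flat by hypothesis, the long exact sequence of $\mathbf{Tor}^{*}$ forces $Z_n$ to also be $\otimes_*$-flat.

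For part $(1)$, these sequences assemble into a short exact sequence of chain complexes $0 \to Z \to K \to B[-1] \to 0$, where $Z$ and $B$ carry the zero differential. Termwise flatness of $B[-1]$ ensures $- \otimes_* L$ preserves exactness. The long exact sequence in homology, together with the computations $H_n(Z \otimes_* L) = \bigoplus_{p+q=n}(Z_p \otimes_* H_q(L))$ and $H_n(B[-1] \otimes_* L) = \bigoplus_{p+q=n-1}(B_p \otimes_* H_q(L))$ (which follow from flatness of the $Z_p$ and $B_p$ together with the zero differentials on $Z$ and $B$), reduces matters to identifying the connecting homomorphism. A diagram chase will show that this connecting map is the direct sum of the maps $i_p \otimes_* 1_{H_q(L)}$ induced by the inclusions $i_p: B_p \hookrightarrow Z_p$. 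I would then apply $- \otimes_* H_q(L)$ to the short exact sequence $0 \to B_p \to Z_p \to H_p(K) \to 0$ and use $\mathbf{Tor}_1^{*}(Z_p, -) = 0$ to obtain $\ker(i_p \otimes_* 1) = \mathbf{Tor}_1^{*}(H_p(K), H_q(L))$ and $\operatorname{coker}(i_p \otimes_* 1) = H_p(K) \otimes_* H_q(L)$. Splicing these identifications into the long exact sequence yields the desired natural short exact sequence, and naturality is inherited from the functoriality of every construction used.

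For part $(2)$, the hereditary assumption implies that subobjects of projective persistence modules are projective, so in particular each $Z_n$ and each $B_n$ is projective. The short exact sequence $0 \to Z_n \to K_n \to B_{n-1} \to 0$ then splits, giving decompositions $K_n = Z_n \oplus W_n$ in which $d^K_n$ restricts to an isomorphism $W_n \xrightarrow{\sim} B_{n-1}$. From these splittings I would construct a chain map $\pi: K \to H(K)$ (viewing $H(K)$ as a complex with the zero differential) by projecting $K_n \to Z_n$ and then quotienting to $H_n(K) = Z_n/B_n$; this $\pi$ is straightforwardly a chain map and induces the identity on homology. The induced map $(\pi \otimes_* 1)_*: H_n(K \otimes_* L) \to H_n(H(K) \otimes_* L)$ restricts to the identity on each $H_p(K) \otimes_* H_q(L)$ summand, and composing with the splittings of the auxiliary universal-coefficient sequences for each complex $H_p(K) \otimes_* L$---which split because hereditariness gives $H_p(K)$ projective dimension at most one---assembles the desired retraction of the K\"unneth injection.

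The principal obstacle I anticipate is executing the splitting in part $(2)$ rigorously. Part $(1)$ is formal homological algebra in a Grothendieck category, transferring verbatim from the classical module case once the flatness facts and $\mathbf{Tor}^{*}$ long exact sequences of Section~\ref{section:derived_functors_of_persistence_modules} are invoked. The splitting, by contrast, requires two non-trivial ingredients: first, that hereditariness is strong enough to split the universal coefficient sequence for each $H_p(K) \otimes_* L$ through a length-one projective resolution of $H_p(K)$; and second, that the chain map $\pi$ and these UCT splittings combine compatibly to yield a single retraction. It is worth noting that in the sheaf case ($* = \mathbf{sh}$), Theorem~\ref{theorem:sheaf_tensor_is_exact} makes the $\mathbf{Tor}_1^{\mathbf{sh}}$ term vanish so the splitting is automatic, and hence the real content of part $(2)$ lies in the graded-module case.
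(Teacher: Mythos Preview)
Your argument for part (1) is correct and is precisely the standard argument the paper invokes by citing Weibel's Theorem~3.6.3: the short exact sequence $0 \to Z \to K \to B[-1] \to 0$ of complexes with zero differential on $Z$ and $B$, flatness of $Z_p$ and $B_p$, identification of the connecting map with $\iota_p \otimes 1$, and the resulting splice. Nothing to add there.

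Part (2), however, has a genuine gap. Your proposed retraction factors through $(\pi\otimes_*1)_*\colon H_n(K\otimes_* L)\to H_n(H(K)\otimes_* L)=\bigoplus_p H_{n-p}(H_p(K)\otimes_* L)$, and then appeals to a ``UCT splitting'' for each $H_p(K)\otimes_* L$ on the grounds that $H_p(K)$ has projective dimension at most one. But the universal coefficient sequence for a module $M$ tensored with a complex $L$ requires flatness hypotheses on $L$, not on $M$; with only $\mathrm{pd}(M)\le 1$ there is in general \emph{no} short exact sequence relating $M\otimes_* H_q(L)$ and $H_q(M\otimes_* L)$, and the natural map $M\otimes_* H_q(L)\to H_q(M\otimes_* L)$ need not even be injective. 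Concretely, over $R=\mathbb{Z}$ take $K=(\mathbb{Z}\xrightarrow{2}\mathbb{Z})$ so $H_0(K)=\mathbb{Z}/2$, and $L=(\mathbb{Z}/4\xrightarrow{2}\mathbb{Z}/4)$. One checks that the composite $(\pi\otimes 1)_*\circ\alpha$ kills the generator of $H_0(K)\otimes H_1(L)=\mathbb{Z}/2$, so no retraction can be built this way---even though the K\"unneth sequence does split here, since $H_1(K\otimes L)\cong(\mathbb{Z}/2)^2$.

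The argument the paper cites (Weibel, Exercise~3.6.2) proceeds differently. Hereditariness makes each $Z_p$ and $B_p$ projective, and the splitting $K_n\cong Z_n\oplus B_{n-1}$ exhibits $K$ as isomorphic, as a complex, to the direct sum $\bigoplus_p\bigl(B_p\hookrightarrow Z_p\bigr)[p]$ of shifted two-term projective complexes. For each such two-term piece one constructs a section of the surjection onto $\mathbf{Tor}^*_1(H_p(K),H_{q}(L))$ directly: given a class in $\ker\bigl(B_p\otimes_* H_q(L)\to Z_p\otimes_* H_q(L)\bigr)$, projectivity of $Z_p$ lets you lift its image in $Z_p\otimes_* B_q(L)$ back through the surjection $Z_p\otimes_* L_{q+1}\twoheadrightarrow Z_p\otimes_* B_q(L)$, producing an explicit cycle in $(K\otimes_* L)_{p+q+1}$. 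The key step is a lift along a surjection coming from $L$, enabled by projectivity of $Z_p$; your route through $H(K)\otimes_* L$ discards exactly the projectivity information needed to perform that lift.
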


\begin{proof}
For part 1), adapt the proof of Theorem 3.6.3 in \cite{MR1269324}. Part 2) follows from Exercise 3.6.2 in \cite{MR1269324}.
\end{proof}

Recall that by Theorem~\ref{theorem:sheaf_tensor_is_exact} persistence modules with coefficients in a field are $\shtensor$-flat, hence there will be no $\mathbf{Tor}^{\mathbf{sh}}_1$ term present in the sequence above, if we work over a field $\mathbf{k}$.

\begin{theorem}[K\"unneth Cohomology Theorem for Persistence Modules]
\label{theorem:Kunneth_gr_2}
Let $(K,d^K)$ be a complex of left persistence modules such that all terms of $K$ and its subcomplex of boundaries $B$ are projective.
\begin{itemize}
\item[1)] For all $n\ge 0$ and every complex $(L, d^L)$ of left persistence modules, there is a natural short exact sequence
\[0\to \prod\limits_{p-q=n-1}\mathbf{Ext}_{*}^1(H_p(K),H_{-q}(L))\to H^n(\mathbf{Hom}^{*}(K,L))\to \prod\limits_{p-q=n}\Hom^*(H_p(K),H_{-q}(L)) \to 0\,.\]
where $\Hom^{*}$ is $\uHom$ if $*=\mathbf{gr}$ and $\scHom$ otherwise. 
\item[2)] If $R[U_{0}]$ is graded left hereditary, then the exact sequence splits for all $n\ge 0$.
\end{itemize}
\end{theorem}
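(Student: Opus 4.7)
The plan is to adapt the classical proof of the cohomology K\"unneth formula (cf.\ \cite[Ch.~3]{MR1269324}) to the present framework, with $\mathbf{Hom}^{*}$ denoting the total Hom cochain complex built from $\uHom$ or $\scHom$ as appropriate. The entire argument runs by producing a single short exact sequence of chain complexes, applying $\mathbf{Hom}^{*}(-,L)$, and extracting the associated long exact sequence in cohomology.

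First, I would exploit the projectivity of each $B_{n-1}$ to split the short exact sequence $0 \to Z_n \to K_n \to B_{n-1} \to 0$ in each degree $n$. This makes each cycle module $Z_n$ a direct summand of the projective module $K_n$, hence itself projective. Assembling yields a degreewise-split short exact sequence of chain complexes
\begin{equation*}
0 \to Z \to K \xrightarrow{d} B[-1] \to 0,
\end{equation*}
where $Z$ and $B[-1]$ carry the zero differential. Because the sequence is split in each degree, the additive functor $\mathbf{Hom}^{*}(-,L)$ preserves its exactness, yielding a short exact sequence of cochain complexes.

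Second, I would pass to the associated long exact sequence in cohomology. Since $Z$ and $B[-1]$ have zero differentials and projective terms, $\Hom^{*}(Z_p,-)$ and $\Hom^{*}(B_p,-)$ are exact and commute with the homology of $L$, which identifies
\begin{equation*}
H^n(\mathbf{Hom}^{*}(Z,L)) \isom \prod_{p-q=n}\Hom^{*}(Z_p, H_{-q}(L))
\end{equation*}
and analogously for $\mathbf{Hom}^{*}(B[-1],L)$ with the index constraint $p-q=n-1$. The key step is then to identify the connecting homomorphism with the map induced by the inclusion $B_p \hookrightarrow Z_p$. Once this identification is made, the short exact sequence $0 \to B_p \to Z_p \to H_p(K) \to 0$ serves as a length-one projective resolution of $H_p(K)$, and applying $\Hom^{*}(-, H_{-q}(L))$ to it realizes the kernel of the connecting map as $\Hom^{*}(H_p(K),H_{-q}(L))$ and its cokernel as $\mathbf{Ext}_{*}^{1}(H_p(K),H_{-q}(L))$. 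Taking products over $p-q=n$ and $p-q=n-1$ respectively yields the desired short exact sequence, which is natural by construction.

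For part 2), the hereditary hypothesis on $R[U_0]$ ensures that submodules of projectives are projective, and the standard splitting argument (cf.\ \cite[Exercise 3.6.2]{MR1269324}) transfers to our Grothendieck-categorical setting. I expect the main obstacle to be the careful identification of the connecting homomorphism together with the bookkeeping of indices in the double complex $\mathbf{Hom}^{*}(K,L)$; a secondary subtlety is to verify that these manipulations work uniformly for both $\uHom$ and $\scHom$, which uses that persistence modules form a Grothendieck category in which $\Hom^{*}$ interacts well with arbitrary products and direct summands.
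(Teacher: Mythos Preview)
Your proposal is correct and follows precisely the approach the paper invokes: the paper's own proof is simply a reference to \cite[Exercise~3.6.1]{MR1269324} and \cite[Theorem~10.85]{rotman2008introduction}, noting that the argument adapts to the graded setting, and what you have written is exactly that classical argument spelled out. Your acknowledgment of the index bookkeeping and the need to check that the manipulations go through uniformly for both $\uHom$ and $\scHom$ is apt, but these are routine verifications in the Grothendieck category of persistence modules rather than genuine obstacles.
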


\begin{proof}
In the ungraded module case, this is Exercise 3.6.1 in \cite{MR1269324} and Theorem 10.85 in \cite{rotman2008introduction}. The proof can be adapted to the graded case.
\end{proof}
 
We apply the K\"unneth theorems above to some simple filtered simplicial complexes.

\begin{example}
\label{example:kunneth_plus}
See Figure~\ref{fig:20}. Let $(K,d^K)$ and $(L, d^L)$ be chain complexes of persistence modules determined by filtrations of the $1$-simplex. In particular, let
\begin{gather*}
  K_0=\mathbf{k}[a_1,\infty)\oplus \mathbf{k}[b_1,\infty), \quad
  K_1=\mathbf{k}[c_1,\infty),\\
  L_0=\mathbf{k}[a_2,\infty)\oplus \mathbf{k}[b_2,\infty), \quad
  L_1=\mathbf{k}[c_2,\infty)
\end{gather*}
where $a_1\le b_1\le c_1$ and $a_2\le b_2\le c_2$, and let $d_K$ and $d_L$ be the induced boundary maps by the boundary maps of the $1$-simplex, as discussed in Section~\ref{sec:chain-complexes}. 
Note that the boundary subcomplexes of $K$ and $L$ are $\grtensor$-flat. Indeed, the only nontrivial boundary map are $d^K_1$ and $d^L_1$ and by construction $d^K_1(K_1)\cong\mathbf{k}[c_1,\infty)$ and $d^L_1(L_1)\cong \mathbf{k}[c_2,\infty)$ and we know these are in fact projective by Theorem~\ref{theorem:classification_into_flats_and_injectives}. Thus the hypotheses in Theorem~\ref{theorem:Kunneth_gr_1} are satisfied for both $(K,d^K)$ and $(L,d^L)$.
Now consider the product complex $K \grtensor L$ (Definition~\ref{def:monoidal_prod_of_chain_complexes}). Note that it is the chain complex of persistence modules corresponding to the filtered cubical given by the square in Figure~\ref{fig:20}, which assigns each cell the sum of filtration values of corresponding cells in the two 1-simplices.
\begin{figure}[ht]
\centering
\begin{tikzpicture}[line cap=round,line join=round,x=1.0cm,y=1.0cm,scale=1]
\draw[fill=green!20] (6,0) rectangle (8,2);
\draw[color=blue] (0,0)--(2,0);
\filldraw[blue] (1,0) circle (0pt) node[anchor=south,color=black] {$c_1$};
\draw[color=blue] (4,0)--(4,2);
\filldraw[blue] (4,1) circle (0pt) node[anchor=east,color=black] {$c_2$};
\draw[color=blue] (6,0)--(8,0);
\filldraw[blue] (7,0) circle (0pt) node[anchor=south,color=black,scale=0.8] {$c_1+a_2$};
\draw[color=blue] (8,0)--(8,2);
\filldraw[blue] (7,2) circle (0pt) node[anchor=north,color=black,scale=0.8] {$c_1+b_2$};
\draw[color=blue] (8,2)--(6,2);
\filldraw[blue] (8,1) circle (0pt) node[anchor=north,color=black,scale=0.8,rotate=90] {$b_1+c_2$};
\draw[color=blue] (6,2)--(6,0);
\filldraw[blue] (6,1) circle (0pt) node[anchor=south,color=black,scale=0.8,rotate=90] {$a_1+c_2$};
\draw[color=black] (3,1) node {$\times$};
\draw[color=black,->] (4.75,1)--(5.25,1);
\draw[color=black](7,1) node[scale=0.8] {$c_1+ c_2$};
\fill [color=red] (0,0) circle (0.5mm) node[anchor=south,color=black] {$a_1$};
\fill [color=red] (2,0) circle (0.5mm) node[anchor=south,color=black] {$b_1$};
\fill [color=red] (4,0) circle (0.5mm) node[anchor=east,color=black] {$a_2$};
\fill [color=red] (4,2) circle (0.5mm) node[anchor=east,color=black] {$b_2$};
\fill [color=red] (6,0) circle (0.5mm) node[anchor=north,color=black,scale=0.8] {$a_1+a_2$};
\fill [color=red] (8,0) circle (0.5mm) node[anchor=north,color=black,scale=0.8] {$b_1+a_2$};
\fill [color=red] (8,2) circle (0.5mm) node[anchor=south,color=black,scale=0.8] {$b_1+b_2$};
\fill [color=red] (6,2) circle (0.5mm) node[anchor=south,color=black,scale=0.8] {$a_1+b_2$};
\end{tikzpicture}
f\caption{A product complex, with respect to $\grtensor$, visualized.}
\label{fig:20}
\end{figure}

One can compute that the only non-trivial homology groups are:
\begin{gather*}
{H_0(K)=\mathbf{k}[a_1,\infty)\oplus\mathbf{k}[b_1,c_1), H_0(L)=\mathbf{k}[a_2,\infty)\oplus \mathbf{k}[b_2,c_2),}\\ 
H_0(K\grtensor L)=\mathbf{k}[a_1+a_2,\infty) \oplus \mathbf{k}[a_1+b_2,a_1+c_2) \oplus \\
\oplus\mathbf{k}[b_1+a_2,c_1+a_2) \oplus \mathbf{k}[b_1+b_2,\min\{b_1+c_2,c_1+b_2\}),\\
{\text{and } H_1(K\grtensor L)=\mathbf{k}[\max\{b_1+c_2,c_1+b_2\},c_1+c_2).}
\end{gather*}
Note that
$H_0(K \grtensor L) = H_0(K) \grtensor H_0(L)$ and
$H_1(K \grtensor L) = \mathbf{Tor}^{\mathbf{gr}}_1(H_0(K),H_0(L))$,
which agrees with Theorem~\ref{theorem:Kunneth_gr_1}.
\end{example}

\begin{example}
\label{example:kunneth_max}
  Let $(K,d^K)$ and $(L,d^L)$ be as in the previous example, where again $a_1\le b_1\le c_1$ and $a_2\le b_2\le c_2$. Now form the product complex
  $K \shtensor L$,
  recalling that $\mathbf{k}[a,\infty) \shtensor \mathbf{k}[b,\infty) = \mathbf{k}[\max(a,b),\infty)$ (Example~\ref{example:sheaf_tensor_of_interval_modules}).
The corresponding picture is given in Figure~\ref{fig:30}.
\begin{figure}[ht]
\centering
\begin{tikzpicture}[line cap=round,line join=round,x=1.0cm,y=1.0cm,scale=1]
\draw[fill=green!20] (6,0) rectangle (8,2);
\draw[color=blue] (0,0)--(2,0);
\filldraw[blue] (1,0) circle (0pt) node[anchor=south,color=black] {$c_1$};
\draw[color=blue] (4,0)--(4,2);
\filldraw[blue] (4,1) circle (0pt) node[anchor=east,color=black] {$c_2$};
\draw[color=blue] (6,0)--(8,0);
\filldraw[blue] (7,0) circle (0pt) node[anchor=south,color=black,scale=0.8] {$\max(c_1,a_2)$};
\draw[color=blue] (8,0)--(8,2);
\filldraw[blue] (7,2) circle (0pt) node[anchor=north,color=black,scale=0.8] {$\max(c_1,b_2)$};
\draw[color=blue] (8,2)--(6,2);
\filldraw[blue] (8,1) circle (0pt) node[anchor=north,color=black,scale=0.8,rotate=90] {$\max(b_1,c_2)$};
\draw[color=blue] (6,2)--(6,0);
\filldraw[blue] (6,1) circle (0pt) node[anchor=south,color=black,scale=0.8,rotate=90] {$\max(a_1,c_2)$};
\draw[color=black] (3,1) node {$\times$};
\draw[color=black,->] (4.75,1)--(5.25,1);
\draw[color=black](7,1) node[scale=0.8] {$\max(c_1,c_2)$};
\fill [color=red] (0,0) circle (0.5mm) node[anchor=south,color=black] {$a_1$};
\fill [color=red] (2,0) circle (0.5mm) node[anchor=south,color=black] {$b_1$};
\fill [color=red] (4,0) circle (0.5mm) node[anchor=east,color=black] {$a_2$};
\fill [color=red] (4,2) circle (0.5mm) node[anchor=east,color=black] {$b_2$};
\fill [color=red] (6,0) circle (0.5mm) node[anchor=north,color=black,scale=0.8] {${\max\{a_1,a_2\}}$};
\fill [color=red] (8,0) circle (0.5mm) node[anchor=north,color=black,scale=0.8] {${\max\{b_1,a_2\}}$};
\fill [color=red] (8,2) circle (0.5mm) node[anchor=south,color=black,scale=0.8] {${\max\{b_1,b_2\}}$};
\fill [color=red] (6,2) circle (0.5mm) node[anchor=south,color=black,scale=0.8] {${\max\{a_1,b_2\}}$};
\end{tikzpicture}
\caption{A product complex, with respect to $\shtensor$, visualized.}
\label{fig:30}
\end{figure}

In this case, recalling the discussion from Example~\ref{example:sheaf_tensor_of_interval_modules}, the only non-trivial homology groups are:
\begin{gather*}
{H_0(K)=\mathbf{k}[a_1,\infty)\oplus\mathbf{k}[b_1,c_1), H_0(L)=\mathbf{k}[a_2,\infty)\oplus \mathbf{k}[b_2,c_2),}\\
H_0(K\shtensor L) = \mathbf{k}[\max\{a_1,a_2\},\infty)\oplus
\mathbf{k}[\max\{a_1,b_2\},c_2)\oplus\\
\oplus\mathbf{k}[\max\{b_1,a_2\},c_1)\oplus
\mathbf{k}[\max\{b_1,b_2\},\min\{c_1,c_2\})=\\
{=H_0(K)\shtensor H_0(L).}
\end{gather*}
\end{example}

Example~\ref{example:kunneth_plus} and Example~\ref{example:kunneth_max} are specific instances of Theorem~\ref{theorem:kunneth_applications}.
Let $X$ and $Y$ be CW complexes with filtrations $f$ and $g$, respectively (see Section~\ref{sec:chain-complexes}). 
The CW complex $X \times Y$ has two canonical filtration given by $f+g$ and $\max(f,g)$, which we call the additive filtration and maximum filtration respectively.

\begin{theorem}
\label{theorem:kunneth_applications}
Let $(K,d_K)$ and $(L,d_L)$ be two chain complexes of persistence modules obtained from filtered CW complexes $X$ and $Y$ respectively (Section~\ref{sec:chain-complexes}). Then the additive and maximum filtrations on $X\times Y$ induce the chain complexes of persistence modules $K\grtensor L$ and $K\shtensor L$, respectively. In particular, we can calculate the persistent homology of these filtrations on $X\times Y$ by applying Theorem~\ref{theorem:Kunneth_gr_1}.
\end{theorem}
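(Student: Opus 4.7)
The plan is to verify that, cell-by-cell, the chain complex of persistence modules $\mathcal{C}_*(X\times Y)$ obtained from the additive (resp.\ maximum) filtration on the product CW complex agrees with $K\grtensor L$ (resp.\ $K\shtensor L$) as chain complexes in the category of persistence modules, and then invoke Theorem~\ref{theorem:Kunneth_gr_1}.

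First I would recall that the cells of $X\times Y$ are products $\sigma\times\tau$ with $\sigma\in X_{(p)}$ and $\tau\in Y_{(q)}$, so that
\begin{equation*}
  \mathcal{C}_m(X\times Y) = \bigoplus_{p+q=m}\,\bigoplus_{\sigma\in X_{(p)},\,\tau\in Y_{(q)}} \mathbf{k}[h(\sigma\times\tau),\infty),
\end{equation*}
where $h$ is the filtration on $X\times Y$. For the additive filtration $h=f+g$ and we have $h(\sigma\times\tau)=f(\sigma)+g(\tau)$; for the maximum filtration $h=\max(f,g)$ and $h(\sigma\times\tau)=\max\{f(\sigma),g(\tau)\}$. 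The core computation is then to identify each summand with a tensor product of generators of $K$ and $L$. By Example~\ref{example:TensorOfIntervalModules} we have $\mathbf{k}[a,\infty)\grtensor\mathbf{k}[b,\infty)=\mathbf{k}[a+b,\infty)$, and by Example~\ref{example:sheaf_tensor_of_interval_modules} we have $\mathbf{k}[a,\infty)\shtensor\mathbf{k}[b,\infty)=\mathbf{k}[\max\{a,b\},\infty)$. Both $\grtensor$ and $\shtensor$ commute with direct sums (they are left adjoints, see Section~\ref{section:enriched}), so distributing the tensor product over the direct sum decompositions of $K_p$ and $L_q$ yields
\begin{equation*}
  (K\grtensor L)_m \;\isom\; \bigoplus_{p+q=m}\bigoplus_{\sigma,\tau} \mathbf{k}[f(\sigma)+g(\tau),\infty)
  \quad\text{and}\quad
  (K\shtensor L)_m \;\isom\; \bigoplus_{p+q=m}\bigoplus_{\sigma,\tau} \mathbf{k}[\max\{f(\sigma),g(\tau)\},\infty),
\end{equation*}
which are exactly the two descriptions of $\mathcal{C}_m(X\times Y)$ above.

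Next I would verify that under this identification the boundary maps coincide. The cellular boundary on $X\times Y$ acts on a product cell by the Leibniz rule $\partial(\sigma\times\tau)=\partial\sigma\times\tau+(-1)^{|\sigma|}\sigma\times\partial\tau$, and the tensor differential on $K\otimes_* L$ (Definition~\ref{def:monoidal_prod_of_chain_complexes}) is defined by the same formula. Since the natural transformation $d_m$ of Section~\ref{sec:chain-complexes} is by construction the one extending $(\partial\sigma)_a$ linearly, the matching on generators suffices. A small but important check is that the translation maps $\mathbf{k}[a,\infty)_{a\leq b}$ induce the correct structure maps on the tensor summands; this is immediate from the naturality of the Leibniz differential.

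Finally, to apply Theorem~\ref{theorem:Kunneth_gr_1}, I need to verify its hypotheses for both product filtrations. Every term of $K$ and $L$ is a direct sum of modules of the form $\mathbf{k}[a,\infty)$, which are projective (hence $\grtensor$- and $\shtensor$-flat) by Theorem~\ref{theorem:classification_into_flats_and_injectives}; direct sums of flats are flat, so each $K_n$ and $L_n$ is flat. For the subcomplex of boundaries $B_*K$, each $B_nK\subset K_{n-1}$ is obtained as the image of a chain map between direct sums of $\mathbf{k}[a,\infty)$'s and (in the one-parameter interval-decomposable setting coming from a filtered CW complex) decomposes as a direct sum of intervals $\mathbf{k}[a,\infty)$, all of which are flat; the same holds for $L$. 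Under $\shtensor$, Theorem~\ref{theorem:sheaf_tensor_is_exact} makes the hypothesis trivial. The main obstacle I anticipate is bookkeeping in the verification that the boundary maps agree as natural transformations rather than merely on the indexing cells—in particular that the signs coming from the Koszul rule on $K\grtensor L$ and $K\shtensor L$ are consistent with the cellular boundary on the product CW complex. Once this is established, Theorem~\ref{theorem:Kunneth_gr_1} applies directly and yields the persistent homology of $X\times Y$ from that of $X$ and $Y$.
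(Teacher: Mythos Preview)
Your proposal is correct and follows essentially the same approach as the paper's proof: identify the generators $\mathbf{k}[f(\sigma),\infty)\otimes_*\mathbf{k}[g(\tau),\infty)$ with the summand for the product cell $\sigma\times\tau$ via the interval-module tensor formulas, then check that the Leibniz differential on $K\otimes_* L$ matches the cellular boundary $\partial(\sigma\times\tau)=\partial\sigma\times\tau+(-1)^{|\sigma|}\sigma\times\partial\tau$. Your write-up is somewhat more explicit than the paper's (you spell out the direct-sum distributivity and separately verify the flatness hypotheses of Theorem~\ref{theorem:Kunneth_gr_1}, which the paper leaves implicit or to examples), but the core argument is the same.
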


\begin{proof}
  Let $\sigma$ be an $n$-cell of $Y$ and let $\tau$ be an $m$-cell of $Y$. 
  These cells have corresponding free summands
  $\mathbf{k}[a_{\sigma},\infty)$ and $\mathbf{k}[b_{\tau},\infty)$ in $K_n$ and $L_m$ respectively (Section~\ref{sec:chain-complexes}).
Consider the additive filtration on $X \times Y$.
Then $\sigma \times \tau$ is a $(n+m)$-cell in $X \times Y$ with corresponding free summand
$\mathbf{k}[a_{\sigma},\infty)\grtensor \mathbf{k}[b_{\tau},\infty)=\mathbf{k}[a_{\sigma}+b_{\tau},\infty)$
in $(K \grtensor L)_{m+n}$
(Example~\ref{example:TensorOfIntervalModules}).
Note that this correspondence is compatible with 
the cellular boundary 
$\partial(\sigma\times \tau)=\partial(\sigma)\times \tau+(-1)^{|\sigma|}\sigma\times \partial(\tau)$
(see for example \cite[Proposition 3.B.1]{MR1867354}),
and the boundary map in $K \grtensor L$ (see Appendix~\ref{sec:homological-algebra}).
Thus $K\grtensor L$ is the chain complex of persistence modules induced by the additive filtration on $X\times Y$.

Similarly, by Example~\ref{example:sheaf_tensor_of_interval_modules}, $\mathbf{k}[a_{\sigma},\infty)\shtensor\mathbf{k}[b_{\tau},\infty)=\mathbf{k}[\max\{a_{\sigma},b_{\tau}\},\infty)$ and $K \shtensor L$ is the chain complex of persistence modules induced by the maximum filtration on $X \times Y$.
\end{proof}

The K\"unneth theorems allow us to compute homology of a tensor product of chain complexes of persistence modules (or cohomology of its adjoint). Thus, as a special case of the K\"unneth Theorem for persistence modules, we have Theorem~\ref{theorem:universal_coefficients_1} and Theorem~\ref{theorem:universal_coefficients_2}, where the second chain complex of persistence modules is assumed to be concentrated in degree $0$.
\begin{theorem}
[Universal Coefficient Homology Theorem for Persistence Modules] 
\label{theorem:universal_coefficients_1}
Let $A$ be a left persistence module and let $(K,d)$ be a chain complex  of $\otimes_*$-flat right persistence modules whose subcomplex of boundaries $B$ also has all terms $\otimes_*$-flat. Then
\begin{itemize}
\item[1)] for all $n\in \mathbb{N}$, there is a natural exact sequence
\[0\to H_n(K)\otimes_*A\to H_n(K\otimes_* A)\to \mathbf{Tor}_1^*(H_{n-1}(K,A))\to 0\]
\item[2)] Assuming the ring in question is right-hereditary (right submodules of right projective modules are projective) and $(K,d)$ has all terms projective (no assumptions on $B$ this time), the above sequence splits (it need not be a natural splitting).
\end{itemize}
\end{theorem}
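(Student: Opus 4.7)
The plan is to obtain the Universal Coefficient Theorem as a direct corollary of the K\"unneth Homology Theorem (Theorem~\ref{theorem:Kunneth_gr_1}) by interpreting the persistence module $A$ as a chain complex concentrated in degree zero. Concretely, I would define $(L,d^L)$ by $L_0 = A$ and $L_i = 0$ for $i \neq 0$, with $d^L_i = 0$ for all $i$. Then $H_0(L) = A$ and $H_q(L) = 0$ for $q \neq 0$. Moreover, the subcomplex of boundaries $d^L(L)$ of $L$ is the zero complex, so it is trivially $\otimes_*$-flat, and one checks immediately that $(K \otimes_* L)_n$ is canonically isomorphic to $K_n \otimes_* A$ with differential induced by $d^K$, so that $H_n(K \otimes_* L) \cong H_n(K \otimes_* A)$.

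With these identifications in place, I would apply Theorem~\ref{theorem:Kunneth_gr_1} part 1) to the pair $(K,L)$. The hypotheses on $K$ (flat terms and flat boundary subcomplex) are exactly those assumed in the theorem, and the hypotheses on $L$ that are used in the K\"unneth proof are vacuous here since $L$ is concentrated in degree zero with zero differential. The K\"unneth short exact sequence
\[
0 \to \bigoplus_{p+q=n} H_p(K) \otimes_* H_q(L) \to H_n(K \otimes_* L) \to \bigoplus_{p+q=n-1} \mathbf{Tor}_1^*(H_p(K), H_q(L)) \to 0
\]
collapses: only the $q=0$ summands survive in each direct sum. This yields exactly
\[
0 \to H_n(K) \otimes_* A \to H_n(K \otimes_* A) \to \mathbf{Tor}_1^*(H_{n-1}(K), A) \to 0,
\]
and naturality of the K\"unneth sequence in both arguments gives naturality of this sequence in $K$ and $A$.

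For part 2), I would verify the hypotheses of Theorem~\ref{theorem:Kunneth_gr_1} part 2). Since $(K,d^K)$ has projective terms and $R[U_0]$ is right hereditary, every submodule of each projective $K_n$ is again projective; in particular, the boundary modules $B_n = d^K_{n+1}(K_{n+1}) \subseteq K_n$ are projective, hence $\otimes_*$-flat. This shows that the flatness hypothesis on the boundary subcomplex required in part 1) is automatically in force, so the K\"unneth short exact sequence exists. Applying the splitting statement of Theorem~\ref{theorem:Kunneth_gr_1} part 2) with $L$ as above then yields the (non-natural) splitting of the Universal Coefficient sequence.

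The only nontrivial verification is the identification $H_n(K \otimes_* L) \cong H_n(K \otimes_* A)$ and the observation that under the degree-zero concentration of $L$ the K\"unneth sequence genuinely collapses to the claimed one-term sequence; both are mechanical. There is no substantive obstacle here once the K\"unneth theorem is granted, so the real content was already carried by Theorem~\ref{theorem:Kunneth_gr_1}. This is consistent with the classical ungraded case, where the Universal Coefficient Theorem is traditionally presented either as an independent result proved from a short exact sequence of cycles and boundaries or as a specialization of K\"unneth; we adopt the latter route.
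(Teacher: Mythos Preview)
Your proposal is correct and matches the paper's own approach: the paper explicitly states that Theorem~\ref{theorem:universal_coefficients_1} is a special case of the K\"unneth Theorem (Theorem~\ref{theorem:Kunneth_gr_1}) obtained by taking the second chain complex to be $A$ concentrated in degree~$0$, which is exactly what you do. Your additional remark that right-hereditarity plus projective terms forces the boundary subcomplex to be projective (so that part~1) applies before invoking the splitting) is a helpful clarification that the paper leaves implicit.
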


\begin{theorem}
[Universal Coefficient Cohomology Theorem for Persistence Modules] 
\label{theorem:universal_coefficients_2}
Let $A$ be a left persistence module, let $(K,d)$ be a complex of projective left persistence modules whose subcomplex $B$ of boundaries has all terms projective.
\begin{itemize}
\item[1)] Then for all $n\in \mathbb{N}$ there is a natural short exact sequence
\[0\to \mathbf{Ext}_*^1(H_{n-1}(K),A)\to H^n(\mathbf{Hom}^*(K,A))\to \Hom^*(H_n(K),A)\to 0\,\]
where $\Hom^{*}$ is $\uHom$ if $*=\mathbf{gr}$ and $\scHom$ otherwise. 
\item[2)] If the ring in question is left-hereditary then the above splits (need not be a natural splitting).
\end{itemize}

\end{theorem}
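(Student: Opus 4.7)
The plan is to deduce Theorem~\ref{theorem:universal_coefficients_2} as a direct specialization of the K\"unneth Cohomology Theorem (Theorem~\ref{theorem:Kunneth_gr_2}). I would view the coefficient persistence module $A$ as a chain complex $L$ concentrated in degree zero, with $L_0 = A$, $L_i = 0$ for $i \neq 0$, and zero differentials. Then $H_0(L) = A$ and $H_{-q}(L) = 0$ for $q \neq 0$, so in the K\"unneth short exact sequence both products collapse to a single surviving summand (the one with $q = 0$): on the left we obtain $\mathbf{Ext}_*^1(H_{n-1}(K),A)$, and on the right $\Hom^*(H_n(K),A)$. The hypotheses of Theorem~\ref{theorem:Kunneth_gr_2} constrain only the first argument---all terms of $K$ and of its boundary subcomplex $B$ must be projective---and these are precisely the standing hypotheses here, while no condition is imposed on $L$.

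The remaining step is to identify the middle term of the specialized K\"unneth sequence with $H^n(\mathbf{Hom}^*(K,A))$. This is bookkeeping: when $L$ is concentrated in degree zero at $A$, the total Hom cochain complex $\mathbf{Hom}^*(K,L)$ reduces termwise (and with matching differentials) to the ordinary cochain complex obtained by applying $\Hom^*(-,A)$ to $K$, so their cohomologies agree. Naturality in both arguments of the K\"unneth sequence yields naturality in $K$ and $A$. For part (2), the hereditary hypothesis on $R[U_0]$ is exactly the hypothesis invoked in Theorem~\ref{theorem:Kunneth_gr_2}, so the non-canonical splitting there descends directly to our sequence.

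Accordingly, the main technical obstacle lies upstream, in Theorem~\ref{theorem:Kunneth_gr_2} itself, whose proof the excerpt defers to adaptations of the classical arguments in \cite{MR1269324} and \cite{rotman2008introduction}. The standard approach that I would follow is to use the short exact sequence of complexes $0 \to Z \to K \to B[-1] \to 0$, where $Z$ and $B$ are the cycle and boundary subcomplexes, and apply $\Hom^*(-,A)$ term-by-term; the projectivity of $K$ and $B$ ensures that the result is a short exact sequence of cochain complexes, whose long exact sequence in cohomology may be cut into the desired short exact pieces, with the connecting homomorphism identified as the Ext-boundary map. Carrying this out internally to the $P$-graded module category $\text{Gr}^P\text{-}_{R[U_0]}\mathbf{Mod}$ (or to $\shRMod{R_P}$ in the sheaf case)---rather than the classical category of modules over an ungraded ring---is where care is required, but the classical computations adapt without essential change once one invokes the enough-projectives property from Proposition~\ref{prop:GeneratorsCogenerators}.
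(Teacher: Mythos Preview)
Your proposal is correct and follows the same route the paper takes: the paper states explicitly (just before Theorem~\ref{theorem:universal_coefficients_1}) that both Universal Coefficient Theorems arise as special cases of the K\"unneth Theorems with the second complex concentrated in degree~$0$, and provides no separate proof environment. Your additional remarks on how the classical argument for Theorem~\ref{theorem:Kunneth_gr_2} itself would be adapted go beyond what the paper records, but are consistent with its deferral to \cite{MR1269324} and \cite{rotman2008introduction}.
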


We now consider some examples in the one-parameter setting and assuming that the coefficient ring is a field $\mathbf{k}$.

\begin{example}
Let $a\le b\le c\le d\le e\le f\le g$ be real numbers and consider the filtration of the $2$-simplex in Figure~\ref{fig:21}.
\begin{figure}[ht]
\centering
\begin{tikzpicture}[line cap=round,line join=round,x=1.0cm,y=1.0cm,scale=0.5]



\fill [color=blue!80] (-1,2) circle (1mm);

\draw[color=black] (-1,1) node {$a$};
\draw[color=black] (-0.25,3) node[scale=1] {$\hookrightarrow$};

\fill [color=blue!80] (0.5,2) circle (1mm);
\fill [color=blue!80] (2.5,2) circle (1mm);

\draw[color=black] (1.5,1) node {$b$};
\draw[color=black] (3.25,3) node[scale=1] {$\hookrightarrow$};
\fill[color=blue!80] (4,2) circle(1mm);
\fill[color=blue!80] (6,2) circle(1mm);
\fill[color=blue!80] (5,4) circle (1mm);
\draw[color=black] (6.75,3) node[scale=1] {$\hookrightarrow$};
\draw[color=black] (5,1) node {$c$};
\draw[color=black, line width=0.7mm] (7.5,2)--(9.5,2);
\fill[color=blue!80](7.5,2) circle(1mm);
\fill[color=blue!80] (9.5,2) circle (1mm);
\fill[color=blue!80] (8.5,4) circle (1mm);
\draw[color=black] (8.5,1) node {$d$};

\draw[color=black] (10.25,3) node[scale=1] {$\hookrightarrow$};

\draw[color=black, line width=0.7mm] (11,2)--(13,2);
\draw[color=black, line width=0.7mm] (13,2)--(12,4);
\fill[color=blue!80](11,2) circle(1mm);
\fill[color=blue!80] (13,2) circle (1mm);
\fill[color=blue!80] (12,4) circle (1mm);
\draw[color=black] (12,1) node {$e$};

\draw[color=black] (13.75,3) node[scale=1] {$\hookrightarrow$};

\draw[color=black, line width=0.7mm] (14.5,2)--(16.5,2);
\draw[color=black, line width=0.7mm] (16.5,2)--(15.5,4);
\draw[color=black, line width=0.7mm] (15.5,4)--(14.5,2);
\fill[color=blue!80](14.5,2) circle(1mm);
\fill[color=blue!80] (16.5,2) circle (1mm);
\fill[color=blue!80] (15.5,4) circle (1mm);

\draw[color=black] (15.5,1) node {$f$};

\draw[color=black] (17.25,3) node[scale=1] {$\hookrightarrow$};

\draw[fill=red!35, line width=0.7mm] (18,2)--(20,2)--(19,4)--cycle;
\fill[color=blue!80](18,2) circle(1mm);
\fill[color=blue!80] (20,2) circle (1mm);
\fill[color=blue!80] (19,4) circle (1mm);
\draw[color=black] (19,1) node {$g$};
\end{tikzpicture}
\caption{A filtration of a triangle.}
\label{fig:21}
\end{figure}
\label{example:triangle_filtration}

The corresponding chain complex of persistence modules
(Section~\ref{sec:chain-complexes})
is given by
\begin{gather*}
{
K_0=\mathbf{k}[a,\infty)\oplus\mathbf{k}[b,\infty)\oplus\mathbf{k}[c,\infty),}\\
{K_1=\mathbf{k}[d,\infty)\oplus\mathbf{k}[e,\infty)\oplus \mathbf{k}[f,\infty),}\\
{\text{and } K_2=\mathbf{k}[g,\infty).}
\end{gather*}
We compute 
$H_0(K)=\mathbf{k}[a,\infty)\oplus\mathbf{k}[b,d)\oplus \mathbf{k}[c,e)$, and
$H_1(K)=\mathbf{k}[f,g)$.

\end{example}

\begin{example}
Let $(K,d)$ be the chain complex of persistence modules in Example~\ref{example:triangle_filtration}. Let $A=\mathbf{k}[\alpha,\infty)$. Let us compute $H_*(K\grtensor A)$. Using Theorem~\ref{theorem:universal_coefficients_1}, since $A$ is free, $\mathbf{Tor}^{\mathbf{gr}}_i(H_{n-1}(K),A)=0$ for all $i\ge 1$, hence $H_n(K\grtensor A)\cong H_n(K)\grtensor A$.
  Thus,
  \begin{gather*}
{H_0(K)=\mathbf{k}[a+\alpha,\infty)\oplus\mathbf{k}[b+\alpha,d)\oplus \mathbf{k}[c+\alpha,e),}\\
{\text{and } H_1(K)=\mathbf{k}[f+\alpha,g).}
\end{gather*}
That is, all interval modules have shifted to the right by $\alpha$.
If $K$ is obtained from a filtration $f$, then $K \grtensor A$ is the chain complex obtained from the filtration $f + \alpha$.
\end{example}

\begin{example}
Let $(K,d)$ be the chain complex of persistence modules in Example~\ref{example:triangle_filtration}. Let $A=\mathbf{k}[\mathbb{R}]$. Let us compute $H_*(K\grtensor A)$.
By Theorem~\ref{theorem:universal_coefficients_1}, $H_n(K\grtensor A)=H_n(K)\grtensor A$ since $A$ is $\grtensor$-flat. From Example~\ref{example:TensorOfIntervalModules}, we have $\mathbf{k}[a,b)\grtensor A= 0$ for all $a \leq b\in \mathbb{R}$, and
$\mathbf{k}[a,\infty)\grtensor A=A$ for all $a\in \mathbb{R}$.
Therefore, $H_0(K) = \mathbf{k}[\R]$ and $H_1(K) = 0$.
\end{example}

\begin{example}
\label{example:UCT_example_1}
Let $(K,d)$ be the chain complex of persistence modules in Example~\ref{example:triangle_filtration}.
Let $A=\mathbf{k}(-\infty,0)$.
Applying Example~\ref{example:TensorOfIntervalModules} we have 
\begin{gather*}
{(K\grtensor A)_0:=K_0\grtensor A=\mathbf{k}(-\infty,a)\oplus\mathbf{k}(-\infty,b)\oplus\mathbf{k}(-\infty,c),}\\ 
{(K\grtensor A)_1:=K_1\grtensor A=\mathbf{k}(-\infty,d)\oplus\mathbf{k}(-\infty,e)\oplus\mathbf{k}(-\infty,f),}\\
{\text{and } (K\grtensor A)_2:=K_2\grtensor A=\mathbf{k}(-\infty,g).} 
\end{gather*}
Applying Theorem~\ref{theorem:universal_coefficients_1}
and Example~\ref{example:tor_interval_modules} we calculate the following:
\begin{gather*}
{H_0(K\grtensor A)\cong H_0(K)\grtensor A=\mathbf{k}(-\infty,a),}\\ 
{H_1(K\grtensor A)\cong H_1(K)\grtensor A\oplus \mathbf{Tor}^{\mathbf{gr}}(H_0(K),A)
= 0\oplus \mathbf{k}(b,d)\oplus\mathbf{k}(c,e),}\\ 
{\text{and } H_2(K\grtensor A)\cong \mathbf{Tor}^{\mathbf{gr}}(H_1(K),A)=\mathbf{k}(f,g).}
\end{gather*}

\begin{figure}[ht]
\centering
\begin{tikzpicture}[line cap=round,line join=round,x=1.0cm,y=1.0cm,scale=0.5]


\draw[fill=red!35] (-1,2)--(1,2)--(0,4)--cycle;
\draw[color=black, line width=0.7mm] (-1,2)--(1,2);
\draw[color=black, line width=0.7mm] (1,2)--(0,4);
\draw[color=black, line width=0.7mm] (0,4)--(-1,2);

\fill[color=blue!80](-1,2) circle(1mm);
\fill[color=blue!80] (1,2) circle (1mm);
\fill[color=blue!80] (0,4) circle (1mm);

\draw[<-,color=black] (1.5,3) -- (2,3);




\draw[draw=none, fill=red!35] (4.5,2)--(3.5,4)--(2.5,2)--cycle;
\draw[color=black, line width=0.7mm] (4.5,2)--(3.5,4);
\draw[color=black, line width=0.7mm] (4.5,2)--(2.7,2);
\draw[color=black, line width=0.7mm] (3.5,4)--(2.6,2.2);
\fill [color=blue!80] (4.5,2) circle (1mm);
\fill [color=blue!80] (3.5,4) circle (1mm);
\draw[color=black] (3.5,1.5) node {$a$};
\draw[<-,color=black] (5,3) -- (5.5,3);
\draw[draw=none, fill=red!35] (6,2)--(8,2)--(7,4)--cycle;
\fill[color=blue!80] (7,4) circle (1mm);

\draw[color=black, line width=0.7mm] (7,4)--(7.9,2.2);
\draw[color=black, line width=0.7mm] (7.8,2)--(6.2,2);
\draw[color=black, line width=0.7mm] (6.1,2.2)--(7,4);

\draw[<-,color=black] (8.5,3)--(9,3);
\draw[color=black] (7,1.5) node {$b$};
\draw[draw=none, fill=red!35] (9.5,2)--(11.5,2)--(10.5,4)--cycle;
\draw[color=black, line width=0.7mm] (9.7,2)--(11.3,2);
\draw[color=black, line width=0.7mm] (9.6,2.2)--(10.4,3.8);
\draw[color=black, line width=0.7mm] (11.4,2.2)--(10.6,3.8);
\draw[color=black] (10.5,1.5) node {$c$};

\draw[<-, color=black] (12,3)--(12.5,3);
\draw[draw=none, fill=red!35] (13,2)--(15,2)--(14,4)--cycle;
\draw[color=black, line width=0.7mm] (13.1,2.2)--(13.9,3.8);
\draw[color=black, line width=0.7mm] (14.9,2.2)--(14.1,3.8);
\draw[color=black] (14,1.5) node {$d$};

\draw[<-, color=black] (15.5,3)--(16,3);

\draw[draw=none, fill=red!35] (16.5,2)--(18.5,2)--(17.5,4)--cycle;

\draw[color=black, line width=0.7mm] (18.4,2.2)--(17.6,3.8);

\draw[color=black] (17.5,1.5) node {$e$};

\draw[<-, color=black] (19,3)--(19.5,3);

\draw[draw=none,fill=red!35] (20,2)--(22,2)--(21,4);

\draw[color=black] (21,1.5) node {$f$};

\draw[<-,color=black] (22.5,3)--(23,3);
\draw[color=black] (24.5,3) node {$\emptyset$};
\draw[color=black] (24.5,1.5) node {$g$};
\end{tikzpicture}
\caption{A filtration of the geometric realization of $\Delta^2$ corresponding to the chain complex $K\grtensor \mathbf{k}(-\infty,0)$.}
\label{fig:23}
\end{figure}

\end{example}

\begin{remark}
\label{remark:UCT_explaining_cofiltrations}
We thank Alexander Dranishnikov for the following observation.  The persistence barcodes in Example~\ref{example:UCT_example_1} correspond to the compactly supported cohomology groups of the filtration of topological spaces in Figure~\ref{fig:23}. {It may be that this observation can be generalized to an arbitrary filtered CW complex.
  We leave it as question for future work.}
\end{remark}

\begin{example}
\label{example:tensoring_with_finite_bar}
Let $(K,d)$ be the chain complex of persistence modules in Example~\ref{example:triangle_filtration}.
Let $A=\mathbf{k}[\alpha,\beta)$.
Using Example~\ref{example:TensorOfIntervalModules} we have
\begin{gather*}
{(K\grtensor A)_0:=K_0\grtensor A=\mathbf{k}[a+\alpha,a+\beta)\oplus \mathbf{k}[b+\alpha,b+\beta)\oplus \mathbf{k}[c+\alpha,c+\beta),}\\ 
{(K\grtensor A)_1:=K_1\grtensor A=\mathbf{k}[d+\alpha,d+\beta)\oplus \mathbf{k}[e+\alpha,e+\beta)\oplus\mathbf{k}[f+\alpha,f+\beta),}\\ 
{\text{and } (K\grtensor A)_2=K_2\otimes_{\mathbf{gr}}A=\mathbf{k}[g+\alpha,g+\beta).}
\end{gather*}
Applying Theorem~\ref{theorem:universal_coefficients_1} and Example~\ref{example:tor_interval_modules} we have the following:
\begin{gather*}
H_0(K\grtensor A)\cong H_0(K)\grtensor A=\mathbf{k}[a+\alpha,a+\beta)\oplus\\
\oplus\mathbf{k}[b+\alpha,\min\{d+\alpha,b+\beta\})
\oplus\mathbf{k}[c+\alpha,\min\{e+\alpha,c+\beta\}),\\
H_1(K\grtensor A)\cong \mathbf{Tor}^{\mathbf{gr}}(H_0(K),A)\oplus H_1(K)\grtensor A=0 \oplus \mathbf{k}[\max\{b+\beta,d+\alpha\},d+\beta)\oplus\\
\oplus \mathbf{k}[\max\{c+\beta, e+\alpha\},e+\beta)\oplus \mathbf{k}[f+\alpha,\min\{f+\beta,g+\alpha\},\\
{\text{and } H_2(K\grtensor A)\cong \mathbf{Tor}^{\mathbf{gr}}(H_1(K), A)=\mathbf{k}[\max\{f+\beta,g+\alpha\}, g+\beta).}
\end{gather*}
Once again, there is a geometric interpretation. If we examine the chain groups $K\grtensor A$ then we see that for each simplex appearing at time $t$ in the original filtration, it now appears at time $t+\alpha$ and is removed at time $t+\beta$. For example an edge generates a homology class when both its boundary points are removed. 
\end{example}

\begin{example}
Let $A=\mathbf{k}[\alpha,\infty)$. Let $(K,d)$ be the chain complex  in Example~\ref{example:triangle_filtration}. Then by using Example~\ref{example:underline_hom_of_interval_modules} we calculate that:
\begin{gather*}
{\mathbf{Hom}_{\mathbf{gr}}(K,A)^0:=\uHom(K_0,A)=\mathbf{k}[\alpha-a,\infty)\oplus\mathbf{k}[\alpha-b,\infty)\oplus \mathbf{k}[\alpha-c,\infty),}\\ 
{\mathbf{Hom}_{\mathbf{gr}}(K,A)^1:=\uHom(K_1,A)=\mathbf{k}[\alpha-d,\infty)\oplus \mathbf{k}[\alpha-e,\infty)\oplus \mathbf{k}[\alpha-f,\infty),}\\
{\text{and } \mathbf{Hom}_{\mathbf{gr}}(K,A)^2:=\uHom(K_2,A)=\mathbf{k}[\alpha-g,\infty).}
\end{gather*}
By Theorem~\ref{theorem:universal_coefficients_2}, Example~\ref{example:ext_interval_modules} and Theorem~\ref{theorem:classification_into_flats_and_injectives} we have 
\begin{gather*}
{H^0(\mathbf{Hom}_{\mathbf{gr}}(K,A))\cong\uHom(H_0(K),A)=\mathbf{k}[\alpha-a,\infty),}\\ 
H^1(\mathbf{Hom}_{\mathbf{gr}}(K,A))\cong\mathbf{Ext}_{\mathbf{gr}}(H_0(K),A)\oplus \uHom(H_1(K),A)=\\
={\mathbf{k}[\alpha-d,\alpha-b)\oplus \mathbf{k}[\alpha-e,\alpha-c)\oplus 0=\mathbf{k}[\alpha-d,\alpha-b)\oplus \mathbf{k}[\alpha-e,\alpha-c),}\\ 
{\text{and }  H^2(\mathbf{Hom}_{\mathbf{gr}}(K,A))\cong\mathbf{Ext}_{\mathbf{gr}}(H_1(K),A)=\mathbf{k}[\alpha-g,\alpha-f).}
\end{gather*}
This also has a geometric interpretation (see Figure~\ref{fig:24}).
\begin{figure}[ht]
\centering
\begin{tikzpicture}
[line cap=round,line join=round,x=1.0cm,y=1.0cm,scale=0.5]


\draw[draw=none,fill=red!35] (-1,2)--(1,2)--(0,4)--cycle;
\draw[color=black] (0,1.5) node {$\alpha-g$};


\draw[->,color=black] (1.5,3) -- (2,3);




\draw[draw=none, fill=red!35] (4.5,2)--(3.5,4)--(2.5,2)--cycle;
\draw[color=black, line width=0.7mm] (3.4,3.8)--(2.6,2.2);
\draw[color=black] (3.5,1.5) node {$\alpha-f$};
\draw[->,color=black] (5,3) -- (5.5,3);
\draw[draw=none, fill=red!35] (6,2)--(8,2)--(7,4)--cycle;
\draw[color=black, line width=0.7mm] (7.1,3.8)--(7.9,2.2);
\draw[color=black, line width=0.7mm] (6.1,2.2)--(6.9,3.8);

\draw[->,color=black] (8.5,3)--(9,3);
\draw[color=black] (7,1.5) node {$\alpha-e$};
\draw[draw=none, fill=red!35] (9.5,2)--(11.5,2)--(10.5,4)--cycle;
\draw[color=black, line width=0.7mm] (9.7,2)--(11.3,2);
\draw[color=black, line width=0.7mm] (9.6,2.2)--(10.4,3.8);
\draw[color=black, line width=0.7mm] (11.4,2.2)--(10.6,3.8);
\draw[color=black] (10.5,1.5) node {$\alpha-d$};

\draw[->, color=black] (12,3)--(12.5,3);
\draw[draw=none, fill=red!35] (13,2)--(15,2)--(14,4)--cycle;
\draw[color=black, line width=0.7mm] (13.2,2)--(14.8,2);
\draw[color=black, line width=0.7mm] (13.1,2.2)--(14,4);
\draw[color=black, line width=0.7mm] (14.9,2.2)--(14,4);
\fill[color=blue!80] (14,4) circle (1mm);
\draw[color=black] (14,1.5) node {$\alpha-c$};

\draw[->, color=black] (15.5,3)--(16,3);

\draw[draw=none, fill=red!35] (16.5,2)--(18.5,2)--(17.5,4)--cycle;

\draw[color=black, line width=0.7mm] (16.7,2)--(18.5,2);
\draw[color=black, line width=0.7mm] (18.5,2)--(17.5,4);
\draw[color=black, line width=0.7mm] (16.6,2.2)--(17.5,4);
\fill[color=blue!80] (18.5,2) circle (1mm);
\fill[color=blue!80] (17.5,4) circle (1mm);

\draw[color=black] (17.5,1.5) node {$\alpha-b$};

\draw[->, color=black] (19,3)--(19.5,3);

\draw[draw=none,fill=red!35] (20,2)--(22,2)--(21,4);
\draw[color=black, line width=0.7mm] (20,2)--(22,2);
\draw[color=black, line width=0.7mm] (22,2)--(21,4);
\draw[color=black, line width=0.7mm] (21,4)--(20,2);
\fill[color=blue!80] (20,2) circle (1mm);
\fill[color=blue!80] (22,2) circle (1mm);
\fill[color=blue!80] (21,4) circle (1mm);
\draw[color=black] (21,1.5) node {$\alpha-a$};

\end{tikzpicture}
\caption{A filtration of the geometric realization of $\Delta^2$ corresponding to $\mathbf{Hom}_{\mathbf{gr}}(K,A)$.}
\label{fig:24}
\end{figure}
In particular, each cell in the original simplicial complex which appeared at time $t$, now appears at time $\alpha-t$.
\end{example}

\begin{example}
Let $(K,d)$ be the chain complex in Example~\ref{example:triangle_filtration}. Let $A=\mathbf{k}(-\infty,\alpha)$. Then by using Example~\ref{example:underline_hom_of_interval_modules} we have:
\begin{gather*}
{\mathbf{Hom}_{\mathbf{gr}}(K,A)^0:=\uHom(K_0,A)=\mathbf{k}(-\infty,\alpha-a)\oplus\mathbf{k}(-\infty,\alpha-b)\oplus\mathbf{k}(-\infty,\alpha-c),}\\
{\mathbf{Hom}_{\mathbf{gr}}(K,A)^1:=\uHom(K_1,A)=\mathbf{k}(-\infty,\alpha-d)\oplus\mathbf{k}(-\infty,\alpha-e)\oplus\mathbf{k}(-\infty,\alpha-f),}\\
{\text{and } \mathbf{Hom}_{\mathbf{gr}}(K,A)^2:=\uHom(K_2,A)=\mathbf{k}(-\infty,\alpha-g).}
\end{gather*}
Noting that $A$ is injective, by Theorem~\ref{theorem:universal_coefficients_2} we have that:
\begin{gather*}
{H^0(\mathbf{Hom}_{\mathbf{gr}}(K,A))\cong \uHom(H_0(K),A)=\mathbf{k}(-\infty ,\alpha-a)\oplus\mathbf{k}(\alpha-d,\alpha-b)\oplus \mathbf{k}(\alpha-e,\alpha-c),}\\
 \text{and } H^1(\mathbf{Hom}_{\mathbf{gr}}(K,A))\cong \mathbf{Ext}_{\mathbf{gr}}(H_0(K),A)\oplus\uHom(H_1(K),A)=\\
 =0\oplus\mathbf{k}(\alpha- g,\alpha-f)=\mathbf{k}(\alpha- g,\alpha-f).
\end{gather*}
As before, there is a filtration (see Figure~\ref{fig:25}) and the persistence module may be interpreted as arising from the cohomology of this filtration. {It is not yet clear how this generalizes to arbitrary CW complexes with a filtration. We leave this question for future work.}
\begin{figure}[ht]
\centering
\begin{tikzpicture}
[line cap=round,line join=round,x=1.0cm,y=1.0cm,scale=0.5]

\draw[fill=red!35, line width=0.7mm] (-1,2)--(1,2)--(0,4)--cycle;
\fill[color=blue!80] (-1,2) circle(1mm);
\fill[color=blue!80] (1,2) circle(1mm);
\fill[color=blue!80] (0,4) circle(1mm);


\draw[<-,color=black] (1.5,3) -- (2,3);




\draw[color=black, line width=0.7mm] (4.5,2)--(3.5,4)--(2.5,2)--cycle;
\fill[color=blue!80] (4.5,2) circle(1mm);
\fill[color=blue!80] (2.5,2) circle(1mm);
\fill[color=blue!80] (3.5,4) circle(1mm);
\draw[color=black] (3.5,1.5) node {$\alpha-g$};
\draw[<-,color=black] (5,3) -- (5.5,3);
\draw[color=black, line width=0.7mm] (6,2)--(8,2);
\draw[color=black, line width=0.7mm] (8,2)--(7,4);
\fill[color=blue!80] (6,2) circle(1mm);
\fill[color=blue!80] (8,2) circle(1mm);
\fill[color=blue!80] (7,4) circle(1mm);

\draw[<-,color=black] (8.5,3)--(9,3);
\draw[color=black] (7,1.5) node {$\alpha-f$};
\draw[color=black, line width=0.7mm] (9.5,2)--(11.5,2);
\fill[color=blue!80] (9.5,2) circle(1mm);
\fill[color=blue!80] (11.5,2) circle(1mm);
\fill[color=blue!80] (10.5,4) circle(1mm);

\draw[color=black] (10.5,1.5) node {$\alpha-e$};

\draw[<-, color=black] (12,3)--(12.5,3);


\fill[color=blue!80] (13,2) circle(1mm);
\fill[color=blue!80] (15,2) circle(1mm);
\fill[color=blue!80] (14,4) circle(1mm);

\draw[color=black] (14,1.5) node {$\alpha-d$};

\draw[<-, color=black] (15.5,3)--(16,3);


\fill[color=blue!80] (16.5,2) circle (1mm);
\fill[color=blue!80] (18.5,2) circle (1mm);

\draw[color=black] (17.5,1.5) node {$\alpha-c$};

\draw[<-, color=black] (19,3)--(19.5,3);


\fill[color=blue!80] (20,2) circle (1mm);

\draw[color=black] (21,1.5) node {$\alpha-b$};

\draw[<-,color=black] (22.5,3)--(23,3);

\draw[color=black] (24.5,1.5) node {$\alpha-a$};
\draw[color=black] (24.5,3) node {$\emptyset$};
\end{tikzpicture}
\caption{A filtration corresponding to $\mathbf{Hom}_{\mathbf{gr}}(K,A)$}
\label{fig:25}
\end{figure}

Each simplex in the original simplicial complex which appeared at time $t$ now appears at time $\alpha-t$.
Note that if $\alpha=0$, then $\uHom(H_n(K),A)=H_n(K)^*_{\mathbf{gr}}=H^n(\uHom(K,A))=H^n(K^*_{\mathbf{gr}})$, generalizing the classical result that homology and cohomology, with coefficients in a field, are isomorphic.
\end{example}

\begin{example}
Let $A=\mathbf{k}[\alpha,\beta)$. Let $(K,d)$ be the chain complex in Example~\ref{example:triangle_filtration}. By Example~\ref{example:underline_hom_of_interval_modules} we have:
\begin{gather*}
{\mathbf{Hom}_{\mathbf{gr}}(K,A)^0:=\uHom(K_0,A)=\mathbf{k}[\alpha-a,\beta-a)\oplus \mathbf{k}[\alpha-b,\beta-b)\oplus \mathbf{k}[\alpha-c,\beta-c),}\\ 
{\mathbf{Hom}_{\mathbf{gr}}(K,A)^1:=\uHom(K_1,A)=\mathbf{k}[\alpha-d,\beta-d)\oplus \mathbf{k}[\alpha-e,\beta-e)\oplus \mathbf{k}[\alpha-f,\beta-f),}\\
{\text{and } \mathbf{Hom}_{\mathbf{gr}}(K,A)^2:=\uHom(K_2,A)=\mathbf{k}[\alpha-g,\beta-g).}
\end{gather*}
By Theorem~\ref{theorem:universal_coefficients_2} and Example~\ref{example:ext_interval_modules} we have that:
\begin{gather*}
H^0(\mathbf{Hom}_{\mathbf{gr}}(K,A))\cong \uHom(H_0(K),A)
=\mathbf{k}[\alpha-a,\beta-a)\oplus \\
\oplus\mathbf{k}[\max\{\alpha-b,\beta-d\},\beta -b)\oplus \mathbf{k}[\max\{\alpha-c,\beta-e\},\beta-c),\\ 
{H^1(\mathbf{Hom}_{\mathbf{gr}}(K,A))\cong \mathbf{Ext}_{\mathbf{gr}}(H_0(K),A)\oplus \uHom(H_1(K),A)
=}\\
{=\mathbf{k}[\alpha-d,\min\{\alpha-b,\beta-d\})\oplus \mathbf{k}[\alpha-e,\min\{\alpha-c,\beta-e\})\oplus \mathbf{k}[\max\{\alpha-f,\beta-g\},\beta-f),}\\ 
{\text{and } H^2(\mathbf{Hom}_{\mathbf{gr}}(K,A))\cong \mathbf{Ext}_{\mathbf{gr}}(H_1(K),A)=\mathbf{k}[\alpha-g,\min\{\alpha-f,\beta-g\}).}
\end{gather*}
This has a geometric interpretation, dual situation to that in Example~\ref{example:tensoring_with_finite_bar}
\end{example}

\begin{example}
Let $(K,d)$ be complex of projective persistence modules coming from a filtration of a simplicial complex and let $A$ be an arbitrary persistence module.  Since persistence modules are $\shtensor$-flat as noted in Theorem~\ref{theorem:sheaf_tensor_is_exact}, we have natural isomorphisms $H_n(K\shtensor A)\cong H_n(K)\shtensor A$, by Theorem~\ref{theorem:universal_coefficients_1}. In particular, if $A$ is an interval module, say $A=\mathbf{k}[I]$, and $H_n(K)\cong {\bigoplus_{j\in J}}\mathbf{k}[I_j]$ is the interval decomposition of $H_n(K)$, then recalling Example~\ref{example:sheaf_tensor_of_interval_modules} we have $H_n(K\shtensor A)\cong{\bigoplus_{j\in J}}\mathbf{k}[I\cap I_j]$.
\end{example}

\begin{example}
Let $(K,d)$ be as in Example~\ref{example:triangle_filtration} and let $A=\mathbf{k}[\alpha,\beta)$ with $b\le \alpha$ and $g\le \beta$. By Example~\ref{example:sheaf_hom_of_interval_modules} we have:
\begin{gather*} 
{\mathbf{Hom}_{\mathbf{sh}}(K,A)^0:=\scHom(K_0,A)=\mathbf{k}[\alpha,\beta)\oplus\mathbf{k}[\alpha,\beta)\oplus\mathbf{k}(-\infty,\beta),}\\
{\mathbf{Hom}_{\mathbf{sh}}(K,A)^1:=\scHom(K_1,A)=\mathbf{k}(-\infty,\beta)\oplus\mathbf{k}(-\infty,\beta)\oplus\mathbf{k}(-\infty,\beta),}\\
{\text{and } \mathbf{Hom}_{\mathbf{sh}}(K,A)^2:=\scHom(K_2,A)=\mathbf{k}(-\infty,\beta).}
\end{gather*}
By Theorem~\ref{theorem:universal_coefficients_2},  Example~\ref{example:sheaf_ext_of_interval_modules}, Example~\ref{example:sheaf_hom_of_interval_modules}, and Theorem~\ref{theorem:universal_coefficients_2}, we have that:
\begin{gather*}
H^0(\mathbf{Hom}_{\mathbf{sh}}(K,A))\cong \scHom(H_0(K),A)
=\scHom(\mathbf{k}[a,\infty),\mathbf{k}[\alpha,\beta))\oplus\\
\oplus\scHom(\mathbf{k}[b,d),\mathbf{k}[\alpha,\beta)\oplus\scHom(\mathbf{k}[c,e),\mathbf{k}[\alpha,\beta))
=\\
{=\mathbf{k}[\alpha,\beta)\oplus 0\oplus 0=\mathbf{k}[\alpha,\beta),}\\ 
{H^1(\mathbf{Hom}_{\mathbf{sh}}(K,A))\cong \mathbf{Ext}_{\mathbf{sh}}(H_0(K),A)\oplus\scHom(H_1(K),A)=}\\
=\mathbf{Ext}_{\mathbf{sh}}(\mathbf{k}[a,\infty),\mathbf{k}[\alpha,\beta))\oplus \mathbf{Ext}_{\mathbf{sh}}(\mathbf{k}[b,d),\mathbf{k}[\alpha,\beta))\oplus \mathbf{Ext}_{\mathbf{sh}}(\mathbf{k}[c,e),\mathbf{k}[\alpha,\beta)\oplus\\\oplus\scHom(\mathbf{k}[f,g),\mathbf{k}[\alpha,\beta))
{=0\oplus \mathbf{k}(-\infty,\alpha)\oplus 0\oplus 0=\mathbf{k}(-\infty,\alpha),}\\
{\text{and } H^2(\mathbf{Hom}_{\mathbf{sh}}(K,A))\cong \mathbf{Ext}_{\mathbf{sh}}(H_1(K),A)=\mathbf{Ext}_{\mathbf{sh}}(\mathbf{k}[f,g),\mathbf{k}[\alpha,\beta))=0.}
\end{gather*}
\end{example}

\section{Persistence modules over finite posets}
\label{section:pers_modules_over_finite_posets}

In this section we apply the Gabriel-Popescu {T}heorem (Theorem~\ref{theorem:Gabriel_Popescu}) to persistence modules over finite {preordered} sets. {It is a classical result that every abelian category is isomorphic to a full subcategory of modules over some ring. Here we do not assume an additional abelian group structure on our preorder $P$ and thus persistence modules are not graded modules over a graded ring. However, the stronger version of the Gabriel Popescu Theorem we show for persistence modules in this section allows us to explicitly construct the ring in question in the above-mentioned isomorphism of categories.}

\begin{definition}
\label{def:compact_object}
Let $\cat{C}$ be a cocomplete abelian category. Then an object $A$ in $\cat{C}$ is \emph{compact} if $\Hom_{\cat{C}}(A,\cdot)$ commutes with direct sums.
\end{definition}

\begin{example}(\cite[Satz 3]{Lenzing1969} and \cite[Introduction]{Breaz2013})
  Let $R$ be a unital ring and $A$ a left $R$-module. Then $A$ is compact if and only if $A$ is finitely presented.
\end{example}

\begin{theorem}(Strengthening of the Gabriel-Popescu Theorem)
\label{theorem:Gabriel_Popescu_compact_version}
 Let $U\in \cat{C}$ be an object in a cocomplete abelian category. Let $R=\emph{{End}}(U)$. Then the following are equivalent:
\begin{itemize}
\item[1)] U is a compact projective generator.
\item[2)] The functor
  $\Hom_{\cat{C}}(U,\cdot)$
  gives us an equivalence of categories between $\cat{C}$ and $\cat{Mod}(R)$.
\end{itemize}
\end{theorem}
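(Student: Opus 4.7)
The plan is to prove both implications, with the substantive work in $(1) \Rightarrow (2)$; the converse follows from formal properties of categorical equivalences. For $(1) \Rightarrow (2)$, I would first appeal to the Gabriel-Popescu Theorem (Theorem~\ref{theorem:Gabriel_Popescu}) to obtain that $\Phi := \Hom_{\cat{C}}(U,-) : \cat{C} \to \cat{Mod}(R)$ is fully faithful with exact left adjoint $\Psi := -\otimes_R U$. Full faithfulness of $\Phi$ is equivalent to the counit $\Psi\Phi \to \mathrm{id}_{\cat{C}}$ of the adjunction being an isomorphism, so to upgrade to an equivalence it suffices to prove that the unit $\eta_M : M \to \Phi\Psi M$ is an isomorphism for every $M \in \cat{Mod}(R)$.

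I would verify this in three steps. First, for $M = R$, the identifications $\Psi R = R \otimes_R U \cong U$ and $\Phi\Psi R \cong \Hom_{\cat{C}}(U,U) = R$ show that $\eta_R$ is (essentially) the identity. Second, for a free module $M = \bigoplus_{i \in I} R$, I would use that $\Psi$ commutes with direct sums as a left adjoint, while $\Phi$ commutes with direct sums precisely because $U$ is compact (Definition~\ref{def:compact_object}); the unit $\eta_{\bigoplus_I R}$ is then the direct sum of copies of $\eta_R$ and so is itself an isomorphism. Third, for a general $M$, choose a free presentation $F_1 \to F_0 \to M \to 0$. Projectivity of $U$ makes $\Phi$ exact, and $\Psi$ is right exact as a left adjoint, so the composite $\Phi\Psi$ is right exact. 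Applying the five lemma to the two-row diagram obtained from naturality of $\eta$, with top row $F_1 \to F_0 \to M \to 0$ and bottom row $\Phi\Psi F_1 \to \Phi\Psi F_0 \to \Phi\Psi M \to 0$, then forces $\eta_M$ to be an isomorphism.

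For $(2) \Rightarrow (1)$, I would observe that under the equivalence $\Phi$, the object $U$ corresponds to $\Phi(U) = \Hom_{\cat{C}}(U,U) = R$, regarded as a module over itself. In $\cat{Mod}(R)$ the ring $R$ is a compact projective generator. Because any equivalence of categories preserves compactness, projectivity, and the property of being a generator (each being expressible through the Hom bifunctor, which the equivalence transports along with colimits and short exact sequences), $U$ inherits all three properties in $\cat{C}$.

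The main obstacle I anticipate is in direction $(1) \Rightarrow (2)$: making precise how compactness of $U$ yields preservation of arbitrary direct sums by $\Phi$, which is what powers the free-module case, and how projectivity of $U$ yields exactness of $\Phi$, which is what powers the five lemma step. Some additional bookkeeping is needed to keep the left/right module conventions consistent with the natural action of $R = \text{End}(U)$ on $U$ throughout, so that $\Psi = - \otimes_R U$ is the correctly handed tensor product.
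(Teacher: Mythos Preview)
The paper's proof is merely a citation to Freyd's \emph{Abelian Categories}, so your argument is substantially more detailed than what the paper provides. Your overall strategy is correct and standard: show the unit and counit of the adjunction $(-\otimes_R U) \dashv \Hom_{\cat{C}}(U,-)$ are isomorphisms by reducing to the free/representable case via presentations and the five lemma.

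There is, however, a hypothesis mismatch in your $(1)\Rightarrow(2)$ direction. You invoke the Gabriel--Popescu Theorem (Theorem~\ref{theorem:Gabriel_Popescu}) to conclude that $\Phi=\Hom_{\cat{C}}(U,-)$ is fully faithful, but that theorem as stated in the paper requires $\cat{C}$ to be a Grothendieck category, in particular to satisfy AB5, whereas the present statement assumes only that $\cat{C}$ is cocomplete abelian. Hypothesis~(1) gives you a generator but not exactness of filtered colimits, so the appeal is not justified as written. The fix is easy and symmetric to what you already do for the unit: prove the counit $\varepsilon_X : \Psi\Phi X \to X$ is an isomorphism directly. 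Since $U$ is a generator and $\cat{C}$ has coproducts, every $X$ sits in an exact sequence $\bigoplus_{J'} U \to \bigoplus_J U \to X \to 0$; since $U$ is projective and compact, $\Phi$ is exact and preserves coproducts, while $\Psi$ is right exact and preserves coproducts as a left adjoint, so $\Psi\Phi$ is right exact and coproduct-preserving. The counit at $U$ is an isomorphism (as $\Phi U = R$ and $\Psi R \cong U$), hence at any coproduct of copies of $U$, and the five lemma gives it for $X$. With this patch Gabriel--Popescu is unnecessary, and the argument goes through under the stated hypotheses.
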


\begin{proof}
See \cite[Exercise F, page 106]{freyd1964abelian}.
\end{proof} 

\begin{proposition}
\label{prop:generator_over_finite_poset}
Suppose $\cat{A}$ is a Grothendieck category, let $(P,\leq)$ be a finite {preordered} set and let $\cat{P}$ denote the corresponding 
category.  Let $G$ be a generator of $\cat{A}$. Then the set $\{G[U_a]\}_{a\in P}$ is a family of generators for $\cat{A}^{\mathbf{P}}$. In particular, $U:={\bigoplus_{a\in P}}G[U_a]$ is a generator for $\cat{A}^{\mathbf{P}}$.
\end{proposition}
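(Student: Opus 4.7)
The plan is essentially to recycle the argument from the proof of Proposition~\ref{prop:GeneratorsCogenerators}, observing that the finiteness of $P$ plays no essential role for the statement of generators per se (though it presumably matters in the sequel for ensuring that $U$ is compact, so that the strengthened Gabriel--Popescu theorem of Theorem~\ref{theorem:Gabriel_Popescu_compact_version} can be applied). So I would first verify the family-of-generators claim and then invoke the standard closure property that a coproduct of generators is a generator.

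For the first step, I would take two distinct morphisms $f,g : M \to N$ in $\cat{A}^{\cat{P}}$. By definition of a natural transformation, $f \neq g$ means there exists some $a \in P$ with $f_a \neq g_a : M_a \to N_a$. Since $G$ is a generator of $\cat{A}$, there exists $h_a : G \to M_a$ with $f_a \circ h_a \neq g_a \circ h_a$. I would then extend $h_a$ to a morphism of persistence modules $h : G[U_a] \to M$ by the pointwise definition
\[
h_b \;=\; \begin{cases} M_{a \leq b} \circ h_a & \text{if } b \in U_a, \\ 0 & \text{otherwise.} \end{cases}
\]
Naturality of $h$ is the only thing to check: for $b \leq c$, if both $b,c \in U_a$ then $G[U_a]_{b \leq c} = 1_G$, and $M_{b \leq c} \circ h_b = M_{b \leq c} \circ M_{a \leq b} \circ h_a = M_{a \leq c} \circ h_a = h_c$ by functoriality of $M$; if $b \notin U_a$ then $G[U_a]_b = 0$ so the square commutes trivially (note $c \in U_a$ is possible with $b \notin U_a$ only if $G[U_a]_{b \le c}=0$, which is consistent). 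Then $(f \circ h)_a = f_a \circ h_a \neq g_a \circ h_a = (g \circ h)_a$, so $fh \neq gh$, establishing that $\{G[U_a]\}_{a \in P}$ is a family of generators.

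For the second step, I would appeal to the standard result of Proposition~\ref{prop:Generator} (in the category appendix), which says that in a cocomplete category the coproduct of a family of generators is again a generator; since $\cat{A}^{\cat{P}}$ is Grothendieck (by Proposition~\ref{prop:GeneratorsCogenerators}) and $P$ is finite, the coproduct $U = \bigoplus_{a \in P} G[U_a]$ exists and is a generator.

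I do not expect any real obstacle here, since this is essentially a restatement of Proposition~\ref{prop:GeneratorsCogenerators} in the finite case; the only minor verification is the naturality of the extension $h$, which reduces to functoriality of $M$ together with the observation that the nonzero structure maps of $G[U_a]$ are all identities on $G$.
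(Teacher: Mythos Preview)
Your proof is correct and follows exactly the approach the paper intends: the paper's own proof simply says to repeat the arguments from the proof of Proposition~\ref{prop:GeneratorsCogenerators}, and you have reproduced that argument explicitly, including the same extension $h_b = M_{a\le b}\circ h_a$ and the appeal to Proposition~\ref{prop:Generator} for the coproduct.
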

\begin{proof}
  Repeat the arguments
   in the proof of Proposition~\ref{prop:GeneratorsCogenerators}.
\end{proof}

\begin{proposition}
\label{prop:generator_is_projective}
Let $(P,\leq)$ be a finite {preordered} set and let $R$ be a unital ring. For each $a\in P$, $R[U_{a}]$ is a projective right (and left) persistence module. In particular, $U = \bigoplus_{a \in P}R[U_a]$ is a projective persistence module.
\end{proposition}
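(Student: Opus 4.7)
The plan is to establish a natural isomorphism $\Hom(R[U_a], M) \cong M_a$ for every right persistence module $M$, and then observe that the evaluation functor $M \mapsto M_a$ is exact. This immediately yields projectivity of $R[U_a]$, and projectivity of the direct sum $U$ follows since coproducts of projectives are projective in any abelian category with enough projectives (or directly from the adjunction $\Hom(\bigoplus M_i, -) \cong \prod \Hom(M_i,-)$ together with the exactness of products in $\ModR$).

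First, I would unpack what a natural transformation $\eta : R[U_a] \to M$ looks like. Since $R[U_a]_b = 0$ for $b \notin U_a$, the component $\eta_b$ is forced to be zero there. For $b \in U_a$, $R[U_a]_b = R$ and $\eta_b$ is determined by $\eta_b(1) \in M_b$. Moreover, since every structure map $R[U_a]_{b \leq c}$ with $b,c \in U_a$ is the identity on $R$, naturality forces $\eta_c(1) = M_{b \leq c}(\eta_b(1))$. Taking $b = a$, the entire natural transformation is determined by the single element $\eta_a(1) \in M_a$; and conversely, any $m \in M_a$ defines such a natural transformation by $\eta_b(1) := M_{a \leq b}(m)$ for $b \in U_a$. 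This gives an isomorphism of abelian groups $\Hom(R[U_a],M) \cong M_a$ that is plainly natural in $M$.

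Next, I would invoke the fact that (co)limits in the functor category $\ModR^{\mathbf{P}}$ are computed pointwise, so the evaluation functor $\mathrm{ev}_a : \ModR^{\mathbf{P}} \to \ModR$, $M \mapsto M_a$, preserves all limits and colimits; in particular it is exact. Composing with the natural isomorphism from the previous paragraph shows that $\Hom(R[U_a], -)$ is exact, which is precisely the definition of $R[U_a]$ being projective. The same argument, applied componentwise, works for left persistence modules $M : \mathbf{P} \to \RMod$.

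Finally, since arbitrary direct sums of projectives are projective (the Hom functor out of a coproduct becomes a product of Hom functors, and products in $\ModR$ or $\RMod$ are exact), we conclude that $U = \bigoplus_{a \in P} R[U_a]$ is a projective persistence module. I expect no real obstacle here: the only subtle point is just being careful that $R[U_a]$ is zero outside $U_a$ so that the Yoneda-style calculation really does produce $M_a$ and nothing more; the finiteness of $P$ is not needed for projectivity itself (it will be used later, together with Proposition~\ref{prop:generator_over_finite_poset}, to apply Theorem~\ref{theorem:Gabriel_Popescu_compact_version}).
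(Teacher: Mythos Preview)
Your proof is correct, but it takes a different route from the paper. The paper argues directly from the lifting characterization of projectivity: given an epimorphism $\pi:M\to N$ and a map $\alpha:R[U_a]\to N$, it uses projectivity of $R$ in $\ModR$ to lift the single component $\alpha_a$ to some $\hat\alpha_a:R\to M_a$, and then propagates this along the structure maps of $M$ to define all other components $\hat\alpha_b = M_{a\le b}\hat\alpha_a$. Your argument instead packages the same underlying observation---that a map out of $R[U_a]$ is determined by where it sends the generator at $a$---as a natural isomorphism $\Hom(R[U_a],-)\cong \mathrm{ev}_a$, and then invokes exactness of pointwise evaluation. This is cleaner and makes the representable nature of $R[U_a]$ explicit; the paper's version is more hands-on and avoids appealing to the pointwise computation of (co)limits in functor categories. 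Your remark that finiteness of $P$ plays no role in projectivity is also correct and worth noting.
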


\begin{proof}
We prove the statement for the case for right persistence modules. The proof for left persistence modules uses the same arguments.
Given any right exact sequence of right persistence modules $M\xrightarrow{\pi} N\to 0$ and a natural transformation $\alpha:R[U_{a}]\to N$ we need to show that there exists a natural transformation $\hat{\alpha}:R[U_a]\to M$ such that $\pi\hat{\alpha}=\alpha$. Since we have $R$-module homomorphisms $\pi_a:M_a\to N_a$ and $\alpha_a:R[U_a]_a\to N_a$ and $R$ is a projective object in $\ModR$, there is an $R$-module homomorphism$\hat{\alpha}_a:R[U_a]_a\to M_a$ such that $\pi_a\hat{\alpha_a}=\alpha_a$. Define $\hat{\alpha}_b:R[U_a]_b\to M_b$ for $a\le b$ to be the map $M_{a\le b}\hat{\alpha}_aR[U_a]_{a\le b}^{-1}$ (recall that $R[U_a]_{a\le b})$ is the identity map on $R$ so its inverse is defined). If $b\not \in U_a$, let $\hat{\alpha}_b:R[U_a]_b\to M_b$ be the zero map. By construction it follows that the collection $\{\hat{\alpha}_b\}_{b\in P}$ are components of a natural transformation $\hat{\alpha}:R[U_a]\to M$. Furthermore, observe that since all the maps for $R[U_a]_{a\le b}$ are the identity maps of $R$ and $\alpha$ is a natural transformation it follows that $\alpha_b=N_{a\le b}\alpha_aR[U_a]_{a\le b}^{-1}$. On the other hand, for $a\le b$, since $\hat{\alpha}$ and $\pi$ are natural transformations we have $\pi_b\hat{\alpha}_b=\pi_bM_{a\le b}\hat{\alpha}_aR[U_a]_{a\le b}^{-1}=N_{a\le b}\pi_a\hat{\alpha}_aR[U_a]_{a\le b}^{-1}=N_{a\le b}\alpha_aR[U_a]_{a\le b}^{-1}=\alpha_b$. Thus $\pi\hat{\alpha}=\alpha$ and therefore $R[U_a]$ is a projective persistence module.
\end{proof}

\begin{proposition}
\label{prop:generator_is_compact}
Let $(P,\leq)$ be a finite {preordered} set and let $R$ be a unital ring. Then $U = \bigoplus_{a \in P}R[U_a]$ is a compact right (and left) persistence module.
\end{proposition}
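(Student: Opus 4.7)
The plan is to first establish compactness of each summand $R[U_a]$ by identifying $\Hom(R[U_a],-)$ with the evaluation functor at $a$, and then to lift this to the finite direct sum $U = \bigoplus_{a \in P} R[U_a]$ using the fact that $P$ is finite.

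First, I would observe that for any right persistence module $M$, a natural transformation $\alpha : R[U_a] \to M$ is completely determined by its $a$-th component $\alpha_a : R \to M_a$. Indeed, $R[U_a]_b = R$ for $b \in U_a$ and is $0$ otherwise, with all nonzero structure maps being identities, so naturality forces $\alpha_b = M_{a\le b} \circ \alpha_a$ for $b \geq a$ and $\alpha_b = 0$ otherwise. Together with the natural isomorphism $\Hom_R(R, M_a) \cong M_a$, this gives a natural isomorphism $\Hom(R[U_a], M) \cong M_a$, i.e.\ the representable functor $\Hom(R[U_a], -)$ agrees with the evaluation functor $\mathrm{ev}_a : \cat{A}^{\mathbf{P}} \to \cat{A}$ (where $\cat{A} = \ModR$).

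Next, since colimits in the functor category $\cat{A}^{\mathbf{P}}$ are computed pointwise, the functor $\mathrm{ev}_a$ preserves arbitrary direct sums. Combined with the representation above, this yields
\[
  \Hom\bigl(R[U_a],\, \textstyle\bigoplus_{i \in I} M_i\bigr) \cong \bigl(\textstyle\bigoplus_{i\in I} M_i\bigr)_a \cong \textstyle\bigoplus_{i\in I} (M_i)_a \cong \bigoplus_{i\in I} \Hom(R[U_a], M_i),
\]
so each $R[U_a]$ is compact in the sense of Definition~\ref{def:compact_object}.

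Finally, I would extend this to $U$ using the universal property of finite direct sums, which are simultaneously products. Because $P$ is \emph{finite},
\[
  \Hom(U, N) = \Hom\bigl(\textstyle\bigoplus_{a \in P} R[U_a],\, N\bigr) \cong \prod_{a \in P} \Hom(R[U_a], N) = \bigoplus_{a \in P} \Hom(R[U_a], N),
\]
naturally in $N$. Applying this with $N = \bigoplus_{i \in I} M_i$ and interchanging the two direct sums (which is legal since one of them is finite) gives
\[
  \Hom\bigl(U,\, \textstyle\bigoplus_i M_i\bigr) \cong \bigoplus_{a\in P} \bigoplus_i \Hom(R[U_a], M_i) \cong \bigoplus_i \bigoplus_{a\in P} \Hom(R[U_a], M_i) \cong \bigoplus_i \Hom(U, M_i),
\]
proving $U$ is compact. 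The proof for left persistence modules is identical. The one subtle point — and the only place finiteness of $P$ is used — is the collapse of the product indexed by $P$ into a direct sum; without it, $U$ would in general fail to be compact, which is why this proposition is restricted to finite preordered sets.
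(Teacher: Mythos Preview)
Your proof is correct and follows essentially the same approach as the paper. Both arguments hinge on the identification $\Hom(R[U_a],M)\cong M_a$ (you phrase this as $\Hom(R[U_a],-)\cong \mathrm{ev}_a$, the paper says $f$ is determined by $f(1_a)$), use that direct sums in $\ModR^{\mathbf{P}}$ are computed pointwise to get compactness of each $R[U_a]$, and then invoke finiteness of $P$ to pass to $U$; your packaging via the evaluation functor is slightly more conceptual but the content is the same.
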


\begin{proof}
  We show that there is a canonical isomorphism $\Hom(U,\bigoplus_{i}M_i)\cong \bigoplus_{i}\Hom(U,M_i)$. Given $f:R[U_{a}]\to \bigoplus_i M_i$, since $f$ is a natural transformation and all maps in $R[U_{a}]$ are the identity or the zero map, $f$ is completely determined by its image, $f(1_a)$, where $1_a$ is the multiplicative identity in $R[U_a]_a$. Thus as the codomain is a direct sum we have $f(1_a)=\sum_{i=1}^nm^a_i$ for some $m_i^a\in (M_i)_a$. Define $f_i:R[U_{a}]\to M_i$ by setting $f_i(1_a)=m_i^a$, and extending appropriately.
  Define the map $\psi_a:\Hom(R[U_a],\bigoplus_iM_i)\to \bigoplus_i\Hom(R[U_{a}],M_i)$ by $f\mapsto (f_i)$. This is clearly well-defined and a canonical isomorphism.
The functor $\Hom$ commutes with limits. Since finite direct sums are isomorphic to finite direct products, $\Hom$ commutes with finite direct sums. 
  Thus we have canonical isomorphisms
\begin{multline*}
  \Hom(\bigoplus\limits_{a\in P}R[U_{a}],\bigoplus_iM_i)\cong \bigoplus_{a\in P}\Hom(R[U_a],\bigoplus_iM_i) \\
  \overset{\oplus\psi_a}{\cong}
  \bigoplus_{a\in P} \bigoplus_i 
  \Hom(R[U_{a}],M_i)\cong\bigoplus_i(\bigoplus_{a\in P}R[U_{a}], M_i)
\end{multline*}
\end{proof}

Combining Proposition~\ref{prop:generator_over_finite_poset}, Proposition~\ref{prop:generator_is_projective}, Proposition~\ref{prop:generator_is_compact} and the fact a unital ring $R$ is a generator for the category left/right modules over $R$ (Appendix~\ref{sec:category}) we obtain Theorem~\ref{theorem:gabriel_popescu_for_persistence_modules}.

\begin{theorem}
\label{theorem:gabriel_popescu_for_persistence_modules}
  Let $(P,\leq)$ be a finite {preordered} set with corresponding 
category $\cat{P}$
and let $R$ be a unital ring. 
Let $U = \bigoplus_{a \in P}R[U_a]$. Then 
  $\Hom(U,-): \ModR^{\mathbf{P}} \to \cat{Mod}_{\emph{{End}}(U)}$ is an equivalence of categories.
\end{theorem}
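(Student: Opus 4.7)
The plan is to directly assemble the three preceding propositions into the hypothesis of the strengthened Gabriel-Popescu theorem (Theorem~\ref{theorem:Gabriel_Popescu_compact_version}). That theorem says that if $\cat{C}$ is a cocomplete abelian category and $U \in \cat{C}$ is a compact projective generator, then the functor $\Hom_{\cat{C}}(U,-)\colon \cat{C} \to \cat{Mod}_{\text{End}(U)}$ is an equivalence of categories. So I need to verify these three hypotheses for $\cat{C} = \ModR^{\mathbf{P}}$ and the specific $U = \bigoplus_{a \in P} R[U_a]$.

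First, I would note that $\ModR^{\mathbf{P}}$ is a Grothendieck category by Proposition~\ref{prop:GeneratorsCogenerators} (taking $\mathbf{A} = \ModR$), and in particular is cocomplete and abelian, so Theorem~\ref{theorem:Gabriel_Popescu_compact_version} applies. Next, recall that a unital ring $R$ is a generator of the module category $\ModR$ (this is the standard fact referenced from Appendix~\ref{sec:category}). Proposition~\ref{prop:generator_over_finite_poset}, applied with generator $G = R$, then tells us that $U = \bigoplus_{a \in P} R[U_a]$ is a generator of $\ModR^{\mathbf{P}}$; note that this uses finiteness of $P$ only through the statement of that proposition. Projectivity of $U$ is then immediate: Proposition~\ref{prop:generator_is_projective} gives projectivity of each summand $R[U_a]$, and arbitrary direct sums of projectives are projective in any Grothendieck category. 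Compactness of $U$ is exactly Proposition~\ref{prop:generator_is_compact}.

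Having verified that $U$ is a compact projective generator in the cocomplete abelian category $\ModR^{\mathbf{P}}$, I would then invoke Theorem~\ref{theorem:Gabriel_Popescu_compact_version} to conclude that $\Hom(U,-)\colon \ModR^{\mathbf{P}} \to \cat{Mod}_{\text{End}(U)}$ is an equivalence of categories, which is precisely the statement to be proved.

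There is essentially no obstacle in this proof: the theorem is an immediate corollary, and every nontrivial piece of work has been isolated into the three preceding propositions. The only thing worth being explicit about is that the convention here is $\cat{Mod}_{\text{End}(U)}$ (right modules over the endomorphism ring), which matches the natural right action of $\text{End}(U)$ on $\Hom(U,M)$ by precomposition; if one prefers left modules, one would replace $\text{End}(U)$ by its opposite ring, but this is a purely notational point.
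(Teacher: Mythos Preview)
Your proposal is correct and matches the paper's approach exactly: the paper simply combines Propositions~\ref{prop:generator_over_finite_poset}, \ref{prop:generator_is_projective}, and \ref{prop:generator_is_compact} together with the fact that $R$ is a generator of $\ModR$, and then invokes Theorem~\ref{theorem:Gabriel_Popescu_compact_version}. Your write-up is slightly more explicit (e.g., noting cocompleteness via the Grothendieck property and the right-module convention for $\text{End}(U)$), but the argument is the same.
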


Thus two persistence modules $M$ and $N$ over a finite {preordered} set are isomorphic iff
the $\text{End}(U)$-modules
$\Hom(U,M)$ and $\Hom(U,N)$ are isomorphic.

\section{Enriched category theory and persistence modules} 
\label{section:enriched}

In this section, we assume that $R$ is a commutative unital ring and that $(P,\le,+,0)$ is a {preorder} with a compatible abelian group structure.
That is, $a \leq b$ implies that $a+c \leq b+c$. {The purpose of this section is to observe that persistence modules are symmetric monoidal categories enriched over themselves, with respect to both the graded module and sheaf tensor products. These observations follow from classical results in enriched category theory. We state these results for persistence modules in the hope that doing so will facilitate new computational approaches to topological data analysis. We remark that enriched category theory has been used in applied topology recently \cite{leinster2017magnitude,cho2019quantales,govc2020persistent}. 
}
\subsection{Enriched structure of persistence modules with the graded tensor}

\begin{theorem}
\label{thm:monoidal-cat-gr}
$\big(\ModR^{\mathbf{P}},\grtensor, R[U_0]\big)$
is a symmetric monoidal category.
\end{theorem}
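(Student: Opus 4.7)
The plan is to exploit the isomorphism of categories from Theorem~\ref{thm:isomorphism}, which identifies $\ModR^{\mathbf{P}}$ with $\text{Gr}^P\text{-}\mathbf{Mod}_{R[U_0]}$, the category of $P$-graded right $R[U_0]$-modules. Since $R$ is commutative, the monoid ring $R[U_0]$ is itself a commutative $P$-graded ring (Example~\ref{example:GradedRing}), and under this identification $\grtensor$ becomes the standard tensor product of graded modules over a commutative graded ring (Definition~\ref{def:alg_def_gr_tensor_prod}). It is a classical fact that such a tensor product endows the category of graded modules with a symmetric monoidal structure whose unit is the ring itself; see for instance \cite[Section 1.2]{hazrat2016graded}. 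Transporting this structure back along the isomorphism of categories yields the theorem.

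For a more self-contained verification, the structure maps take the following form. The associator $(M \grtensor N) \grtensor L \isomto M \grtensor (N \grtensor L)$ and the symmetry $\sigma_{M,N} : M \grtensor N \isomto N \grtensor M$ are induced, using the colimit description of Definition~\ref{def:graded_tensor_product}, by the associativity isomorphism and the swap $m \otimes n \mapsto n \otimes m$ of ordinary $R$-module tensor products at each bidegree; the latter is well-defined on the quotient appearing in Definition~\ref{def:alg_def_gr_tensor_prod} precisely because $R$ is commutative. The left and right unitors come from the observation recorded after Definition~\ref{def:graded_tensor_product} that $M \grtensor R[U_s] \cong M(-s)$; specializing to $s = 0$ gives $M \grtensor R[U_0] \cong M$ and, by symmetry, $R[U_0] \grtensor M \cong M$.

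The main step is to verify the pentagon, triangle, hexagon, and involution axioms. In the colimit formulation these reduce, degree by degree, to the corresponding coherence identities for $\otimes_R$ on $\ModR$, which are classical. The cleanest conceptual way to package this verification is to observe that the coend formula $(M \grtensor N)_r = \colim_{s+t \le r}(M_s \otimes_R N_t)$ exhibits $\grtensor$ as the Day convolution arising from the symmetric monoidal preorder $(\mathbf{P}, +, 0)$ (well-defined as a symmetric monoidal category because $+$ is compatible with $\le$, and with all coherence automatic since $\mathbf{P}$ is a preorder) and the standard symmetric monoidal structure on $\ModR$. Day convolution is known to inherit a symmetric monoidal structure from such data, so all coherence diagrams commute without further calculation. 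I expect this coherence bookkeeping to be the only nontrivial part; everything else is an immediate unwinding of Definition~\ref{def:graded_tensor_product}.
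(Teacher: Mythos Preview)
Your proposal is correct. The self-contained verification in your second and third paragraphs is essentially the paper's own argument: the paper constructs the braiding, associator, and unitors by applying the corresponding isomorphisms from $(\ModR,\otimes_R,R)$ pointwise to the diagrams $\{M_s\otimes_R N_t\}_{s+t\le r}$ and passing to colimits, and then observes that the pentagon, triangle, and hexagon identities hold because they already hold in $\ModR$.

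Where you differ is in the packaging. You offer two additional framings the paper does not use: (i) transport of the known symmetric monoidal structure on graded modules over the commutative graded ring $R[U_0]$ along the isomorphism of Theorem~\ref{thm:isomorphism}, and (ii) recognition of $\grtensor$ as a Day convolution, so that coherence is inherited automatically. Both are legitimate and arguably cleaner than the paper's hands-on construction; the Day convolution viewpoint in particular dispatches all coherence at once, whereas the paper (and your direct verification) must at least gesture at the degree-by-degree reduction. The paper's approach, on the other hand, has the advantage of making the left unitor explicit, which it later uses. Either route suffices.
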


\begin{proof}

Let $M,N \in \ModR^{\mathbf{P}}$ and $s,t \in P$. We have canonical morphisms $\gamma_{s,t}:M_{s}\otimes_RN_{t}\to N_{t}\otimes_R M_{s}$ since $\ModR$ is a symmetric monoidal category, {as $R$ is assumed to be commutative}, with unit $R$ and tensor product $\otimes_R$.  The collection of maps $\gamma_{s,t}$ induces an isomorphism of diagrams $\{(M_{s}\otimes_RN_{t})\}_{s+t\le r}$ and $\{N_{t}\otimes_RM_{s}\}_{s+t\le r}$ and thus a natural isomorphism between their colimits.
  Hence we get a natural isomorphism between $(M\grtensor N)_{r}$ and $(N\grtensor M)_{r}$, called the braiding.
  By the same argument, we obtain an associator and left and right unitors.

  Since it will be used later, let us explicitly define the left unitor.
  The left unitor is a natural isomorphism with components $\lambda_M:R[U_0]\grtensor M \isomto M$, for each persistence module $M$. Let $x^0$ be the generator of $R[U_0]$ and consider $\sum c_ix^{t_i}\grtensor m_i \in R[U_{0}]\grtensor M$ where $c_i\in R$, $t_i\in U_0$, and $m_i \in M_{s_i}$.  Note that by the definition of $\grtensor$ we have that  $\sum c_ix^{t_i}\grtensor m_i=\sum_i x^0\grtensor c_i x^{t_i}\cdot m_i$. Define $\lambda_M(\sum c_ix^{t_i}\grtensor m_i):=\sum_i c_ix^{t_i}\cdot m_i$.

  Since the pentagon identity, triangle identity, and hexagon identity hold in $\ModR$, it follows that they also hold here.
\end{proof}

\begin{proposition}
  There is a functor $\uHom(-,-):(\ModR^{\mathbf{P}})^{\op}\times \ModR^{\mathbf{P}}\to \ModR^{\mathbf{P}}$ given by \[\uHom(M,N)_{s}:=\Hom(M,N(s)),
  \]
  for $s\in P$ and persistence modules $M$ and $N$.
\end{proposition}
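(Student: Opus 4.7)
The plan is to first verify that $\uHom(M,N)$ is a persistence module, then define the action of the bifunctor on morphisms in each variable, and finally check the bifunctor axioms. For the object-level construction, each component $\uHom(M,N)_s = \Hom(M, N(s))$ carries a natural $R$-module structure because $R$ is commutative and $N(s)$ takes values in $\ModR$: we add and scalar-multiply natural transformations componentwise. For $s \leq t$, compatibility of the preorder with the group operation gives $t - s \in U_0$, so (as in Definition~\ref{def:translation_functor} and the discussion following it) there is a translation natural transformation $\eta_{t-s}: N(s) \to N(t)$ whose $x$-component is $N_{x+s \leq x+t}$. The transition map $\uHom(M,N)_{s \leq t}$ is then postcomposition with this natural transformation, exactly as already written out after the definition of $\uHom$; functoriality in $s \leq t \leq u$ reduces to the identity $\eta_{u-t} \circ \eta_{t-s} = \eta_{u-s}$, which is an immediate consequence of functoriality of $N:\mathbf{P} \to \ModR$.

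Next I define the action on morphisms. Given $f: M' \to M$ in $\ModR^{\mathbf{P}}$ and $g: N \to N'$ in $\ModR^{\mathbf{P}}$, set the $s$-component of $\uHom(f,g): \uHom(M,N) \to \uHom(M',N')$ to be the map $\alpha \mapsto g(s) \circ \alpha \circ f$, where $g(s): N(s) \to N'(s)$ denotes the natural transformation with $x$-component $g_{x+s}$. This is manifestly an $R$-module homomorphism at each $s$. To see that $\uHom(f,g)$ is itself a morphism of persistence modules, I must check that it commutes with the transition maps of $\uHom(M,N)$ and $\uHom(M',N')$. The nontrivial piece, coming from the $g$-slot, reduces to the identity $\eta'_{t-s} \circ g(s) = g(t) \circ \eta_{t-s}$, which at the $x$-component reads $N'_{x+s \leq x+t} \circ g_{x+s} = g_{x+t} \circ N_{x+s \leq x+t}$; this is precisely the naturality of $g$ at the arrow $x+s \leq x+t$. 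Precomposition with $f$ is unproblematic since it does not interact with the translations on the target.

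Finally, the bifunctor axioms. Preservation of identities is immediate from the formula. Compatibility with composition, that is, $\uHom(f' \circ f, g' \circ g) = \uHom(f, g') \circ \uHom(f', g)$ with the usual flip in the contravariant slot, follows from associativity of composition in $\ModR^{\mathbf{P}}$ together with the observation that $(g' \circ g)(s) = g'(s) \circ g(s)$. The main obstacle is purely organizational: one has to keep careful track of the various translation natural transformations attached to each module in sight and verify that each ensuing square commutes. Every such verification reduces to the naturality of a morphism already in hand, so there is no substantive difficulty.
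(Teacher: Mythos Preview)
Your proof is correct and follows essentially the same approach as the paper: establish the $R$-module structure on each $\Hom(M,N(s))$, define the transition maps via postcomposition with the translation natural transformation $\eta_{t-s}$, and then define the action on morphisms by pre- and post-composition. If anything, your version is slightly more thorough, since you explicitly verify that $\uHom(f,g)$ commutes with the transition maps (reducing this to naturality of $g$), whereas the paper checks functoriality in each variable separately and leaves that compatibility implicit.
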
 

\begin{proof}
  Let $s\in P$. Then $\Hom(M,N(s))$ is the set of natural transformations from $M$ to $N(s)$. This is an $R$-module. Indeed given a natural transformation, we define an $R$ action by an element $r\in R$ to be componentwise multiplication by $r$.
  Whenever $s\le t$
define  $\uHom(M,N)_{s\le t}:\Hom(M,N(s))\to \Hom(M,N(t))$ to be the map \[\uHom(M,N)_{s\le t}(\alpha)=\alpha*\eta_{t-s}.\] 
That is, given a natural transformation $\alpha:M\to N(s)$ compose each component $\alpha_a$ with $N_{a+s\le a+t}$ to get a new natural transformation, namely $\alpha*\eta_{t-s}$. 
Note that we could have precomposed with $M_{a-(t-s)\le a}$ to have a similar construction, however due to the naturality of $\alpha$ this choice would give us the same answer.

\begin{equation*}
\begin{tikzcd}
M_{a-(t-s)}\arrow[rr,"M_{a-(t-s)\le a}"]\arrow[drr,dashed]&&M_a\arrow[d,"\alpha_a"]\arrow[drr,dashed]\\
&&N_{a+s}\arrow[rr,"N_{a+s\le a+t}"]&&N_{a+t}
\end{tikzcd}
\end{equation*}

It remains to show is that
this definition is functorial.

Suppose $\alpha:M\to N$ is a natural transformation of persistence modules $M$ and $N$. Let ${N'}$ be a persistence module. Define $\uHom(\alpha,P):\uHom(N,{N'})\to \uHom(M,{N'})$ by pre-composing with $\alpha$. Namely, for a given 
$\beta:N\to P(s)$
define $\uHom(\alpha,{N'})(\beta)= \beta\alpha$.
Then $\uHom(\gamma\alpha,{N'})(\beta\alpha)=\uHom(\alpha,{N'})\circ \uHom(\gamma,{N'})$ and that $\uHom(-,{N'})(\mathbf{1}_M)=\mathbf{1}_{\uHom(M,{N'})}$. 
To show that $\uHom({N'},-)$ is a functor, define
$\uHom({N'},\alpha)(\beta) = \alpha\beta.$
It follows that $\uHom({N'},-)$ is a functor.
\end{proof}

 By Proposition~\ref{prop:UnderlineHom}, when $M$ is finitely generated $\uHom(M,N)$ is the abelian group of module homomorphisms when we forget the grading.

\begin{proposition}
\label{prop:enriched_category_is_a_category}
There is a category whose objects are persistence modules and whose morphisms are the sets $\uHom(M,N)$.
We denote this category $\uModRP$.
\end{proposition}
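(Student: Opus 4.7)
The plan is to equip the collection $\{\uHom(M,N)\}_{M,N}$ with composition and identities and then check the category axioms. Since $\uHom(M,N) = \bigoplus_{s \in P}\Hom(M,N(s))$ is a direct sum of $R$-modules, it suffices by additivity to define composition on each graded summand and then extend linearly; I will treat only homogeneous elements below.

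First, I would define composition. Given $\alpha \in \uHom(M,N)_s = \Hom(M,N(s))$ and $\beta \in \uHom(N,P)_t = \Hom(N,P(t))$, I set
\begin{equation*}
\beta \circ \alpha \;:=\; (\beta * \mathcal{T}_s)\, \alpha \;\in\; \Hom(M, P(s+t)) \;=\; \uHom(M,P)_{s+t},
\end{equation*}
where $\beta * \mathcal{T}_s : N(s) \to P(t)(s) = P(s+t)$ denotes the natural transformation with components $(\beta * \mathcal{T}_s)_a := \beta_{a+s}$ (i.e., horizontal composition of $\beta$ with the translation functor $\mathcal{T}_s$ of Definition~\ref{def:translation_functor}), and the juxtaposition is vertical composition of natural transformations. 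Note that $P(t)(s) = P \circ \mathcal{T}_t \circ \mathcal{T}_s = P \circ \mathcal{T}_{s+t} = P(s+t)$, so the codomain is as stated. Extending biadditively over the direct-sum decomposition gives a map $\uHom(N,P) \otimes \uHom(M,N) \to \uHom(M,P)$ which lands in degree $s+t$ when the inputs are in degrees $t$ and $s$.

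Next, I would declare the identity at $M$ to be $1_M \in \Hom(M,M) = \Hom(M,M(0)) = \uHom(M,M)_0$. To verify the category axioms, I would check: (i) associativity, which for homogeneous $\alpha \in \Hom(M,N(s))$, $\beta \in \Hom(N,P(t))$, $\gamma \in \Hom(P,Q(u))$ reduces to the identity
\begin{equation*}
(\gamma * \mathcal{T}_{s+t})(\beta * \mathcal{T}_s)\,\alpha \;=\; \bigl((\gamma * \mathcal{T}_t)\,\beta\bigr)* \mathcal{T}_s \cdot \alpha,
\end{equation*}
which holds because $\mathcal{T}_t\mathcal{T}_s = \mathcal{T}_{s+t}$ and horizontal composition of natural transformations distributes over vertical composition (the interchange law); (ii) the left unit law $1_N \circ \alpha = \alpha$ for $\alpha \in \Hom(M,N(s))$, which follows from $1_N * \mathcal{T}_s = 1_{N(s)}$; and (iii) the right unit law $\alpha \circ 1_M = \alpha$, which is immediate since $s+0 = s$ and $\mathcal{T}_0 = \mathrm{id}$.

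The only mild obstacle is bookkeeping: one must keep the grading and the translation functors aligned so that codomains match, and confirm that composition extended across the direct sum respects the grading (which is automatic because $\Hom(M,N(s)) \times \Hom(N,P(t))$ lands in degree $s+t$). Everything else reduces to the associativity of vertical composition and the functoriality of translation, both of which are inherited from $\cat{A}^{\cat{P}}$. This construction in fact gives more than a category: it exhibits the hom objects as $P$-graded $R$-modules and composition as a graded map, which is exactly what is needed for the enrichment over $(\ModR^{\mathbf{P}},\grtensor,R[U_0])$ developed in the sequel.
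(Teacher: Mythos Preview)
Your proof is correct and follows essentially the same approach as the paper: the paper also defines composition on homogeneous elements by $\beta\circ\alpha := (\beta * 1_{\mathcal{T}_s})\bullet\alpha$ (whiskering then vertical composition), extends by linearity, and takes $1_M$ in degree~$0$ as the identity. You are a bit more explicit about deriving associativity from the interchange law and about checking the unit laws, whereas the paper simply observes these follow from the corresponding axioms in $\ModR^{\mathbf{P}}$.
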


\begin{proof}
  Let $\alpha\in \uHom(M,N)$ and $\beta\in \uHom(N,{N'})$.
  Suppose $\alpha$ and $\beta$ are of homogeneous degrees, $s$ and $t$ respectively.
  That is $\alpha:M\Longrightarrow N(s)$ and $\beta:N\Longrightarrow {N'}(t)$. Define the $x$-component of $\beta\circ \alpha$ to be $\beta_{s+x}\circ \alpha_{x}$.
Extend to the general case by linearity.
  Since the composition in $\ModR^{\mathbf{P}}$ is associative, this composition is associative as well.
  
  For the more categorically minded reader, this can be stated using horizontal 
  and vertical compositions of natural transformations.
  We define $\beta\circ \alpha:=(\beta * 1_{\mathcal{T}_{s}})\bullet \alpha$ (Definition~\ref{def:translation_functor}), where $*$ signifies horizontal composition and $\bullet$ a vertical composition. 
Consider the following diagram, where  $N(s)=N\mathcal{T}_s$ and ${N'}(t)={N'}\mathcal{T}_t$.
  
\begin{equation*}
\begin{tikzcd}[row sep=2em,column sep=2em]
\mathbf{P}\arrow[d,"="]\arrow[rrr,"M",""{name=V,below}]&&& \ModR\arrow[d,"="]\\
\mathbf{P}  \arrow[rrr, bend left=25, "N(s)" below, ""{name=K}] \arrow[Rightarrow, from=V,to=K, "\alpha" right]\arrow[d,"="]  \arrow[r, "\mathcal{T}_s",""{name=U,below}]   & \mathbf{P} \arrow[rr, "N", ""{name=J,below}]\arrow[d,"="]  && \ModR\arrow[d,"="]\\
\mathbf{P} \arrow[r,"\mathcal{T}_s" below,""{name=L,above}]\arrow[Rightarrow,from=U,to=L, "1_{\mathcal{T}_s}" right] &\mathbf{P}\arrow[r,"\mathcal{T}_t" below]&\mathbf{P}\arrow[Rightarrow, from=J, "\beta", right]\arrow[r, "{N'}" below]&\ModR
\end{tikzcd}
\end{equation*}

For every persistence module $M$ we have an identity morphism, $\mathbf{1}_M$ the identity morphism in $\ModR^{\mathbf{P}}$ viewed as a morphism in the new category. The identity axiom in $\uModRP$ follows from the identity axiom in $\ModR^{\mathbf{P}}$.
\end{proof}

\begin{example}
Let $M$ be a persistence module and $s \in P$. Then the translations by $s$ and $-s$, show that  $M(s)$ and $M$ are isomorphic in $\uModRP$. That is, translations are isomorphisms.
\end{example}

\begin{proposition}
$\uModRP$ is an additive category.
\end{proposition}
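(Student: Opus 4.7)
The plan is to verify the three defining properties of an additive category: each hom-set is an abelian group with bilinear composition, there is a zero object, and finite biproducts exist. Since the underlying functor category $\ModR^{\mathbf{P}}$ is already abelian (hence additive) and $\uModRP$ has the same objects while enlarging the morphism sets via $\uHom(M,N) = \bigoplus_{s\in P}\Hom(M,N(s))$, the task is really to transfer the additive structure along this direct sum decomposition.

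First I would equip each $\uHom(M,N)$ with an abelian group structure: each summand $\Hom(M,N(s))$ is already an abelian group under pointwise addition of natural transformations, so the direct sum is one automatically. For bilinearity of composition, the direct sum decomposition reduces the check to the homogeneous case: given $\alpha, \alpha' \in \Hom(M,N(s))$ and $\beta \in \Hom(N,N'(t))$, the $x$-component of $\beta\circ(\alpha+\alpha')$ is $\beta_{s+x}(\alpha_x + \alpha'_x) = \beta_{s+x}\alpha_x + \beta_{s+x}\alpha'_x$, which is the $x$-component of $\beta\alpha + \beta\alpha'$; linearity in the other variable is symmetric. The constant-zero persistence module then serves as a zero object, since each summand of $\uHom(0,N)$ and $\uHom(M,0)$ vanishes.

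Finally, for binary biproducts I would use the pointwise direct sum $M \oplus N$, which is already the biproduct in $\ModR^{\mathbf{P}}$. The canonical inclusions and projections are natural transformations, hence degree-$0$ elements of the relevant $\uHom$'s, and the biproduct identities $p_i\iota_j = \delta_{ij}$ and $\iota_M p_M + \iota_N p_N = \mathrm{id}_{M\oplus N}$ already hold in $\ModR^{\mathbf{P}}$ and therefore persist in $\uModRP$. The main point to watch is the grading bookkeeping: since elements of $\uHom$ are finitely supported sums of homogeneous morphisms, both the associativity established in Proposition~\ref{prop:enriched_category_is_a_category} and the bilinearity above extend from the homogeneous case by distribution, so no genuine obstacle arises beyond the careful homogeneous-degree verification.
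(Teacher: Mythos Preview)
Your proposal is correct and follows essentially the same approach as the paper's proof: both verify the abelian group structure on $\uHom(M,N)$ via the direct sum of abelian groups, note bilinearity of composition, identify the zero persistence module as the zero object, and use $M\oplus N$ for biproducts. Your version simply spells out more detail (the homogeneous-degree bilinearity check and the biproduct identities) where the paper is terse.
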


\begin{proof}
  Each $\uHom(M,N)$ is an abelian group as it is a graded direct sum of abelian groups.
  Our definition of composition in $\uModRP$  is bilinear.
  The zero persistence module is the $0$ object.
  The coproduct of persistence modules $M$ and $N$ is $M\oplus N$. The product of persistence modules $M$ and $N$ is $M\times N$ and is canonically isomorphic to $M\oplus N$.  
\end{proof}

\begin{theorem}
\label{theorem:enriched_category_is_monoidal}
$\Big(\uModRP,\grtensor, R[U_{0}]\Big)$ is a symmetric monoidal category.
\end{theorem}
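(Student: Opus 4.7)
The plan is to promote the symmetric monoidal structure from Theorem~\ref{thm:monoidal-cat-gr} on $\ModR^{\mathbf{P}}$ to the enriched category $\uModRP$. The objects are the same; only the morphisms are enlarged from degree-$0$ natural transformations to their graded siblings $\uHom(M,N) = \bigoplus_{s \in P} \Hom(M,N(s))$. So the heart of the argument is to extend the bifunctor $\grtensor$ to these graded morphisms and verify that the existing associator, unitors, and braiding remain natural and coherent.

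First I would extend $\grtensor$ to a bifunctor $\uModRP \times \uModRP \to \uModRP$. On objects it is unchanged. Given homogeneous morphisms $\alpha: M \to M'(s)$ and $\beta: N \to N'(t)$ in $\uModRP$, define $\alpha \grtensor \beta$ to be the composite
\[
M \grtensor N \xrightarrow{\alpha \grtensor \beta_0} M'(s) \grtensor N'(t) \isomto (M' \grtensor N')(s+t),
\]
where $\alpha \grtensor \beta_0$ is the ordinary tensor of natural transformations in $\ModR^{\mathbf{P}}$ and the last isomorphism is the canonical identification $M'(s) \grtensor N'(t) \isom (M' \grtensor N')(s+t)$ noted after Definition~\ref{def:graded_tensor_product}. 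Extend to inhomogeneous morphisms by bilinearity, using that $\uHom(M,M')$ and $\uHom(N,N')$ are direct sums over $P$. Functoriality in each variable and bifunctoriality follow from the corresponding facts for $\grtensor$ on $\ModR^{\mathbf{P}}$, together with compatibility of the shift isomorphisms with composition (horizontal composition of natural transformations with translation functors).

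Next I would promote the associator $a$, left and right unitors $\lambda, \rho$, and braiding $\gamma$ from Theorem~\ref{thm:monoidal-cat-gr}. Each of these is, at every object, a degree-$0$ isomorphism in $\ModR^{\mathbf{P}}$, hence a morphism in $\uModRP$. To establish naturality with respect to graded morphisms, it suffices to check naturality squares for homogeneous morphisms of arbitrary degree. For example, for $\alpha \in \uHom(M,M')_s$, $\beta \in \uHom(N,N')_t$, $\delta \in \uHom(L,L')_u$, the relevant square for $a$ differs from the already-established naturality in $\ModR^{\mathbf{P}}$ only by tensoring with shifts, which commute with $a$ by the canonical shift isomorphisms; the corresponding square therefore commutes. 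The same argument handles $\lambda$, $\rho$, and $\gamma$.

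Finally, the coherence axioms (the pentagon, the triangle, and both hexagons for the braiding) hold object-wise in $\ModR^{\mathbf{P}}$ by Theorem~\ref{thm:monoidal-cat-gr}. Since the underlying objects and structural isomorphisms are unchanged when passing to $\uModRP$, the same diagrams of objects and isomorphisms commute. Thus all the data of a symmetric monoidal structure on $\uModRP$ with unit $R[U_0]$ is obtained.

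The main obstacle I expect is the bookkeeping around the canonical identification $M(s) \grtensor N(t) \isom (M \grtensor N)(s+t)$: one must verify it is natural in both variables, associative (so that the extended $\grtensor$ respects composition of graded morphisms), and compatible with the braiding and unitors. Once this shift-compatibility is formalized, the rest of the verification is a routine transfer of the degree-$0$ coherence data.
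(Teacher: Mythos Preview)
Your proposal is correct and follows essentially the same approach as the paper: inherit the associator, unitors, and braiding from Theorem~\ref{thm:monoidal-cat-gr} and verify their naturality against the enlarged (graded) morphisms, with coherence then coming for free since the structural isomorphisms and objects are unchanged. The paper's proof is terser---it checks the braiding's naturality by a pointwise commutative square in $\ModR$ and waves at the rest---whereas you are more explicit about extending $\grtensor$ to a bifunctor on $\uModRP$ and about the shift identification $M(s)\grtensor N(t)\cong (M\grtensor N)(s+t)$, which is exactly the bookkeeping the paper leaves implicit.
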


\begin{proof}
Let the braiding, associator, left and right unitor {morphisms} be those from the symmetric monoidal category $\left( \ModR^{\mathbf{P}}, \grtensor, R[U_0]\right)$ (Theorem~\ref{thm:monoidal-cat-gr}).
It remains to show that these commute with the larger set of morphisms in $\uModRP$.

Consider the braiding. Let $\varphi:M \to M'$ and $\psi: N \to N'$ in $\uModRP$.
Since we have the following commutative diagrams in $\ModR$,
\begin{figure}[H]
\centering
\begin{tikzcd}
  M_s \tensor_R N_t \ar[r,"\gamma"] \ar[d,"\varphi_s^a \tensor_R \psi_t^b"] &  N_t \tensor_R M_s \ar[d,"\psi_t^b \tensor_R \varphi_s^a"] \\
  M'_{s+a} \tensor_R N'_{t+b} \ar[r,"\gamma"] & N'_{t+s} \tensor_R M'_{s+a}
\end{tikzcd}
\end{figure}
it follows that the braiding is natural in $\uModRP$. Naturality of the associator and left and right unitors follows similarly.
\end{proof}

\begin{theorem}
  \label{theorem:graded_module_adjunction}
  $\Big(\uModRP,\grtensor, R[U_0]\Big)$ is a closed symmetric monoidal category. That is, for all $N \in \uModRP$, $-\grtensor N$ has right adjoint $\uHom(N,-):\uModRP \to \uModRP$.
  That is, for any persistence modules $M,N$ and ${N'}$ there exists a natural (in all arguments) isomorphism $\uHom(M\grtensor N,{N'})\cong \uHom(M,\uHom(N,{N'}))$. Furthermore this isomorphism is a morphism of degree zero, i.e., a natural transformation. In particular, $\Big(\ModR^{\mathbf{P}},\grtensor,R[U_0]\Big)$ is a closed symmetric monoidal category.
\end{theorem}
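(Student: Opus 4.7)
The plan is to construct the adjunction isomorphism explicitly via currying/uncurrying, then verify bijectivity, naturality in all three arguments, and degree preservation. Since both sides of the claimed isomorphism are $P$-graded abelian groups built out of ordinary $\Hom$-sets, it suffices to exhibit a natural isomorphism in each graded piece $s \in P$ and check that these assemble into a morphism of $P$-graded modules (i.e.\ a morphism of degree zero in $\uModRP$).

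First I would define the map at the level of degree-$s$ components. By definition,
\[
\uHom(M \grtensor N, N')_s = \Hom(M \grtensor N,\, N'(s)), \qquad \uHom(M, \uHom(N, N'))_s = \Hom(M,\, \uHom(N,N')(s)).
\]
Given $f: M \grtensor N \to N'(s)$, a natural transformation, define $\Phi_s(f): M \to \uHom(N, N')(s)$ by sending $m \in M_a$ to the element of $\uHom(N,N')_{s+a} = \bigoplus_{t} \Hom(N, N'(t))$ whose component in $\Hom(N, N'(s+a))$ is the natural transformation $n \mapsto f_{a+b}(m \grtensor n)$ for $n \in N_b$. The inverse map $\Psi_s$ takes a morphism $g: M \to \uHom(N,N')(s)$ and uncurries it to $m \grtensor n \mapsto (g(m))(n)$, using Definition~\ref{def:graded_tensor_product} and the universal property of the colimit defining $(M \grtensor N)_r$ to see this descends to a well-defined map out of the tensor product. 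The relations $m \cdot x \grtensor n = m \grtensor x \cdot n$ are respected because $g(m)$ is itself a morphism of persistence modules, hence $R[U_0]$-linear.

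Next I would verify that $\Phi_s$ and $\Psi_s$ are mutually inverse (a routine unwinding of definitions) and that together, as $s$ varies, they give a graded isomorphism of the two $P$-graded $R$-modules $\uHom(M \grtensor N, N')$ and $\uHom(M, \uHom(N, N'))$. That this graded isomorphism is compatible with the structure maps $\uHom(-,-)_{s \leq t}$ --- i.e.\ commutes with post-composition by the canonical translation natural transformations $\eta_{t-s}$ from Definition~\ref{def:translation_functor} --- follows because currying interacts correctly with the translation functor $\mathcal{T}_s$. This last check is where the care is needed, and is the main obstacle: one must verify that for $s \leq t$ the diagram
\[
\begin{tikzcd}
\Hom(M \grtensor N, N'(s)) \ar[r,"\Phi_s"] \ar[d,"(-)*\eta_{t-s}"'] & \Hom(M, \uHom(N,N')(s)) \ar[d,"(-)*\eta_{t-s}"] \\
\Hom(M \grtensor N, N'(t)) \ar[r,"\Phi_t"'] & \Hom(M, \uHom(N,N')(t))
\end{tikzcd}
\]
commutes, which reduces to the fact that translation in the first factor of $N'(s)$ under $\uHom$ matches translation in the outer $\uHom$ after currying. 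This guarantees the whole isomorphism is a morphism of degree zero, hence a morphism in $\uModRP$.

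Naturality in the three arguments $M$, $N$, $N'$ is then checked by direct diagram chases: a morphism $M \to \tilde M$ acts by precomposition on both sides and commutes with $\Phi$; a morphism $N' \to \tilde N'$ acts by postcomposition and likewise commutes with $\Phi$; and a morphism $N \to \tilde N$ acts on $M \grtensor N$ by functoriality of $\grtensor$ in the second slot and on $\uHom(N, N')$ by contravariant functoriality of $\uHom$ in its first slot, and commutes with currying. All of these naturality statements in $\uModRP$ (as opposed to $\ModR^{\mathbf P}$) follow from their counterparts in $\ModR^{\mathbf P}$ applied degree-by-degree, precisely because morphisms in $\uModRP$ decompose as direct sums of degree-homogeneous natural transformations (Proposition~\ref{prop:UnderlineHom}). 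Finally, specializing to $s = 0$ yields the ordinary adjunction $\Hom(M \grtensor N, N') \cong \Hom(M, \uHom(N, N'))$, which together with Theorem~\ref{thm:monoidal-cat-gr} shows that $\big(\ModR^{\mathbf P}, \grtensor, R[U_0]\big)$ is closed symmetric monoidal.
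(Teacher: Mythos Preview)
Your approach is correct, but it differs from the paper's in a meaningful way. You construct the adjunction by explicit currying/uncurrying on elements, then verify all the compatibility conditions (degree preservation, naturality in three variables) by hand. The paper instead invokes the limit/colimit descriptions of $\uHom$ and $\grtensor$ (Proposition~\ref{prop:limit_characterization_of_underline_hom} and Definition~\ref{def:graded_tensor_product}) and writes a chain of canonical isomorphisms: it unwinds $\uHom(M\grtensor N,N')_r$ as a limit over $s+t\ge r$ of $\Hom_R((M\grtensor N)_{-s},N'_t)$, replaces $(M\grtensor N)_{-s}$ by its colimit description, uses that $\Hom_R$ sends colimits to limits, applies the ordinary tensor--hom adjunction in $\ModR$, reindexes, and then reassembles to obtain $\uHom(M,\uHom(N,N'))_r$. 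Because every step is a canonical categorical isomorphism, naturality and degree preservation come for free, whereas in your argument these are the ``where the care is needed'' steps you flag.

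Two small points on your write-up. First, you write $\uHom(N,N')_{s+a} = \bigoplus_t \Hom(N,N'(t))$; this is not right---the left side is just the single piece $\Hom(N,N'(s+a))$, not the whole direct sum. Your next clause makes the intended meaning clear, so this is cosmetic. Second, your appeal to Proposition~\ref{prop:UnderlineHom} to reduce naturality in $\uModRP$ to the degree-homogeneous case is not quite the right citation: that proposition requires $M$ finitely generated. What you actually need is the definition of $\uHom$ as a direct sum of homogeneous pieces and the definition of composition in $\uModRP$ (Proposition~\ref{prop:enriched_category_is_a_category}), which already extends composition linearly from homogeneous morphisms---so linearity alone gives the reduction, no finiteness hypothesis needed.
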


\begin{proof}
  Define $\psi_r$ to be the following composition of isomorphisms. The first two and last isomorphisms are by the categorical definitions of $\uHom(-,-)$ and $- \grtensor -$ (Definition~\ref{def:graded_tensor_product} and Proposition~\ref{prop:limit_characterization_of_underline_hom}). The third and the third last isomorphism are due to the fact that $\Hom_R(-,-)$ preserves limits in both variates. Since it is contravariant in the first {variable} this means that it sends colimits to limits in $\ModR$.
The fourth isomorphism is the tensor-hom adjunction in $\ModR$. The remaining fifth isomorphism follows from reindexing. We have $p+q \leq -s \leq t-r$ which may be rewritten as $p+q-t \leq -r$. Substituting variables, we have $-s-p-q \leq -r$ which may be rewritten as $p+q \geq t \geq -s+r$.

\begin{equation*}
\begin{tikzcd}
\uHom(M\grtensor N,{N'})_{r}  \arrow[d,"\cong"] & \uHom(M,\uHom(N,{N'}))_{r} \\
\varprojlim\limits_{s+t\ge r}\Hom_R((M\grtensor N)_{-s},{N'}_{t})\arrow[d,"\cong"] & \varprojlim\limits_{s+t\ge r}\Hom_R(M_{-s},\uHom(N,{N'})_{t})\arrow[u,"\cong"]\\
\varprojlim\limits_{s+t\ge r}\Hom_R({\colim\limits_{p+q\le -s}}M_{p}\otimes_R N_{q},{N'}_{t})\arrow[d,"\cong"] & \varprojlim\limits_{s+t\ge r}\Hom_R(M_{-s},\varprojlim\limits_{p+q\ge t}\Hom_R(N_{-p},{N'}_{q}))\arrow[u,"\cong"]\\
\varprojlim\limits_{s+t\ge r}(\varprojlim\limits_{p+q\le -s}\Hom_R(M_{p}\otimes_R N_{q},{N'}_{t}))\arrow[d,"\cong"]& \varprojlim\limits_{s+t\ge r}(\varprojlim\limits_{p+q\ge t}\Hom_R(M_{-s},\Hom_R(N_{-p},{N'}_{q})))\arrow[u,"\cong"]\\
\varprojlim\limits_{s+t\ge r}(\varprojlim\limits_{p+q\le -s}\Hom_R(M_{p},\Hom_R(N_{q},{N'}_{t})))\arrow[ru,"\cong"]
\end{tikzcd}
\end{equation*}

Note that all of these isomorphisms are natural. The last statement of the theorem follows by setting $r=0$ and thus getting the following natural isomorphism:
\[\psi_0:\uHom(M\grtensor N,{N'})_0:=\Hom(M\grtensor N,{N'})\isom \uHom(M,\uHom(N,{N'}))_0:=\Hom(M,\uHom(N,{N'})),\]
which gives us an adjunction between $\grtensor$ and $\uHom$ in $\ModR^{\mathbf{P}}$.
\end{proof}

Every closed symmetric monoidal category is enriched over itself, see for example \cite[Section 1.6]{kelly:enriched}.
Thus, we have the following corollary.
We expect that it will be useful in applications that the respective hom objects between persistence modules are themselves persistence modules, from which one may compute invariants such as persistence diagrams.

\begin{corollary}
\label{thm:enriched_over_itself}
The category $\ModR^{\mathbf{P}}$ is enriched over $\big(\ModR^{\mathbf{P}},\grtensor, R[U_0]\big)$. The category $\uModRP$ is enriched over $(\uModRP,\grtensor, R[U_0])$.
\end{corollary}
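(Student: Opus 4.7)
The plan is to deduce both statements from the closed symmetric monoidal structure established in Theorem~\ref{theorem:graded_module_adjunction}, using the general principle that every closed symmetric monoidal category is canonically enriched over itself (see \cite[Section 1.6]{kelly:enriched}). First I would recall the construction in outline. Given a closed symmetric monoidal category $(\cat{V}, \otimes, I, [-,-])$, a $\cat{V}$-enrichment on $\cat{V}$ itself is obtained by assigning to each pair $(X,Y)$ the hom-object $[X,Y]$. The identity $j_X: I \to [X,X]$ is taken to be the morphism adjoint to the left unitor $\lambda_X: I \otimes X \isomto X$, and the composition morphism
\[
  c_{X,Y,Z}: [Y,Z] \otimes [X,Y] \to [X,Z]
\]
is taken to be the morphism adjoint to the double-evaluation map $[Y,Z] \otimes [X,Y] \otimes X \to [Y,Z] \otimes Y \to Z$ built from two applications of the counit $\mathrm{ev}: [A,B] \otimes A \to B$ of the $\otimes \dashv [-,-]$ adjunction. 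The verification that $(c, j)$ satisfies the associativity pentagon and unit triangles for an enriched category is a standard diagram chase using naturality of the adjunction together with the pentagon and triangle identities of $(\cat{V}, \otimes, I)$.

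For the second statement, I would apply this general principle directly. By Theorem~\ref{theorem:enriched_category_is_monoidal} and Theorem~\ref{theorem:graded_module_adjunction}, $(\uModRP, \grtensor, R[U_0])$ is a closed symmetric monoidal category with internal hom $\uHom(-,-)$. The construction above therefore endows $\uModRP$ with a $\uModRP$-enrichment in which the hom-object between persistence modules $M$ and $N$ is $\uHom(M,N)$, and the enriched composition at the underlying set level reproduces the composition defined in Proposition~\ref{prop:enriched_category_is_a_category}.

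For the first statement I would observe that the same closed symmetric monoidal structure $(\ModR^{\mathbf{P}}, \grtensor, R[U_0])$ yields an enrichment of $\ModR^{\mathbf{P}}$ over itself, again with hom-object $\uHom(M,N)$. The key compatibility is that $\ModR^{\mathbf{P}}$ and $\uModRP$ have the same objects and the same monoidal structure, and that ordinary morphisms in $\ModR^{\mathbf{P}}$ correspond to degree-zero elements of $\uHom$ via the natural isomorphism $\Hom(M,N) \isom \uHom(M,N)_0$ implicit in Proposition~\ref{prop:UnderlineHom}. Under this identification the enriched composition $c$ restricts on degree-zero components to the usual composition of natural transformations, and the enriched identity $j_M: R[U_0] \to \uHom(M,M)$ picks out the degree-zero element $1_M$, as one sees by unpacking its adjoint $\lambda_M: R[U_0] \grtensor M \isomto M$ defined in the proof of Theorem~\ref{thm:monoidal-cat-gr}.

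There is no real obstacle here beyond careful bookkeeping; the only step requiring attention is checking the enriched-category coherence axioms, but these reduce via the $\grtensor \dashv \uHom$ adjunction to the pentagon, triangle, and hexagon identities for $(\uModRP, \grtensor, R[U_0])$ already established in Theorem~\ref{theorem:enriched_category_is_monoidal}. For the detailed verification one may refer to \cite[Section 1.6]{kelly:enriched}, where the passage from a closed symmetric monoidal category to a self-enriched category is carried out in full generality.
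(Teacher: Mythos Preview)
Your proposal is correct and takes essentially the same approach as the paper: both invoke the general fact from \cite[Section 1.6]{kelly:enriched} that every closed symmetric monoidal category is enriched over itself, applied to the structures established in Theorems~\ref{theorem:enriched_category_is_monoidal} and~\ref{theorem:graded_module_adjunction}. You have simply spelled out more of the standard construction (identities via the unitor, composition via double evaluation) than the paper does.
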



  Note that the statements in Corollary~\ref{thm:enriched_over_itself} and Theorem~\ref{theorem:graded_module_adjunction} are not true if we replace $\ModR$  by $\mathbf{mod}_R$, the category of finitely generated $R$-modules.
  The reason is that when applied to persistence modules valued in $\mathbf{mod}_R$, $\uHom$ does not always give a  persistence module valued in $\mathbf{mod}_R$, as shown in 
the following example.

\begin{example}
  \label{example:not_ptwfd}
Let $\mathbf{vect}_{\mathbf{k}}$ denote the category of finite-dimensional $\mathbf{k}$-vector spaces and $\mathbf{k}$-linear maps. Let $M:(\mathbb{R},\le)\to \mathbf{vect}_{\mathbf{k}}$ be the one-parameter persistence module given by $M_a = \mathbf{k}$ if $a \in \Z$ and $M_a = 0$ otherwise.
By functoriality, the maps $M_{a\le b}$ have to be zero except when $a=b\in \mathbb{Z}$. Hence any collection of linear maps $\{f_{a}:M_a\to M_a\}$ will give us a natural transformation $f:M\to M$ as there are no restrictions for the appropriate squares to commute. In particular let, $\alpha^n:M\to M$ be a natural transformation such that 
$\alpha_a^n = 1_{\mathbf{k}}$ if $a = n$ and $\alpha_a^n = 0$ otherwise.
Then $\uHom(M,M)_0$ is an infinite dimensional vector space as the collection $\{\alpha^n\}_{n\in \mathbb{Z}}$ is linearly independent.
\end{example}

\subsection{Enriched structure of persistence modules with the sheaf tensor}

Let $M$ be a right $R_{P}$-module and let $N$ be a left $R_{P}$-module, where $P$ is given the Alexandrov topology (Section~\ref{sec:sheaves-cosheaves}).
Recall that we have a sheaf tensor product $M \shtensor N$ (Section~\ref{sec:shtensor}).
Viewed as a graded module, $M\shtensor N \isom \bigoplus_{a \in P} M(U_a) \otimes_R N(U_a) \isom \bigoplus_{a\in P}M_a\otimes_R N_a$.

\begin{theorem}
\label{thm:monoidal-cat-sh}
$\big(\ModR^{\mathbf{P}},\shtensor, R[P]\big)$
is a symmetric monoidal category.
\end{theorem}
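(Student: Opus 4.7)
The plan is to mimic the proof of Theorem~\ref{thm:monoidal-cat-gr} for the graded tensor product, exploiting the fact that the sheaf tensor product is computed stalkwise. By Definition~\ref{def:sheaf_tensor_product}, for each $a \in P$ we have $(M\shtensor N)_a = M_a \otimes_R N_a$, and for $a \leq b$ the transition map $(M\shtensor N)_{a \leq b}$ is the tensor product $M_{a\leq b} \otimes_R N_{a\leq b}$. Moreover, the unit $R[P]$ satisfies $R[P]_a = R$ with all transition maps equal to the identity. Thus the entire monoidal structure on $(\ModR^{\mathbf{P}}, \shtensor, R[P])$ can be built by applying the monoidal structure on $(\ModR, \otimes_R, R)$ componentwise.

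First, I would define the braiding. Since $R$ is commutative, $\ModR$ is a symmetric monoidal category, so for each $a \in P$ there is a natural isomorphism $\gamma_{M_a,N_a}: M_a \otimes_R N_a \to N_a \otimes_R M_a$. Define the braiding $\gamma_{M,N}:M\shtensor N \to N\shtensor M$ to have components $(\gamma_{M,N})_a := \gamma_{M_a,N_a}$. The commutativity of the squares needed to make $\gamma_{M,N}$ a morphism of persistence modules follows from the naturality of $\gamma$ in $\ModR$ applied to the morphisms $M_{a\leq b}$ and $N_{a\leq b}$.

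Next, I would define the associator and unitors analogously. The associator $\alpha_{M,N,L}:(M\shtensor N)\shtensor L \to M\shtensor (N\shtensor L)$ is given componentwise by the associator of $\ModR$ applied to $(M_a \otimes_R N_a)\otimes_R L_a$. For the unitors, note that the left unitor $\lambda_M: R[P]\shtensor M \isomto M$ has components $(\lambda_M)_a: R \otimes_R M_a \isomto M_a$ given by the unitor in $\ModR$; these commute with transition maps because $R[P]_{a\leq b}$ is the identity on $R$. The right unitor is defined dually. That each of these defines a natural transformation between the appropriate functors $(\ModR^{\mathbf{P}})^k \to \ModR^{\mathbf{P}}$ follows from the naturality of the corresponding structure in $\ModR$.

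Finally, the coherence axioms (pentagon, triangle, and the two hexagon identities) for these natural isomorphisms can be verified componentwise: at each $a \in P$ the relevant diagram reduces to the corresponding coherence diagram in $\ModR$ evaluated at the stalks $M_a, N_a, L_a$, which commutes because $(\ModR, \otimes_R, R)$ is a symmetric monoidal category. I do not anticipate a real obstacle here, since the argument is formal once one observes that $\shtensor$, its unit $R[P]$, and all structural isomorphisms are computed stalkwise; the only mild care required is checking naturality of the structural maps with respect to the morphisms $M_{a\leq b}, N_{a\leq b}, L_{a\leq b}$, which is automatic from naturality in $\ModR$.
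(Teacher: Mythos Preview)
Your proposal is correct and takes essentially the same approach as the paper: both exploit that $\shtensor$ is computed pointwise (stalkwise) so that all structural isomorphisms and coherence axioms reduce to those of $(\ModR,\otimes_R,R)$. The paper's proof is a one-sentence version of exactly what you spelled out.
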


\begin{proof}  
Since $\shtensor$ is a pointwise tensor product of $R$-modules,
  the axioms for a symmetric monoidal category will hold pointwise and thus can be assembled to obtain the desired axioms for persistence modules.
\end{proof}

\begin{theorem}
\label{theorem:sheaf_theory_adjunction}
$\Big(\ModR^{\mathbf{P}},\shtensor,R[P]\Big)$ is a closed symmetric monoidal category.
\end{theorem}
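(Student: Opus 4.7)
The plan is to exhibit, for each persistence module $N$, a right adjoint to $-\shtensor N$, namely $\scHom(N,-)$, and then promote this to the internal closed structure compatible with the symmetric monoidal structure already established in Theorem~\ref{thm:monoidal-cat-sh}.

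First, I would transport the problem through the isomorphism of categories $\ModR^{\mathbf{P}} \cong \shModR{R_P}$ from Example~\ref{example:persistence_module_sheaf_ring}, so that persistence modules are viewed as sheaves of $R_P$-modules on $P$ with the Alexandrov topology. In this sheaf-theoretic setting, the tensor-hom adjunction is classical (see for example \cite[Proposition 2.2.9]{MR1074006}): for $R_P$-modules $M$, $N$, $K$ there is a natural isomorphism
\[
\Hom_{R_P}(M \otimes_{R_P} N,\, K) \;\isom\; \Hom_{R_P}\bigl(M,\, \scHom_{R_P}(N,K)\bigr),
\]
which, under the isomorphism of categories, becomes the desired Set-level adjunction $-\shtensor N \dashv \scHom(N,-)$ on $\ModR^{\mathbf{P}}$. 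Naturality in all three arguments is inherited from the sheaf-theoretic version.

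Second, to upgrade from a Set-level adjunction to the internal-hom isomorphism required of a closed symmetric monoidal category, I would establish a natural isomorphism
\[
\scHom(M \shtensor N,\, K) \;\isom\; \scHom\bigl(M,\, \scHom(N,K)\bigr)
\]
in $\ModR^{\mathbf{P}}$. This is obtained by applying the Set-level adjunction to restrictions to each up-set $U \subseteq P$: by definition $\scHom(X,Y)(U) = \Hom_{(R_P)|_U}(X|_U, Y|_U)$, and restriction to $U$ commutes with $\shtensor$ and with $\scHom$ (both being defined stalkwise, by Definition~\ref{def:sheaf_tensor_product} and the construction of $\scHom$). Assembling these isomorphisms over all up-sets, together with compatibility with restriction maps, yields the internal natural isomorphism.

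Third, I would verify that the closed structure is compatible with the symmetric monoidal structure of Theorem~\ref{thm:monoidal-cat-sh}, i.e., that the braiding, associator, and unit isomorphisms intertwine with the adjunction. This follows formally from the universal property of the adjunction together with the coherence axioms already holding pointwise in $\ModR$, since $\shtensor$ is pointwise the tensor product of $R$-modules. The main bookkeeping obstacle is checking that the internal-hom isomorphism is genuinely a morphism of $R_P$-modules (not merely of sheaves of abelian groups), which reduces to the commutativity of $R$ and to the fact that the $R_P$-action on $\scHom$ is computed from the action on the codomain.
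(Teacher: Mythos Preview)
Your proposal is correct and follows essentially the same approach as the paper: the paper's proof consists of a single reference to Proposition~\ref{prop:sheaf-tensor-hom-adjunction} (i.e., \cite[Proposition 2.2.9]{MR1074006}), which is exactly the sheaf-theoretic tensor--hom adjunction you invoke. Your second and third steps spell out more detail than the paper does, but they are in the same spirit and do not deviate from the intended argument.
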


\begin{proof}
  See Proposition~\ref{prop:sheaf-tensor-hom-adjunction}.
\end{proof}

\begin{corollary}
\label{thm:enriched_over_itself_sheaf}
The category $\ModR^{\mathbf{P}}$ is enriched over $\big(\ModR^{\mathbf{P}},\shtensor, R[P]\big)$. 
\end{corollary}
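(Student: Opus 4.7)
The plan is to invoke exactly the same general principle from enriched category theory that was used to deduce Corollary~\ref{thm:enriched_over_itself} from Theorem~\ref{theorem:graded_module_adjunction}. Namely, every closed symmetric monoidal category $(\cat{V}, \otimes, I)$ is canonically enriched over itself: one takes the hom-objects to be the internal hom $[X,Y]$ provided by the closed structure, the composition to be the morphism $[Y,Z] \otimes [X,Y] \to [X,Z]$ obtained by currying the evident evaluation map, and the identity to be the adjunct $I \to [X,X]$ of the unitor $I \otimes X \to X$. See, for example, \cite[Section 1.6]{kelly:enriched}.

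With this in mind, the proof is short. First I would appeal to Theorem~\ref{thm:monoidal-cat-sh}, which tells us that $(\ModR^{\mathbf{P}}, \shtensor, R[P])$ is a symmetric monoidal category. Next I would apply Theorem~\ref{theorem:sheaf_theory_adjunction} (equivalently, Proposition~\ref{prop:sheaf-tensor-hom-adjunction}), which upgrades this to a closed symmetric monoidal structure, with internal hom given by $\scHom(-,-)$. Then the general categorical fact cited above directly produces a $\ModR^{\mathbf{P}}$-enrichment of $\ModR^{\mathbf{P}}$ whose hom-objects are the persistence modules $\scHom(M,N)$.

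There is no substantial obstacle here beyond verifying that we are in a position to apply the general theorem, so the only thing I would take care over is being explicit about the resulting enriched structure: for persistence modules $M, N, L$, the composition natural transformation is
\begin{equation*}
\scHom(N,L) \shtensor \scHom(M,N) \longrightarrow \scHom(M,L)
\end{equation*}
obtained as the adjunct of
\begin{equation*}
\scHom(N,L) \shtensor \scHom(M,N) \shtensor M \xrightarrow{1 \shtensor \mathrm{ev}} \scHom(N,L) \shtensor N \xrightarrow{\mathrm{ev}} L,
\end{equation*}
and the identity $R[P] \to \scHom(M,M)$ is the adjunct of the left unitor $R[P] \shtensor M \isomto M$ from Theorem~\ref{thm:monoidal-cat-sh}. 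The associativity and unit axioms for the enrichment follow formally from the triangle and pentagon axioms for $(\ModR^{\mathbf{P}}, \shtensor, R[P])$ together with naturality of the tensor-hom adjunction, exactly as in the self-enrichment of any closed symmetric monoidal category.
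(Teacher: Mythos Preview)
Your proposal is correct and follows essentially the same approach as the paper: the paper's proof simply says that the same arguments justifying Corollary~\ref{thm:enriched_over_itself} apply, namely the general fact from \cite[Section~1.6]{kelly:enriched} that every closed symmetric monoidal category is enriched over itself. Your write-up is in fact more explicit than the paper's, spelling out the composition and identity morphisms in the resulting enrichment.
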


\begin{proof}
{Same arguments that justify Corollary~\ref{thm:enriched_over_itself} may be used.}
\end{proof}

\appendix

\section{Category theory}
\label{sec:category}
We review some notions from category theory.
For more details, see for example \cite{popescu1973abelian,riehl2017category}. 

Let $\mathcal{C}$ be a category.  A family $\{U_i\}_{i\in I}$ of objects from $\mathcal{C}$ is called a family of $\emph{generators}$ of $\mathcal{C}$ if for any pair $(A,B)$ of objects in $\mathcal{C}$ and for any two distinct morphisms $f,g:A\to B$, there is an index $i_0$ and a morphism $h:U_{i_0}\to A$ such that $fh\neq gh$. We say $\{U_i\}_{i\in I}$ is a set of $\emph{cogenerators}$ of $\mathcal{C}$ if the family $\{U_i^{op}\}_{i\in I}$ is a set of generators of $\mathcal{C}^{op}$. If the families in question are singleton sets, we say they are a generator (resp. cogenerator) of $\mathcal{C}$.
In the category $\mathit{Set}$, the singleton set $\{*\}$ is a generator and the 2 point set $\{*_1,*_2\}$ is a cogenerator. In the category of abelian groups $\mathbf{Ab}$ the group $\mathbb{Z}$ is a generator. More generally, whenever we have a unital ring $\mathcal{R}$ and the category of left modules over $\mathcal{R}$ , $\emph{Mod}_{\mathcal{R}}$, the ring as a module over itself is a generator of $\emph{Mod}_{\mathcal{R}}$. In particular if $\mathcal{R}$ is a field, say $\mathbf{k}$ , then $\mathbf{Vect}_{\mathbf{k}}$ the category of vector spaces over $\mathbf{k}$ is a category with generator $\mathbf{k}$.
Grothendieck categories are abelian categories with a few extra axioms that guarantee existence of injective and projective resolutions, for more details see \cite{popescu1973abelian,grothendieck1957quelques, freyd1964abelian}. 
A \emph{Grothendieck category} $\mathcal{C}$ is a category satisfying the following axioms:
\begin{enumerate*}
\item $\mathcal{C}$ is an abelian category.
\item $\mathcal{C}$ has a generator.
\item $\mathcal{C}$ contains all small colimits (colimits of diagrams indexed by a category with a set of objects).
\item Taking colimits of diagrams of short exact sequences produces a short exact sequence.
\end{enumerate*}

The following is due to Grothendieck and is presented in for example 
\cite[Proposition 1.9.1]{grothendieck1957quelques} or \cite[Chapter 2, Proposition 8.2]{popescu1973abelian}. 

\begin{proposition}
\label{prop:Generator}
Let $\mathcal{C}$ be an abelian category category with infinite direct sums and $\{U_i\}_{i\in I}$ a set of objects of $\mathcal{C}$. The following are equivalent:
\begin{enumerate}
\item The given set is a set of generators of $\mathcal{C}$.
\item The object $U:=\coprod_{i\in I}U_i$ is a generator of $\mathcal{C}$.
\item For any object $A$ in $\mathcal{C}$, there is a set $J$ and an epimorphism: $U^{(J)}\to A$.
\end{enumerate}
\end{proposition}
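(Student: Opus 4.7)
The plan is to prove the equivalence via the cycle $(1) \Rightarrow (2) \Rightarrow (3) \Rightarrow (2) \Rightarrow (1)$, relying throughout on the universal property of the coproduct: a morphism $U \to A$ out of $U = \coprod_{i \in I} U_i$ is the same data as a family $(h_i : U_i \to A)_{i \in I}$, and for each index $i$ we have the coproduct injection $\iota_i : U_i \to U$ with $h \circ \iota_i = h_i$. The existence of infinite direct sums in $\mathcal{C}$ is what makes $U$ and the coproducts $U^{(J)}$ of copies of $U$ well-defined objects of $\mathcal{C}$, while the abelian structure lets us fill in unused components of coproducts with the zero morphism.

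For $(1) \Rightarrow (2)$, given distinct $f, g : A \to B$, pick by hypothesis an index $i_0$ and a morphism $h_{i_0} : U_{i_0} \to A$ with $f h_{i_0} \neq g h_{i_0}$. Define $h : U \to A$ by the universal property of the coproduct, taking the $i_0$-component to be $h_{i_0}$ and every other component to be $0$. Then $f h \iota_{i_0} = f h_{i_0} \neq g h_{i_0} = g h \iota_{i_0}$, so $f h \neq g h$, showing that $U$ is a generator. Conversely, for $(2) \Rightarrow (1)$, given distinct $f, g$ and a morphism $h : U \to A$ with $f h \neq g h$, the universal property of the coproduct forces some injection $\iota_i$ to satisfy $f h \iota_i \neq g h \iota_i$ (otherwise $fh$ and $gh$ would agree on every summand and therefore be equal). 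Then $h \circ \iota_i : U_i \to A$ witnesses the family property.

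For $(2) \Rightarrow (3)$, fix an object $A$ and let $J := \mathrm{Hom}_{\mathcal{C}}(U, A)$. Form $U^{(J)} := \coprod_{j \in J} U$ and let $e : U^{(J)} \to A$ be the canonical morphism whose $j$-th component is $j$ itself. To check that $e$ is an epimorphism, suppose $f, g : A \to B$ satisfy $f e = g e$. Then for every $j \in J$ we have $f j = f e \iota_j = g e \iota_j = g j$, so $f$ and $g$ agree after precomposition with every morphism $U \to A$; since $U$ is a generator this forces $f = g$. For $(3) \Rightarrow (2)$, given distinct $f, g : A \to B$, pick an epimorphism $e : U^{(J)} \to A$. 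Then $f e \neq g e$ (since $e$ is epi), so by the coproduct universal property there exists some $j \in J$ with $f e \iota_j \neq g e \iota_j$, and $h := e \iota_j : U \to A$ is the required morphism witnessing that $U$ generates.

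There is no real obstacle here; the proof is a clean diagram chase once one recognizes that ``being a generator'' is exactly a statement about morphisms out of the generating object(s), and that coproducts convert a family of test morphisms into a single test morphism and vice versa. The only mild subtlety is the use of the zero morphism in the $(1) \Rightarrow (2)$ step, which requires the ambient category to be at least additive; this is automatic here since $\mathcal{C}$ is abelian.
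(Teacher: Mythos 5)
Your proof is correct, and it is essentially the standard argument: the paper itself gives no proof of this proposition, merely citing Grothendieck's T\^ohoku paper and Popescu's book, and the diagram chase you give (converting a family of test morphisms into a single morphism out of the coproduct, and using $J = \mathrm{Hom}_{\mathcal{C}}(U,A)$ with the evaluation map for the epimorphism in $(2)\Rightarrow(3)$) is exactly the argument found in those references. The only implicit assumptions you rely on --- that $\mathcal{C}$ is locally small so that $\mathrm{Hom}_{\mathcal{C}}(U,A)$ is a set, and that zero morphisms exist to pad the unused components --- are satisfied in the abelian setting of the statement, so there is nothing to fix.
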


\begin{proposition}[{{\cite[Proposition 1.8]{grothendieck1957quelques}}}]
\label{proposition:grothendieck_functor_categories}
Let $(P,\leq)$ be a pre-ordered set and $\cat{P}$ be the corresponding
  category.
Let $\mathcal{C}$ be a  category. If $\mathcal{C}$ is an additive/abelian/Grothendieck category then  $\mathcal{C}^{(\mathbf{P},\le)}$ is also an additive/abelian/Grothendieck category.
\end{proposition}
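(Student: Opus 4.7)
The plan is to verify each of the three categorical properties in turn, with the observation that all the relevant limits, colimits, kernels, cokernels, and biproducts in the functor category $\cat{C}^{(\mathbf{P},\le)}$ are computed objectwise (i.e., ``pointwise''). This objectwise computation is what reduces every axiom to the corresponding axiom in $\cat{C}$.

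First I would handle the additive case. For functors $F,G:\cat{P}\to\cat{C}$, define the addition of natural transformations componentwise: $(\alpha+\beta)_p := \alpha_p+\beta_p$, using the abelian group structure on $\Hom_{\cat{C}}(F(p),G(p))$. Naturality of $\alpha+\beta$ follows from the additivity of the transition maps of $F$ and $G$, which themselves are morphisms in an additive category. The zero functor $0$ with $0(p)=0_{\cat{C}}$ is a zero object, and the biproduct $F\oplus G$ is defined by $(F\oplus G)(p) := F(p)\oplus G(p)$ with transition maps induced by the universal property of biproducts in $\cat{C}$. Bilinearity of composition and the biproduct axioms transfer from $\cat{C}$ componentwise.

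Next, for the abelian case, I would construct kernels and cokernels pointwise. Given a natural transformation $\alpha: F\to G$, let $K(p) := \ker(\alpha_p)$ and define the transition map $K(p\le q)$ as the unique factorization of $F(p\le q)\circ \iota_p$ through $\ker(\alpha_q)$, where $\iota_p: K(p)\to F(p)$ is the canonical inclusion; this exists because $\alpha_q \circ F(p\le q)\circ \iota_p = G(p\le q)\circ \alpha_p\circ \iota_p = 0$. Functoriality of $K$ follows from uniqueness in the universal property. Dually one constructs cokernels pointwise. The canonical morphism from the coimage to the image in the functor category is pointwise the corresponding morphism in $\cat{C}$, which is an isomorphism by the abelian axiom in $\cat{C}$.

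For the Grothendieck case, I would invoke the three remaining axioms. Small colimits (and limits) in $\cat{C}^{\mathbf{P}}$ are computed pointwise: for a diagram $D:\cat{I}\to\cat{C}^{\mathbf{P}}$, define $(\colim D)(p) := \colim_{i\in\cat{I}} D(i)(p)$, with transition maps induced by the universal property. The AB5 axiom (filtered colimits of short exact sequences are short exact) then follows immediately: a sequence in $\cat{C}^{\mathbf{P}}$ is exact iff it is exact at every $p\in P$ (this is where we use that kernels and cokernels are computed pointwise), so filtered-colimit exactness in $\cat{C}$ lifts pointwise to $\cat{C}^{\mathbf{P}}$. For the generator, I would reuse the construction from Proposition~\ref{prop:GeneratorsCogenerators}: given a generator $G$ of $\cat{C}$, the family $\{G[U_a]\}_{a\in P}$ is a family of generators of $\cat{C}^{\mathbf{P}}$, and hence by Proposition~\ref{prop:Generator} the direct sum $\bigoplus_{a\in P} G[U_a]$ is a single generator. (Note that Proposition~\ref{prop:GeneratorsCogenerators} is proved below using this very result, but the generator argument there does not circularly depend on the present proposition; it only uses that $\cat{C}^{\mathbf{P}}$ is abelian and cocomplete, which we have just established.)

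The main obstacle is not any one step in isolation — each axiom descends from $\cat{C}$ via pointwise computation — but rather the bookkeeping of verifying that the pointwise constructions assemble into genuine functors and natural transformations, which amounts to repeated application of universal properties. The only conceptually nontrivial point is the exhibition of a generator, and that is essentially handled by the earlier Proposition~\ref{prop:GeneratorsCogenerators}, whose proof is direct and does not rely on the present proposition in any essential way.
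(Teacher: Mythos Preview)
The paper does not supply its own proof of this proposition; it simply cites \cite[Proposition 1.8]{grothendieck1957quelques} and moves on. Your sketch is correct and is exactly the standard pointwise argument: biproducts, kernels, cokernels, and (filtered) colimits in $\cat{C}^{\mathbf{P}}$ are computed objectwise, so each axiom reduces to the corresponding axiom in $\cat{C}$. Your handling of the generator is also fine, and your remark about the apparent circularity with Proposition~\ref{prop:GeneratorsCogenerators} is on point: the construction of the generating family $\{G[U_a]\}_{a\in P}$ in that proof only requires that $\cat{C}^{\mathbf{P}}$ be abelian and have arbitrary direct sums, both of which you establish before invoking it.
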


\begin{theorem}[{{\cite[Theorem 1.10.1]{grothendieck1957quelques}}}]
If a category $\mathcal{C}$ is a Grothendieck category then any $A\in \mathcal{C}$ has a monomorphism into an injective object.
\label{theorem:injectives}
\end{theorem}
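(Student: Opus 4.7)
The plan is to establish the classical Grothendieck construction of injective envelopes (or at least injective extensions), following the approach in Tôhoku. The key observation is that the generator $U$ supplied by axiom (2) lets one reduce the injectivity test to a set-sized collection of extension problems, and axiom (4) (exactness of filtered colimits, sometimes called AB5) lets one push these problems out by transfinite iteration.

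First I would prove a Baer-type criterion: an object $I \in \mathcal{C}$ is injective if and only if, for every subobject $S \hookrightarrow U$ of the generator $U$, every morphism $S \to I$ extends to a morphism $U \to I$. One direction is trivial. For the converse, given a monomorphism $A \hookrightarrow B$ and a morphism $A \to I$, one considers the poset of pairs $(A', f')$ where $A \subseteq A' \subseteq B$ and $f':A' \to I$ extends $A \to I$. Zorn's lemma (using AB5 to take colimits along chains) yields a maximal such $(A', f')$. If $A' \neq B$, the fact that $U$ is a generator produces a morphism $h:U \to B$ whose image is not contained in $A'$; pulling back $A'$ along $h$ gives a subobject $S = h^{-1}(A') \hookrightarrow U$ with a map $S \to A' \to I$, which by hypothesis extends to $U \to I$, and the pushout of $U \leftarrow S \rightarrow A'$ produces a strictly larger extension, contradicting maximality.

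Next I would build the embedding into an injective by transfinite iteration. Since subobjects of $U$ form a set (a key consequence of $U$ being a generator in a Grothendieck category), I can define, for each object $X$, a new object $M(X)$ as follows: form the coproduct, indexed over all pairs $(S, f)$ with $S \hookrightarrow U$ a subobject and $f:S \to X$ a morphism, of the pushouts of $U \leftarrow S \xrightarrow{f} X$; equivalently, take one big pushout of $\bigoplus_{(S,f)} S \to X$ and $\bigoplus_{(S,f)} S \to \bigoplus_{(S,f)} U$. By construction, $X \hookrightarrow M(X)$ is a monomorphism and every $f:S \to X$ extends to $U \to M(X)$. Iterate: set $A_0 = A$, $A_{\alpha+1} = M(A_\alpha)$, and for limit ordinals $A_\lambda = \colim_{\alpha < \lambda} A_\alpha$, using cocompleteness.

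Finally, I would choose an ordinal $\beta$ whose cofinality exceeds the cardinality of the set of subobjects of $U$, and show $A_\beta$ is injective via the Baer criterion. Given any $S \hookrightarrow U$ and $f:S \to A_\beta$, the point is that $S$ is, in a suitable sense, small: because the subobjects of $U$ form a set and filtered colimits are exact (hence commute with the relevant monomorphism-detection), the morphism $f$ factors through some $A_\alpha$ with $\alpha < \beta$, and then by construction it extends to $U \to A_{\alpha+1} \hookrightarrow A_\beta$. The composition $A = A_0 \hookrightarrow A_\beta$ is a monomorphism since every transition $A_\alpha \hookrightarrow A_{\alpha+1}$ is and AB5 preserves monomorphisms under filtered colimits.

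The main obstacle is the smallness/factorization step in the last paragraph: proving that every morphism $S \to A_\beta$ factors through some earlier $A_\alpha$. This requires showing $S$ is a "small" or "finitely presentable relative to $\beta$" object, which in a general Grothendieck category is not automatic for arbitrary subobjects of $U$. The standard workaround is to pick $\beta$ of cofinality strictly larger than $|\mathrm{Sub}(U)|$ and to exploit AB5 to commute $\Hom(S, -)$ with the relevant filtered colimit on the level of monomorphism lattices, using that subobjects of $S$ themselves form a set bounded by $|\mathrm{Sub}(U)|$. Carrying out this cardinal bookkeeping carefully is the technical heart of the argument, and is exactly where axioms (2) and (4) of a Grothendieck category are used together in an essential way.
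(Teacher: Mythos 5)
Your proposal is correct; the paper does not prove this statement but only cites Grothendieck's T\^ohoku paper (Th\'eor\`eme 1.10.1), and your argument --- the Baer-type criterion via subobjects of the generator, the one-step pushout construction $M(X)$, transfinite iteration, and termination via a cofinality bound on $|\mathrm{Sub}(U)|$ using AB5 to commute pullbacks of subobjects with the filtered union $A_\beta=\bigcup_{\alpha<\beta}A_\alpha$ --- is precisely the classical proof being cited. You also correctly identify the smallness/factorization step as the technical heart and describe the right way to carry it out, so there is no gap to flag.
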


The category of abelian groups $\mathbf{Ab}$ (or more generally the category of modules over a unital ring) is a Grothendieck category.
In particular, if we have a unital ring $R$ then $\ModR$ and $\RMod$ are Grothendieck categories.  
 Note that if we consider the category $\mathbf{mod}_R$ of right $R$-modules of finite rank it is abelian but not a Grothendieck category, as a coproduct (direct sum) of an infinite family of finite rank modules is not a finite rank module.

\begin{proposition}[{{\cite[Chapter 3, Lemma 3.1]{popescu1973abelian}, \cite[Lemma 1]{grothendieck1957quelques}}}]
\label{prop:BaerCriterionForCategories}
Let $\mathcal{C}$ be a Grothendieck category, $U$  a generator and $E$ and object of $\mathcal{C}$. Then
$E$ is an injective object if and only if
for any monomorphism $\iota:U'\to U$ and for any morphism $f:U'\to E$ there exists a morphism $\bar{f}:U\to E$ such that $\bar{f}\iota=f$.
\end{proposition}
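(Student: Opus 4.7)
The proof will split into the two implications. The forward direction is immediate: if $E$ is injective, then given any monomorphism $\iota:U'\hookrightarrow U$ and any $f:U'\to E$, the definition of injectivity produces the desired extension $\bar f:U\to E$. So the substance is the converse.

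For the converse, assume $E$ has the stated extension property along subobjects of $U$. Let $\iota:A\hookrightarrow B$ be any monomorphism in $\mathcal{C}$ and $f:A\to E$; I will construct an extension of $f$ to $B$. My plan is a standard Zorn-plus-generator argument. Consider the poset $\mathcal{P}$ whose elements are pairs $(C,g)$ with $A\hookrightarrow C\hookrightarrow B$ a subobject of $B$ containing $A$, and $g:C\to E$ a morphism extending $f$, ordered by $(C,g)\leq (C',g')$ when $C\hookrightarrow C'$ as subobjects of $B$ and $g'|_C=g$. A chain in $\mathcal{P}$ has an upper bound obtained by taking the filtered colimit of the subobjects $C_i\hookrightarrow B$; this colimit is again a subobject of $B$ because $\mathcal{C}$ is a Grothendieck category (the AB5 axiom ensures that filtered colimits of monomorphisms are monomorphisms), and the compatible morphisms $g_i$ assemble into a morphism on the colimit. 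Zorn's lemma then yields a maximal element $(C_0,g_0)$.

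The crux is to show $C_0=B$. Suppose for contradiction that $C_0\subsetneq B$. Since $U$ is a generator and $C_0\hookrightarrow B$ is not an isomorphism, there is a morphism $u:U\to B$ that does not factor through $C_0\hookrightarrow B$ (this reformulation of ``generator'' follows from the fact that $B/C_0\neq 0$ together with Proposition~\ref{prop:Generator}). Form the pullback
\[
\begin{tikzcd}
U'\arrow[r,"j"]\arrow[d,hook,"\iota'"']&C_0\arrow[d,hook]\\
U\arrow[r,"u"']&B
\end{tikzcd}
\]
so that $\iota':U'\hookrightarrow U$ is a monomorphism (monos are stable under pullback in an abelian category). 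The hypothesis on $E$ applied to the morphism $g_0\circ j:U'\to E$ produces an extension $\bar h:U\to E$ with $\bar h\iota'=g_0\circ j$.

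Now form the pushout $P:=C_0\cup_{U'}U$. Since $g_0$ and $\bar h$ agree on $U'$, the universal property of the pushout yields a morphism $\tilde g:P\to E$, and similarly the pair $(C_0\hookrightarrow B,\ u:U\to B)$ induces a morphism $\pi:P\to B$. The key step is to verify that $\pi$ is a monomorphism: in an abelian category the pushout $P$ sits in an exact sequence $0\to U'\to C_0\oplus U\to P\to 0$, and the combined map $C_0\oplus U\to B$ sending $(c,x)\mapsto c-u(x)$ has kernel precisely $U'$ (this uses crucially that $U'$ is the pullback), so $P\to B$ is injective. Moreover, $C_0\hookrightarrow P$ is strict because $u$ factors through $P$ but not through $C_0$. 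Thus $(P,\tilde g)\in\mathcal{P}$ strictly dominates $(C_0,g_0)$, contradicting maximality. Therefore $C_0=B$ and $g_0$ is the desired extension.

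The main obstacle is the verification that $\pi:P\to B$ is a monomorphism; this is precisely the categorical fact that makes the single generator $U$ sufficient for testing injectivity, and it is what allows the small-object class of ``subobjects of $U$'' to do the work of all subobjects of all objects. Once this is in hand, the Zorn/pushout dance is routine.
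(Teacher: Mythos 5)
Your proof is correct and is essentially the classical argument behind the cited sources (Grothendieck's Lemme 1 and Popescu's Lemma 3.1); the paper itself states the result without proof, deferring to those references, and your Zorn-plus-pullback/pushout argument is exactly the standard one, including the key verification that the pushout $P\to B$ is monic via the exact sequence $0\to U'\to C_0\oplus U\to P\to 0$. The only point worth making explicit is that Zorn's lemma applies because the collection of subobjects of $B$ is a set, which holds since a Grothendieck category (having a generator) is well-powered.
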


\cref{prop:BaerCriterionForCategories} provides a simpler criterion for an object to be injective in a Grothendieck category. In particular instead of checking diagrams with arbitrary monomorphisms $M\to N$, we need only check diagrams with monomorphisms into the generator
This generalizes the Baer Criterion in module theory. The Baer Criterion for graded modules is presented in \cref{theorem:BaerCriterionGradedModules}.

\section{Graded module theory} 
\label{Graded Module Theory}

The purpose of this section is to introduce the reader to the basics of graded module theory. The literature on graded modules is bountiful but our main reference is \cite{hazrat2016graded}.

Let $\Gamma$ be a group. A $\Gamma\emph{-graded ring}$ is a ring $S=\bigoplus\limits_{g\in \Gamma}S_g$, where $S_g$ is an additive subgroup of $S$ and $S_gS_h\subset S_{gh}$.

Let $S$ be a $\Gamma$-graded ring. A $\emph{graded left S-module}$ is a left $S$-module $M=\bigoplus\limits_{g\in \Gamma}M_g$, where $M_g$ is an additive subgroup of $M$ and $S_gM_h\subset M_{gh}$. Let $M$ and $N$ be $\Gamma$-graded $S$-modules.  A $\Gamma\emph{-graded S-module homomorphism}$ between $M$ and $N$ is a module homomorphism $\alpha:M\to N$, such that $\alpha(M_g)\subset N_g$.
One can analogously define graded right $\mathcal{S}$-module and corresponding graded module homomorphism. Assuming the ring $S$ is commutative we stop differentiating between left and right. The set $M^h=\bigcup_{g\in \Gamma}M_g$ is called the set of \emph{homogeneous} elements of $M$.

For graded $S$-modules $M$ and $N$, a \emph{graded} $S$-\emph{module  homomorphism of degree} $\epsilon, \epsilon\in \Gamma$, is a $S$-module homomorphism $f:M\to N$, such that $f(M_g)\subset N_{g\epsilon}$ for any $g\in \Gamma$. Let $\text{Hom}_S(M,N)_{\epsilon}$ be the subgroup of $\text{Hom}_S(M,N)$, the group of non-graded module homomorphisms between $M$ and $N$, consisting of all $S$-graded module homomorphisms of degree $\epsilon$.

A graded module $M$ is \emph{finitely generated} if it is finitely generated as a module.
A $\Gamma$-graded (left) $\mathcal{S}$-module $M$ is called a \emph{graded-free} $\mathcal{S}$-module if $M$ is a free left $\mathcal{S}$-module with a homogeneous base.
Let $M$ be a $\Gamma$-graded module over a $\Gamma$-graded ring $S$. Let $g\in \Gamma$. Define a new module $M(g)$ by setting its graded by $M(g)_h:=M_{gh}$. The action of $S$ on $M(g)$ is induced from the action of $S$ on $M$.

\begin{theorem}(Baer Criterion for Graded Modules)
\label{theorem:BaerCriterionGradedModules}
Let $E$ be a $\Gamma$-graded module over the $\Gamma$-graded ring $\mathcal{S}$. Then $E$ is injective if and only if given any monomorphism $i:I\to \mathcal{S}(g)$ and a graded module homomorphism $f:I\to E$, there exists an $\overline{f}:\mathcal{S}(g)\to E$ such that $f=\overline{f}i$.
\end{theorem}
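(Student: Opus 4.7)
The plan is to adapt the standard proof of the Baer Criterion to the graded setting, with Zorn's Lemma doing the heavy lifting and the only graded subtlety being careful bookkeeping of the shifts $\mathcal{S}(g)$.

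The forward direction is immediate: if $E$ is injective in the category of $\Gamma$-graded $\mathcal{S}$-modules, then every graded monomorphism $i: I \to \mathcal{S}(g)$ with a graded map $f: I \to E$ admits an extension $\bar{f}: \mathcal{S}(g) \to E$ with $\bar{f} i = f$, since such monomorphisms are a special case of the class for which extensions are required by the definition of an injective object.

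For the reverse direction, suppose the stated lifting property holds. Given an arbitrary graded monomorphism $\iota: M \hookrightarrow N$ and a graded homomorphism $\varphi: M \to E$, I will apply Zorn's Lemma to the poset $\mathcal{P}$ whose elements are pairs $(M', \varphi')$ with $M \subseteq M' \subseteq N$ a graded submodule and $\varphi': M' \to E$ a graded extension of $\varphi$, ordered by extension. Every chain $\{(M'_\alpha, \varphi'_\alpha)\}$ has an upper bound given by $M' = \bigcup_\alpha M'_\alpha$ with $\varphi'|_{M'_\alpha} = \varphi'_\alpha$, which is well-defined because all the $\varphi'_\alpha$ agree on overlaps and the union of graded submodules is a graded submodule. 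Hence $\mathcal{P}$ has a maximal element $(M_0, \varphi_0)$.

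The heart of the argument is showing $M_0 = N$. Suppose for contradiction that there exists a homogeneous $n \in N \setminus M_0$, say $n \in N_g$. I will consider the graded left ideal $J := \{s \in \mathcal{S}(g^{-1}) : s \cdot n \in M_0\}$ of $\mathcal{S}(g^{-1})$; the shift $g^{-1}$ is chosen precisely so that the map $\mathcal{S}(g^{-1}) \to N$ sending $1 \mapsto n$ is a degree-zero graded homomorphism (since $\mathcal{S}(g^{-1})_k = \mathcal{S}_{g^{-1}k}$ acts on $n \in N_g$ to land in $N_k$). The map $f: J \to E$ given by $f(s) = \varphi_0(sn)$ is then a well-defined degree-zero graded homomorphism. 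By hypothesis applied to the inclusion $J \hookrightarrow \mathcal{S}(g^{-1})$, there is a graded extension $\bar{f}: \mathcal{S}(g^{-1}) \to E$ with $\bar{f}|_J = f$.

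Finally, I will define $\varphi_1: M_0 + \mathcal{S}n \to E$ by $\varphi_1(m + sn) := \varphi_0(m) + \bar{f}(s)$ and check that it is well-defined (if $m + sn = m' + s'n$, then $s - s' \in J$, and $\bar{f}(s - s') = f(s - s') = \varphi_0((s-s')n) = \varphi_0(m' - m)$), graded, and a proper extension of $\varphi_0$, contradicting maximality. Thus $M_0 = N$ and $\varphi_0$ is the desired extension, proving $E$ is injective. The main obstacle, and the only place where the proof diverges from the ungraded case, is organizing the degree shift in defining $J$ and verifying that $f$ and $\bar{f}$ are degree zero; the rest is bookkeeping and a direct transcription of the classical argument of Baer.
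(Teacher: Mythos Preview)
Your proof is correct, but it takes a genuinely different route from the paper. The paper does not give a standalone argument; it simply observes that this theorem is a special case of the categorical Baer criterion for Grothendieck categories (Proposition~\ref{prop:BaerCriterionForCategories}), applied to the category of $\Gamma$-graded $\mathcal{S}$-modules with generator $\bigoplus_{g\in\Gamma}\mathcal{S}(g)$. You instead directly adapt the classical Zorn's lemma proof of Baer to the graded setting. The paper's approach is more conceptual and situates the result within a general pattern, at the cost of invoking the Grothendieck-category machinery (and leaving implicit the small reduction from subobjects of $\bigoplus_g \mathcal{S}(g)$ to subobjects of a single $\mathcal{S}(g)$). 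Your approach is fully self-contained and elementary, which is an advantage if one wants the graded module theory to stand on its own.

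One minor point worth tightening: your verification that $\mathcal{S}(g^{-1})_k \cdot n \subseteq N_k$ for $n \in N_g$ uses $\mathcal{S}_{g^{-1}k}\cdot N_g \subseteq N_k$, which with the paper's conventions $M(g)_h = M_{gh}$ and $\mathcal{S}_h M_k \subseteq M_{hk}$ actually lands in $N_{g^{-1}kg}$. This equals $N_k$ only when $\Gamma$ is abelian (as it is throughout the paper's applications). For non-abelian $\Gamma$ you would need the opposite shift convention, or to work with right modules; this is purely a bookkeeping issue and does not affect the substance of your argument.
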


Note that this is a specific example of \cref{prop:BaerCriterionForCategories}, as the generator of the category of $\Gamma$-graded modules over the $\Gamma$-graded ring $\mathcal{S}$, with graded module homomorphisms as the morphisms, is $\bigoplus\limits_{g\in\Gamma}\mathcal{S}(g)$. 

\section{Sheaf theory} 
\label{sec:sheaves}

We introduce some notions from sheaf theory. For more details see \cite[Chapter 1]{Bredon:SheafTheory} and
\cite[Chapter 2]{MR1074006}. Throughout this section, $X$ is a topological space. 

Given a presheaf $F$ on $X$ there exists a sheaf $F^+$ and a morphism $\theta:F\to F^+$ such that for any sheaf $G$ the homomorphism given by $\theta$:
 \[\emph{Hom}_{\mathbf{Sh}(X)}(F^+,G)\to \emph{Hom}_{\mathbf{PSh}(X)}(F,G)\]
 is an isomorphism. In other words, $F\mapsto F^+$ is the left adjoint functor of the inclusion functor $Sh(X)\to PSh(X)$. Moreover, $(F^+,\theta)$ is unique up to isomorphism, and for any $x\in X$, $\theta_x:F_x\to F^+_x$ is an isomorphism. The sheaf $F^+$ is called \emph{the sheaf associated to $F$} or \emph{sheafification of $F$.}

Given an abelian group $A$, we denote by $A_X$ the sheaf associated to the presheaf $U\mapsto A$, where $U$ is open in $X$, and we say $A_X$ is the \emph{constant sheaf} on $X$ with stalk $A$.

Let $F$ be a sheaf on  
$X$. We say $F$ is \emph{flabby} if the map $F(U\subset X):F(X)\to F(U)$ is surjective for all open $U\subset X$.

Let $\mathcal{R}$ be a sheaf of rings on $X$. The pair $(X,\mathcal{R})$ is called a \emph{ringed space}. A left $\mathcal{R}$-module $M$ is a sheaf of abelian groups $M$ such that for every open $U\subset X$, $M(U)$ is a left $\mathcal{R}(U)$-module, and for any inclusion $V\subset U$, $V$ and $U$ open, the restriction morphism is compatible with the structure of the module, that is, $M(V\subset U)(sm)=\mathcal{R}(V\subset U)(s)\cdot M(V\subset U)(m)$ for every $s\in \mathcal{R}(U)$ and $m\in M(U)$. Define right $\mathcal{R}$-modules in the obvious way and morphisms between left(right) modules is a natural transformation compatible with the structure of the module. Denote these sets of natural transformations by $\text{Hom}_{\mathcal{R}}(M,N)$. We denote the category of right $\mathcal{R}$-modules by $\shModR{\mathcal{R}}$, and the category of left $\mathcal{R}$-modules by $\shRMod{\mathcal{R}}$.

  Denote by $\mathbb{Z}_X$ the sheaf associated to the constant presheaf $U\mapsto \mathbb{Z}$ for every open $U\subset X$. Then $\mathbb{Z}_{X}$-modules are precisely sheaves with values in abelian groups, i.e, $\mathbf{Mod}(\mathbb{Z}_X)=\mathbf{Sh}(X)$. More generally, define $R_X$ to be the sheaf associated to the constant presheaf $U\mapsto R$ for every open $U\subset X$.
  For example, we have the constant sheaf $\mathbf{k}_{\R^n}$.

\label{def:sheaf_hom}
Let $\mathcal{R}$ be a sheaf of rings and let $F$ and $G$ be two left $\mathcal{R}$-modules. Then the presheaf $\scHom(F,G)$ defined by $\scHom(F,G)(U):=\text{Hom}_{\mathcal{R}|_U}(F|_U,G|_U)$ is a sheaf of abelian groups, in particular a left $\mathcal{R}$-module. 

Let $F$ be a right $\mathcal{R}$-module and $G$ be a left $\mathcal{R}$-module. Define $F\otimes_{\mathcal{R}}G$ to be the sheaf associated to the presheaf of abelian groups $U\mapsto F(U)\otimes_{\mathcal{R}(U)}G(U)$, and call $F\otimes_{\mathcal{R}}G$ the tensor product of $F$ and $G$ over $\mathcal{R}$.

\begin{proposition}[{\cite[Proposition 2.2.9]{MR1074006}}] \label{prop:sheaf-tensor-hom-adjunction}
Let $X$ be a topological space. Let $\mathcal{R}$ be a sheaf of rings on $X$, $\mathcal{S}$ a sheaf of commutative rings and $\mathcal{S}\to\mathcal{R}$ a morphism of sheaves of rings such that its image is contained in the center of $\mathcal{R}$. Let $F$ and $G$ be two $\mathcal{R}$-modules and $H$ and $\mathcal{S}$-module. Then one has canonical isomorphisms:
\[\scHom_{\mathcal{R}}(H\otimes_{\mathcal{S}}F,G)\cong \scHom_{\mathcal{R}}(F,\scHom_{\mathcal{S}}(H,G))\cong \scHom_{\mathcal{S}}(H,\scHom_{\mathcal{R}}(F,G))\]
Note that by taking global sections of each of the sheaves above, we get:
\[
\Hom_{\mathcal{R}}(H\otimes_{\mathcal{S}}F,G)\cong \Hom_{\mathcal{S}}(H,\scHom_{\mathcal{R}}(F,G))
\]
In other words, $-\otimes_{\mathcal{S}}F$ is the left adjoint of $\scHom_{\mathcal{R}}(F,-)$.
\end{proposition}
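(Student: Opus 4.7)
The plan is to establish the first isomorphism
$\scHom_{\mathcal{R}}(H\otimes_{\mathcal{S}}F,G)\cong\scHom_{\mathcal{R}}(F,\scHom_{\mathcal{S}}(H,G))$
by exhibiting a natural bijection on sections over each open $U\subseteq X$ and then showing the bijection is compatible with restriction, so that the pieces assemble into a morphism of sheaves. The second isomorphism will follow from a symmetric currying argument, and the global hom statement is immediate from taking global sections.

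By the definition of $\scHom$, and because restriction to $U$ commutes with both $\otimes_{\mathcal{S}}$ and $\scHom_{\mathcal{S}}$, producing sections over each $U$ reduces to constructing, for any topological space with sheaves of rings $\mathcal{R}',\mathcal{S}'$ (the restrictions to $U$) and appropriate modules $H',F',G'$, a natural bijection
\[
\Hom_{\mathcal{R}'}(H'\otimes_{\mathcal{S}'}F',G')\;\cong\;\Hom_{\mathcal{R}'}(F',\scHom_{\mathcal{S}'}(H',G')).
\]
Given $\phi:H'\otimes_{\mathcal{S}'}F'\to G'$, I would define the image $\tilde\phi:F'\to\scHom_{\mathcal{S}'}(H',G')$ as follows: for an open $V$ and $f\in F'(V)$, let $\tilde\phi_V(f)\in\Hom_{\mathcal{S}'|_V}(H'|_V,G'|_V)$ be the sheaf map that on $W\subseteq V$ sends $h\in H'(W)$ to $\phi_W(h\otimes f|_W)$, where the tensor is interpreted via the canonical map $H'(W)\otimes_{\mathcal{S}'(W)}F'(W)\to(H'\otimes_{\mathcal{S}'}F')(W)$ from the presheaf tensor into its sheafification. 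The sheaf-morphism axioms and $\mathcal{R}'$-linearity of $\tilde\phi$ follow from the corresponding properties of $\phi$ together with the fact that the image of $\mathcal{S}\to\mathcal{R}$ is central.

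For the inverse, given $\psi:F'\to\scHom_{\mathcal{S}'}(H',G')$, I would first define a morphism of presheaves
\[
\bigl[U\mapsto H'(U)\otimes_{\mathcal{S}'(U)}F'(U)\bigr]\longrightarrow G',\qquad h\otimes f\mapsto\psi_U(f)_U(h),
\]
checking that the ring-level universal property of the tensor product makes this well defined and that compatibility with restriction holds since $\psi$ is a map of sheaves of $\mathcal{R}'$-modules. Because $H'\otimes_{\mathcal{S}'}F'$ is by definition the sheafification of this presheaf and $G'$ is already a sheaf, the universal property of sheafification extends the presheaf map uniquely to a morphism $H'\otimes_{\mathcal{S}'}F'\to G'$. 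That the two constructions are mutually inverse then reduces, at each open $U$, to the classical tensor--hom adjunction for the ring $\mathcal{R}'(U)$ with central subring $\mathcal{S}'(U)$ (the assignments $\phi\leftrightarrow\tilde\phi$ are literally the classical currying on elements).

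For the second isomorphism $\scHom_{\mathcal{R}}(F,\scHom_{\mathcal{S}}(H,G))\cong\scHom_{\mathcal{S}}(H,\scHom_{\mathcal{R}}(F,G))$, the same currying philosophy applies: sections of either sheaf over $U$ correspond to biadditive pairings $(f,h)\mapsto g\in G$ that are $\mathcal{R}|_U$-linear in $f$, $\mathcal{S}|_U$-linear in $h$, and compatible with restriction; swapping the order of the two variables defines the isomorphism, and the hypothesis that $\mathcal{S}\to\mathcal{R}$ lands in the center of $\mathcal{R}$ is exactly what is needed for both linearity conditions to survive the swap. Taking global sections of the composed isomorphism $\scHom_{\mathcal{R}}(H\otimes_{\mathcal{S}}F,G)\cong\scHom_{\mathcal{S}}(H,\scHom_{\mathcal{R}}(F,G))$ yields the stated identity on $\Hom$ sets and hence the adjunction $-\otimes_{\mathcal{S}}F\dashv\scHom_{\mathcal{R}}(F,-)$. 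The principal obstacle is technical bookkeeping: tracking three module structures simultaneously, verifying $\mathcal{R}$-linearity when the action factors through the central map $\mathcal{S}\to\mathcal{R}$, and invoking the sheafification universal property against the correct sheaf-valued target at each step.
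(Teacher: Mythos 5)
The paper does not actually prove this proposition; it is quoted from Kashiwara--Schapira \cite[Proposition 2.2.9]{MR1074006} with no in-paper argument, so there is nothing internal to compare against. Your proposal is a correct reconstruction of the standard proof: reduce each $\scHom$ to its sections over an open $U$, curry via the classical tensor--hom adjunction over the rings $\mathcal{R}(U)$ and $\mathcal{S}(U)$, and use the universal property of sheafification (a presheaf morphism from $V\mapsto H(V)\otimes_{\mathcal{S}(V)}F(V)$ into the sheaf $G$ extends uniquely over the sheafification) to produce the inverse direction; the centrality of the image of $\mathcal{S}\to\mathcal{R}$ is invoked exactly where it is needed, namely to make $H\otimes_{\mathcal{S}}F$ and $\scHom_{\mathcal{S}}(H,G)$ into $\mathcal{R}$-modules and to preserve linearity under the swap. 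The one point you gloss is in the second isomorphism: a section of $\scHom_{\mathcal{R}}(F,\scHom_{\mathcal{S}}(H,G))$ over $U$ a priori pairs $f\in F(V)$ with $h\in H(W)$ for $W\subseteq V\subseteq U$, whereas a section of the swapped sheaf pairs $h\in H(V)$ with $f\in F(W)$; these are not literally the same data, and one needs the (easy but worth stating) observation that each is determined by, and reconstructible from, the diagonal family of pairings $F(W)\times H(W)\to G(W)$ compatible with restriction, after which exchanging the variables is immediate. With that remark added, your argument is complete.
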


\begin{proposition}[{\cite[Section 2.2]{MR1074006}}]
\label{prop:sheaf_tensor_is_right_sheaf_hom_is_left_exact}
The functor $\scHom(-,-)$ is left exact with respect to each of its arguments and the functor $-\otimes_{\mathcal{R}}-$ is right exact with respect to each of its arguments.
\end{proposition}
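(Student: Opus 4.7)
The plan is to reduce both claims to the standard principle that left adjoints between abelian categories are right exact and right adjoints are left exact, then to handle the second variable either by the symmetry of the tensor product or by passing to stalks. Everything we need is packaged in Proposition~\ref{prop:sheaf-tensor-hom-adjunction}.

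First I would prove that $-\otimes_{\mathcal{R}} F$ is right exact and that $\scHom_{\mathcal{R}}(F,-)$ is left exact simultaneously. By Proposition~\ref{prop:sheaf-tensor-hom-adjunction}, applied with $\mathcal{S}=\mathcal{R}$ assumed commutative (or more generally by the sheaf version of the tensor–hom adjunction on the side where it is valid), $-\otimes_{\mathcal{R}}F$ is left adjoint to $\scHom_{\mathcal{R}}(F,-)$. In the abelian category $\shRMod{\mathcal{R}}$ (or $\shModR{\mathcal{R}}$), left adjoints preserve all colimits, hence cokernels and arbitrary coproducts, which together with additivity yields right exactness; dually, right adjoints preserve kernels and products, giving left exactness. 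This handles one of the two variables for each functor.

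Next I would obtain exactness in the other variable. The cleanest route is via stalks: the stalk functor $(-)_x\colon \shRMod{\mathcal{R}}\to \shRMod{\mathcal{R}_x}$ is exact and conservative, and there are canonical isomorphisms $(F\otimes_{\mathcal{R}} G)_x\cong F_x\otimes_{\mathcal{R}_x} G_x$ and a natural map $\scHom_{\mathcal{R}}(F,G)_x\to \Hom_{\mathcal{R}_x}(F_x,G_x)$. A short exact sequence of $\mathcal{R}$-modules in either variable therefore gives, stalkwise, a short exact sequence of $\mathcal{R}_x$-modules, and classical right exactness of $\otimes_R$ and left exactness of $\Hom_R$ in module theory deliver the corresponding exactness stalkwise, hence globally. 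For $\scHom_{\mathcal{R}}(-,G)$ one can also argue directly: for every open $U$, $\scHom_{\mathcal{R}}(-,G)(U)=\Hom_{\mathcal{R}|_U}(-,G|_U)$ is left exact in the abelian category $\shRMod{\mathcal{R}|_U}$ by the universal property of kernels, and a sectionwise-left-exact sequence of sheaves is itself left exact.

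The only technical point to address carefully is that the sheaf tensor product is defined by sheafifying the presheaf $U\mapsto F(U)\otimes_{\mathcal{R}(U)} G(U)$, so one must check that sheafification does not destroy the right exactness one already has at the level of presheaves. This is immediate because sheafification is itself a left adjoint (to the inclusion of sheaves into presheaves), and therefore preserves cokernels and arbitrary coproducts. The main obstacle, accordingly, is simply verifying the stalk formula for $\otimes_{\mathcal{R}}$ and bookkeeping left versus right modules in the non-commutative case; once that is in place, the proposition follows from the abstract nonsense about adjoints together with the well-known exactness properties of $\otimes_R$ and $\Hom_R$ over a ring.
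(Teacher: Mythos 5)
Your argument is correct. Note, however, that the paper does not actually prove this proposition: it is stated as background in Appendix~C with a citation to Kashiwara--Schapira, so there is no internal proof to compare against, and your write-up supplies the standard argument that the citation is standing in for. The two halves of your proof fit together properly: the adjunction from Proposition~\ref{prop:sheaf-tensor-hom-adjunction} handles one variable of each bifunctor (left adjoints are right exact, right adjoints are left exact), the stalkwise computation $(F\otimes_{\mathcal{R}}G)_x\cong F_x\otimes_{\mathcal{R}_x}G_x$ together with exactness and conservativity of stalks handles the remaining variable of the tensor product, and you correctly avoid the trap of trying to run the stalk argument for $\scHom$ (where the canonical map $\scHom_{\mathcal{R}}(F,G)_x\to\Hom_{\mathcal{R}_x}(F_x,G_x)$ need not be an isomorphism) by instead arguing sectionwise, using that kernels of sheaf morphisms are computed sectionwise so that a sectionwise-left-exact sequence of sheaves is left exact. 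The only cosmetic point is that the sheafification remark at the end is redundant once you have the stalk identification, since exactness of sheaf sequences is detected on stalks and sheafification preserves stalks; the paper itself records this identification for the constant sheaf of rings in Definition~\ref{def:sheaf_tensor_product}.
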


\section{Homological algebra}
\label{sec:homological-algebra}

We introduce some homological algebra. For more details, see \cite{rotman2008introduction,MR1269324,MR1074006,MR2050072}.

The \emph{homotopy category} of an abelian category $\mathbf{A}$, $K(\mathbf{A})$, is obtained from the category of chain complexes valued in $\mathbf{A}$, $C(\mathbf{A})$ by identifying all morphisms that are chain homotopic to $0$. Furthermore, by formally inverting quasi isomorphisms we obtain the \emph{derived category} of $\mathbf{A}$, $D(\mathbf{A})$.

Assuming that $\mathbf{A}$ has enough projectives/injectives we are able to construct projective/injective resolutions which are used to compute derived functors. Given an object $A$ of $\mathbf{A}$, we consider it to be a chain complex concentrated in degree $0$. 
For such a chain complex there is a quasi-isomorphism to a chain complex of injective objects of $\mathbf{A}$ concentrated in non-negative degrees given by an injective resolution of $A$:
$\cdots\to 0\to E^0\to E^1\to \cdots.$
In the derived category, $D(\mathbf{A})$, $A$ is isomorphic to this injective resolution.
  Given a left-exact functor $F:\cat{A} \to \cat{B}$ we compute the $i$-th right derived functor of $F$ by calculating
the $i$-th cohomology group of the chain complex:
$\cdots \to 0\to F(E^0)\to F(E^1)\to \cdots.$

Similarly, we use projective resolutions to compute left derived functors of right-exact functors.

\label{def:monoidal_prod_of_chain_complexes}
Let $\mathbf{A}$ be an abelian category and consider $\mathcal{C}(\mathbf{A})$ the category of chain complexes valued in $\mathbf{A}$. Suppose $\mathbf{A}$ comes equipped with a monoidal product, say $\otimes_*$ and an adjoint for the monoidal product, say $\text{Hom}^*$.

Consider $(A,d_A)$ and $(B,d_B)$ in $\mathcal{C}(\mathbf{A})$.
  The tensor product 
  $A \otimes_* B$ is the chain complex given by
  $(A\otimes_* B)_n := \bigoplus\limits_{p+q=n}(A_p\otimes_* B_q)$ and the differential given on elements $x \tensor_* y$ in homogeneous degree by
$d(x \tensor_* y) = d_Ax\otimes_* y+(-1)^{|x|}x\otimes_* d_By$.
The hom chain complex 
  $\Hom^*(A,B)$ given by
  $\Hom^*(A,B)_n=\prod\limits_{p+q=n}\Hom^*(A_{-p},B_q)$, with differential
  $d$ 
  defined on homogeneous  $f \in \Hom^*(A,B)_n$ by
  $df := d_B \circ f - (-)^n f \circ d_A$.

\subsection*{Acknowledgments}

We would like to thank the anonymous referee for many helpful comments and corrections that greatly improved our paper. We would like to thank Michael Catanzaro for his insights during the start of this project involving the graded module point of view of persistence modules. We would like to thank Ezra Miller for help with examples involving multi-parameter injective persistence modules (for example, Example~\ref{ex:ezra}), for pointing us to the literature on Matlis duality, and for many helpful comments on an earlier version of this paper. We would also like to thank the organizers of the Conference: Bridging Statistics and Sheaves at the IMA in May 2018 for helpful discussions on the sheaf point of view of persistence modules.
This research was partially supported by the Southeast Center for Mathematics and Biology, an NSF-Simons Research Center for Mathematics of Complex Biological Systems, under National Science Foundation Grant No. DMS-1764406 and Simons Foundation Grant No. 594594.
This material is based upon work supported by, or in part by, the Army Research Laboratory and the Army Research Office under contract/grant number
W911NF-18-1-0307.

 \printbibliography

\end{document}